\documentclass[11pt]{amsart}
\usepackage{graphicx} %
\usepackage{amsmath,amssymb}
\usepackage[top=1in, bottom=1in, left=1in, right=1in]{geometry} 
\usepackage{comment} 
\usepackage{microtype}
\usepackage{xcolor}
\usepackage{url}
\usepackage{mathtools}
\usepackage{tikz-cd}
\usepackage{enumerate}
\usepackage{xspace}
\usepackage{hyperref}
\usepackage{soul}
\usepackage{colonequals}
\usepackage{mathscinet} %
\usepackage{mathpazo}
\usepackage[cal=boondox]{mathalfa}

\usepackage{cellspace} %
\setlength{\cellspacetoplimit}{6pt}
\setlength{\cellspacebottomlimit}{6pt}

\DeclareMathAlphabet{\mathbx}{U}{BOONDOX-ds}{m}{n}
\SetMathAlphabet{\mathbx}{bold}{U}{BOONDOX-ds}{b}{n}
\DeclareMathAlphabet{\mathbxx} {U}{BOONDOX-ds}{b}{n}

\hypersetup{
   colorlinks,
   citecolor=blue,
   filecolor=blue,
   linkcolor=blue,
   urlcolor=blue
}

\theoremstyle{plain}
\newtheorem{theorem}{Theorem}[section]
\newtheorem{lemma}[theorem]{Lemma}
\newtheorem{proposition}[theorem]{Proposition}
\newtheorem{propositiondefn}[theorem]{Proposition/Definition}
\newtheorem{corollary}[theorem]{Corollary}

\theoremstyle{definition}
\newtheorem{example}[theorem]{Example}
\newtheorem{definition}[theorem]{Definition}

\newtheorem{remark}[theorem]{Remark}

\newtheorem{notation}[theorem]{Notation}

\newcommand{\Z}{\mathbf{Z}}

\newcommand{\spa}{\mathrm{span}_k}
\newcommand{\ord}{\mathrm{ord}}

\newcommand{\Res}{\mathrm{Res}}
\newcommand{\PP}{\mathbf{P}}
\newcommand{\Om}{\Omega}

\newcommand{\Jac}{\text{Jac}}
\newcommand{\cO}{\mathcal{O}}
\newcommand{\cD}{\mathcal{D}}

\newcommand{\Gal}{\mathrm{Gal}}

\DeclareMathOperator{\HH}{\mathrm{H}}
\newcommand{\HdR}{\HH_{\textrm{dR}}}

\newcommand{\codim}{\mathrm{codim}}

\newcommand{\nil}{\mathrm{nil}}
\newcommand{\bij}{\mathrm{bij}}

\newcommand{\ceiling}[1]{\lceil #1 \rceil}
\newcommand{\floor}[1]{\left \lfloor #1 \right \rfloor}

\title{Ekedahl-Oort types of $\Z/2\Z$-covers in characteristic $2$}

\author{Jeremy Booher}
\address{University of Florida, Gainesville, Florida, USA}
\email{jeremybooher@ufl.edu}

\author{Steven R. Groen} 
\address{Korteweg-de Vries Institute for Mathematics, University of Amsterdam, Amsterdam, Netherlands}
\email{s.r.groen@uva.nl}

\author{Joe Kramer-Miller}
\address{Lehigh University, Bethlehem, Pennsylvania, USA}
\email{jjk221@lehigh.edu}

\begin{document}

\begin{abstract}
In this article we study the Ekedahl-Oort types of $\Z/2\Z$-Galois covers
$\pi:Y \to X$ in characteristic two. When the base curve $X$ is ordinary, we show
that the Ekedahl-Oort type of $Y$ is completely determined by the
genus of $X$ and the ramification of $\pi$. For a general base curve $X$,
we prove bounds on the Ekedahl-Oort depending on the Ekedahl-Oort type
of $X$ and the ramification of $\pi$. Along the way, we develop
a theory of \emph{enhanced differentials of the second kind}. This theory
allows us to study algebraic de Rham cohomology in any characteristic
by working directly with differentials, in contrast to the standard \v{C}ech resolution. 

Keywords: curve, Jacobian, abelian variety, positive characteristic, Artin-Schreier cover, $p$-torsion, moduli space, Ekedahl-Oort strata, de Rham cohomology.

2020 MSC primary: 11G20, 14H10, 14H30, 14H40, 14F40. Secondary: 14G17, 14L15.
\end{abstract}

\maketitle

\section{Introduction}

Let $p$ be a prime and $k$ be a perfect field of characteristic $p$.  If $X, Y$ are nice curves over $k$ (smooth, projective, and geometrically connected) and $\pi: Y \to X$ is a branched $\Z/p\Z$-Galois cover of curves, it is natural to study how ``natural properties'' of $Y$ depend on $X$ and the ramification of $\pi$.  Most famously, the Riemann-Hurwitz formula relates the genus of $Y$ to the genus of $X$ and the ramification of $\pi$. 
Another well-known example is the Deuring-Shafarevich theorem, which gives a formula for the $p$-rank of $Y$ in terms of the $p$-rank of $X$ and the ramification \cite{SubraoDS}.  
More recent work has focused on finer invariants related to
the $p$-power torsion of the Jacobians $\Jac(X)$ and $\Jac(Y)$, which reveals a
more nuanced picture. 
For instance, there is no analog of the Deuring-Shafarevich theorem for $a$-numbers. Instead, there are bounds on the $a$-number of $Y$ in terms
of the $a$-number of $X$ and the ramification of $\pi$ (see \cite{PriesConstant,boohercais} and \cite{groen} for similar results with
higher $a$-numbers). Similarly, there is no Deuring-Shafarevich theorem
for `higher slopes' of the Newton polygon. There is, however,
a `Newton-over-Hodge' type phenomenon, which gives a lower bound
on the Newton polygon of $Y$ in terms of the Newton polygon of $X$ and
the ramification (see \cite{Kramer-Miller} and \cite{Kramer-Miller-Upton}). 
All of this work can be subsumed into the following goal: 
describe the cohomology of $Y$ in terms of the cohomology of $X$ and the ramification of $\pi$. In this article we work with algebraic de Rham cohomology in characteristic two,
endowed with the structure of a mod-$p$ Dieudonn\'e module.
When the base curve $X$ is ordinary, we completely determine the
Dieudonn\'e module of $Y$ in terms of the Dieudonn\'e module of $X$
and the ramification. For a more general base curve we provide
bounds on the Dieudonn\'e module of $Y$ in terms of this same information.

Recall that for a nice curve $C$ over $k$, the algebraic de Rham cohomology $\HdR^1(C)$ is naturally equipped with the semilinear operators Frobenius and Verschiebung, $F$ and $V$, subject to the relation $FV=VF=0$.  In particular, $\HdR^1(C)$ is
a Dieudonn\'{e} module, i.e., a module over the mod-$p$ Dieudonn\'{e} ring $D_k=k[F,V]/(FV)$,  where $Fw=w^pF$ and $Vw^p=wV$ for $w \in k$. 
There is also a symplectic pairing $\langle \cdot , \cdot \rangle$ on $\HdR^1(C)$ for which $F$ and $V$ are `skew' adjoint, i.e., they satisfy $\langle Vx,y\rangle^p=\langle x,Fy\rangle$.
Furthermore, there is a short exact sequence
\[
0 \to H^0(C,\Omega^1_C) \to \HdR^1(C) \to H^1(C,\cO_C) \to 0
\]
with $H^0(C,\Omega^1_C) = \ker F = \operatorname{Im} V$.  So knowledge of $\HdR^1(C)$ as a $D_k$-module %
gives, among other things, an understanding of the genus of $C$ (half the dimension), the $p$-rank (the stable-rank of $V$ on $H^0(C,\Omega^1_C)$), and the $a$-number (the dimension of the kernel of $V$ on $H^0(C,\Omega^1_C)$, i.e. $\dim_k (\ker F \cap \ker V)$).  Furthermore, a result of Oda \cite{Oda} shows that $\HdR^1(C)$ is naturally isomorphic as a Dieudonn\'{e} module to the Dieudonn\'{e} module of $\Jac(C)[p]$, the $p$-torsion in the Jacobian of $C$.  
In particular, knowledge of the Dieudonn\'{e} module $\HdR^1(C)$ is equivalent to  understanding the \emph{Ekedahl-Oort type} of $C$, which is the isomorphism class of $\Jac(C)[p]$ as a finite flat group scheme.  See \cite{pries_guide} and Section~\ref{ss:final types} for background on Ekedahl-Oort types and how to combinatorially express them as \emph{final types}.
Thus an equivalent way to state our question is as follows: given a branched $\Z/p\Z$-Galois cover of curves $\pi : Y \to X$, describe the Ekedahl-Oort type of $Y$ in terms of the Ekedahl-Oort type of $X$ and the ramification of $\pi$.  

The only previous work on the question at this level of generality is Cais and Ulmer's work on unramified covers \cite{caisulmer} and Elkin and Pries's work for covers of $\PP^1$ in characteristic two \cite{ElkinPries}.  In the present paper, we focus on one of the few general situations where we expect the Ekedahl-Oort type of $X$ and the ramification of $\pi$ to determine the Ekedahl-Oort type of $Y$.  In particular, in characteristic $p=2$ when the base curve $X$ is ordinary we show the ramification of $\pi : Y \to X$ determines the Ekedahl-Oort type of $Y$.  (Recall $X$ being ordinary means that the $p$-rank of $X$ equals the genus of $X$, which fully determines the Ekedahl-Oort type.)  To state the result more precisely, we must set up some notation.

Fix $p=2$ and consider a $\Z/2 \Z$-cover $\pi : Y \to X$ which is ramified over $m$ points $\{P_i\}_{i=1,\ldots,m}$ of $X$.  Let $d_i$ be the \emph{ramification break} at $P_i$, the unique break in the upper ramification filtration above $P_i$.  The final type is a combinatorial way to describe a polarized mod-$p$ Dieudonn\'{e} module, which we review in Section~\ref{ss:final types}.

\begin{definition}
Let $M_{\textrm{ord}}$ be the polarized mod-$p$ Dieudonn\'{e} module with final type $[1]$ (i.e. the Dieudonn\'{e} module of an ordinary elliptic curve).
For a positive odd integer $d$, let $M_d$ be the polarized mod-$p$ Dieudonn\'{e} module with final type $\left[0,1,1,2,2,\ldots, \floor{\frac{d-1}{4}}\right]$.  (The last entry occurs once or twice depending on $d$ modulo $4$.) 
\end{definition}

\begin{theorem} \label{thmA}
Let $\pi : Y \to X$ be a $\Z/2 \Z$-cover of smooth, proper, geometrically connected curves over a perfect field $k$ of characteristic two, and suppose $X$ is ordinary.  Then as Dieudonn\'{e} modules
\[
D(\Jac(Y)[2]) \simeq\HdR^1(Y) \simeq M_{\ord}^ {2 g_X -1 +m } \oplus \bigoplus_{i=1}^m M_{d_i}.
\]
\end{theorem}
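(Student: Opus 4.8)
The plan is to compute the Dieudonné module $\HdR^1(Y)$ directly using the theory of enhanced differentials of the second kind developed earlier, exploiting the $\Z/2\Z$-action to split everything into eigenspaces. Since $\pi$ is a $\Z/2\Z$-cover, $\pi_* \cO_Y = \cO_X \oplus \mathcal{L}$ where $\mathcal{L}$ carries the nontrivial character, and correspondingly $\HdR^1(Y) = \HdR^1(Y)^+ \oplus \HdR^1(Y)^-$. The invariant part $\HdR^1(Y)^+$ is canonically $\HdR^1(X)$, which since $X$ is ordinary is $M_{\ord}^{g_X}$ (an ordinary Dieudonné module of rank $2g_X$). So the entire content is to identify the anti-invariant part $\HdR^1(Y)^-$ as a polarized Dieudonné module, and to check the pairing interacts with the eigenspace decomposition as expected (the $+$ and $-$ parts should be orthogonal, or rather the pairing should restrict appropriately, so that the polarization decomposes compatibly). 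By Riemann–Hurwitz, $2g_Y - 2 = 2(2g_X-2) + \sum_i (d_i+1)$, so $\dim_k \HdR^1(Y)^- = 2g_Y - 2g_X = 2g_X - 2 + \sum_i(d_i+1)$; one checks this matches $\dim M_{\ord}^{2g_X-1+m-g_X} \oplus \bigoplus M_{d_i}$, giving a sanity check. Actually it is cleaner to peel off $M_{\ord}^{g_X}$ from $\HdR^1(X)$ and combine: the claim becomes $\HdR^1(Y)^- \simeq M_{\ord}^{g_X - 1 + m} \oplus \bigoplus_{i=1}^m M_{d_i}$, which has the right dimension.

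Next I would get explicit coordinates on $\HdR^1(Y)^-$. Since $p=2$, the cover is Artin–Schreier: locally $Y$ is $y^2 - y = f$ for appropriate $f \in k(X)$, and the anti-invariant differentials of the second kind are spanned by forms $\omega = (g + y h)\,\dd t$ (or in the enhanced formalism, pairs consisting of such a form together with the data pinning down its class). Working over each ramification point $P_i$ with break $d_i$, the local structure is governed entirely by $d_i$: near $P_i$ with uniformizer $t$, we can take $f = u/t^{d_i}$ with $u$ a unit, and the contribution to the anti-invariant de Rham cohomology, as a Dieudonné module, should be exactly $M_{d_i}$ — this is the "local" computation, and it is essentially the same calculation that appears in Elkin–Pries's analysis of covers of $\PP^1$. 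The Frobenius $F$ acts by $y \mapsto y^2 = y + f$, i.e. it mixes $yh$ into $h\cdot(1+\cdots)$ and raises coefficients to the square; the Verschiebung $V$ is the adjoint. Tracking the filtration on the space of local enhanced differentials of the second kind modulo exact ones, graded by pole order, one reads off the final type $[0,1,1,2,2,\ldots,\floor{(d_i-1)/4}]$: the break $d_i$ being odd forces the local monodromy to be as in the definition of $M_{d_i}$.

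The globalization step is where the hypothesis that $X$ is ordinary does the real work. The point is that $F$ on $H^1(X,\cO_X)$ is bijective (equivalently $V$ on $H^0(X,\Omega^1_X)$ is bijective), so there are "no relations coming from global holomorphic differentials on $X$" obstructing the local pictures from being independent. Concretely, I would set up the exact sequence relating $\HdR^1(Y)^-$ to $H^0$ of the sheaf of anti-invariant enhanced differentials and to the local terms at the $P_i$, and argue that ordinarity of $X$ makes this sequence split as Dieudonné modules into: one copy of $M_{\ord}$ for each ramification point beyond the first plus $g_X-1$ further ordinary copies coming from the ordinary part of $X$ twisted by $\mathcal{L}$ (this is where Deuring–Shafarevich, $p$-rank of $Y = 2\cdot 0 + (m-1) \cdot \ldots$ wait—$p$-rank additivity: $p\text{-rank}(Y) = 2 g_X - 1 + m$ when $X$ ordinary, matching the exponent), plus the local $M_{d_i}$'s. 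The main obstacle is precisely justifying that the global Dieudonné module is the direct sum of the local ones with no "interaction terms" — i.e. that the natural maps from local cohomology are split injections of Dieudonné modules compatibly with the pairing. I expect this to follow from a dimension count combined with the fact that $F$ is invertible on the Cartier-dual piece, forcing the Dieudonné module to be a direct sum of a maximal ordinary summand and the "non-ordinary part," which must then come entirely from the wild ramification and hence be $\bigoplus_i M_{d_i}$ by the local computation; but making this splitting canonical (and checking the polarization respects it) is the delicate point and will require the enhanced-differential machinery rather than a soft argument.
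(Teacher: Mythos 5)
There is a genuine gap at the very first step, and it undermines the whole strategy. You propose to split $\pi_*\cO_Y = \cO_X \oplus \mathcal{L}$ into character eigenspaces and correspondingly write $\HdR^1(Y) = \HdR^1(Y)^+ \oplus \HdR^1(Y)^-$. This decomposition does not exist here: the cover has degree $p=2$ equal to the characteristic, so $k[\Z/2\Z] \cong k[\gamma]/(\gamma-1)^2$ is not semisimple, the generator of $\Gal(Y/X)$ acts unipotently (for an Artin--Schreier equation $y^2-y=f$ it sends $y \mapsto y+1$), and there is no nontrivial character $\mathcal{L}$ and no anti-invariant complement. One only has a filtration $\pi^*\HdR^1(X) \subset \HdR^1(Y)$ with no canonical (let alone Galois- or $F,V$-equivariant) splitting; moreover the Galois invariants of $\HdR^1(Y)$ are much larger than $\pi^*\HdR^1(X)$ precisely because the action is unipotent. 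Your dimension sanity check happens to come out right, but the object $\HdR^1(Y)^-$ whose Dieudonn\'e structure you then propose to compute locally is not defined. (Relatedly, "$F$ acts by $y\mapsto y^2 = y+f$" shows $F$ does not respect any putative eigenspace of $y$.)

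The correct replacement for the eigenspace splitting --- and what the paper actually does --- is twofold. First, apply Fitting's lemma to $F$ and $V$ separately to get $\HdR^1(Y) = U \oplus Z \oplus L$ with $V$ bijective on $U$, $F$ bijective on $Z$, and both nilpotent on $L$ (Lemma~\ref{lemma: decomposition k[F,V]}); the summand $U\oplus Z$ is $M_{\ord}^{f_Y}$ with $f_Y = 2g_X-1+m$ by Deuring--Shafarevich. Second, for the local part one constructs explicit $S$-enhanced differentials $\widetilde{\omega}_{i,j}$ ($1\le j\le d_i-1$) as pullbacks of meromorphic differentials on $X$ with a pole only at $P_i$ (Lemma~\ref{l: construct wij}, Proposition~\ref{prop:nilpotent wij}); their span $W=\bigoplus_i W_i$ has codimension $4l_X$ in $L$, so ordinarity ($l_X=0$) forces $W=L$, and the same hypothesis kills the correction terms $\pi^*\eta_{i,j}$ in $V(\widetilde{\omega}_{i,j})$, yielding $V(\widetilde{\omega}_{i,j})=\widetilde{\omega}_{i,j/2}$ or $0$. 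The absence of "interaction terms" between branch points is then proved not by a splitting of sheaves but via the explicit symplectic pairing $\langle \widetilde{\omega}_{i,j},\widetilde{\omega}_{i',j'}\rangle$ (Proposition~\ref{proposition: symplectic pairing}) together with the adjunction $\langle Fx,y\rangle = \langle x,Vy\rangle^p$, which shows $F$ preserves each $W_i$. Your instincts about where ordinarity enters and about the local final types $[0,1,1,2,\ldots]$ are sound, but the argument cannot be run through the $\pm$-decomposition; it needs the $U\oplus Z\oplus L$ decomposition and the explicit basis of $L$ by pulled-back differentials.
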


Here $g_X$ is the genus of $X$, equal to the $p$-rank of $X$; note that $2 g_X -1 + m$ is the $p$-rank of $Y$ by the Deuring-Shafarevich formula \cite{SubraoDS}.

\begin{remark}
Theorem~\ref{thmA} is a simultaneous generalization of a result of Elkin and Pries, which explicitly treats the case that $X = \PP^1$ \cite{ElkinPries}, and of a result of Voloch which computes the $a$-number of an Artin-Schreier cover of an ordinary curve in characteristic two \cite{voloch88}.
\end{remark}

\begin{remark}
The Dieudonn\'{e} modules in Theorem~\ref{thmA} can equivalently be described by giving the $F$ and $V$ actions: see Lemma~\ref{lemma: decomposition k[F,V]}, Section~\ref{ss:fandv}, and in particular the proof of Theorem~\ref{thm: ordinary EO type}.
\end{remark}

By fixing a base curve in characteristic two and varying the cover subject to a specific set of ramification invariants, we can construct positive dimensional families of genus $g$ curves with constant Ekedahl-Oort type.  For example:

\begin{corollary} \label{cor:families}
For any positive integer $g$, there is a $(g-1)$-dimensional family of smooth curves of genus $g$ with constant Ekedahl-Oort type: in particular, the final type is $[0,1,1,2,2,\ldots, \floor{g/2}]$. %

For any positive integer $n$, let $d = 2^n+1$.  There is a $(d+1)/2$-dimensional family of smooth curves of genus $(d+3)/2$ with constant Ekedahl-Oort type: in particular, the final type is $[0,1,2,3,3,4,4, \ldots, (d-1)/4+1,(d-1)/4+1,(d-1)/4+2]$. %
\end{corollary}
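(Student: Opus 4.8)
The plan is to read both families directly off Theorem~\ref{thmA}. When $X$ is ordinary the right-hand side of Theorem~\ref{thmA} depends only on $g_X$ and on the ramification breaks $d_i$; so if one fixes an ordinary base curve $X$ together with the breaks and lets $\pi$ vary, every resulting $Y$ has the same Ekedahl--Oort type, and it remains in each case to compute $g_Y$ by Riemann--Hurwitz, to translate the $\HdR^1(Y)$ produced by Theorem~\ref{thmA} into a final type via the dictionary of Section~\ref{ss:final types}, and to count the dimension of the space of such covers.

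For the first family I would take $X=\PP^1$ with a single branch point normalized to $\infty$ and ramification break $d:=2g+1$ there. Such a $\Z/2\Z$-cover is $y^2-y=f(x)$ with $f\in k[x]$; subtracting a suitable $h^2-h$ one may assume $f=a_1x+a_3x^3+\cdots+a_dx^d$ has only odd-degree terms with $a_d\neq 0$, so that $\infty$ is the unique branch point, the break is $d$, and these covers form a $(g+1)$-dimensional space. Riemann--Hurwitz gives $g_Y=(d-1)/2=g$, and since $2g_X-1+m=-1+1=0$, Theorem~\ref{thmA} gives $\HdR^1(Y)\cong M_d$, which by definition has final type $[0,1,1,\dots,\floor{g/2}]$. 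Finally $\mathrm{Aut}(\PP^1,\infty)$ is the $2$-dimensional group $x\mapsto ax+b$, whose generic orbit on the space of reduced $f$ is $2$-dimensional, so the induced map to $\mathcal{M}_g$ has $2$-dimensional generic fibres and its image is a $(g-1)$-dimensional family.

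For the second family I would fix an ordinary elliptic curve $E$, take $X=E$ with a single branch point normalized to the origin $O$, and break $d:=2^n+1$. A $\Z/2\Z$-cover of $E$ ramified only at $O$ is a class in $H^0(E,\cO_E(dO))$ modulo the additive image of $h\mapsto h^2-h$ from $H^0(E,\cO_E(\floor{d/2}\,O))$; since these spaces have $k$-dimensions $d$ and $(d-1)/2$, this map has finite fibres, and since a break smaller than $d$ is a closed condition (as $d$ is odd one cannot reduce away the order-$d$ pole), the covers with break exactly $d$ form a $(d+1)/2$-dimensional family. Riemann--Hurwitz gives $g_Y=2\cdot 1-1+(d+1)/2=(d+3)/2$; here $2g_X-1+m=2$, so Theorem~\ref{thmA} gives $\HdR^1(Y)\cong M_{\ord}^2\oplus M_d$, whose final type is obtained from those of $M_{\ord}$ and $M_d$ by the direct-sum rule recorded in Section~\ref{ss:final types} (equivalently, by decomposing $\HdR^1(Y)$ as a $k[F,V]/(FV)$-module as in Lemma~\ref{lemma: decomposition k[F,V]}), yielding the sequence in the statement. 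Since $\mathrm{Aut}(E,O)$ is finite for ordinary $E$, the induced map to $\mathcal{M}_{(d+3)/2}$ is generically finite onto its image, which is therefore $(d+1)/2$-dimensional.

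The Riemann--Hurwitz computations, the Artin--Schreier reductions, and the invocation of Theorem~\ref{thmA} are all routine. I expect the two points requiring care to be: (i) confirming that the map from the parameter space of covers to $\mathcal{M}_{g_Y}$ has exactly the expected generic fibre dimension --- that is, that the generic $Y$ behaves as expected under $\mathrm{Aut}(X,\,\text{branch locus})$ and admits no unexpected further $\Z/2\Z$-quotient --- so that the image has the stated dimension; and (ii) the purely combinatorial step identifying the final type of $M_{\ord}^2\oplus M_d$, i.e.\ unwinding the correspondence of Section~\ref{ss:final types} between polarized $k[F,V]/(FV)$-modules and final types.
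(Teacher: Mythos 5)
Your treatment of the first family matches the paper's (Remark~\ref{rmk:family1}): same base $\PP^1$, same reduction of $f$ to odd-degree terms giving $(d+1)/2=g+1$ parameters, same subtraction of the $2$-dimensional $\mathrm{Aut}(\PP^1,\infty)$. That part is fine.

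The second family, however, has a genuine error: you cannot obtain the stated final type from an \emph{ordinary} elliptic curve. With $X=E$ ordinary and one branch point, Theorem~\ref{thmA} gives $\HdR^1(Y)\simeq M_{\ord}^{2}\oplus M_d$, which has $p$-rank $f_Y=2$; hence its final type begins $\nu_1=1,\ \nu_2=2$, i.e.\ it is $[1,2,2,3,3,\ldots,(d-1)/4+2]$. The final type asserted in the corollary begins $[0,1,2,\ldots]$, so $\nu_1=0$ and the $p$-rank is $0$; by Deuring--Shafarevich this forces $f_X=0$, i.e.\ a \emph{supersingular} base. So your construction does produce a $(d+1)/2$-dimensional family with constant Ekedahl--Oort type, but it is a different type from the one in the statement, and the ``purely combinatorial step'' you defer in point (ii) cannot be made to work. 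The paper's construction takes $X$ to be the supersingular elliptic curve, branched at one point with $d=2^n+1$, and the constancy of the Ekedahl--Oort type is then \emph{not} a consequence of Theorem~\ref{thmA} at all: over a non-ordinary base the type is generally not determined by the ramification (Example~\ref{example: different EO types}), and the whole content of the second family is Theorem~\ref{theorem: d = 2^n+1}, which shows that for the special values $d=2^n+1$ the type is nonetheless determined and equals $[0,1,2,3,3,\ldots,(d-1)/4+1,(d-1)/4+1,(d-1)/4+2]$. Your dimension count $(d+1)/2$ for covers of an elliptic curve branched at one point does agree with the paper's, and would carry over to the supersingular base, but the proof must cite Theorem~\ref{theorem: d = 2^n+1} rather than Theorem~\ref{thmA} for the identification of the final type.
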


\begin{remark}
    These families are ``unlikely intersections'' in the sense that, when $g$ is sufficiently large, the codimension of the Ekedahl-Oort strata in the moduli space of principally polarized Abelian varieties of dimension $g$ exceeds $3g-3$, the dimension of the moduli space of curves of genus $g$ (see for example Remark~\ref{remark:unlikely}).
\end{remark}

When the base curve $X$ is not ordinary, the Ekedahl-Oort type of the cover $Y$ is no longer
determined entirely by the Ekedahl-Oort type of $X$ and the ramification breaks. Instead, we are able to constrain the Ekedahl-Oort type of the cover $Y$ based on the ramification breaks. In Section~\ref{sec: non ordinary base} we consider the simplest non-ordinary case, which is a supersingular elliptic curve.
When the cover is ramified at a single point with break $d$, we determine all possible $k[V]$-structures on $H^0(Y,\Omega_Y^1)$ (see Theorem \ref{t: k[V] structure on ss ell curve}) and determine the codimensions of the strata in the moduli space of such covers (see Theorem \ref{t: codimension of strata}).  The difficulty in extending this analysis to the
full Ekedahl-Oort type lies in understanding the pairing $\langle\cdot ,\cdot  \rangle$. In particular, in Example~\ref{ex:theoryd15} we exhibit two covers whose
global differentials have the same $k[V]$-structure, but whose Ekedahl-Oort
types differ.

More generally, let $X$ be any curve and let $[v_1,\dots,v_{g_Y}]$ be the final type of $Y$. In Theorem \ref{theorem: final type bounds} we estimate $v_l$ with an error that is essentially logarithmic in terms of the ramification breaks. To avoid introducing unnecessary notation, we put off
stating Theorem \ref{theorem: final type bounds} in full generality until Section~\ref{sec:bounds}. Instead, we state 
a special case of this theorem for the case where $Y\to X$ is branched at one point. 

\begin{theorem} \label{thm: log intro}
Let $Y \to X$ be a $\Z/2 \Z$-cover of smooth, proper, geometrically connected curves over a perfect field $k$ of characteristic two,
branched at one point with ramification invariant $d$. Let $f_Y=2f_X$ be the $p$-rank of $Y$ and $l_X=g_X-f_X$ be the local rank of $X$. Letting $[\nu_1, \ldots, \nu_{g_Y} ]$ denote the final type of $Y$:
\begin{enumerate}
    \item For $1\leq l \leq f_Y$, we have $\nu_l=l$.
    \item For $f_Y< l < f_Y+2l_X,$ we have $f_Y \leq \nu_l \leq l-1$.
    \item For $f_Y+2l_X \leq l \leq g_Y$, we have 
    \begin{equation} 
        \left | \nu_l - \left(f_Y + \left \lfloor \frac{l-f_Y}{2} \right \rfloor \right)   \right| \leq \begin{cases}   \frac{3\lceil \log_2(d-1) \rceil}{2}  l_X &\hbox{if $d \leq 4g_X-5$}  \\ \ceiling{\log_2(d-1)} l_X &\hbox{if $d > 4g_X-5$.} \end{cases} 
    \end{equation}
\end{enumerate}
\end{theorem}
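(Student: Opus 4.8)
The plan is to derive Theorem~\ref{thm: log intro} as the one-branch-point specialization of Theorem~\ref{theorem: final type bounds}; granting the latter, the stated form is bookkeeping, using Riemann--Hurwitz to write $g_Y = 2g_X + \tfrac{d-1}{2}$ and the Deuring--Shafarevich formula \cite{SubraoDS} to write $f_Y = 2f_X$, whence $l_Y = 2l_X + \tfrac{d-1}{2}$. So I outline the approach to the underlying bounds, which rests on the model of $\HdR^1(Y)$ given by enhanced differentials of the second kind. Writing the $\Z/2\Z$-cover locally as $y^2 + y = f$, one has $dy = df$ in characteristic two, so every de Rham class of $Y$ is represented by a differential of the second kind of the shape $\pi^*\alpha + y\,\pi^*\beta$ with $\alpha,\beta$ meromorphic differentials on $X$ whose poles lie on the branch locus and are bounded in terms of $d$. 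In these coordinates $V$ is the Cartier operator, and using $y = f + y^2$ one computes $\mathcal{C}(\pi^*\alpha + y\,\pi^*\beta) = \pi^*\mathcal{C}_X(\alpha + f\beta) + y\,\pi^*\mathcal{C}_X(\beta)$, with the symplectic pairing likewise explicit. Since the Galois generator $\sigma$ satisfies $(\sigma-1)(\pi^*\alpha + y\,\pi^*\beta) = \pi^*\beta$, the module $H^0(Y,\Omega^1_Y)$ carries a three-step, $\mathcal{C}$-stable filtration
\[
0 \subset \pi^*H^0(X,\Omega^1_X) \subset \pi^*H^0\!\big(X,\Omega^1_X(\tfrac{d+1}{2}P)\big) \subset H^0(Y,\Omega^1_Y)
\]
(where $P$ is the branch point), whose graded pieces are, as $k[V]$-modules, a copy of $H^0(X,\Omega^1_X)$, a ``principal-parts'' module $R_d := H^0(X,\Omega^1_X(\tfrac{d+1}{2}P))/H^0(X,\Omega^1_X)$ of dimension $\tfrac{d-1}{2}$ — which by Riemann--Roch is independent of $X$ and on which $\mathcal{C}$ acts by essentially halving pole orders — and another copy of $H^0(X,\Omega^1_X)$.

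First I would read off parts (1) and (2), which are essentially formal. The bijective part of $V$ on each copy of $H^0(X,\Omega^1_X)$ has rank $f_X$, while $\mathcal{C}$ is nilpotent on $R_d$ (on the relevant residue-zero principal parts the pole order strictly drops under each nonzero application of $\mathcal{C}$), so the bijective part of $H^0(Y,\Omega^1_Y)$ has rank $2f_X = f_Y$, recovering Deuring--Shafarevich; the corresponding summand of $\HdR^1(Y)$ is $M_{\ord}^{f_Y}$, forcing $\nu_l = l$ for $1 \le l \le f_Y$. Since any final type is non-decreasing with $\nu_l \le l$, for $l > f_Y$ one automatically has $f_Y = \nu_{f_Y} \le \nu_l \le l-1$, which is part (2); and in the window $f_Y < l < f_Y + 2l_X$ there really is no better bound in general, because the two nilpotent parts of $H^0(X,\Omega^1_X)$, of total dimension $2l_X$, can sit arbitrarily low in the canonical filtration.

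Part (3) is the heart of the matter. When $X$ is ordinary both copies of $H^0(X,\Omega^1_X)$ are $V$-bijective, the summand $M_{\ord}^{f_Y}$ splits off, and what remains has $H^0$-part exactly $R_d$; a direct computation with the halving action of $\mathcal{C}$ together with the pairing identifies the resulting polarized Dieudonn\'e module as $M_d$ (this recovers Elkin--Pries over $\PP^1$, cf.\ \cite{ElkinPries} and Theorem~\ref{thmA}), whose final type is $[0,1,1,2,2,\dots]$, i.e.\ $\nu_l = f_Y + \floor{\tfrac{l-f_Y}{2}}$ on the nose — precisely the predicted value in part (3), attained when $l_X = 0$. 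For general $X$ the only departure from this picture comes from the nilpotent parts of the two copies of $H^0(X,\Omega^1_X)$: the nilpotent part of $H^0(Y,\Omega^1_Y)$ is an iterated extension of an $l_X$-dimensional nilpotent module, the module $R_d$, and another $l_X$-dimensional nilpotent module, so it agrees with $R_d$ away from a subquotient of dimension at most $2l_X$ that can be threaded through the Jordan chains of $R_d$. Because $\mathcal{C}$ halves pole orders, the nilpotency index of $V$ on $R_d$ is at most $\lceil \log_2(d-1) \rceil$, so each of these $\le 2l_X$ extra directions is visible only along $\le \lceil \log_2(d-1) \rceil$ consecutive steps of the canonical filtration. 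Counting these steps — together with the observation that when $d \le 4g_X - 5$, equivalently $\tfrac{d+1}{2} \le 2g_X - 2 = \deg\Omega^1_X$, the pole orders of the global holomorphic differentials on $X$ reach into the entire halved range, producing extra mixing that costs a factor $\tfrac32$, whereas for $d > 4g_X - 5$ the top of each halving chain is purely local and the bookkeeping is clean — shows that each $\nu_l$ is perturbed from $f_Y + \floor{\tfrac{l-f_Y}{2}}$ by at most the stated amount. Concretely, one invokes that modifying the relevant polarized Dieudonn\'e module by a subquotient of dimension $e$ moves each final-type entry by at most $e$, with $e = \tfrac32 \lceil \log_2(d-1) \rceil\, l_X$ (resp.\ $\lceil \log_2(d-1) \rceil\, l_X$).

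The main obstacle is exactly the point flagged just after the statement: the final type is \emph{not} determined by the $k[V]$-module structure of $H^0(Y,\Omega^1_Y)$ — Example~\ref{ex:theoryd15} produces covers with the same $V$-module but different Ekedahl--Oort types — so one cannot simply read off $\nu_l$ from the Jordan type. For a \emph{bounds} statement this is circumvented by never evaluating the pairing, only confining $\nu_l$: the purely combinatorial constraints on a final type (non-decreasing, $\nu_1 \in \{0,1\}$, $\nu_{l+1}-\nu_l \in \{0,1\}$, self-dual under the polarization) together with the explicitly controlled $k[V]$-structure of $H^0(Y,\Omega^1_Y)$ already trap $\nu_l$ in the interval asserted in part (3). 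Making precise — uniformly in the unknown pairing — the principle that agreement of Dieudonn\'e modules away from a small subquotient forces closeness of final types is the remaining technical point that requires care.
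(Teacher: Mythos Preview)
Your reduction to Theorem~\ref{theorem: final type bounds} and your handling of parts (1) and (2) are correct and match the paper. The gap is in part (3).

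Your argument there is essentially about the $k[V]$-structure of $H^0(Y,\Omega^1_Y)$: you filter it with graded pieces $H^0(X,\Omega^1_X)$, $R_d$, $H^0(X,\Omega^1_X)$, observe that the nilpotent part differs from $R_d$ by a subquotient of dimension $\le 2l_X$, and then want to invoke a principle that ``modifying a polarized Dieudonn\'e module by a subquotient of dimension $e$ moves each $\nu_l$ by at most $e$'' with $e$ equal to the asserted bound. But the subquotient you actually exhibited has dimension $2l_X$, not $\lceil\log_2(d-1)\rceil\,l_X$; the $\log$ factor does not come from the size of the perturbation. Your attempt to recover it --- ``each of the $\le 2l_X$ extra directions is visible along $\le \lceil\log_2(d-1)\rceil$ steps'' --- counts how many different $l$ are affected, not how far a \emph{fixed} $\nu_l$ can move. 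More seriously, you acknowledge (via Example~\ref{ex:theoryd15}) that the $k[V]$-structure does not determine the final type, and your proposed fix (``combinatorial constraints on final types plus the controlled $k[V]$-structure already trap $\nu_l$'') is not an argument: those constraints alone do not give the logarithmic bound, since the canonical filtration is built from words in $V$ \emph{and} $\bot$, and you have said nothing quantitative about $\bot$.

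The paper's mechanism is different and this is exactly where the $\log$ enters. One works not with $H^0(Y,\Omega^1_Y)$ as a $k[V]$-module but with the full de Rham module $L$, and for each simple word $w = V^{n_t}\bot\cdots\bot V^{n_1}$ one constructs an explicit reference subspace $W_w \subset L$ of dimension $\phi(d,w)$ (built from the $\widetilde\omega_{i,j}$) and proves inductively in $t$ that $w(L)$ is trapped between $M_0\oplus T_0\oplus W_w^L$ and $M\oplus T_0\oplus W_w^U$ where $W_w^L\subset W_w\subset W_w^U$ have codimension $O(t\,l_X)$ (Proposition~\ref{prop: w(L) bounds}). The pairing is handled \emph{explicitly} at each inductive step (Lemmas~\ref{lemma: upper to lower} and~\ref{lemma: lower to upper}): one applies $\bot$ to the sandwiching spaces using the known pairing on the $\widetilde\omega_{i,j}$, and the error grows by $\sim l_X$ per $\bot$. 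The logarithm then comes from the purely combinatorial Lemma~\ref{lemma: log}: every $0\le m\le (d-1)/2$ is $\phi(d,w)$ for some $w$ with at most $\lceil\log_2((d-1)/2)\rceil$ occurrences of $\bot$. So for each $l$ in the range of part (3) one chooses such a short $w$ with $\phi(d,w)=l-f_Y-2l_X$ and reads off $\nu_l$ from the sandwich on $w(L)$ and $Vw(L)$. Your nilpotency-index intuition is morally the same halving phenomenon behind Lemma~\ref{lemma: log}, but the actual bound on $\nu_l$ comes from bounding the word-length needed to hit dimension $l$, not from bounding how long a perturbation persists.
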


In particular, we have an estimate for the final type of $Y$
whose error is logarithmic in the ramification invariant.

\subsection{Structure of the paper}

The main technical tool we introduce is a generalization of differentials of the second kind which works in positive characteristic.  
We develop a theory of \emph{enhanced differentials of the second kind} for smooth proper connected curves over a field in Section~\ref{sec:enhanced differentials}, based on a folklore pole-order resolution for the de Rham complex.  Concretely, an enhanced differential of the second kind consists
of a classical differential of the second kind $\omega$ (i.e. a locally exact differential) together with a choice of antiderivative for the meromorphic part of $\omega$ at each of its poles. In characteristic
zero there is only one such antiderivative, and we therefore recover the classical theory of differentials of the second kind. 
In contrast, when $k$ has characteristic $p$ there are many possible 
local antiderivatives, since $\frac{d}{dx}f(x)^p=0$.
This theory gives a concrete way to represent de Rham cohomology classes: we prove that $\HdR^1(X)$ is isomorphic to the space of enhanced differentials
of the second kind modulo those that are globally exact. Note that classical differentials of the second kind have been studied in positive
characteristic by Rosenlicht \cite{rosenlicht}, but they fail to compute
de Rham cohomology. 
The standard techniques for studying the Cartier operator on regular differentials have natural generalizations to enhanced differentials of the second kind, which inspires our analysis.  (While a similar analysis could surely be done using the representation of de Rham cohomology classes coming from the \v{C}ech resolution, the generalization of these techniques is far less natural from that perspective.)

In Section~\ref{section: omegaij}, we apply the theory developed in Section~\ref{sec:enhanced differentials}  to study $\Z/2\Z$-covers $\pi : Y \to X$ in characteristic two.  We decompose 
$\HdR^1(Y) = U \oplus Z \oplus L$ where $V$ is bijective on $U$ (and $F$ is zero), where $F$ is bijective on $Z$ (and $V$ is zero), and where $F$ and $V$ are both nilpotent on $L$. The latter space is the most interesting, and we construct a convenient set of enhanced differentials of the second kind $\{\widetilde{\omega}_{i,j}\}$ which almost span $L$ and for which the action of $V$ is particularly simple (see Proposition~\ref{prop:nilpotent wij} and Definition~\ref{defn:wijtilde}). In particular, these enhanced differentials are built using pullbacks of meromorphic differentials on the base curve $X$: this is the same spirit as Voloch's argument computing the $a$-number of $Y$ using a space of differentials with bounded poles on the ordinary base curve \cite{voloch88}.  For a general base curve, we know a lot about the $V$-action on $L$ and the form of the duality pairing on $\HdR^1(Y)$ via studying $\{\widetilde{\omega}_{i,j}\}$ but not quite enough to determine the Ekedahl-Oort type of $Y$. We expect the ideas in Section~\ref{sec:enhanced differentials}
and Section~\ref{section: omegaij} will be useful for studying
$\HdR^1(X)$ for curves in any characteristic.

In Section~\ref{sec:ordinary base} we study the case when the base curve $X$ is ordinary, in which case $\{\widetilde{\omega}_{i,j}\}$ forms a basis for $L$ and we are able to completely describe the action of $V$.  This is enough information to fully determine the Ekedahl--Oort type and establish Theorem~\ref{thmA}. We obtain the first family described in Corollary~\ref{cor:families} by looking at covers of $\PP^1$ ramified at one point with ramification break $d = 2g+1$, which is a case covered by Elkin and Pries \cite{ElkinPries}.  %

In Section~\ref{sec: non ordinary base} we study the case
when $X$ is a supersingular elliptic curve. This is the
simplest non-ordinary case. We first give explicit examples
that demonstrate that the Ekedahl--Oort type of $Y$ is
not determined by that of $X$ and the ramification of the cover. 
This is done by exploring the difference between the span of $\{\widetilde{\omega}_{i,j}\}$ and the space $L$. 
Next, we consider the $k[V]$-structure of $Y$ for covers
branched over one point. We determine all possible
$k[V]$-structures that occur. Furthermore, we determine
the codimensions of the $k[V]$-module strata in the moduli
space of curves admitting a $\Z/2\Z$-cover to $X$ branched
at a single point with fixed ramification break $d$. Finally,
we prove that the Ekedahl-Oort type is determined by the ramification when $d=2^n+1$, which gives the second case of Corollary~\ref{cor:families}.

In Section~\ref{sec:bounds}, we focus on bounding the Ekedahl-Oort type of $Y$ when $X$ is not ordinary.  The final type is determined by the dimensions of the spaces $ w(\HdR^1(Y))$ where
$w = V^{n_t} \bot V^{n_{t-1}} \bot \ldots \bot V^{n_1}$. Here $\bot$ denotes taking symplectic
complement under the pairing $\langle \cdot , \cdot \rangle$. The uncertainty about the $V$-action and its interaction with the duality pairing introduce additional uncertainty when taking the symplectic complements.  As seen in Example~\ref{ex:theoryd15}, this leads to legitimate variation of the Ekedahl-Oort type. Our approach is to bound this uncertainty in terms
of the number of $\bot$'s that occur in a word $w$. Proposition~\ref{prop: bounds dim(w(L))} gives constants $L(X,\pi,w)$ and $ U(X,\pi,w)$ such that
\begin{equation}\label{eq: lower and upper bound}
    L(X,\pi,w) \leq \dim_k( w(L)) \leq U(X,\pi,w).
\end{equation}
These constants are determined inductively on $t$, i.e., the number of $\bot$'s
occurring in the word $w$.
When $t=1$, so that there are no $\bot$'s in $w$, the range in \eqref{eq: lower and upper bound}
is approximately $l_X=g_X-f_X$, where $f_X$ is the $p$-rank of $X$. When 
$w = V^{n_{t+1}}\bot w'$ and $w'$ is a word containing $\bot$ exactly $t-1$ times,
we find that the range of possible values of $\dim_k(w(L))$ (i.e. $U(X,\pi,w) - L(X,\pi,w)$)
is approximately $2l_X$ more than the range for $\dim_k(w'(L)) $. %
In particular, 
we determine $\dim_k( w(L))$ up to an error that is approximately $2l_X \cdot t$. 
Our main result, Theorem~\ref{theorem: final type bounds}, then ascertains the effects of these bounds on the final type of $Y$. Finally, Theorem~\ref{thm: log intro} follows
by bounding the total number of $\bot$'s that are necessary to obtain all possible
subspaces
$ w(\HdR^1(Y))$ when $\pi$ has only one branch point.

\begin{remark}
Our analysis and the Ekedahl-Oort types in Theorems~\ref{thmA} and \ref{theorem: final type bounds} exhibit a kind of local-to-global principle.  Each point of ramification makes an independent contribution depending on the ramification break, and this combined with the Ekedahl-Oort type of the base gives the overall behavior.  This sort of behavior has also been seen in the Riemann-Hurwitz formula, the Deuring-Shafarevich formula, and bounds for $a$-numbers and higher $a$-numbers \cite{boohercais,groen}.  This also appears in work of Garnek, which obtains local-global decompositions of $\HdR^1(Y)$ as a $\Z/p\Z$-module \cite{garnek,garnek2}.  (It is unfortunately not compatible with Frobenius and Verschiebung.)  However, there is a not a simple local-to-global principle for Ekedahl-Oort types (or even $a$-numbers) of Artin-Schreier curves.  Groen gives an example of an Artin-Schreier curve whose $a$-number depends on the location of its branch points, not just the breaks in the ramification filtration \cite[Theorem 2.7.1]{StevenPhD}.
\end{remark}

\begin{remark}
Our results are specific to characteristic two for (at least) two reasons.  The first is that in odd characteristic even if the base curve is ordinary (i.e. $\PP^1$) the Ekedahl-Oort type of the cover (or even just the $a$-number) is not determined by the ramification and can take on a wide range of possibilities.  So there is no hope of a direct analog of Theorem~\ref{thmA} %
when the base curve is not ordinary. 

The second reason is technical, and has to do with the construction of the enhanced differentials $\{\widetilde{\omega}_{i,j}\}$ which we essentially understand.  The fact that most of $L \subset \HdR^1(Y)$ can be constructed with differentials pulled back from $X$ is also specific to characteristic two.  In contrast, on an Artin-Schreier curve in odd characteristic this is patently false.  For example, consider the curve in characteristic three given by $y^3 - y = f(x)$ where $f(x)$ is a polynomial of degree $d$ with $\gcd(d,3)=1$.   Writing $\omega = (g_ 0(x) + g_1(x) y + g_2(x) y^2) dx$ with $g_i(x)$ polynomials, we see that $\omega$ is regular if and only if $\deg(g_0) \leq \ceiling{\frac{2d}{3}}-2$, $\deg(g_1) \leq \ceiling{\frac{d}{3}}-2$, and $g_2=0$ \cite[Lemma 3.7]{boohercais}.  So there are many differentials which involve $y$ and cannot be analyzed based on the action of $V$ on spaces of differentials with poles on the base.
\end{remark}

\subsection*{Acknowledgments}

The authors thank Bryden Cais, Rachel Pries, and Damiano Testa for helpful conversations.

\section{Enhanced differentials of the second kind and de Rham cohomology} \label{sec:enhanced differentials}

In this section we let $X$ be a smooth proper connected curve over a field $k$. We make no assumption on the characteristic of $k$. Let $k(X)$ denote the function field of $X$. For any closed point $Q \in X$ we let $\widehat{\mathcal{O}}_{X,Q}$ denote the completion of the local ring at $Q$ and we let $K_Q$ denote the fraction field of $\widehat{\mathcal{O}}_{X,Q}$.

\subsection{Enhanced differentials of the second kind}
Differentials of the second kind are a classic concept over the complex numbers.  Rosenlicht introduced and studied the concept in arbitrary characteristic \cite{rosenlicht}.

\begin{definition}
    A differential of the second kind on $X$ is a meromorphic differential (i.e. a differential that may have poles)  $\omega$ such that for every $Q \in X$, there exists
    $f \in K_Q$ such that $\omega - df \in \Omega^1_{\widehat{\mathcal{O}}_{X,Q}/k}$.
\end{definition}

Rosenlicht requires there be $f \in k(X)$ with $\omega -d f$ regular at $Q$, which is equivalent.

\begin{remark}
Note that when $k$ has characteristic zero, a differential $\omega$ is of the second kind if and only if the residue at $Q$ is zero for all $Q \in X $.   This fails in characteristic $p$ as the derivative of a $p$-th power is zero.  In particular, to be of the second kind the local expansion of $\omega$ in terms of a uniformizer $t$ at $Q$ cannot include terms of the form $t^i dt$ where $i<0$ and $i \equiv -1 \mod{p}$.
 
Note that exact differentials are automatically of the second kind. 
\end{remark}
 
The classical isomorphism between de Rham cohomology and differentials of the second kind modulo exact differentials breaks down in characteristic $p$. (For an expository account, see the note of Gurski \cite{gurski}.)  
We introduce the notion of enhanced differentials of the second kind to recover this connection.  An enhanced differential of the second kind consists of a differential of the second kind along with a choice of an antiderivative of the ``tail'' of $\omega$ at each $Q$.
 
    \begin{definition}
        An enhanced differential of the second kind on $X$ is a pair $(\omega,(f_Q)_{Q \in X})$ where
        \begin{enumerate}
            \item $\omega$ is a differential of the second kind on $X$;
            \item for each $Q \in X$, $f_Q \in K_Q/\widehat{\mathcal{O}}_{X,Q}$ and if $\tilde{f}_Q \in K_Q$ represents $f_Q$ then $\omega-d\tilde{f}_Q \in \Omega^1_{\widehat{\mathcal{O}}_{X,Q}/k}$. 
        \end{enumerate} 
    \end{definition}

Note that almost all the $f_Q$'s are automatically in $\widehat{\mathcal{O}}_{X,Q}$.  We will often abuse notation by not distinguishing between the element $\tilde{f}_Q$ of $K_Q$ and the equivalence class $f_Q$ it represents.
    
    \begin{definition} \label{def:S-enhanced}
        Let $S$ be a finite set of closed points in $X$. An $S$-enhanced differential of the second kind on $X$ is a pair $(\omega,(f_Q)_{Q \in S})$ such that
        \begin{enumerate}
            \item $\omega$ is a differential of the second kind that is regular away from $S$;
            \item $f_Q \in K_Q/\widehat{\mathcal{O}}_{X,Q}$ and
            if $\tilde{f}_Q$ is a lift of $f_Q$ to $K_Q$ we have $\omega-d\tilde{f}_Q \in \Omega^1_{\widehat{\mathcal{O}}_{X,Q}/k}$. 
        \end{enumerate} 
    \end{definition}

If we take $f_Q=0$ for $Q \in X-S$, an $S$-enhanced differential of the second kind naturally becomes an enhanced differential of the second kind.

\begin{definition}
We let $E_X$ (resp. $E_{X,S}$) denote the $k$-vector space of enhanced differentials (resp. $S$-enhanced differentials) on $X$. 
Define $d_X : k(X) \to E_X$ by $d_X(f)= (df, (f)_{Q \in X})$. 
Letting $A_S = \mathcal{O}_X(X-S)$ be the ring of regular functions on $X - S$, we naturally restrict $d_X$ to obtain a map $d_{X,S} : A_S \to E_{X,S}$.  When $X$ and $S$ are clear from context, we will drop them from the notation.
\end{definition}
\subsection{Relation to algebraic de Rham cohomology}

    \begin{theorem}
        \label{t: enhanced diff of second kind compute H1dr}
        Letting $S$ be a nonempty finite set of closed points of $X$, we see
        \begin{equation} \label{eq:derhamisomorphism}
            \HdR^1(X) \simeq \frac{E}{d_X(k(X))} \cong  \frac{E_S}{d_{X,S}(A_S)}.
        \end{equation}
        Furthermore, the duality pairing $\HdR^1(X) \times \HdR^1(X) \to \HdR^2(X) \simeq k$ is given by
        \begin{align*}
            \langle (\omega,(f_Q)_{Q \in S}), (\tau, (g_Q)_{Q \in S}) \rangle &=  \sum_{Q \in S} \Res_Q( \tilde{g}_Q \omega - \tilde{f}_Q \tau - \tilde{g}_Qd\tilde{f}_Q) .
        \end{align*}
        In particular, if $\omega$ is a global differential we have
                \begin{align*}
            \langle (\omega,(0)_{Q \in S}), (\tau, (g_Q)_{Q \in S}) \rangle &=  \sum_{Q \in S} \Res_Q( \tilde{g}_Q \omega ) .
        \end{align*}
    \end{theorem}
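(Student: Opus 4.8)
The plan is to deduce all three assertions from the ``folklore'' pole-order resolution mentioned in the introduction, realized concretely as a \v{C}ech--de Rham double complex. Write $\Omega_X^\bullet = [\mathcal{O}_X \xrightarrow{d} \Omega_X^1]$, so $\HdR^i(X) = \mathbb{H}^i(X,\Omega_X^\bullet)$, and consider the cover of $X$ by the affine open $U = X - S$ together with the formal disks $\operatorname{Spec}\widehat{\mathcal{O}}_{X,Q}$ for $Q \in S$; the pairwise overlaps are the punctured disks $\operatorname{Spec}K_Q$, and there are no triple overlaps. Each piece of this cover, and each overlap, is acyclic for quasicoherent sheaves ($U$ is affine; a complete local ring and its fraction field have vanishing higher sheaf cohomology), so the total complex of $C^{p,q} = \check C^p(\Omega_X^q)$ for this cover computes $\HdR^\bullet(X)$; identifying $\HdR^\bullet(X)$ with this total cohomology is exactly the pole-order resolution.

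I would then extract the isomorphisms by reading off total degree $1$. A cocycle in $\check C^0(\Omega_X^1)\oplus\check C^1(\mathcal{O}_X)$ is a differential $\omega$ regular on $U$, a regular differential $\omega_Q$ on each formal disk, and $f_Q \in K_Q$ with $\omega - \omega_Q = df_Q$ on $\operatorname{Spec}K_Q$ --- no further cocycle condition, since there are no triple overlaps. This is precisely an $S$-enhanced differential of the second kind, once one notes that $\omega_Q$ is determined by $\omega$ and $f_Q$, and that the coboundary freedom coming from $\prod_{Q\in S}\widehat{\mathcal{O}}_{X,Q}\subset\check C^0(\mathcal{O}_X)$ is exactly the ambiguity in $f_Q$ modulo $\widehat{\mathcal{O}}_{X,Q}$; the remaining coboundary freedom $\mathcal{O}_X(U) = A_S$ produces $d_{X,S}(A_S)$. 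Hence $\HdR^1(X)\cong E_S/d_{X,S}(A_S)$. For the first isomorphism, the inclusion $E_S\hookrightarrow E$ induces a map to $E/d_X(k(X))$; it is injective because $d_X(h)\in E_S$ forces $h$ regular away from $S$ (look at the components at $Q\notin S$), hence $h\in A_S$; and it is surjective because, given $(\omega,(f_Q)_{Q\in X})\in E$, one can find $h\in k(X)$ with $h\equiv f_Q \pmod{\widehat{\mathcal{O}}_{X,Q}}$ at the finitely many $Q\notin S$ with $f_Q\neq 0$ (which automatically makes $\omega - dh$ regular off $S$), the obstruction to prescribing these principal parts living in $H^1(X,\mathcal{O}_X)$ and being killed by also adjusting the unconstrained principal parts along the nonempty set $S$, since already the map from principal parts at a single point of $S$ to $H^1(X,\mathcal{O}_X)$ is surjective (by Serre duality, a regular differential with vanishing Laurent tail at one point is zero).

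For the pairing I would first check the right-hand side defines an alternating bilinear form on $E_S/d_{X,S}(A_S)$: it is independent of the lifts $\tilde f_Q,\tilde g_Q$ because changing a lift by a section of $\widehat{\mathcal{O}}_{X,Q}$ changes each summand by the residue of a regular differential (using that $\omega - d\tilde f_Q$ and $\tau - d\tilde g_Q$ are regular), which vanishes; it kills $d_{X,S}(A_S)$ in either slot because, after cancellation, the total becomes $\sum_{Q\in S}\Res_Q(h\tau)$ or $\sum_{Q\in S}\Res_Q(h\omega)$, which is $0$ by the residue theorem on $X$ (these differentials have poles only along $S$); and $\langle a,a\rangle = -\sum_Q\Res_Q(\tilde f_Q\,d\tilde f_Q) = 0$ using $\Res_Q(\tilde f\,d\tilde f) = 0$ --- valid in every characteristic, including $p=2$, on pairing the coefficient of $t^n$ against that of $t^{-n}$. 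To match this form with the cup product $\HdR^1\times\HdR^1\to\HdR^2(X)\xrightarrow{\sim}k$, I would compute the cup product on the above double complex, using the standard Koszul-signed formula $(\alpha\smile\beta)_{i_0\cdots i_{p+p'}} = \pm\,\alpha_{i_0\cdots i_p}\wedge\beta_{i_p\cdots i_{p+p'}}$: the product of two total-degree-$1$ cocycles lands in $\check C^1(\Omega_X^1)$ (the $\check C^0(\Omega_X^2)$ part vanishes since $\Omega_X^2 = 0$, the $\check C^2(\mathcal{O}_X)$ part for lack of triple overlaps), with $\operatorname{Spec}K_Q$-component equal, after substituting $\omega_Q = \omega - d\tilde f_Q$ and $\tau_Q = \tau - d\tilde g_Q$, to $\tilde g_Q\omega - \tilde f_Q\tau - \tilde g_Q\,d\tilde f_Q$; combining with the identification of the trace $\HdR^2(X)\cong k$ with $(\eta_Q)_{Q\in S}\mapsto\sum_{Q\in S}\Res_Q(\eta_Q)$ on $\check C^1(\Omega_X^1)$ yields the displayed formula. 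The special case is the specialization $\omega$ global, $f_Q = 0$, $\omega_Q = \omega$, which collapses the formula to $\sum_Q\Res_Q(\tilde g_Q\omega)$ (well defined since $\omega$ is regular).

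I expect the main obstacle to be the last step: carrying out the cup-product bookkeeping on the double complex and fixing the normalization of the trace $\HdR^2(X)\cong k$ so that the formula emerges \emph{on the nose}, with the sign exactly as written and no stray factors; and, relatedly, pinning down the formal-patching comparison underlying the pole-order resolution (Leray acyclicity of the cover and the identification of its total cohomology with $\HdR^\bullet(X)$). Both are standard, but they are where the care is needed; the isomorphism statements and the well-definedness of the residue pairing are comparatively routine once the resolution is set up.
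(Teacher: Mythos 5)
Your strategy reaches all three assertions, but it is a genuinely different route from the paper's, and one step is not justified as written. The paper's ``pole-order resolution'' is not the formal-disk \v{C}ech complex you build: it is an honest acyclic resolution of $\Omega^\bullet_X$ by sheaves on $X$ itself, with rows $\cO_X \to \cO_X(nD) \to \cO_X(nD)|_{nD}$ and $\Omega^1_X \to \Omega^1_X((n+1)D) \to \Omega^1_X((n+1)D)|_{(n+1)D}$ for $\deg(nD) > \max(2g_X-2,0)$ (acyclicity via Serre duality plus the skyscraper terms). Its total complex of global sections gives $\HdR^1(X) \cong E_{X,nD}/d_X(\Gamma(\cO_X(nD)))$, and the theorem's isomorphisms follow by passing to the limit over $n$ and over $S$; your direct surjectivity argument for $E_S/d_{X,S}(A_S) \to E/d_X(k(X))$ (prescribing principal parts away from $S$ and absorbing the $H^1(X,\cO_X)$-obstruction at a point of $S$) replaces that second limit and is correct. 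For the pairing, the paper does not compute a cup product on an exotic complex: it compares the enhanced-differential description with the ordinary two-open-set Zariski \v{C}ech description ($U_1 = X\setminus S$, $U_2$ an affine open containing $S$) and imports the documented cup-product and trace formulas from Oda and Conrad, which is exactly how it sidesteps the sign and normalization bookkeeping you flag as the main obstacle. Your checks that the proposed formula is independent of lifts, alternating in characteristic $2$, and kills $d_{X,S}(A_S)$ coincide with the paper's Proposition on the pairing.

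The one genuine gap is at the foundation of your complex: $\{U\} \cup \{\operatorname{Spec}\widehat{\cO}_{X,Q}\}_{Q\in S}$ is not a Zariski open cover of $X$ (the formal disks are not open subschemes), so acyclicity of the pieces and overlaps does not, via Leray's theorem, imply that the associated \v{C}ech--de Rham total complex computes $\HdR^\bullet(X)$. The conclusion is true, but it requires formal glueing (Beauville--Laszlo descent, or the adelic/r\'epartition resolution, or a comparison with genuine open covers obtained by completing along $S$), and that comparison is precisely the content you would need to supply --- it is of comparable weight to the paper's Proposition establishing the acyclic resolution. As written, ``each piece is acyclic, hence the total complex computes hypercohomology'' is not a valid inference for this cover; once that descent statement is supplied (or you switch to the paper's sheaf-level resolution, or to the honest two-open-set cover for the pairing), the rest of your argument goes through.
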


We will establish \eqref{eq:derhamisomorphism} using an acyclic ``pole order'' resolution of the de Rham complex.   We are not aware of a direct reference, but similar ideas appear in various places in the literature.  For example, Coleman introduces a somewhat similar description using differentials of the second kind with bounded poles \cite[\S5]{coleman98}. %
This can also be viewed as an extension of the r\'{e}partition description of $H^1(X,\cO_X)$ \cite[\S II.5]{Serre1988}.

Fix a nonempty set $S$ of closed points on the curve $X$, and let $\displaystyle D = \sum_{Q \in S} Q$. 

\begin{proposition} \label{prop:acyclic resolution}
 If $\deg(nD) > \max( 2g_X-2, 0)$, the double complex $\cD(n)$ in Figure~\ref{fig:acyclic resolution} is an acyclic resolution of the de Rham complex $\Omega^\bullet_X$.
\end{proposition}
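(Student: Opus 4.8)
The plan is to unwind the phrase ``acyclic resolution'' into its two halves: that the total complex $\mathrm{Tot}(\cD(n))$ is quasi-isomorphic to $\Omega^\bullet_X$ (the ``resolution'' part), and that every sheaf occurring in $\cD(n)$ has vanishing higher cohomology on $X$ (the ``acyclic'' part). Neither half should require more than the curve case of Serre duality and the fact that skyscraper sheaves are $\Gamma$-acyclic; the real work is bookkeeping.

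For the resolution part I would read $\cD(n)$ one column at a time. Its two columns are the short exact sequences
\[
0 \to \mathcal{O}_X \to \mathcal{O}_X(nD) \to \mathcal{O}_X(nD)/\mathcal{O}_X \to 0
\qquad\text{and}\qquad
0 \to \Omega^1_X \to \Omega^1_X((n+1)D) \to \Omega^1_X((n+1)D)/\Omega^1_X \to 0,
\]
the twist by $(n+1)D$ in the second column being forced because $d$ raises pole orders by exactly one. Since $d$ sends regular functions to regular differentials, it extends to a map of the top rows and descends to a map of the bottom (skyscraper) rows, so $\cD(n)$ is a genuine double complex, and the inclusions $\mathcal{O}_X \hookrightarrow \mathcal{O}_X(nD)$ and $\Omega^1_X \hookrightarrow \Omega^1_X((n+1)D)$ assemble to a chain map $\Omega^\bullet_X \to \mathrm{Tot}(\cD(n))$. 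Then I would run the spectral sequence of the double complex taking the vertical differential first: each column is exact away from the top, where its cohomology is $\mathcal{O}_X$, resp. $\Omega^1_X$, so the $E_1$-page is the de Rham complex concentrated in a single row, the sequence degenerates, and the chain map above induces an isomorphism on cohomology sheaves. I would note that $S \neq \emptyset$ is used here: it makes the columns genuinely non-exact and keeps $nD$ and $(n+1)D$ effective of positive degree.

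For the acyclic part, the two quotients $\mathcal{O}_X(nD)/\mathcal{O}_X$ and $\Omega^1_X((n+1)D)/\Omega^1_X$ are skyscraper sheaves supported on the finite set $S$, hence have no cohomology in positive degrees. For the two line bundles I would apply Serre duality on $X$: $H^1(X,\mathcal{O}_X(nD)) \cong H^0(X,\Omega^1_X(-nD))^\vee$, which vanishes because $\deg(\Omega^1_X(-nD)) = (2g_X-2) - \deg(nD) < 0$ by the hypothesis $\deg(nD) > 2g_X-2$; and $H^1(X,\Omega^1_X((n+1)D)) \cong H^0(X,\mathcal{O}_X(-(n+1)D))^\vee = 0$ because $\deg((n+1)D) \geq \deg(nD)+1 > 0$, which follows from $\deg(nD) > 0$. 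The quantity $\max(2g_X-2,0)$ in the statement is exactly what makes both inequalities hold simultaneously, handling uniformly the case $g_X = 0$ (where $\deg(nD) > 0$ is the binding constraint) and $g_X \geq 1$. Since a curve has no cohomology above degree $1$, this completes the verification.

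I do not expect a genuine obstacle: the proposition is a repackaging of Serre-duality vanishing together with the triviality of skyscraper cohomology. The points that need care are that the pole orders in the two columns are chosen so that $d$ is defined without extra twisting, that $\Omega^\bullet_X \to \mathrm{Tot}(\cD(n))$ really is a chain map before comparing cohomology, and that the conclusion is an isomorphism of complexes of \emph{sheaves}; the subsequent passage to global sections --- which yields the description of $\HdR^1(X)$ by enhanced differentials of the second kind with bounded poles, and then all of them via a colimit over $n$ --- is the content of Theorem~\ref{t: enhanced diff of second kind compute H1dr} and relies on the acyclicity proved here.
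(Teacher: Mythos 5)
Your proposal is correct and follows essentially the same route as the paper: exactness of the two resolution sequences (which you call columns and the paper's figure draws as rows, via the twisted closed-subscheme sequence), acyclicity of the line bundles by Serre duality from the degree hypothesis $\deg(nD) > \max(2g_X-2,0)$, and acyclicity of the quotients because they are skyscrapers supported on $S$. The extra detail you supply on the spectral sequence of the double complex and on the chain map $\Omega^\bullet_X \to \mathrm{Tot}(\cD(n))$ is a correct elaboration of what the paper leaves implicit in the phrase ``acyclic resolution.''
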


\begin{figure}[h]
\[\begin{tikzcd}
	& 0 \\
	0 & {\Omega^1_X} & {\Omega^1_X((n+1)D)} & {\Omega^1_X((n+1)D)|_{(n+1)D}} & 0 \\
	0 & {\mathcal{O}_X} & {\mathcal{O}_X(nD)} & {\mathcal{O}_X(nD)|_{nD}} & 0 \\
	& 0
	\arrow[from=2-1, to=2-2]
	\arrow[from=2-2, to=1-2]
	\arrow[from=2-2, to=2-3]
	\arrow[from=2-3, to=2-4]
	\arrow[from=2-4, to=2-5]
	\arrow[from=3-1, to=3-2]
	\arrow["d"', from=3-2, to=2-2]
	\arrow[from=3-2, to=3-3]
	\arrow["d"', from=3-3, to=2-3]
	\arrow[from=3-3, to=3-4]
	\arrow["d"',from=3-4, to=2-4]
	\arrow[from=3-4, to=3-5]
	\arrow[from=4-2, to=3-2]
\end{tikzcd}\]
\caption{The Pole Order Resolution $\cD(n)$ of the de Rham Complex.}
\label{fig:acyclic resolution}
\end{figure}
    
\begin{proof}
Recall that $\cO_X(nD)|_{nD}$ is naturally isomorphic to $\cO_{nD}(nD)$, and the bottom row can also be obtained by twisting the closed subscheme exact sequence.  There is a similar interpretation for the row of differentials.  Thus the rows are exact.
Now the degrees of $\Omega^1_X(-nD)$ and $\cO_X(-(n+1)D)$ are negative as $\deg(nD) > \max(2g_X-2,0)$.  Hence by Serre duality $\cO_X(nD)$ and $\Omega^1_X((n+1)D)$ are acyclic.  The third term in each row is acyclic as it is supported in dimension zero.  
\end{proof}

\begin{definition}
For a positive integer $n$,  define
\begin{equation*}
E_{X,nD} \colonequals \{ (\omega,(f_Q)_{Q \in S}) \in E_{X,S} : \ord_Q(\omega) \geq - (n+1) \textrm{ and }  \ord_Q(f_Q) \geq -n \textrm{ for all } Q \in S\}.
\end{equation*}
\end{definition}

\begin{theorem} \label{theorem:differentials second kind fixed poles}
If $\deg(nD) > \max( 2g_X-2, 0)$ then the first de Rham cohomology of $X$ is isomorphic to $E_{X,nD} / d_X(\Gamma(\cO_X(nD))$, where $\Gamma$ denotes global sections.
\end{theorem}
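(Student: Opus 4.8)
The plan is to read the theorem off from Proposition~\ref{prop:acyclic resolution}. By that proposition and the definition of de Rham cohomology as the hypercohomology of $\Omega^\bullet_X$, the group $\HdR^i(X)$ is the $i$-th cohomology of the total complex of global sections of the acyclic resolution of $\Omega^\bullet_X$ from Figure~\ref{fig:acyclic resolution}; concretely, in degrees $0,1,2$ this total complex is
\[
\Gamma(\cO_X(nD)) \xrightarrow{\ d^0\ } \Gamma(\cO_X(nD)|_{nD}) \oplus \Gamma(\Omega^1_X((n+1)D)) \xrightarrow{\ d^1\ } \Gamma(\Omega^1_X((n+1)D)|_{(n+1)D}),
\]
where $d^0(f) = (\bar f,\, df)$ with $\bar f$ the image of $f$ in $\cO_X(nD)|_{nD}$, and $d^1\big((g_Q)_{Q\in S}, \omega\big)$ is the collection of principal parts $\big(\,\overline{d\tilde g_Q - \omega}\,\big)_{Q \in S}$ in $\Omega^1_X((n+1)D)|_{(n+1)D}$ (up to an overall sign from the total-complex convention, immaterial below). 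So it suffices to identify $\ker d^1$ with $E_{X,nD}$ and $\operatorname{im} d^0$ with $d_X(\Gamma(\cO_X(nD)))$ inside $E_{X,S}$.

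For this I would unwind the sheaf sections into local data, using that $nD = \sum_{Q\in S} nQ$ is supported on the finite set $S$. Restriction to $nD$ identifies $\Gamma(\cO_X(nD)|_{nD})$ with $\bigoplus_{Q\in S}\big(t_Q^{-n}\widehat{\cO}_{X,Q}/\widehat{\cO}_{X,Q}\big)$, i.e.\ with tuples $(g_Q)_{Q\in S}$ where $g_Q \in K_Q/\widehat{\cO}_{X,Q}$ and $\ord_Q(g_Q) \geq -n$; likewise $\Gamma(\Omega^1_X((n+1)D)|_{(n+1)D})$ is the space of tuples of principal parts of order $\geq -(n+1)$, while $\Gamma(\Omega^1_X((n+1)D))$ and $\Gamma(\cO_X(nD))$ are, respectively, the meromorphic differentials and the rational functions on $X$ that are regular on $X-S$ and have pole order at most $n+1$, respectively $n$, along $S$. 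Under these identifications, $d^1\big((g_Q)_Q,\omega\big)=0$ says exactly that $\omega - d\tilde g_Q \in \Omega^1_{\widehat{\cO}_{X,Q}/k}$ for every $Q\in S$, which together with the pole bounds and regularity on $X-S$ is precisely the condition (Definition~\ref{def:S-enhanced}) that $(\omega,(g_Q)_{Q\in S})$ lie in $E_{X,nD}$; the second-kind requirement at points of $X-S$ is automatic with antiderivative $0$. Hence $\ker d^1 = E_{X,nD}$. Similarly, for $f \in \Gamma(\cO_X(nD))$ we have $d^0(f) = \big(df,\,(f \bmod \widehat{\cO}_{X,Q})_{Q\in S}\big)$, which is exactly $d_X(f)$ viewed as an $S$-enhanced differential (note $(f)_Q = 0$ for $Q\notin S$, since $f$ is regular there), so $\operatorname{im} d^0 = d_X(\Gamma(\cO_X(nD)))$. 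Combining, $\HdR^1(X)\cong \ker d^1/\operatorname{im} d^0 = E_{X,nD}/d_X(\Gamma(\cO_X(nD)))$.

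The only real work is the bookkeeping in the last paragraph: pinning down the torsion sheaves $\cO_X(nD)|_{nD}$ and $\Omega^1_X((n+1)D)|_{(n+1)D}$ together with their global sections as direct sums of truncated Laurent tails over $Q\in S$, and checking that the total-complex differential $d^1$ is literally the defining congruence of an $S$-enhanced differential of the second kind with bounded poles. The sign ambiguity from the total-complex convention at worst replaces each $g_Q$ by $-g_Q$ and is harmless. All the substance is already contained in Proposition~\ref{prop:acyclic resolution}; there is nothing subtle beyond matching notation.
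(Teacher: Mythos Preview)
Your proof is correct and follows essentially the same approach as the paper: compute $\HdR^1(X)$ as the first cohomology of the total complex of global sections of the acyclic resolution from Proposition~\ref{prop:acyclic resolution}, then identify $\ker d^1$ with $E_{X,nD}$ and $\operatorname{im} d^0$ with $d_X(\Gamma(\cO_X(nD)))$ by unwinding the torsion sheaves as Laurent tails. The paper's argument is slightly terser but makes the same identifications, including the same remark about the harmless sign ambiguity in $\delta_1$.
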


\begin{proof}
The algebraic de Rham cohomology of $X$ is the hypercohomology of the de Rham complex, which we can compute as the cohomology of the total complex of the global sections of any acyclic resolution.  Using the resolution from Proposition~\ref{prop:acyclic resolution}, the total complex is
\begin{equation} \label{eq:totalycomplex}
0 \to  \Gamma(\cO_X(nD)) \overset{\delta_0} \to \Gamma(\Omega^1_X((n+1)D)) \oplus \Gamma(\cO_X(nD)|_{nD}) \overset{\delta_1} \to  \Gamma(\Omega^1_X((n+1)D)|_{(n+1)D}) \to 0
\end{equation}
Now $\cO_X(nD)|_{nD}$ is a skyscraper sheaf supported on $D$, with the stalk at $Q$ consisting of functions with a pole of order at most $n$ at $Q$ modulo functions regular at $Q$.  Similarly
$\Gamma(\Omega^1_X((n+1)D)|_{(n+1)D})$ consists of ``tails'' of differentials at $S$.  
Recalling the definition of the maps $\delta_i$ in the total complex, we see $\ker \delta_1$ consists of pairs $(\omega, (f_Q)_{Q \in S})$ where $$\delta_1(\omega, (f_Q)_{Q \in S}) = \pm (\omega -  d(f_Q))_{Q \in S} =0 \in \Omega^1_X((n+1)D).$$  In other words, $\omega - d(f_Q)$ is regular at $Q$ for each $Q \in S$.  Thus there is an isomorphism $\ker \delta_1 \to E_{X,nD}$.  For $f \in \Gamma(\cO_X(nD))$, note that $\delta_0(f)= (df, (f_Q)_{Q \in S})$.  Thus we obtain an isomorphism $\ker \delta_1 / \textrm{Im} \delta_0 \simeq E_{X,nD} / d_X(\Gamma(\cO_X(nD))$. 
\end{proof}

\begin{proof}[Proof of  Theorem~\ref{t: enhanced diff of second kind compute H1dr}]
When $n \leq m$, there are natural maps $\cD(n) \to \cD(m)$ and $E_{X,nD} \to E_{X,mD}$.  These induce maps $\HdR^1(X) \to \HdR^1(X)$ and $E_{X,nD} / d_X(\Gamma(\cO_X(nD))) \to E_{X,mD} / d_X(\Gamma(\cO_X(mD)))$ which are compatible with the identification of Theorem~\ref{theorem:differentials second kind fixed poles}.  Taking the limit, we obtain an isomorphism between $\HdR^1(X)$ and $E_{X,S}/d_{X,S}(A_S)$.  A similar argument taking the limit over nonempty finite sets $S$ of $X$ gives the isomorphism between $\HdR^1(X)$ and $E/d_X(k(X))$.

It remains to establish the formula for the pairing.  We will obtain it indirectly in Proposition~\ref{p: pairing}, by comparing the description of $\HdR^1(X)$ in terms of differentials of the second kind to the \v{C}ech description and using the known form of the pairing in that case.  
\end{proof}

\begin{remark}
We could also construct resolutions of $\cO_X$ (resp. $\Omega^1_X$) using functions (resp. differentials) on $X$ having poles only along $D$, without specifying the particular bound $n$ (resp. $n+1$) on the pole order.  The same approach would then give Theorem~\ref{t: enhanced diff of second kind compute H1dr} without the need to take limits.  The advantage of Theorem~\ref{theorem:differentials second kind fixed poles} is that it allows explicit computation of algebraic de Rham cohomology of curves, similar to Weir's implementation of algebraic de Rham cohomology using the \v{C}ech resolution \cite{weir_code}.
\end{remark}

\subsection{Relation to the \v{C}ech description of algebraic de Rham
cohomology}
\label{ss: 2nd kind vs Cech}
We now compare the enhanced differential description of $\HdR^1(X)$
with the \v{C}ech description given in \cite[\S 5]{Oda} (see also
\cite{ElkinPries}).  Let $S$ be a finite set of points in $X$. Let $U_1=X\backslash S$ and let $U_2$ be an affine open subscheme of $X$ containing $S$. Then $\mathfrak{U}=\{U_1,U_2\}$ is a cover of $X$. We define the \v{C}ech 1-cocycles to be
\[ Z^1(\mathfrak{U}) = \left\{ \left(\omega_1,\omega_2,f\right)~\middle| ~\begin{array}{c} \omega_i \in \Omega^1_X(U_i) \\f \in \mathcal{O}_X(U_1 \cap U_2) \\ \omega_1 - \omega_2 = df\end{array}\right\}.\]
The \v{C}ech 1-coboundaries are defined by
\[B^1(\mathfrak{U}) = \left\{(\omega_1,\omega_2,f) ~\middle | ~ \begin{array}{c}  \text{There exists }f_1,f_2 \text{ with } f_i \in \mathcal{O}_X(U_i) \\
\text{such that }\omega_i=df_i \text{ and } f=f_1-f_2\end{array}\right\}.\]
Then we have
\begin{equation*}
    \HdR^1(X) \cong Z^1(\mathfrak{U})/B^1(\mathfrak{U}).
\end{equation*}
We now define a map $\imath : Z^1(\mathfrak{U}) \to E_S$ by
\[\imath (\omega_1,\omega_2,f) = \left( \omega_1, (f \mod \widehat{\mathcal{O}}_{X,Q})_{Q \in S} \right).\]
Note the only poles of $\omega_1$ are at $S$, and since $\omega_2$ is regular at the points of $S$, the relation $\omega_1-\omega_2=df$ means that
$\omega_1 - df \in \Omega^1_{\widehat{\mathcal{O}}_{X,Q}}$ for each $Q \in S$. 

\begin{proposition} \label{proposition:cech second kind}
The map $\imath$ induces an isomorphism
\begin{equation}\label{eq: map from cech to enhanced differentials} \frac{Z^1(\mathfrak{U})}{B^1(\mathfrak{U}) }\to \frac{E_S}{d_{X,S}(A_S)}.
\end{equation}
\end{proposition}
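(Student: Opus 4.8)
The plan is to show that $\imath$ is well-defined on cohomology classes, that it is surjective, and that it is injective, thereby deducing the claimed isomorphism. A cleaner route, which I would pursue, is to factor the comparison through the finite-pole-order models of both descriptions and invoke Theorem~\ref{theorem:differentials second kind fixed poles}, but since the \v{C}ech model here is stated without pole bounds it is probably simplest to argue directly.

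First I would check that $\imath$ maps $B^1(\mathfrak{U})$ into $d_{X,S}(A_S)$. Given a coboundary $(df_1, df_2, f_1 - f_2)$ with $f_i \in \mathcal{O}_X(U_i)$, we have $\imath(df_1, df_2, f_1-f_2) = (df_1, (f_1 - f_2 \bmod \widehat{\mathcal{O}}_{X,Q})_{Q \in S})$. Since $f_2$ is regular at every $Q \in S$, the class of $f_1 - f_2$ in $K_Q/\widehat{\mathcal{O}}_{X,Q}$ equals the class of $f_1$; and $f_1 \in \mathcal{O}_X(U_1) = \mathcal{O}_X(X \setminus S) = A_S$, so this is exactly $d_{X,S}(f_1)$. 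Hence $\imath$ descends to the map in \eqref{eq: map from cech to enhanced differentials}. Next, surjectivity: given an $S$-enhanced differential $(\omega, (f_Q)_{Q \in S})$, I want to produce a cocycle mapping to it. Choose lifts $\tilde f_Q \in K_Q$; by weak approximation (or since $\mathcal{O}_X(U_2)$ is dense in $\prod_{Q\in S}\widehat{\mathcal{O}}_{X,Q}$ up to the relevant finite pole orders — use that $U_2$ is affine and $\mathcal{O}_X(U_2)$ surjects onto $\mathcal{O}_X(U_2)/\mathfrak{m}_Q^N$ for all $N$) I can find $g \in \mathcal{O}_X(U_1\cap U_2)$ whose class at each $Q\in S$ is $f_Q$, i.e. $g - \tilde f_Q \in \widehat{\mathcal{O}}_{X,Q}$. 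Then set $\omega_1 = \omega$, which is regular on $U_1$ since $\omega$ is of the second kind regular away from $S$, and set $\omega_2 = \omega - dg$. One checks $\omega_2$ extends to a regular differential on $U_2$: away from $S$ it agrees with $\omega_1 = \omega$ which is regular there, and at each $Q\in S$, $\omega - dg = (\omega - d\tilde f_Q) + d(\tilde f_Q - g)$ is regular since $\omega - d\tilde f_Q \in \Omega^1_{\widehat{\mathcal{O}}_{X,Q}}$ by definition and $\tilde f_Q - g \in \widehat{\mathcal{O}}_{X,Q}$. By construction $\omega_1 - \omega_2 = dg$, so $(\omega_1, \omega_2, g) \in Z^1(\mathfrak{U})$ and $\imath(\omega_1,\omega_2,g) = (\omega, (f_Q)_{Q\in S})$.

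For injectivity, suppose $\imath(\omega_1,\omega_2,f) \in d_{X,S}(A_S)$, say $\imath(\omega_1,\omega_2,f) = d_{X,S}(h) = (dh, (h)_{Q\in S})$ with $h \in A_S = \mathcal{O}_X(U_1)$. Then $\omega_1 = dh$, and for each $Q \in S$ the class of $f - h$ in $K_Q/\widehat{\mathcal{O}}_{X,Q}$ vanishes, i.e. $f - h \in \widehat{\mathcal{O}}_{X,Q}$. Replacing the cocycle by the cohomologous $(\omega_1 - dh, \omega_2 - d(f-h)\cdot 0, \dots)$ — more precisely, subtract the coboundary $d_{X}^{\mathrm{\check C}}(h)$ where we regard $h \in \mathcal{O}_X(U_1)$ — reduces to the case $\omega_1 = 0$ and $f \in \mathcal{O}_X(U_1 \cap U_2)$ regular at each $Q \in S$; since $f$ is already regular on $U_1 \cap U_2$ and now regular at $S$, it extends to a function $f_1 \in \mathcal{O}_X(U_1)$ (using $X = U_1 \cup \{Q : Q\in S\}$ and that regularity is checked pointwise), while $f \in \mathcal{O}_X(U_1\cap U_2) \subseteq \mathcal{O}_X(U_2)$ only after noting $\omega_2 = -df$ must then be the differential of something — here I use that $\omega_2 = df_1 - df = d(f_1)$ wait, rather: from $\omega_1 - \omega_2 = df$ and $\omega_1 = 0$ we get $\omega_2 = -df$, and since $f$ extends to $f_1 \in \mathcal{O}_X(U_1)$ with $f = f_1$ on $U_1 \cap U_2$, take $f_2 := f_1 - f$; then $f_2$ is regular on $U_1 \cap U_2$ and we must check it extends over $U_2$, i.e. over $S$ — but $f$ need not be regular at $S$ a priori, so instead I argue directly that $(\omega_1,\omega_2,f) = (0, -df, f)$ with $f$ regular at $S$ is the coboundary of $(0, f)$: take $f_1 = 0 \in \mathcal{O}_X(U_1)$ is wrong since then $df_1 = 0 \ne$ generally; rather take $f_1$ to be the extension of $f$ to $U_1$... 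The cleanest formulation: after the reduction, $f$ is regular at every point of $U_1$ (automatic) and at every point of $S$, hence $f \in \mathcal{O}_X(U_1) \cap (\text{regular at }S) = \mathcal{O}_X(X) = k$ only if $X$ proper — no. I will instead phrase injectivity via the five lemma using the short exact sequences of complexes computing both sides, which avoids these case gymnastics: both $Z^1/B^1$ and $E_S/d(A_S)$ sit in compatible long exact sequences relating them to $H^0(\Omega^1_X)$, $H^1(\mathcal{O}_X)$ and the skyscraper terms, and $\imath$ induces the identity on those, so the five lemma finishes. The main obstacle is organizing injectivity cleanly; I expect the five-lemma argument comparing the two mapping-cone presentations to be the right tool, with the direct elementary argument above serving only as a sanity check.

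\begin{proof}
We first check that $\imath$ carries $B^1(\mathfrak{U})$ into $d_{X,S}(A_S)$. A coboundary has the form $(df_1, df_2, f_1 - f_2)$ with $f_i \in \mathcal{O}_X(U_i)$. Since $f_2$ is regular at every $Q \in S$, the image $\imath(df_1, df_2, f_1 - f_2) = (df_1, (f_1 - f_2 \bmod \widehat{\mathcal{O}}_{X,Q})_{Q\in S})$ equals $(df_1, (f_1 \bmod \widehat{\mathcal{O}}_{X,Q})_{Q\in S}) = d_{X,S}(f_1)$, using $f_1 \in \mathcal{O}_X(U_1) = A_S$. Hence $\imath$ descends to the map \eqref{eq: map from cech to enhanced differentials}.

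For surjectivity, let $(\omega, (f_Q)_{Q\in S})$ be an $S$-enhanced differential and fix lifts $\tilde f_Q \in K_Q$ for each $Q \in S$. Since $U_2$ is affine, $\mathcal{O}_X(U_2)$ is dense in $\prod_{Q\in S} \widehat{\mathcal{O}}_{X,Q}$ in the sense that it surjects onto every finite truncation, so there exists $g \in \mathcal{O}_X(U_1 \cap U_2)$ with $g - \tilde f_Q \in \widehat{\mathcal{O}}_{X,Q}$ for all $Q \in S$ (allow $g$ poles at $S$ to match the pole part of $\tilde f_Q$). Set $\omega_1 := \omega$ and $\omega_2 := \omega - dg$. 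The differential $\omega_1$ is regular on $U_1 = X \setminus S$ because $\omega$ is of the second kind and regular away from $S$. At each $Q \in S$, $\omega - dg = (\omega - d\tilde f_Q) + d(\tilde f_Q - g)$ lies in $\Omega^1_{\widehat{\mathcal{O}}_{X,Q}}$, the first summand by the defining property of the enhanced differential and the second because $\tilde f_Q - g \in \widehat{\mathcal{O}}_{X,Q}$; away from $S$ it equals $\omega_1$, which is regular there. Thus $\omega_2 \in \Omega^1_X(U_2)$, and $\omega_1 - \omega_2 = dg$, so $(\omega_1,\omega_2,g) \in Z^1(\mathfrak{U})$ maps under $\imath$ to $(\omega, (f_Q)_{Q\in S})$.

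For injectivity, both $Z^1(\mathfrak{U})/B^1(\mathfrak{U})$ and $E_S/d_{X,S}(A_S)$ are the degree-one cohomology of total complexes built from the de Rham complex of $X$ together with auxiliary data supported on $S$, and in both cases there is a short exact sequence of complexes expressing the description in terms of the ordinary de Rham complex of $X \setminus S$ glued to skyscraper data at $S$. The map $\imath$ is a morphism of these short exact sequences which is the identity on the de Rham complex of $X$ and, by inspection of $\imath$ and $d_{X,S}$, compatible with the skyscraper and section terms. Passing to the associated long exact cohomology sequences and applying the five lemma, the induced map \eqref{eq: map from cech to enhanced differentials} is an isomorphism; in particular it is injective. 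Alternatively, one checks directly: if $\imath(\omega_1,\omega_2,f) = d_{X,S}(h)$ with $h \in A_S = \mathcal{O}_X(U_1)$, then subtracting the \v{C}ech coboundary associated to $h \in \mathcal{O}_X(U_1)$ reduces to $\omega_1 = 0$ and $f$ regular at every $Q \in S$; then $f \in \mathcal{O}_X(U_1 \cap U_2)$ is regular at all points of $U_1$ and of $S$, hence extends to $f_2 \in \mathcal{O}_X(U_2)$, and $(0, -df, f) = (d\cdot 0, df_2, 0 - f_2)$ is a coboundary, so the class vanishes.
\end{proof}
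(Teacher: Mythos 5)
Your proof is correct and takes essentially the same route as the paper's: both reduce the isomorphism to the existence of a function $g\in\mathcal{O}_X(U_1\cap U_2)$ whose principal parts at each $Q\in S$ match the prescribed $f_Q$ (the paper cites Riemann--Roch for this), and your surjectivity/injectivity checks are exactly the two verifications the paper performs when exhibiting the inverse map. Only cosmetic repairs are needed: on $U_1\cap U_2$ the differential $\omega_2$ equals $\omega-dg$ rather than $\omega_1$ (it is regular there because $\omega$ and $dg$ both are), the final coboundary identification requires $f_2=-f$ rather than $f$, and the under-specified five-lemma paragraph should be dropped since your direct injectivity argument already suffices.
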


\begin{proof}
Note that $\imath$ factors through the quotient as if
$(\omega_1,\omega_2,f)$ is a \v{C}ech 1-coboundary, then $\left( \omega_1, (f \mod \widehat{\mathcal{O}}_{X,Q})_{Q \in S} \right)$ is equal to $d_{X,S}(f_1)$.  
To see that this map is an isomorphism, we describe the inverse.
Let $(\omega, (f_Q)_{Q \in S})$ be an $S$-enhanced differential. 
Let $f$ be a function in $\mathcal{O}_X(U_1\cap U_2)$ with 
$f \equiv f_Q \mod \widehat{\mathcal{O}}_{X,Q}$ for each $Q \in S$.
Such an $f$ exists by Riemann-Roch.  Then $(\omega, \omega-df, f)$ is a \v{C}ech 1-cocycle.  
A short calculation shows that a different choice of $f$ gives
a $1$-cocycle that differs by a $1$-coboundary.
If $(\omega, (f_Q)_{Q \in S})$ is exact, so equal to $d_{X,S}(f_1)$, then
we can take $f=f_1$, so the \v{C}ech 1-cocycle obtained is $(\omega,0,f_1)$,
which is a \v{C}ech 1-coboundary.  It is straightforward to verify this induces an inverse to the map in \eqref{eq: map from cech to enhanced differentials}. 
\end{proof}

The \v{C}ech $2$-cocycles and $2$-coboundaries are given as follows
\begin{align*}
    Z^2(\mathfrak{U}) &= \Omega_X^1(U_1 \cap U_2) \\
    B^2(\mathfrak{U}) &= \left\{ \omega \in \Omega_X^1(U_1 \cap U_2)~\middle | ~ \begin{array}{c} \text{There exists }\omega_1,\omega_2 \text{ with } \omega_i \in \Omega^1_X(U_i) \\
\text{such that }\omega = \omega_1-\omega_2\end{array}\right\}.
\end{align*}
Then we have
\[ \HdR^2(X)\cong H^1(X,\Omega_X^1)   \cong Z^2(\mathfrak{U})/B^2(\mathfrak{U}).\]
The trace map on \v{C}ech $2$-cocycles is given explicitly up to sign (see \cite[Theorem 5.2.3]{conrad_00} for the connection between the modern formulation and the classical formulation in terms of residues) %
by 
\begin{align*}
    t:\HdR^2(X) &\to k \\
    \omega &\mapsto \sum_{Q \in S} \Res_Q(\omega).
\end{align*}
The cup product is given by
\[(\omega_1,\omega_2,f) \cup (\tau_1,\tau_2,g) = g\omega_1 -f\tau_1\in \Omega_X^1(U_1\cap U_2), \]
so the pairing $\langle\cdot , \cdot \rangle :\HdR^1(X) \times \HdR^1(X) \to k$ is given by 
\begin{equation}\label{eq: pairing on cech}
    \langle (\omega_1,\omega_2,f), (\tau_1,\tau_2,g)\rangle = \sum_{Q \in S} \Res(g\omega_1 - f\tau_1).
\end{equation}

\subsection{The cup product on enhanced differentials of the second kind}

\begin{proposition}
    \label{p: pairing} Let $(\omega, (f_Q)_{Q \in S})$ and $(\tau,(g_Q)_{Q \in S})$ be
$S$-enhanced differentials of the second kind. Then the
pairing $[\cdot,\cdot]$ on the $S$-enhanced differentials of the second kind defined by
\begin{equation} \label{eq: definition of pairing} \left[ (\omega, (f_Q)_{Q \in S}), (\tau, (g_Q)_{Q \in S}) \right] =
\sum_{Q \in S}\left( \Res_Q(\tilde{g}_Q\omega) - \Res_Q(\tilde{f}_Q\tau) - \Res_Q(\tilde{g}_Qd\tilde{f}_Q) \right),
\end{equation}
where $\tilde{f}_Q$ (resp. $\tilde{g}_Q$) is any lift of $f_Q$ (resp. $g_Q$), is well-defined and agrees with the pairing $\langle \cdot,\cdot\rangle $
on $\HdR^1(X)$.
\end{proposition}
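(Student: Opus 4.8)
The plan is to prove the statement in two stages: first, that the right-hand side of \eqref{eq: definition of pairing} is independent of the chosen lifts $\tilde f_Q,\tilde g_Q$ and descends to a pairing on $E_S/d_{X,S}(A_S)\cong\HdR^1(X)$; second, that under the isomorphism $\imath$ of Proposition~\ref{proposition:cech second kind} it is carried to the \v{C}ech cup-product pairing \eqref{eq: pairing on cech}.

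For the first stage I would argue directly with residues. If $\tilde f_Q$ is replaced by $\tilde f_Q+h$ for some $h\in\widehat{\mathcal O}_{X,Q}$, then the $Q$-summand in \eqref{eq: definition of pairing} changes by $-\Res_Q(h\tau)-\Res_Q(\tilde g_Q\,dh)$; integrating by parts (the residue of an exact differential is zero in any characteristic) this equals $-\Res_Q\bigl(h(\tau-d\tilde g_Q)\bigr)$, which vanishes since $h$ and $\tau-d\tilde g_Q\in\Omega^1_{\widehat{\mathcal O}_{X,Q}/k}$ are both regular at $Q$; the case of a change in $\tilde g_Q$ is symmetric, using that $\omega-d\tilde f_Q$ is regular. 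To see the form descends, I would apply it to an exact enhanced differential $d_{X,S}(u)=(du,(u)_{Q\in S})$ with $u\in A_S$: taking $\tilde f_Q=u$ (resp.\ $\tilde g_Q=u$) for every $Q$ causes two of the three terms in each summand to cancel, leaving $-\sum_{Q\in S}\Res_Q(u\tau)$ (resp.\ $\sum_{Q\in S}\Res_Q(u\omega)$), which is zero because $u\tau$ (resp.\ $u\omega$) is a meromorphic differential on $X$ with poles only along $S$, so its residues sum to zero. The same book-keeping shows $[\,\cdot\,,\cdot\,]$ is alternating, the symmetric contribution being $-\sum_{Q}\Res_Q\bigl(d(\tilde f_Q\tilde g_Q)\bigr)=0$; this is harmless but convenient later.

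For the second stage, given $S$-enhanced differentials $(\omega,(f_Q))$ and $(\tau,(g_Q))$, I would choose $f,g\in\mathcal O_X(U_1\cap U_2)$ with $f\equiv f_Q$ and $g\equiv g_Q\pmod{\widehat{\mathcal O}_{X,Q}}$ for all $Q\in S$ (possible by Riemann--Roch as in the proof of Proposition~\ref{proposition:cech second kind}), so that $(\omega,\omega-df,f)$ and $(\tau,\tau-dg,g)$ are \v{C}ech $1$-cocycles representing the images under $\imath^{-1}$. Since $\omega-df$ and $\tau-dg$ lie in $\Omega^1_X(U_2)$ and $S\subset U_2$, the functions $f$ and $g$ are themselves valid lifts of $(f_Q)$ and $(g_Q)$, so the first stage lets me evaluate \eqref{eq: definition of pairing} with $\tilde f_Q=f$, $\tilde g_Q=g$, obtaining $\sum_{Q\in S}\bigl(\Res_Q(g\omega)-\Res_Q(f\tau)-\Res_Q(g\,df)\bigr)$. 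Plugging the same cocycles into \eqref{eq: pairing on cech} gives $\sum_{Q\in S}\Res_Q(g\omega-f\tau)$, so the two expressions agree up to the term $\sum_{Q\in S}\Res_Q(g\,df)$. That term is absorbed by replacing the cup-product $2$-cocycle $g\omega-f\tau$ with the cohomologous one $g\omega-f(\tau-dg)$ obtained from the cocycle relation $\tau=(\tau-dg)+dg$: its trace differs by $\sum_{Q\in S}\Res_Q(f\,dg)$, and $\Res_Q(f\,dg)+\Res_Q(g\,df)=\Res_Q\bigl(d(fg)\bigr)=0$ for each $Q$, so the two pairings coincide.

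I expect the delicate point to be exactly this last reconciliation: one has to pass carefully between the global function $f\in\mathcal O_X(U_1\cap U_2)$ entering the \v{C}ech cocycle and the purely local antiderivatives $\tilde f_Q$ appearing in \eqref{eq: definition of pairing}, which agree only modulo a function regular at $Q$, and one has to keep track of precisely which component of the second cocycle enters the chain-level cup product (the identity $d(fg)=f\,dg+g\,df$, together with vanishing of residues of exact differentials, is what makes the two choices give the same trace). Everything else — the residue-theorem vanishings, the integration-by-parts identities at each $Q$, and the explicit description of $\imath^{-1}$ from Proposition~\ref{proposition:cech second kind} — is routine once the comparison is organized in this way.
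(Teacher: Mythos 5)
Your overall strategy is the same as the paper's: first show $[\cdot,\cdot]$ is independent of the lifts by local residue manipulations, then choose global $f,g\in\mathcal{O}_X(U_1\cap U_2)$ via Riemann--Roch and compare with the \v{C}ech cup product through $\imath$. Your first stage is correct and in fact more complete than the paper's (the paper only varies $\tilde{g}_Q$ and appeals to antisymmetry; you also verify descent to the quotient, which is a nice, if redundant, check). You have also correctly located the delicate point: after substituting the global lifts, \eqref{eq: definition of pairing} and the literal reading of \eqref{eq: pairing on cech} differ by $\sum_{Q\in S}\Res_Q(g\,df)$, and this term does not vanish in general.

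The gap is in how you dispose of that term. You assert that $g\omega-f\tau$ and $g\omega-f(\tau-dg)$ are cohomologous $2$-cocycles. Their difference is $f\,dg$, and since $\HH^1(X,\Omega^1_X)\cong k$ via the trace, an element of $Z^2(\mathfrak{U})=\Omega^1_X(U_1\cap U_2)$ lies in $B^2(\mathfrak{U})$ precisely when the sum of its residues over $S$ vanishes. So ``$f\,dg$ is a coboundary'' is equivalent to $\sum_{Q\in S}\Res_Q(f\,dg)=0$, which is (up to the identity $\Res_Q(d(fg))=0$) exactly the statement you are trying to prove; as written the step is circular, and the claim is in fact false for general $f,g$ (e.g.\ on $\PP^1$ with $S=\{0\}$, $f=x^{-1}$, $g=x^{-2}+x$ one gets $\sum_{Q\in S}\Res_Q(f\,dg)\neq 0$). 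What is actually true, and what your computation needs, is that $g\omega_1-f\tau_2$ --- not $g\omega_1-f\tau_1$ --- is the cochain-level cup product that descends to cohomology: one checks directly that pairing a cocycle against a coboundary $(dh_1,dh_2,h_1-h_2)$ with this formula lands in $B^2(\mathfrak{U})$ (using $\omega_1=\omega_2+df$ and $\Res_Q$ of exact forms being zero), whereas the formula with $\tau_1$ is representative-dependent. Once that is established, $\Res_Q(d(fg))=0$ gives the agreement exactly as you compute. To be fair, the paper's own proof is equally terse at this precise point (it simply cites \eqref{eq: pairing on cech}), so you have identified a real subtlety; but the justification you offer for resolving it does not stand on its own.
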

\begin{proof}
    First, we show that $[\cdot,\cdot]$ is independent of the lift $\tilde{f}_Q$ and $\tilde{g}_Q$. Since the right side of \eqref{eq: definition of pairing} is clearly anti-symmetric, it is enough
    to show that the expression is independent of lift of $g_Q$. Any other
    lift is of the form $\tilde{g}_Q + h_Q$ where $h_Q \in \widehat{\mathcal{O}}_{X,Q}$. Write $\omega=d\tilde{f}_Q + \omega_0$,
    where $\omega_0$ is holomorphic. Then $\Res_Q(h_Q\omega_0)=0$. In particular, we see that $\Res_Q(h_Q\omega)= \Res_Q(h_Qd\tilde{f}_Q)$.
    A short calculation shows $[\cdot,\cdot]$ does not depend on the lift.

    Let $f$ (resp. $g$) be a function in $\mathcal{O}_X(U_1\cap U_2)$ with 
    $f \equiv f_Q \mod \widehat{\mathcal{O}}_{X,Q}$ (resp. 
    $g \equiv g_Q \mod \widehat{\mathcal{O}}_{X,Q}$) for each $Q \in S$.
    Then using $f$ (resp. $g$) for the lift of $f_Q$ (resp. $g_Q$) we obtain 
    \[ \left[ (\omega, (f_Q)_{Q \in S}), (\tau, (g_Q)_{Q \in S}) \right] =
\sum_{Q \in S} \left( \Res_Q(g\omega) - \Res_Q(f\tau) - \Res_Q(gdf). \right)\]
    We then see that $[\cdot,\cdot]$ agrees with the pairing
    $\langle \cdot, \cdot\rangle$ by looking explicitly at the isomorphism
    $\imath$ described in Section \S \ref{ss: 2nd kind vs Cech}
    and the formula \eqref{eq: pairing on cech} for the pairing  of
    \v{C}ech $1$-cocycles.\qedhere

    \end{proof}

\begin{remark}
    In characteristic $0$, we can choose $\tilde{f}_Q$ and $\tilde{g}_Q$
    to be actual antiderivatives of $\omega$ and $\tau$. In this case,
    the pairing becomes
    \begin{equation*}
        \langle (\omega, (f_Q)_{Q \in S}), (\tau, (g_Q)_{Q \in S}) \rangle = \sum_{Q \in S} \Res_Q(\tilde{g}_Q \omega),
    \end{equation*}
    since $d(\tilde{f}_Q\tilde{g}_Q)$ has zero residue. In particular,
    this agrees with the pairing on classical differentials
    of the second kind given by Chevalley \cite{chevalley}.  Note the Equation~\eqref{eq: definition of pairing} is also quite similar to the one in Coleman's setting \cite[Corollary 5.1]{coleman98}, which is also proven by relating to the \v{C}ech description.
\end{remark}

\subsection{Frobenius and Verschiebung operators}
\begin{definition}
    Suppose that the field $k$ is perfect of characteristic $p$, and let $\sigma : k \to k$ be the $p$-th power Frobenius map. The $p$-th power map
    induces a $\sigma$-linear map of the de Rham complex $\Omega_X^\bullet$.
    We define the Frobenius map $F : \HdR^1(X) \to \HdR^1(X)$ to
    be the induced $\sigma$-linear map on cohomology. We define 
    the Verschiebung $V : \HdR^1(X) \to \HdR^1(X)$ to be the
    $\sigma^{-1}$-linear map adjoint to $F$. Note due to semilinearity, the adjointness condition is $\langle V x,y \rangle^p = \langle x, Fy \rangle$. %
\end{definition}

\begin{definition}
    We define the Cartier operator $V_X: \Omega_{k(X)/k} \to \Omega_{k(X)/k}$ as
    follows:
    If we let $t_Q$ be a local parameter at $Q \in X$ then we may uniquely write 
\begin{equation}\label{eq: break up differential}
\omega = \sum_{i=0}^{p-1} h_{Q,i}^p t^i_Q \frac{dt_Q}{t_Q}
\end{equation}
where the $h_{Q,i}$ are rational functions on $X$.  Then a direct definition of the Cartier operator is that $V_X(\omega) = h_{Q,0} \frac{dt_Q}{t_Q}$.  
It is well known that this definition does not depend on the choice of $Q$ or
$t_Q$ and that $V_X(\omega)=0$ if and only if $\omega$ is exact (see e.g. \cite[Section 7]{Katz-Nilpotent_connections_and_monodromy_theory} for a detailed discussion).
\end{definition}

\begin{proposition} \label{prop:F and V on S-enhanced differentials}
Let $S$ be a nonempty set of points of $X$, and $(\omega,(f_Q)_{Q \in S})$ be an $S$-enhanced differential of the second kind.  Then 
\[
F (\omega,(f_Q)_{Q \in S}) \sim (0, (f_Q^p)_{Q \in S}) \quad \text{and} \quad V(\omega,(f_Q)_{Q \in S})\sim (V_X(\omega), (0)_{Q \in S}),
\]
where $\sim$ denotes equivalence in $\HdR^1(X)$.
\end{proposition}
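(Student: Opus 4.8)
The plan is to handle $F$ and $V$ separately: read off $F$ directly from the \v{C}ech comparison in Section~\ref{ss: 2nd kind vs Cech}, and then deduce the formula for $V$ from the formula for $F$ using the pairing of Proposition~\ref{p: pairing} together with the defining adjointness relation $\langle Vx,y\rangle^p=\langle x,Fy\rangle$.

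\textbf{Frobenius.} The $p$-th power map on the de Rham complex $\Omega^\bullet_X$ is the map of complexes which is $f\mapsto f^p$ in degree $0$; since it is $\mathcal{O}_X$-semilinear and $F_X^*(dt)=d(t^p)=0$ for a local coordinate $t$, it is the zero map in degree $1$. Computing $\HdR^1(X)$ as the hypercohomology of $\Omega^\bullet_X$ via the \v{C}ech cover $\mathfrak{U}=\{U_1,U_2\}$, this map sends a $1$-cocycle $(\omega_1,\omega_2,f)$ to $(0,0,f^p)$, and by functoriality this represents $F$ of the corresponding class. By the proof of Proposition~\ref{proposition:cech second kind}, the class $(\omega,(f_Q)_{Q\in S})$ is represented by a \v{C}ech $1$-cocycle $(\omega,\omega-df,f)$ where $f\in\mathcal{O}_X(U_1\cap U_2)$ satisfies $f\equiv f_Q\bmod\widehat{\mathcal{O}}_{X,Q}$ for each $Q\in S$. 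Since $\widehat{\mathcal{O}}_{X,Q}$ is closed under $p$-th powers and the $p$-th power map is additive in characteristic $p$, we get $f^p\equiv f_Q^p\bmod\widehat{\mathcal{O}}_{X,Q}$, so $\imath(0,0,f^p)=(0,(f_Q^p)_{Q\in S})$. Note $(0,(f_Q^p)_{Q\in S})$ is a genuine $S$-enhanced differential of the second kind since $d(\tilde f_Q^p)=0$. This gives the stated formula for $F$.

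\textbf{Verschiebung.} First observe that $V_X(\omega)$ is actually a global \emph{regular} differential: at each point $Q$ write $\omega=d\tilde f_Q+\eta_Q$ with $\eta_Q$ regular (possible since $\omega$ is of the second kind); then $V_X(\omega)=V_X(\eta_Q)$ is regular at $Q$ because the Cartier operator is additive, annihilates exact differentials, and preserves regular ones. Hence $(V_X(\omega),(0)_{Q\in S})$ is a legitimate $S$-enhanced differential, and by the ``in particular'' case of the pairing formula, $\langle(V_X(\omega),(0)_Q),(\tau,(g_Q)_Q)\rangle=\sum_{Q\in S}\Res_Q(\tilde g_Q V_X(\omega))$ for every $S$-enhanced differential $(\tau,(g_Q)_{Q\in S})$. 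Using the formula for $F$ together with Proposition~\ref{p: pairing} and taking $\tilde g_Q^p$ as the lift of $g_Q^p$,
\[
\langle(\omega,(f_Q)_Q),\,F(\tau,(g_Q)_Q)\rangle=\langle(\omega,(f_Q)_Q),(0,(g_Q^p)_Q)\rangle=\sum_{Q\in S}\Res_Q\!\big(\tilde g_Q^p(\omega-d\tilde f_Q)\big).
\]
Since $\omega-d\tilde f_Q$ is regular at $Q$ and $V_X(\omega-d\tilde f_Q)=V_X(\omega)$, the residue--Cartier identity $\Res_Q(h^p\mu)=\Res_Q(h\,V_X(\mu))^p$ rewrites the right-hand side as $\sum_{Q\in S}\Res_Q(\tilde g_Q V_X(\omega))^p=\big(\sum_{Q\in S}\Res_Q(\tilde g_Q V_X(\omega))\big)^p=\langle(V_X(\omega),(0)_Q),(\tau,(g_Q)_Q)\rangle^p$. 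Comparing with $\langle Vx,y\rangle^p=\langle x,Fy\rangle$ and using that the pairing on $\HdR^1(X)$ is perfect, we conclude $V(\omega,(f_Q)_{Q\in S})\sim(V_X(\omega),(0)_{Q\in S})$.

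The computation is essentially bookkeeping once the \v{C}ech comparison and the pairing formula are in hand; the points needing the most care are the claim that $V_X$ of a differential of the second kind is regular (so that $(V_X(\omega),(0))$ is a valid $S$-enhanced differential), the residue--Cartier identity $\Res_Q(h^p\mu)=\Res_Q(hV_X\mu)^p$, and tracking the semilinearity in the adjointness relation (in particular the identity $\sum_Q a_Q^p=(\sum_Q a_Q)^p$ valid in characteristic $p$). I do not anticipate a genuine obstacle.
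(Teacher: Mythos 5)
Your proof is correct, but it diverges from the paper's argument in an interesting way. The paper's proof is a two-line affair: it cites Oda's formulas for \emph{both} operators on \v{C}ech $1$-cocycles, $F(\omega_1,\omega_2,f)=(0,0,f^p)$ \emph{and} $V(\omega_1,\omega_2,f)=(V(\omega_1),V(\omega_2),0)$, and transports them through the isomorphism $\imath$ of Proposition~\ref{proposition:cech second kind}. You do the same for $F$, but for $V$ you instead derive the formula from the $F$-formula via the adjunction $\langle Vx,y\rangle^p=\langle x,Fy\rangle$, the explicit pairing of Proposition~\ref{p: pairing}, and the residue--Cartier identity $\Res_Q(h^p\mu)=\Res_Q(hV_X\mu)^p$. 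Since the paper \emph{defines} $V$ as the adjoint of $F$, your route is arguably more faithful to that definition: it sidesteps the (true but unremarked) fact that Oda's \v{C}ech-level Verschiebung agrees with the adjoint-defined one. The price is that you need the pairing formula and the perfectness of the pairing as inputs, plus the observation that $V_X(\omega)$ is globally regular so that $(V_X(\omega),(0)_{Q\in S})$ is a legitimate $S$-enhanced differential; you handle all of these correctly (the residue--Cartier identity and the step $\sum_Q a_Q^p=(\sum_Q a_Q)^p$ check out, and injectivity of $x\mapsto x^p$ on $k$ lets you strip the $p$-th powers before invoking non-degeneracy). No gaps.
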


\newcommand{\Coeff}{\operatorname{Coeff}}

\begin{proof}
On \v{C}ech $1$-cocycles we have $V(\omega_1,\omega_2,f)=(V(\omega_1),V(\omega_2),0)$ and 
$F(\omega_1,\omega_2,f)=(0,0,f^p)$ (see \cite[Definition 5.6]{Oda}).
The result then follows using the comparison in Section~\ref{ss: 2nd kind vs Cech}.  %
\end{proof}

\section{\texorpdfstring{$S$}{S}-enhanced differentials on double covers in characteristic \texorpdfstring{$2$}{2}} \label{section: omegaij}

We now assume $k$ is an algebraically closed field of characteristic $2$. Let $\pi: Y \to X$ be a $\Z/2\Z$-cover of smooth, proper, connected curves. %
Let $P_1, \ldots , P_m \in X$ be the branch points of $\pi$ and let $Q_i= \pi^{-1}(P_i) \in Y$ be the ramified points of $\pi$. Let $B=\{P_1, \ldots, P_m\}$ and let $S= \{Q_1, \ldots , Q_m\}$. For each $i$, let $d_i$ be the unique break in the ramification filtration of $\text{Gal}(K_{Q_i}/K_{P_i})$. Note that $d_i$ is an odd positive integer. Let $g_X$ be the genus of $X$ and let $g_Y$ be the genus of $Y$. Then the Riemann-Hurwitz theorem \cite[Corollary IV.2.4]{Hartshorne} gives
\begin{align*}
    g_Y &= 2g_X - 1 + \sum_{i=1}^m \frac{d_i+1}{2}.
\intertext{Moreover, let $f_X$ be the $p$-rank of $X$ and let $f_Y$ be the $p$-rank of $Y$. Then the Deuring-Shafarevich formula \cite[Theorem 4.1]{SubraoDS} yields}
    f_Y &= 2f_X - 1 + m .
\end{align*}
We define $l_X\colonequals g_X-f_X$ to be the \emph{local rank} of $X$, and similarly 
$$l_Y\colonequals g_Y-f_Y = 2l_X + \sum_{i=1}^m \frac{d_i-1}{2}$$ is the local rank of $Y$. Note that $X$ is ordinary if and only if $l_X=0$.

\subsection{A decomposition of Dieudonn\'{e} modules}
The first step towards understanding the Dieudonn\'{e} module structure of $\HdR^1(Y)$ is to split off the parts on which $F$ or $V$ act bijectively. Let $\langle -, - \rangle$ denote the symplectic pairing $\HdR^1(Y) \times \HdR^1(Y) \to k$ described in Theorem~\ref{theorem:differentials second kind fixed poles}.

\begin{lemma} \label{lemma: decomposition k[F,V]}
There exists a decomposition of Dieudonn\'{e} modules
\begin{equation} \label{eq: decomposition k[F,V]}
    \HdR^1(Y) = U \oplus Z \oplus L
\end{equation}
where $U$ and $Z$ both have dimension $f_Y$ and $L$ has dimension $2l_Y$.
Furthermore:
\begin{enumerate}[(i)]
    \item $V$ acts bijectively on $U$ and $F$ acts trivially on $U$;
    \item $F$ acts bijectively on $Z$ and $V$ acts trivially on $Z$;
    \item $F$ and $V$ act nilpotently on $L$.
    \item $\langle \cdot, \cdot \rangle : U \times Z \to k$ is a perfect pairing.
\end{enumerate}
\end{lemma}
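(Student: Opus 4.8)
The plan is to build the decomposition \eqref{eq: decomposition k[F,V]} in two stages: first isolate the ``bijective'' parts of the Dieudonn\'e module, then argue that the pairing behaves as claimed on them. The key structural fact is that $M = \HdR^1(Y)$ is a finite-dimensional module over $D_k = k[F,V]/(FV)$ with $F,V$ mutually adjoint up to the $p$-th-power twist. First I would consider the descending chains $F^n M$ and $V^n M$; since $M$ is finite-dimensional these stabilize, say $Z \colonequals F^\infty M = F^N M$ and $U' \colonequals V^\infty M = V^N M$ for $N \gg 0$. On $Z$ the operator $F$ is surjective hence bijective, and since $FV = 0$ we get $V \equiv 0$ on $Z$; symmetrically $V$ is bijective on $U'$ and $F \equiv 0$ there. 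A standard argument (the module over $D_k$ splits into its \'etale, multiplicative, and local parts) shows $M = U' \oplus Z \oplus L$ with $F$ and $V$ both nilpotent on $L$; concretely $L$ can be taken to be a $D_k$-submodule complement, e.g. $L = \ker(F^N) \cap \ker(V^N)$ after checking this meets $U' \oplus Z$ trivially and has the right dimension. The dimension count $\dim U' = \dim Z = f_Y$ follows because the stable rank of $V$ (equivalently of $F$) on $H^0(Y,\Omega^1_Y)$ is the $p$-rank $f_Y$ of $Y$, as recalled in the introduction; then $\dim L = 2g_Y - 2f_Y = 2l_Y$.

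For part (iv), I would use the adjointness $\langle Vx, y\rangle^p = \langle x, Fy\rangle$ together with nondegeneracy of $\langle\cdot,\cdot\rangle$ on all of $\HdR^1(Y)$. The first step is to show the three summands pair as claimed: for $x \in Z$ and $y \in Z$, writing $x = Fx'$ we get $\langle x, Fy \rangle = \langle Fx', Fy\rangle$, and iterating, $\langle Z, Z\rangle$ lands in the image of arbitrarily high powers of $F$ applied inside the residue pairing — combined with $V|_Z = 0$ and $\langle Vx',y\rangle^p = \langle x', Fy\rangle$ one concludes $\langle Z, Z\rangle = 0$; similarly $\langle U', U'\rangle = 0$ since $F|_{U'} = 0$ forces $\langle x, Fy\rangle = 0$ for $x,y\in U'$ hence $\langle Vx,y\rangle = 0$, and $V$ is surjective on $U'$. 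One also checks $\langle U', L\rangle = \langle Z, L\rangle = 0$: for $x\in U'$, $y\in L$, pick $x = V^n x''$ with $n$ large; then $\langle V^n x'', y\rangle^{p^n} = \langle x'', F^n y\rangle = 0$ for $n$ past the nilpotency index of $F$ on $L$. By symmetry $\langle Z,L\rangle = 0$. Since $\langle\cdot,\cdot\rangle$ is nondegenerate on $M = U' \oplus Z \oplus L$ and $L$ is orthogonal to $U'\oplus Z$, the pairing is nondegenerate on $L$ and on $U'\oplus Z$; as $U'$ and $Z$ are each isotropic of dimension $f_Y = \tfrac12\dim(U'\oplus Z)$, the induced pairing $U' \times Z \to k$ must be perfect. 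Finally, to match the statement's notation one replaces $U'$ by a possibly adjusted complement $U$ if needed, though the construction already gives an honest $D_k$-module decomposition, so $U = U'$ works.

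The main obstacle I anticipate is the bookkeeping in part (iv): the $p$-power twist in the adjointness relation means one cannot simply say ``$\langle x, Fy\rangle = 0 \implies \langle Vx, y\rangle = 0$'' without tracking that $t \mapsto t^p$ is injective on $k$ (true, $k$ perfect), and one must be careful that the orthogonality arguments use the \emph{surjectivity} of $F$ on $Z$ and of $V$ on $U'$ in the right direction — it is cleanest to prove $Z \subseteq (U'\oplus Z)^{\perp}$ relative to... no: to prove $Z$ is isotropic and orthogonal to $L$, then deduce the rest by nondegeneracy and dimension count, rather than trying to compute $\langle U', Z\rangle$ directly. A secondary subtlety is verifying that the naive complement $\ker F^N \cap \ker V^N$ is actually a complement (not just that the three subspaces span, but that the sum is direct and the dimensions add to $2g_Y$); this is routine module theory over the (non-commutative but nearly so) ring $D_k$, and I would cite or reproduce the standard decomposition of a finite $D_k$-module into bijective-$F$, bijective-$V$, and bi-nilpotent parts.
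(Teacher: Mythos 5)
Your proposal is correct and follows essentially the same route as the paper: the paper applies (a semilinear version of) Fitting's lemma first to $V$ and then to $F$ on the $V$-nilpotent part, which is exactly your stable-image/stable-kernel construction of $U$, $Z$, and $L$, and its argument for (iv) is the same orthogonality-from-adjointness plus dimension-count argument you give, just stated more tersely. Your explicit verification that $U'$ and $Z$ are isotropic and that $L \perp (U'\oplus Z)$, and your justification of $\dim U = \dim Z = f_Y$ via the stable rank of $V$, fill in details the paper leaves implicit, so nothing is missing.
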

\begin{proof}
First, viewing $\HdR^1(Y)$ as a $k[V]$-module, (a semilinear version of) Fitting's lemma yields a decomposition $\HdR^1(Y) = U \oplus R$, where $V$ acts bijectively on $U$ and nilpotently on $R$. Since $F \circ V = 0$, it follows that $F$ acts trivially on $U$. Then, viewing $R$ as a $k[F]$-module, Fitting's lemma gives a decomposition $R=Z \oplus L$, where $F$ acts bijectively on $Z$ and nilpotently on $L$. Again, it follows from $V \circ F = 0$ that $V$ must act trivially on $Z$. 

It remains to verify the claim about the pairing. For this, we use the fact $\langle Fm_1 , m_2\rangle = \langle m_1 , Vm_2 \rangle ^p$ (see \cite[(2.6)]{Moonengsas}), which implies that elements of $U$ can only pair non-trivially with elements of $Z$ and vice versa. Finally, the statement follows from the fact that $U$ and $Z$ have the same dimension and that the pairing on $\HdR^1(Y)$ is perfect.  %
\end{proof}

Thus the Dieudonn\'{e} module structure of $\HdR^1(Y)$ is determined by the Dieudonn\'{e} module structure of the local part $L$.  To this end, we will construct a large subspace $W$ of $L$ and investigate the action of $F$ and $V$ and the pairing on this subspace throughout the rest of this section.

\subsection{Constructing \texorpdfstring{$S$}{S}-enhanced differentials}

\begin{lemma}
    \label{l: uniformizer up above} We can choose a uniformizer $t_i$ at $P_i$ and $u_i$ at $Q_i$ such that
    \begin{align*}
         t_i &= u_i^2 + u_i^{ d_i+2} + O(u_i^{2d_i + 2}).
    \intertext{In particular, we have }
         dt_i &= [u_i^{d_i + 1} + O(u_i^{2d_i + 2})]du_i.
    \end{align*}
\end{lemma}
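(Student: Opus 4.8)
The plan is to reduce the statement to a single valuation computation at $Q_i$ together with a short sequence of elementary changes of the uniformizer upstairs. The one non-formal input is the value $v_{Q_i}(dt_i)$. Since $\pi$ has degree $2$ and is ramified at $Q_i$ in characteristic $2$, the ramification is wild with index $e(Q_i/P_i)=2$, so $v_{Q_i}(t_i)=2$; and since the unique break of $\Gal(K_{Q_i}/K_{P_i})$ is $d_i$, the different exponent at $Q_i$ equals $(p-1)(d_i+1)=d_i+1$, which gives $v_{Q_i}(dt_i)=d_i+1$ for the pullback of $dt_i$. (One can instead obtain this by hand: locally $K_{Q_i}=K_{P_i}(y)$ with $y^2+y=f$ and $v_{P_i}(f)=-d_i$, so $v_{Q_i}(y)=-d_i$ and $dy=df$ in characteristic $2$; comparing valuations in $dy=f'\,dt_i$ and using that the exterior derivative lowers valuation by exactly $1$ on elements whose valuation is prime to $p$ yields $v_{Q_i}(dt_i)=d_i+1$.)

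Because $v_{Q_i}(t_i)=2$ and $k$ is algebraically closed, I first choose a uniformizer $u$ at $Q_i$ with $t_i=u^2+O(u^3)$, by rescaling an arbitrary uniformizer so that $t_i$ has leading coefficient $1$ in $u$. Among all uniformizers of this shape I then select one maximizing $M\colonequals v_{Q_i}(t_i-u^2)$. This maximum is finite: otherwise $t_i$ would be a square in $K_{Q_i}$, forcing $dt_i=0$ and contradicting $v_{Q_i}(dt_i)=d_i+1$; alternatively, $k$ being perfect, $K_{Q_i}^2=k((u^2))$ is closed, so $t_i-u^2$ cannot have arbitrarily large valuation as $u$ varies.

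The crux is that this maximal $M$ is odd. If instead $M=2\ell$ (necessarily $\ell\geq 2$, since the normalization forces $M\geq 3$), write $t_i=u^2+au^{2\ell}+O(u^{2\ell+1})$ with $a\in k^\times$ and replace $u$ by $u'\colonequals u+\sqrt{a}\,u^{\ell}$; this $u'$ is again a uniformizer with $t_i=(u')^2+O((u')^3)$, and since squaring in characteristic $2$ gives $(u')^2=u^2+au^{2\ell}$ we get $t_i-(u')^2=O(u^{2\ell+1})$, so $v_{Q_i}(t_i-(u')^2)>M$, contradicting maximality. Hence $M$ is odd, so $M$ is invertible in $k$; writing $t_i=u^2+c_iu^{M}+O(u^{M+1})$ with $c_i\in k^\times$, we obtain $dt_i=(Mc_iu^{M-1}+O(u^M))\,du$, that is, $v_{Q_i}(dt_i)=M-1$. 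Comparing with $v_{Q_i}(dt_i)=d_i+1$ forces $M=d_i+2$, which is exactly the claimed expansion; the displayed formula for $dt_i$ then follows by differentiating term by term — noting that $d_i+2$ is odd, hence reduces to $1$ mod $2$, so the leading coefficient is simply $c_i$ — and relabeling $u$ as $u_i$ finishes the proof.

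The main obstacle is the bookkeeping around the characteristic-$2$ fact that squaring produces only even-order terms: I must check that the uniformizer-clearing step genuinely strictly increases $v_{Q_i}(t_i-u^2)$ and keeps the new parameter in the admissible family, and that the maximization is legitimate. A secondary point to get right is the identity $v_{Q_i}(dt_i)=d_i+1$: I would make sure the normalization of ``ramification break'' matches the $(p-1)(d_i+1)$ different formula — the same normalization underlying the Riemann--Hurwitz formula for $g_Y$ recorded above, which provides a built-in consistency check.
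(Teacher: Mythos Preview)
Your argument is correct and takes a genuinely different route from the paper's. The paper works directly with the Galois involution $\gamma$: setting $T=1-\gamma$, it uses the defining property $v_i(T(u))=1+d_i$ of the ramification break to compute, for the odd/even decomposition $t_i=\alpha_{\mathrm{odd}}+\alpha_{\mathrm{even}}$, that $v_i(\alpha_{\mathrm{odd}})=d_i+2$, and then takes $u_i$ to be the square root of $\alpha_{\mathrm{even}}$. Your approach instead packages the ramification input as the different exponent $v_{Q_i}(dt_i)=d_i+1$ and runs a characteristic-$2$ ``square-killing'' iteration on the uniformizer: maximize $M=v_{Q_i}(t_i-u^2)$, show $M$ must be odd (else $u\mapsto u+\sqrt{a}\,u^{\ell}$ strictly increases $M$), and read off $M=d_i+2$ from $v_{Q_i}(dt_i)=M-1$. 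The paper's method makes the role of the Galois action transparent and generalizes more visibly to other cyclic $p$-covers, while yours is shorter and uses only the scalar invariant $v_{Q_i}(dt_i)$ together with the closedness of $K_{Q_i}^2$; both yield the same $u_i$ up to higher-order ambiguity.
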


\begin{proof}

    Let $t$ be a uniformizer of $K_{P_i}$, and
    consider an Artin-Schreier equation defining $K_{Q_i}$ over $K_{P_i}$
    \begin{equation}\label{eq: AS explicit}
        y^2 + y= f(t).
    \end{equation}
    We may take $f(t)$ to have a pole of order $d$. First, we claim that
    there exists a uniformizer $t_i$ of $K_{P_i}$ with $t_i^{-d_i}=f(t)$. To see this, note that
    $t^{d_i}f(t)=c\cdot u(t)$, where $c \in k^\times$ and $u$ is a $1$-unit. Then $u$ has a $d_i$-th root since $d_i$ is coprime to the characteristic. Also,
    $c$ has a $d_i$-th root, since $k$ is algebraically closed. In particular, we have
    $t^{d_i}f(t)=v^{d_i}$. Then we can take $t_i$ to be $tv^{-1}$, so that \eqref{eq: AS explicit}
    becomes
    \begin{equation}\label{eq: AS explicit2}
        y^2+y=t_i^{-d_i}.
    \end{equation}
    Let $u_i=yt_i^{\frac{d_i+1}{2}}$. Since $y$ has a pole of order $d_i$ at $Q_i$, we see that
    $u_i$ is a uniformizer of $K_{Q_i}$. Multiplying \eqref{eq: AS explicit2} by $t_i^{d_i+1}$ gives
    \begin{equation}\label{eq: AS explicit3}
        t_i = u_i^2 + t_i^{\frac{d_i+1}{2}}u_i. 
    \end{equation}
    Recursively expanding \eqref{eq: AS explicit3}  gives
    \begin{equation*}
        t_i = u_i^2+\left( u_i^2 + t_i^{\frac{d_i+1}{2}}u_i\right)^{\frac{d_i+1}{2}}u_i=u_i^2 + u_i^{d_i+2} + O(u_i^{2d+2}).\qedhere
    \end{equation*}

\end{proof}

\begin{corollary} \label{cor: t^j}
    For all $j \in \Z$ we have $t_i^j = u_i^{2j} + O(u_i^{2j + d_i})$.
\end{corollary}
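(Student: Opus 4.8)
The plan is to factor the local expansion from Lemma~\ref{l: uniformizer up above} as $u_i^2$ times a unit that is congruent to $1$ modulo $u_i^{d_i}$, and then to observe that raising such a unit to an arbitrary integer power preserves this congruence.

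First I would rewrite Lemma~\ref{l: uniformizer up above} in the form
\[
t_i = u_i^2\,\varepsilon_i, \qquad \varepsilon_i \colonequals 1 + c_i u_i^{d_i} + O(u_i^{d_i+1}) \in \widehat{\mathcal{O}}_{Y,Q_i}^{\times},
\]
so that $\varepsilon_i - 1 \in u_i^{d_i}\widehat{\mathcal{O}}_{Y,Q_i}$. The subset $1 + u_i^{d_i}\widehat{\mathcal{O}}_{Y,Q_i}$ is a subgroup of the unit group $\widehat{\mathcal{O}}_{Y,Q_i}^{\times}$: it is closed under multiplication by expanding the product, and closed under inversion since $(1+v)^{-1} = 1 - v + v^2 - \cdots$ converges $u_i$-adically whenever $v \in u_i^{d_i}\widehat{\mathcal{O}}_{Y,Q_i}$. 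Hence $\varepsilon_i^{\,j} \in 1 + u_i^{d_i}\widehat{\mathcal{O}}_{Y,Q_i}$ for every $j \in \Z$; that is, $\varepsilon_i^{\,j} = 1 + O(u_i^{d_i})$.

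Then for any $j \in \Z$ I would simply compute
\[
t_i^{\,j} = u_i^{2j}\,\varepsilon_i^{\,j} = u_i^{2j}\bigl(1 + O(u_i^{d_i})\bigr) = u_i^{2j} + O(u_i^{2j+d_i}),
\]
which is the assertion. There is essentially no obstacle here; the only points worth a moment's care are that $j$ may be negative, so one works inside the completed local ring (equivalently, the Laurent series ring in $u_i$) where inverses of units are available, and that the error term must be tracked correctly through multiplication by $u_i^{2j}$ — both entirely routine.
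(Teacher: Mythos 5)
Your proposal is correct and follows essentially the same route as the paper: both factor $t_i = u_i^2(1 + O(u_i^{d_i}))$ using Lemma~\ref{l: uniformizer up above} and then observe that the unit factor raised to any integer power $j$ remains $1 + O(u_i^{d_i})$ (the paper invokes the generalized binomial theorem where you argue via the subgroup $1 + u_i^{d_i}\widehat{\mathcal{O}}_{Y,Q_i}$ of the unit group, which amounts to the same thing). No gaps.
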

\begin{proof}
    From Lemma \ref{l: uniformizer up above} we know $\frac{t_i}{u_i^2} = 1 + O(u_i^{d_i})$. Thus, $\frac{t_i^j}{u_i^{2j}} = 1 + O(u_i^{d_i})$ (e.g. use the generalized binomial theorem). The corollary follows by multiplying by $u_i^{2j}$.
\end{proof}

\begin{lemma}
\label{l: construct wij}
Let $i \in \{ 1, \ldots, m\}$ and $j \in \{1, 2, \ldots, d_i-1\}$.  There exists a differential $w_{i,j}$ on $X$ such that, letting $\omega_{i,j} \colonequals \pi^* w_{i,j}$ denote the pullback to $Y$:
\begin{enumerate}[(i)]
\item $w_{i,j}$ is regular away from $P_i$ and there is $c'_i \in k$ such that the local expansion at $P_i$ is  
$$w_{i,j} = (c'_i t_i^{-j-1} + O(1))dt_i.$$

\item  if $1 \leq j \leq (d_i-1)/2$ then
    \begin{align*}
        \omega_{i,j} &= [u_i^{-2j + d_i - 1} + O(u_i^{d_i+1})]du_i 
    \end{align*}
\item  if $(d_i-1)/2 < j \leq d_i-1$ then
    \begin{align*}
        \omega_{i,j} &= [u_i^{-2j + d_i - 1} + O(u_i^{-2j + 2d_i - 1})]du_i.
    \end{align*}
\end{enumerate}
Furthermore, the above properties uniquely determine $w_{i,j}$ up to adding elements of $H^0(X,\Omega^1_X)$
and the order of $\omega_{i,j}$ at $Q_i$ is $-2j + d_i - 1$.
\end{lemma}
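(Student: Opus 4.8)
The plan is to build $w_{i,j}$ by prescribing its polar part at $P_i$ and invoking Riemann--Roch, then to choose that polar part so that the pull-back acquires the precise local shape demanded in (ii) and (iii). All the real work happens at $P_i$ and $Q_i$, since away from $B$ and $S$ everything is automatically regular.

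First, the Riemann--Roch bookkeeping. For $j\ge 1$ one has $\deg\bigl(\Omega^1_X((j+1)P_i)\bigr)=2g_X-2+(j+1)\ge 2g_X-1$, so $h^1=0$ and $h^0\bigl(\Omega^1_X((j+1)P_i)\bigr)=g_X+j$; comparing with $h^0(\Omega^1_X)=g_X$ through the length-$(j+1)$ skyscraper at $P_i$, the ``polar part at $P_i$'' map has image exactly the $j$-dimensional space of residue-zero polar parts $\sum_{l=2}^{j+1}b_l\,t_i^{-l}dt_i$ (the residue theorem being the only obstruction, because $P_i$ is the only pole). Hence every such polar part is realized by a differential on $X$ that is regular away from $P_i$, unique up to adding $H^0(X,\Omega^1_X)$; this gives the final ``uniqueness up to $H^0(X,\Omega^1_X)$'' assertion once the polar part of $w_{i,j}$ is pinned down, and reduces everything to understanding $\pi^*$ of the elementary polar parts $t_i^{-l}dt_i$.

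Second, the key local computation. By Lemma~\ref{l: uniformizer up above} one has $t_i=u_i^2\bigl(1+c_iu_i^{d_i}+\cdots\bigr)$ and $dt_i=\bigl(c_iu_i^{d_i+1}+\cdots\bigr)du_i$ with $c_i\neq 0$, so $\pi^*(t_i^{-l}dt_i)=t_i(u_i)^{-l}\,\tfrac{dt_i}{du_i}\,du_i$ has valuation exactly $-2l+d_i+1$ at $Q_i$ with leading coefficient $c_i$; this already yields $\ord_{Q_i}(\omega_{i,j})=-2j+d_i-1$ once the polar part of $w_{i,j}$ has leading term a nonzero multiple of $t_i^{-(j+1)}dt_i$. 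To control the tail I would push the expansion further, exploiting that $t_i$ is $\Gal(Y/X)$-invariant so that its $u_i$-expansion, and that of $dt_i/du_i$, is rigid: this forces $\pi^*(t_i^{-l}dt_i)$ to have, beyond its leading term, only even-order contributions up to order roughly $2d_i-2l$ (with a pattern that is $l$-independent up to the overall shift $u_i^{-2l}$), the first genuinely uncontrolled term appearing only near order $-2l+2d_i+1$. The characteristic-two identities $d(u_i^{2m})=0$ and $(1+x)^2=1+x^2$ are exactly what produce these cancellations and gaps.

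Third, solving for the polar part. Writing $\omega_{i,j}\equiv\sum_{l=2}^{j+1}b_l\,\pi^*(t_i^{-l}dt_i)$ modulo contributions of order $\ge d_i+1$ (which come from the irrelevant regular part of $w_{i,j}$), the requirements in (ii) or (iii) become a linear system in $b_2,\dots,b_{j+1}$ got by matching coefficients of $u_i^{-2j+d_i-1+2s}$; by the structure above this system is lower-triangular with invertible diagonal, so it determines $b_{j+1},b_j,\dots$ successively. For $(d_i-1)/2<j\le d_i-1$ as in (iii) there are at least as many unknowns as equations, so one simply solves, and the error the construction produces is precisely the $O(u_i^{-2j+2d_i-1})$ of the statement. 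For $1\le j\le (d_i-1)/2$ as in (ii) the system is critically determined, and one must additionally verify a compatibility relation among the tail coefficients of $\pi^*(t_i^{-(j+1)}dt_i)$; I expect this to be the main obstacle. I anticipate it is dispatched by a good normalization of the local datum at $P_i$ --- e.g.\ choosing the Artin--Schreier presentation so that the polar part at $P_i$ of its defining function is a single power of $t_i$, which forces the relevant tail coefficients to vanish and collapses $\pi^*(t_i^{-l}dt_i)$ to $c_iu_i^{-2l+d_i+1}du_i+O(u_i^{-2l+2d_i+1})du_i$, after which (ii) is immediate.
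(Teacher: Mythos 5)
Your Riemann--Roch bookkeeping and the leading-order computation $\ord_{Q_i}(\pi^*(t_i^{-j-1}dt_i)) = -2j+d_i-1$ agree with the paper, but the core of your plan --- ``solving for the polar part'' by tuning coefficients $b_2,\dots,b_{j+1}$ of $\sum b_l t_i^{-l}dt_i$ in a triangular linear system --- is structurally incompatible with the statement you are proving. Part (i) asserts the local expansion is $(c_i't_i^{-j-1}+O(1))dt_i$, i.e.\ the polar part of $w_{i,j}$ at $P_i$ is the \emph{single monomial} $c_i't_i^{-j-1}dt_i$; any nonzero $b_l$ with $2\le l\le j$ would introduce an extra pole term and violate (i). So there are no free polar coefficients to absorb unwanted tail terms: the only freedom is the scalar $c_i'$ and the additive $H^0(X,\Omega^1_X)$, and (ii)/(iii) must be verified for the monomial polar part itself. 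That is exactly what the paper does: it takes $w'_{i,j}$ with polar part $t_i^{-j-1}dt_i$, invokes Corollary~\ref{cor: t^j} (which gives $t_i^{-j-1}=u_i^{-2j-2}+O(u_i^{-2j-2+d_i})$, a multiplicative error of relative order $d_i$) together with the expansion $dt_i=[c_iu_i^{d_i+1}+O(u_i^{d_i+2})]du_i$ from Lemma~\ref{l: uniformizer up above}, and multiplies the two series; the case split at $j=(d_i-1)/2$ is just which of the two resulting error exponents is smaller. No linear system and no compatibility relation appear.

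Your proposed escape hatch for case (ii) --- normalizing the Artin--Schreier presentation so that the defining function has a single polar term --- also does not work. Substitutions $\psi\mapsto\psi+g^2+g$ can only remove the even-order polar coefficients of $\psi$, so the normalization you want is generally unavailable; and in any case changing the presentation does not change the extension $K_{Q_i}/K_{P_i}$, hence does not change the intrinsic expansion $t_i=u_i^2+c_iu_i^{d_i+2}+\cdots$ or the tail of $\pi^*(t_i^{-l}dt_i)$. Finally, even granting your asserted expansion $\pi^*(t_i^{-l}dt_i)=c_iu_i^{-2l+d_i+1}du_i+O(u_i^{-2l+2d_i+1})du_i$, case (ii) is not ``immediate'': at the boundary $j=(d_i-1)/2$ this gives an error of order $u_i^{d_i}$, not $u_i^{d_i+1}$ (a delicacy that is also present, unremarked, in the paper's own case analysis). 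The fix is not more freedom in the polar part but a direct multiplication of the two established local expansions.
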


\begin{proof}
By Riemann-Roch there exists a differential $w'_{i,j}$ on $X$ that is regular away from $P_i$ and whose local expansion at $P_i$ is  
$$w'_{i,j} = (t_i^{-j-1} + O(1))dt_i.$$
The difference of two such differentials is regular everywhere, so the choice of $w'_{i,j}$ is unique up to adding an element of $H^0(X,\Om_X^1)$.
 By Corollary \ref{cor: t^j} we know that
    $t_i^{-j-1} + O(1) = u_i^{-2j-2} + O\left(u_i^{\min(0,-2j-2+d_i)}\right)$. Then by
    Lemma \ref{l: uniformizer up above} we have
    \begin{align*}
        \omega_{i,j} &= \left(u_i^{-2j-2} + O\left(u_i^{\min(0,-2j-2+d_i)}\right)\right)\left( c_i u_i^{d_i + 1} + O(u_i^{2d_i + 2})\right) du_i \\
        &= \left(c_iu_i^{-2j+d_i-1} + O\left(u_i^{\min(d_i+1,-2j+2d_i-1)}\right)  \right)du_i.
    \end{align*}
    When $j=1,\dots,\frac{d_i-1}{2}$ the minimum in the exponent is $d_i+1$ and
    when $j=\frac{d_i+1}{2}, \dots, d_i-1$ the minimum in the exponent is $-2j+2d_i - 1$.  Finally rescale so the leading term in $\omega_{i,j}$ is monic.
\end{proof}

\begin{corollary}
    \label{c: estimating corollary of tail of differential at off-points}
    For $i' \neq i$ the stalk of $\omega_{i,j}$ at $Q_{i'}$ satisfies $\omega_{i,j} = O\left(u_{i'}^{d_{i'}+1}\right)du_{i'}$. In particular, $\omega_{i,j}$
    is regular away from $Q_i$.
\end{corollary}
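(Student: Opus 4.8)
The plan is to use the fact, recorded in Lemma~\ref{l: construct wij}(i), that $w_{i,j}$ is a differential on $X$ that is regular away from the single point $P_i$. In particular, for $i' \neq i$ the differential $w_{i,j}$ is regular at $P_{i'}$, so its local expansion at $P_{i'}$ has the form $w_{i,j} = h(t_{i'})\, dt_{i'}$ for some $h \in \widehat{\mathcal{O}}_{X,P_{i'}}$.

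Next I would pull back to $Y$ near $Q_{i'}$. Since $P_{i'}$ is a branch point, $\pi^{-1}(P_{i'}) = \{Q_{i'}\}$ and the cover is ramified there with break $d_{i'}$, so Lemma~\ref{l: uniformizer up above} supplies a uniformizer $u_{i'}$ at $Q_{i'}$ with $t_{i'} = u_{i'}^2 + O(u_{i'}^{d_{i'}+2})$ and, crucially, $dt_{i'} = [c_{i'} u_{i'}^{d_{i'}+1} + O(u_{i'}^{d_{i'}+2})]\, du_{i'}$. Substituting into the expansion of $w_{i,j}$ gives
\[
\omega_{i,j} = \pi^* w_{i,j} = h\bigl(u_{i'}^2 + O(u_{i'}^{d_{i'}+2})\bigr)\cdot \bigl[c_{i'} u_{i'}^{d_{i'}+1} + O(u_{i'}^{d_{i'}+2})\bigr]\, du_{i'}.
\]
Because $h(u_{i'}^2 + \cdots) \in \widehat{\mathcal{O}}_{Y,Q_{i'}}$ has only nonnegative powers of $u_{i'}$, the whole product lies in $u_{i'}^{d_{i'}+1}\widehat{\mathcal{O}}_{Y,Q_{i'}}\, du_{i'}$, which is exactly the assertion $\omega_{i,j} = O(u_{i'}^{d_{i'}+1})\, du_{i'}$.

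For the ``in particular'' clause: the computation above shows $\omega_{i,j}$ is regular (indeed vanishing) at every $Q_{i'}$ with $i' \neq i$, and at any other closed point $Q$ of $Y$ the map $\pi$ is \'etale and $w_{i,j}$ is regular at $\pi(Q)$, so $\pi^* w_{i,j}$ is regular at $Q$; hence $\omega_{i,j}$ is regular away from $Q_i$. I do not anticipate a genuine obstacle here: the statement is an immediate consequence of Lemmas~\ref{l: uniformizer up above} and~\ref{l: construct wij} together with the elementary behavior of pullback under a ramified double cover, and the only point requiring a moment's care is invoking the ``regular away from $P_i$'' part of Lemma~\ref{l: construct wij}(i) uniformly for all $i' \neq i$.
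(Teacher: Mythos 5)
Your proposal is correct and follows exactly the paper's argument: invoke the regularity of $w_{i,j}$ at $P_{i'}$ from Lemma~\ref{l: construct wij}(i) to write $w_{i,j}=O(1)\,dt_{i'}$, then apply the expansion $dt_{i'}=[c_{i'}u_{i'}^{d_{i'}+1}+O(u_{i'}^{d_{i'}+2})]\,du_{i'}$ from Lemma~\ref{l: uniformizer up above}. Your extra remarks on the unramified locus are a harmless elaboration of the same reasoning.
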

\begin{proof}
    Since $w_{i,j}$ is regular at $P_{i'}$, we have $w_{i,j} = O(1)dt_{i'}$. The corollary follows immediately from Lemma~\ref{l: uniformizer up above}.
\end{proof}

\begin{proposition}
    \label{p: description of S-enhanced differential}
    For $j=1,\dots, \frac{d_i-1}{2}$, the element 
    \[ \widehat{\omega}_{i,j} \colonequals  (\omega_{i,j}, (0, \dots,  0)) \]
    is an $S$-enhanced differential on $Y$. For $j=\frac{d_i+1}{2}, \dots, d_i-1$ the element 
    \[\widehat{\omega}_{i,j} \colonequals  (\omega_{i,j}, (0, \dots, u_i^{-2j + d_i}, \dots , 0))\] is an 
    $S$-enhanced differential on $Y$.  These are unique up to adding the pullback of a regular differential on $X$.
\end{proposition}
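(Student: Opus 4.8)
The plan is to verify the two conditions of Definition~\ref{def:S-enhanced} directly from the local expansions recorded in Lemma~\ref{l: construct wij} and Corollary~\ref{c: estimating corollary of tail of differential at off-points}. In both cases $\omega_{i,j} = \pi^* w_{i,j}$ is regular on $Y$ away from $Q_i$: at each $Q_{i'}$ with $i' \neq i$ this is Corollary~\ref{c: estimating corollary of tail of differential at off-points}, and at every other closed point it follows because $w_{i,j}$ is regular on $X$ away from $P_i$; so the only point needing attention is $Q_i$, and we may take $f_{Q_{i'}} = 0$ for $i' \neq i$. When $1 \le j \le (d_i-1)/2$, Lemma~\ref{l: construct wij}(ii) gives $\ord_{Q_i}(\omega_{i,j}) = -2j + d_i - 1 \ge 0$, so $\omega_{i,j}$ is regular everywhere on $Y$, hence of the second kind, and the pair $(\omega_{i,j},(0,\dots,0))$ trivially satisfies Definition~\ref{def:S-enhanced}.

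When $(d_i+1)/2 \le j \le d_i - 1$, the differential $\omega_{i,j}$ genuinely has a pole at $Q_i$, and I would exhibit a local antiderivative of its tail explicitly. Because $d_i$ is odd, $-2j+d_i$ is odd, so in characteristic $2$ we have $d(u_i^{-2j+d_i}) = (-2j+d_i)\,u_i^{-2j+d_i-1}\,du_i = u_i^{-2j+d_i-1}\,du_i$, which by Lemma~\ref{l: construct wij}(iii) is exactly the polar part of $\omega_{i,j}$. Since $-2j+2d_i-1 \ge 1$ on this range of $j$, the difference $\omega_{i,j} - d(u_i^{-2j+d_i})$ has order $\ge 1$ at $Q_i$, hence is regular there. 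This shows simultaneously that $\omega_{i,j}$ is of the second kind at $Q_i$ (its only negative-order term has even order, so it has no residue) and that $f_{Q_i} := u_i^{-2j+d_i} \bmod \widehat{\mathcal O}_{Y,Q_i}$ is an admissible enhancement; as $-2j+d_i \le -1$, this class is nonzero. Hence $(\omega_{i,j},(0,\dots,u_i^{-2j+d_i},\dots,0))$ is an $S$-enhanced differential.

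For the uniqueness clause, I would invoke the last sentence of Lemma~\ref{l: construct wij}: $w_{i,j}$, and therefore $\omega_{i,j} = \pi^* w_{i,j}$, is determined by its prescribed local behaviour up to adding an element $\eta \in H^0(X,\Omega^1_X)$; the pullback $\pi^*\eta$ is regular on $Y$, so adding it does not affect the chosen $f_Q$'s and changes $\widehat\omega_{i,j}$ by $(\pi^*\eta,(0,\dots,0))$, i.e.\ by the pullback of a regular differential on $X$. I do not expect a serious obstacle: every local expansion needed is already in place and the only genuine computation is the characteristic-$2$ identity $d(u_i^{-2j+d_i}) = u_i^{-2j+d_i-1}\,du_i$. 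The one point to be careful about is confirming that the error term in Lemma~\ref{l: construct wij}(iii) has positive order, so that $\omega_{i,j} - d(u_i^{-2j+d_i})$ is genuinely regular at $Q_i$ — this is the elementary inequality $-2j+2d_i-1 \ge 1$, valid precisely because $j \le d_i - 1$.
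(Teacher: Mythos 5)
Your proposal is correct and follows essentially the same route as the paper: regularity of $\omega_{i,j}$ for $j\le (d_i-1)/2$ via Lemma~\ref{l: construct wij} and Corollary~\ref{c: estimating corollary of tail of differential at off-points}, the characteristic-$2$ identity $d(u_i^{-2j+d_i})=u_i^{-2j+d_i-1}\,du_i$ together with the inequality $-2j+2d_i-1\ge 1$ for the larger $j$, and uniqueness inherited from the ambiguity of $w_{i,j}$ up to $\HH^0(X,\Om_X^1)$. Your write-up merely makes explicit a couple of steps the paper leaves implicit.
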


\begin{proof}
    For $j=1,\dots,\frac{d_i-1}{2}$, we know from Lemma~\ref{l: construct wij} and Corollary \ref{c: estimating corollary of tail of differential at off-points} that $\omega_{i,j}$
    is a global holomorphic differential on $Y$, which gives the desired result. For $j=\frac{d_i+1}{2},\dots,d_i-1$, recalling Definition~\ref{def:S-enhanced} we need to prove that
    \begin{align*} 
        \omega_{i,j} &= [u_i^{-2j + d_i - 1} + O(1)]du_i.
    \end{align*}
    But Lemma \ref{l: construct wij} shows that if $-2j + 2d_i - 1 \geq 1$ then
    \begin{align*}
                \omega_{i,j} &= [u_i^{-2j + d_i - 1} + O(u_i^{})]du_i
    \end{align*}
    which suffices to give the result.  

    Since the differentials $w_{i,j}$ on $X$ are unique up to adding an element of $\HH^0(X,\Om_X^1)$, the $S$-enhanced differentials $\widehat{\omega}_{i,j}$ are unique up to adding $S$-enhanced differentials of the form $ (\pi^*\eta , (0, \ldots ,0))$, for $\eta \in \HH^0(X,\Om_X^1)$. 
\end{proof}

\begin{example}
The $\omega_{i,j}$ for fixed $i$ describe a portion of the cohomology of the cover that mimics the cohomology of an Artin-Schreier cover of $\PP^1$.
Suppose $X = \PP^1$ and let $Y$ be given by an Artin-Schreier equation $y^2 + y = x^{-d_1}$. Then $Y \to X$ is ramified only over $\{0\}$ with break $d_1$ and a uniformizer above $0$ is given by $u = y x^{(d_1+1)/2}$. We directly see that
\[
\omega_{1,j} = x^{-j-1} dx \quad \text{and} \quad \widehat{\omega}_{1,j} = \begin{cases}
    (x^{-j-1} dx, 0), & j \leq \frac{d_1-1}{2}\\
   ( x^{-j-1} dx, (u^{-2j + d_1 }) ), & j > \frac{d_1-1}{2}.
\end{cases}
\]
\end{example}

\subsection{The action of \texorpdfstring{$F$}{F} and \texorpdfstring{$V$}{V}}

Since we wish to understand the Dieudonn\'{e} module structure of $\HdR^1(Y)$, it is a natural next step to study the action of $F$ and $V$ on the $S$-enhanced differentials $\widehat{\omega}_{i,j}$.

Using the decomposition of $k[F]$-modules
$$\HdR^1(Y) = \HdR^1(Y)^{\bij} \oplus \HdR^1(Y)^{\nil}$$
coming from Fitting's lemma, for any $\widehat{\omega} \in \HdR^1(Y)$ we can write $\widehat{\omega} = \widehat{\omega}^{\bij} + \widehat{\omega}^{\nil}$. 

\begin{proposition} \label{prop:nilpotent wij}
There exists a choice of differentials $\widehat{\omega}_{i,j}$ as in Proposition~\ref{p: description of S-enhanced differential} such that 
$$V(\widehat{\omega}_{i,j}^{\nil}) = \begin{cases}
        \widehat{\omega}_{j/2}^{\nil} + \pi^*\eta_{i,j} &\hbox{if $j$ is even} \\
        \pi^*\eta_{i,j} &\hbox{if $j$ is odd},
    \end{cases}$$
    where $\eta_{i,j}$ is an element of $\HH^0(X,\Om_X^1)$ on which $V$ acts nilpotently.  In particular, $V$ acts nilpotently on $\widehat{\omega}_{i,j}^{\nil}$.
\end{proposition}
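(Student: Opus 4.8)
The plan is to reduce the computation of $V$ to an explicit Cartier-operator calculation on the base curve $X$, using that $\omega_{i,j}=\pi^*w_{i,j}$. By Proposition~\ref{prop:F and V on S-enhanced differentials}, the class $V(\widehat{\omega}_{i,j})$ in $\HdR^1(Y)$ is represented by $(V_Y(\omega_{i,j}),(0)_{Q\in S})$, where $V_Y$ is the Cartier operator on $k(Y)$; and since $\pi$ is a separable cover, $V_Y(\pi^*w_{i,j})=\pi^*V_X(w_{i,j})$. Two structural observations will streamline everything. First, $VF=0$ forces $V$ to vanish on the $F$-bijective part $\HdR^1(Y)^{\bij}$ (which is contained in the image of $F$), so $V(\widehat{\omega}_{i,j})=V(\widehat{\omega}_{i,j}^{\nil})$; moreover $V(\widehat{\omega}_{i,j}^{\nil})\in\ker F=H^0(Y,\Omega^1_Y)\subseteq\HdR^1(Y)^{\nil}$, so whatever we compute automatically lands in the nilpotent part. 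Second, every global regular differential on $Y$ lies in $\ker F\subseteq\HdR^1(Y)^{\nil}$; in particular $\pi^*H^0(X,\Omega^1_X)\subseteq\HdR^1(Y)^{\nil}$, and for $1\le j\le (d_i-1)/2$ the class $\widehat{\omega}_{i,j}$, which by Proposition~\ref{p: description of S-enhanced differential} is represented by the global regular differential $\omega_{i,j}$, equals $\widehat{\omega}_{i,j}^{\nil}$.

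Next I would compute $V_X(w_{i,j})$. By Lemma~\ref{l: construct wij}(i), $w_{i,j}$ is regular away from $P_i$ and its local expansion at $P_i$ has a \emph{single} polar term $c'_i t_i^{-j-1}dt_i$, the rest being holomorphic. The direct formula for the Cartier operator sends $t_i^n dt_i$ to $t_i^{(n-1)/2}dt_i$ when $n$ is odd and to $0$ when $n$ is even, and carries holomorphic differentials to holomorphic differentials; it also preserves regularity away from $P_i$. If $j$ is odd then $-j-1$ is even, so $V_X(w_{i,j})$ is regular at $P_i$ and hence lies in $H^0(X,\Omega^1_X)$. If $j$ is even then $-j-1$ is odd, so $V_X(w_{i,j})$ is regular away from $P_i$ with local expansion at $P_i$ equal to a single polar term $(c'_i)^{1/2}t_i^{-j/2-1}dt_i$ plus a holomorphic part; since $j/2\le (d_i-1)/2$, the uniqueness clause of Lemma~\ref{l: construct wij} identifies $V_X(w_{i,j})$ with a nonzero scalar multiple of $w_{i,j/2}$ modulo $H^0(X,\Omega^1_X)$. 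That scalar is a fixed power of $c'_i$ and can be normalized to $1$ by rescaling each $w_{i,j}$ by a nonzero constant depending only on $i$ (consistently, since the index recursion $j\mapsto j/2$ has a fixed behaviour after one squaring, making a constant rescaling work). Pulling back via $\pi^*$ and using $\pi^*w_{i,j/2}=\omega_{i,j/2}$ together with $\widehat{\omega}_{i,j/2}=(\omega_{i,j/2},(0))$, we conclude that $V(\widehat{\omega}_{i,j})$ is represented by $\pi^*\eta_{i,j}$ for some $\eta_{i,j}\in H^0(X,\Omega^1_X)$ when $j$ is odd, and by $\widehat{\omega}_{i,j/2}+\pi^*\eta_{i,j}$ with $\eta_{i,j}\in H^0(X,\Omega^1_X)$ when $j$ is even.

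To arrange that $V$ acts nilpotently on $\eta_{i,j}$ (equivalently, that the Cartier operator $V_X$ is nilpotent on it, since $V(\pi^*\eta)=\pi^*V_X(\eta)$ and $\pi^*$ is injective on differentials), I would use the freedom in Proposition~\ref{p: description of S-enhanced differential}: replacing $\widehat{\omega}_{i,j}$ by $\widehat{\omega}_{i,j}+\pi^*\xi$ with $\xi\in H^0(X,\Omega^1_X)$ changes $\eta_{i,j}$ by $V_X(\xi)$ and alters nothing else (in particular not $\widehat{\omega}_{i,j/2}$). By Fitting's lemma applied to $(H^0(X,\Omega^1_X),V_X)$, the $V_X$-bijective part of $\eta_{i,j}$ lies in the image of $V_X$, so we may subtract it off and assume each $\eta_{i,j}$ is $V_X$-nilpotent. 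Combined with the first paragraph (recall $\widehat{\omega}_{i,j/2}=\widehat{\omega}_{i,j/2}^{\nil}$ since $j/2\le (d_i-1)/2$, and $\pi^*\eta_{i,j}\in\HdR^1(Y)^{\nil}$), this gives exactly the stated formula for $V(\widehat{\omega}_{i,j}^{\nil})$. Finally, nilpotency of $V$ on $\widehat{\omega}_{i,j}^{\nil}$ follows by strong induction on $j$: for $j$ odd, $V(\widehat{\omega}_{i,j}^{\nil})=\pi^*\eta_{i,j}$ is killed by a power of $V$ because $\eta_{i,j}$ is $V_X$-nilpotent; for $j$ even, $V(\widehat{\omega}_{i,j}^{\nil})$ is the sum of $\widehat{\omega}_{i,j/2}^{\nil}$, killed by a power of $V$ by the inductive hypothesis ($j/2<j$), and $\pi^*\eta_{i,j}$, killed by a power of $V$, so $\widehat{\omega}_{i,j}^{\nil}$ is itself killed by a power of $V$.

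The main technical point I would be careful about is the Cartier computation: the fact that $w_{i,j}$ has only one polar term at $P_i$ is exactly what makes $V_X$ either annihilate it (for $j$ odd) or produce precisely the single polar term of $w_{i,j/2}$ (for $j$ even), and hence what makes the recursion $j\mapsto j/2$ clean. The bookkeeping with the $\nil$/$\bij$ decomposition, the identification of $\widehat{\omega}_{i,j/2}$ with a global regular differential, and the normalization of scalars are all routine once this is established.
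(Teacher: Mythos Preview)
Your proposal is correct and follows essentially the same approach as the paper: reduce to $\widehat{\omega}_{i,j}$ via $VF=0$, compute $V_X(w_{i,j})$ from its single polar term at $P_i$ to get $w_{i,j/2}$ modulo $H^0(X,\Omega^1_X)$, and then use the additive freedom (replacing $w_{i,j}$ by $w_{i,j}-\gamma_{i,j}$ with $V_X(\gamma_{i,j})=\eta_{i,j}^{\bij}$) to force each $\eta_{i,j}$ to be $V_X$-nilpotent. You are somewhat more explicit than the paper about the scalar in front of $w_{i,j/2}$ and about the induction giving nilpotency of $V$ on $\widehat{\omega}_{i,j}^{\nil}$; the paper silently absorbs the scalar (in effect normalizing $c'_i=1$ via the choice of uniformizer) and leaves the final nilpotency implicit, but the substance of the two arguments is the same.
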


\begin{proof}
Note first that $V$ kills $\widehat{\omega}_{i,j}^{\bij}$ since it is in the image of $F$. Thus it suffices to show the lemma for the $\widehat{\omega}_{i,j}$.

Since $\omega_{i,j}$ is regular away from $P_i$, so is $V(\omega_{i,j})$. Locally at $P_i$, we have
$$ V(w_{i,j}) = V((t_i^{-j-1} +O(1))dt_i) = \begin{cases}
    \left(t_i^{-\frac{j}{2}+1}+O(1)\right)dt_i & \hbox{if $j$ is even} \\
    O(1)dt_i &\hbox{if $j$ is odd}.
\end{cases} $$
Pulling back along $\pi$ and using Proposition~\ref{prop:F and V on S-enhanced differentials}, the local expansions show that $V(\widehat{\omega}_{i,j}) - \omega_{i,j/2}$ is the pullback of a differential which is regular on $X$.   Noting that $\widehat{\omega}_{i,j/2}= \widehat{\omega}_{i,j/2}^{\nil}$ since a regular differential is killed by $F$, we obtain the formula in the lemma.
It remains to be shown that we may assume that $V$ acts nilpotently on $\eta_{i,j}$ (and hence on $\widehat{\omega}_{i,j}$). For this, recall the decomposition of $k[V]$-modules from Fitting's lemma
$$\HH^0(X,\Om_X^1) = \HH^0(X,\Om_X^1)^{\bij} \oplus \HH^0(X,\Om_X^1)^{\nil}$$
and write $\eta_{i,j} = \eta_{i,j}^{\bij} + \eta_{i,j}^{\nil}.$ There exists a differential $\gamma_{i,j}\in \HH^0(X,\Om_X^1)$ such that $V(\gamma_{i,j})=\eta_{i,j}^{\bij}$. Recall that the differential $w_{i,j}$ on $X$ is chosen up to an element of $\HH^0(X,\Om_X^1)$, so we may replace $w_{i,j}$ by $w_{i,j}-\gamma_{i,j}$. Then we have 
$$V(w_{i,j}-\gamma_{i,j})=V(w_{i,j}) - V(\gamma_{i,j}) = \begin{cases}
    w_{i,j/2} + \eta_{i,j}^{\nil} &\hbox{if $j$ is odd} \\
    \eta_{i,j}^{\nil} &\hbox{if $j$ is even}.
\end{cases}$$
Thus we may choose $w_{i,j}$ such that $V$ is nilpotent on $w_{i,j}$ and therefore $V$ is nilpotent on $\widehat{\omega}_{i,j}^{\nil}$. 
\end{proof}

\begin{definition} \label{defn:wijtilde}
Fixing differentials as in Proposition~\ref{prop:nilpotent wij}, we define 
$$\widetilde{\omega}_{i,j} \colonequals \widehat{\omega}_{i,j}^{\nil}=\widehat{\omega}_{i,j}-\widehat{\omega}_{i,j}^{\bij}.$$
For $i=1, \ldots ,m$, define $W_i \colonequals \spa \{\widetilde{\omega}_{i,j} \; | \; 1 \leq j \leq d_i-1\}$.  Given $1 \leq \ell \leq d_i-1$, we also define
    $$W_{i,l} \colonequals  \spa \{ \widetilde{\omega}_{i,j} \; | \; j \leq l \} \subseteq W_i.$$
Finally, define $W \colonequals \bigoplus_{i=1}^m W_i$.
\end{definition}

Note that $F$ is nilpotent on $\widetilde{\omega}_{i,j}$ and that $F$ kills regular differentials so $\widetilde{\omega}_{i,j} = \widehat{\omega}_{i,j}$ if $j \leq \frac{d_i-1}{2}$.  It is also clear that $\dim_k W_{i,l} =l$.

\begin{remark} \label{remark: W subspace of L}
Note that $W$ is a subspace of the space $L$ in Equation~\eqref{eq: decomposition k[F,V]}.
In general, $W$ is not a Dieudonn\'{e} submodule of $L$.   The dimension of $L$ is $2l_Y = 4l_X + \sum_{i=1}^m (d_i-1)$ and the dimension of $W$ is $\sum_{i=1}^m (d_i - 1)$.
\end{remark}

\subsection{The symplectic pairing}

We already know that the pairing $U \times Z \to k$ is perfect and the spaces $U \oplus Z$ and $L$ are orthogonal.  So the next step is to analyze the symplectic pairing between the $S$-enhanced differentials $\widetilde{\omega}_{i,j}$ on $Y$.  

\begin{proposition} \label{proposition: symplectic pairing}
Let $\widetilde{\omega}_{i,j}$ be an $S$-enhanced differential with $1 \leq j \leq \frac{d_i-1}{2}$. Let $\widetilde{\omega}_{i',j'}$ be arbitrary. Then the symplectic pairing is given by
\begin{equation*} \label{eq:omij pairing}
\langle \widetilde{\omega}_{i,j} , \widetilde{\omega}_{i',j'} \rangle =
\begin{cases}
    1 & \hbox{if $i=i'$ and $j+j' = d_i$} \\
    0 & \hbox{otherwise.}
\end{cases}
\end{equation*}
\end{proposition}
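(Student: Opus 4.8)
The key point is that for $1\le j\le \frac{d_i-1}{2}$ the class $\widetilde{\omega}_{i,j}=\widehat{\omega}_{i,j}$ is represented by the \emph{global} regular differential $(\omega_{i,j},(0,\dots,0))$, so the simplified form of the pairing in Theorem~\ref{t: enhanced diff of second kind compute H1dr} applies: for any $S$-enhanced differential $(\tau,(g_Q)_{Q\in S})$,
\[
\big\langle(\omega_{i,j},(0)_{Q\in S}),\,(\tau,(g_Q)_{Q\in S})\big\rangle=\sum_{Q\in S}\Res_Q\!\big(\tilde g_Q\,\omega_{i,j}\big).
\]
The plan is to apply this with $(\tau,(g_Q))=\widehat{\omega}_{i',j'}$, after first replacing $\widetilde{\omega}_{i',j'}$ by $\widehat{\omega}_{i',j'}$. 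Writing $\widehat{\omega}_{i',j'}=\widetilde{\omega}_{i',j'}+\widehat{\omega}_{i',j'}^{\bij}$ and recalling from Lemma~\ref{lemma: decomposition k[F,V]} that $F$ is bijective on $Z$, zero on $U$, and nilpotent on $L$, the $F$-bijective part $\widehat{\omega}_{i',j'}^{\bij}$ lies in $Z$, whereas $\widetilde{\omega}_{i,j}\in W\subseteq L$ by Remark~\ref{remark: W subspace of L}. Since the proof of Lemma~\ref{lemma: decomposition k[F,V]} shows (via $\langle Fm_1,m_2\rangle=\langle m_1,Vm_2\rangle^p$) that $L$ is orthogonal to $U\oplus Z$, the cross term dies and
\[
\langle\widetilde{\omega}_{i,j},\widetilde{\omega}_{i',j'}\rangle=\langle\widehat{\omega}_{i,j},\widehat{\omega}_{i',j'}\rangle=\sum_{Q\in S}\Res_Q\!\big(\tilde g_Q\,\omega_{i,j}\big),
\]
where $(g_Q)_{Q\in S}$ is the enhancement datum of $\widehat{\omega}_{i',j'}$ recorded in Proposition~\ref{p: description of S-enhanced differential}.

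Next I would carry out the residue computation case by case. If $j'\le\frac{d_{i'}-1}{2}$, every enhancement datum of $\widehat{\omega}_{i',j'}$ vanishes, so the sum is $0$; and if moreover $i=i'$ then $j+j'\le d_i-1<d_i$, so the claimed value is also $0$. If $j'>\frac{d_{i'}-1}{2}$, the only nonzero enhancement datum is $u_{i'}^{-2j'+d_{i'}}$ at $Q_{i'}$, so the sum reduces to $\Res_{Q_{i'}}\!\big(u_{i'}^{-2j'+d_{i'}}\,\omega_{i,j}\big)$. For $i\ne i'$, Corollary~\ref{c: estimating corollary of tail of differential at off-points} gives $\omega_{i,j}=O(u_{i'}^{d_{i'}+1})\,du_{i'}$ near $Q_{i'}$, so $u_{i'}^{-2j'+d_{i'}}\,\omega_{i,j}$ vanishes to order $\ge -2j'+2d_{i'}+1\ge 3$ and the residue is $0$, matching. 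For $i=i'$, Lemma~\ref{l: construct wij}(ii) gives $\omega_{i,j}=u_i^{-2j+d_i-1}\,du_i+O(u_i^{d_i+1})\,du_i$, whence
\[
u_i^{-2j'+d_i}\,\omega_{i,j}=u_i^{-2(j+j')+2d_i-1}\,du_i+O(u_i^{-2j'+2d_i+1})\,du_i;
\]
since $j'\le d_i-1$ the error term has order $\ge 3$, so it is irrelevant to the residue, while the leading term contributes to the residue precisely when $-2(j+j')+2d_i-1=-1$, i.e.\ when $j+j'=d_i$, and then with coefficient $1$. Collecting the cases yields the asserted formula.

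The conceptual content is entirely the orthogonality of $L$ and $Z$, which lets us discard the $F$-bijective part of $\widehat{\omega}_{i',j'}$ and work throughout with honest enhanced differentials and residues. The only delicate point — and the main (if minor) obstacle — is the bookkeeping with the $O$-terms: one must check in each case that, after multiplying $\omega_{i,j}$ by the relevant enhancement datum, the tail vanishes to nonnegative order so that it cannot affect the residue, which is exactly what the hypotheses $j\le\frac{d_i-1}{2}$ and $j'\le d_{i'}-1$ ensure. (One should also note that the final answer is choice-independent: changing $w_{i,j}$ by a regular differential on $X$ alters $\omega_{i,j}$ only in order $\ge d_i+1$ at $Q_i$ by Lemma~\ref{l: uniformizer up above}, which again does not affect any of the residues computed above.)
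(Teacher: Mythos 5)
Your proof is correct and follows essentially the same route as the paper: reduce $\widetilde{\omega}_{i',j'}$ to $\widehat{\omega}_{i',j'}$ by discarding the $F$-bijective part, then compute the pairing via the residue formula of Theorem~\ref{t: enhanced diff of second kind compute H1dr} using the local expansions from Lemma~\ref{l: construct wij} and Corollary~\ref{c: estimating corollary of tail of differential at off-points}. The only cosmetic difference is that you dispose of the cross term by citing the orthogonality $L\perp Z$ from Lemma~\ref{lemma: decomposition k[F,V]}, whereas the paper redoes the adjointness computation $\langle \widehat{\omega}_{i,j},F^r\delta\rangle=\langle V^r\widehat{\omega}_{i,j},\delta\rangle^{p^r}=0$ directly; both rest on the same fact and your residue bookkeeping is accurate (indeed slightly sharper than the paper's $O$-term exponent).
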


\begin{proof}
Recall that we had defined $\widetilde{\omega}_{i',j'} = \widehat{\omega}_{i',j'} - \widehat{\omega}_{i',j'}^{\bij}$. Since $j \leq (d_i-1)/2$, we have $\widetilde{\omega}_{i,j} = \widehat{\omega}_{i,j}$. By Proposition~\ref{prop:nilpotent wij}, $V$ is nilpotent on $\widehat{\omega}_{i,j}$, so let $r$ be such that $V^r(\widehat{\omega}_{i,j})=0$.  Note there exists $\delta_{i',j'}$ such that $F^r(\delta_{i',j'})=\widehat{\omega}_{i',j'}^{\bij}$. Then observe
\begin{align*}
    \langle \widetilde{\omega}_{i,j} , \widetilde{\omega}_{i',j'} \rangle 
    &= \langle \widehat{\omega}_{i,j} , \widehat{\omega}_{i',j'} - \widehat{\omega}_{i',j'}^{\bij} \rangle \\
    &= \langle \widehat{\omega}_{i,j}, \widehat{\omega}_{i',j'} \rangle 
    - \langle \widehat{\omega}_{i,j} , F^r(\delta_{i',j'}) \rangle \\
    &= \langle \widehat{\omega}_{i,j} ,\widehat{\omega}_{i',j'} \rangle 
    - \langle V^r(\widehat{\omega}_{i,j}), \delta_{i', j'} \rangle^{p^r} \\
    &= \langle \widehat{\omega}_{i,j}, \widehat{\omega}_{i',j'} \rangle.
\end{align*}
The rest of the proof is devoted to computing $\langle \widehat{\omega}_{i,j}, \widehat{\omega}_{i',j'} \rangle$ making use of  Theorem~\ref{theorem:differentials second kind fixed poles}.

First assume $i'\neq i$. %
Since $j \leq \frac{d_i-1}{2}$, the class $\widehat{\omega}_{i,j}$ only consists of a regular differential. If $j' \leq \frac{d_{i'}}{2}$, then we immediately obtain $\langle \omega_{i,j}, \omega_{i',j'} \rangle =0$. Otherwise, write $\widehat{\omega}_{i',j'} = (\omega_{i',j'}, (0, \ldots , u_{i'}^{-2j'+d_{i'}}, \ldots ,0).$ Then the pairing is given by
$$ \langle \widehat{\omega}_{i,j}, \widehat{\omega}_{i',j'} \rangle = \Res_{Q_{i'}} (u^{-2j'+d_{i'}} \omega_{i,j}).$$
Now, since $w_{i,j}$ is regular at $P_{i'}$, it follows from Lemma~\ref{l: uniformizer up above} that $\ord_{Q_{i'}}(\omega_{i,j}) \geq d_{i'}+1,$ so that
$$\ord_{Q_{i'}}(u^{-2j'+d_{i'}} \omega_{i,j}) \geq -2j'+d_{i'} + (d_{i'}+1) \geq 3,$$
since $j' \leq d_{i'}-1$. Therefore the residue vanishes.

Next we compute the shape of the pairing restricted to $W_i$. For $j' \leq \frac{d_i-1}{2}$, we know $\omega_{i,j}$ and $\omega_{i,j'}$ are regular differentials, and thus $\langle \widehat{\omega}_{i,j}, \widehat{\omega}_{i,j'}\rangle=0$. When $\omega_{i,j'}$ is not regular, we obtain
\begin{align*}
    \langle \widehat{\omega}_{i,j}, \widehat{\omega}_{i,j'}\rangle&= \Res_{Q_i}(u_i^{-2j'+d_i} \omega_{i,j}).
\end{align*}
By Lemma~\ref{l: construct wij} we have
\[u_i^{-2j'+d_i} \omega_{i,j} =  [u_i^{-2(j+j')+2d_i-1} + O(u_i^{-2j'+2d_i-1})]du_i \]
Since $j'\leq d_i-1$ we know that $O(u_i^{-2j'+2d_i-1})du_i$ has zero residue. Thus, we see that $u_i^{-2j'+d_i} \omega_{i,j}$ has residue one if $j+j'=d_i$ and zero otherwise. 
\end{proof}

\subsection{Further Decomposition of \texorpdfstring{$L$}{L}}

We have constructed and studied a large subspace $W = \bigoplus_{i=1}^m W_i \subset L$.  We end by studying the rest of $L$.  
Note we have natural inclusions
\begin{align*}
	\pi^* \HH^0(X,\Om_X^1) &\subset \HH^0(Y,\Om_Y^1) \subset \HdR^1(Y) \\
	\pi^* \HH^0(X,\Om_X^1) &\subset \pi^*\HdR^1(X) \subset \HdR^1(Y).
\end{align*}  

\begin{definition}
Define
\begin{align*}
	L_0 &\colonequals  L \cap \HH^0(Y,\Om_Y^1) \\
	M   &\colonequals  L \cap \pi^* \HdR^1(X) \\
	M_0 &\colonequals  L \cap \pi^* \HH^0(X,\Om_X^1).
\end{align*}
\end{definition}

We immediately obtain the following.

\begin{lemma} \label{lemma:Ldecomposition}
Let $T$ be a vector space complement of $M \oplus W$ in $L$, and define $T_0 \colonequals T \cap \HH^0(Y,\Om_Y^1)$.  Let $M_1$ be a complement to $M_0$ in $M$, and $T_1$ be a complement to $T_0$ in $T$.  Then there is a decomposition of vector spaces
\begin{align}
L &= M \oplus T \oplus \bigoplus_{i=1}^m W_i= M_0 \oplus M_1 \oplus T_0 \oplus T_1 \oplus \bigoplus_{i=1}^m W_i  \label{eq:Ldecomp} \\
L_0 &= V(L)= M_0 \oplus T_0 \oplus \bigoplus_{i=1}^m W_{i,\frac{d_i-1}{2}} = M_0 \oplus T_0 \oplus V\left(\bigoplus_{i=1}^m W_i \right). \label{eq:L0decomp}
\end{align}

We have $\dim M_0 = \dim T_0 = l_X$ and $\dim M = \dim T = 2 l_X$.
The spaces $M_0, M, M_0 \oplus V\left(\bigoplus_{i=1}^m W_i \right)$, and $M \oplus \bigoplus_{i=1}^m W_i $ are stable under $V$. 

Finally, %
$T_0$ may be chosen so that $\dim_k (\pi_{T_0}(V^n(T_0))) = l_X - a_X^n$. 
\end{lemma}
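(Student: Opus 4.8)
The plan is to reduce essentially everything to the Dieudonné-module structure of $\pi^*\HdR^1(X) \subseteq \HdR^1(Y)$ together with a dimension count from Riemann--Hurwitz and Deuring--Shafarevich, and to treat the final clause by a duality argument using the adjointness of $F$ and $V$.

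First I would record that $\pi^*$ is injective on $\HdR^1(X)$: writing $0 \to \cO_X \to \pi_*\cO_Y \to \mathcal{L} \to 0$ one computes $\deg\mathcal{L} = 2g_X - g_Y - 1 \le 0$, so $H^0(X,\mathcal{L}) = 0$ and $\pi^*$ is injective on $H^1(X,\cO_X)$; with injectivity on $H^0(X,\Omega^1_X)$ (separability) and the functorial Hodge filtration, $\pi^*$ is injective on $\HdR^1(X)$. Since $\pi^*$ commutes with Frobenius ($p$-th power) and Verschiebung (functoriality of the Cartier operator), $\pi^*\HdR^1(X)$ is a Dieudonné submodule isomorphic to $\HdR^1(X)$. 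Writing $N$ for the summand of $\HdR^1(X)$ on which $F$ and $V$ are both nilpotent, compatibility of Fitting decompositions with Dieudonné submodules gives $M = L \cap \pi^*\HdR^1(X) = \pi^*N$, so $\dim_k M = 2l_X$, and $M_0 = L \cap \pi^*H^0(X,\Omega^1_X) = \pi^*(N \cap \ker F_X)$, so $\dim_k M_0 = l_X$. Likewise $L_0 = L \cap H^0(Y,\Omega^1_Y) = \ker(F|_L)$, and since $\ker F = \operatorname{Im}V$ on $\HdR^1(Y)$ with $V$ injective on the $V$-bijective summand, $\ker(F|_L) = \operatorname{Im}(V|_L) = V(L)$, of dimension $l_Y$.

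Next I would establish the vector-space decompositions. The identity $L = M \oplus T \oplus \bigoplus_i W_i$ is the definition of $T$ once one checks $M \cap W = 0$ and that the $W_i$ span the claimed dimension (the latter is Remark~\ref{remark: W subspace of L}); for the former, a hypothetical $\xi = \sum_{i,j} c_{i,j}\widetilde\omega_{i,j} \in M$ pairs trivially against all of $\pi^*\HdR^1(X)$ since the latter is isotropic, while by Proposition~\ref{proposition: symplectic pairing} its pairing with the $\widetilde\omega_{i',j'}$ for $j' \le \frac{d_{i'}-1}{2}$ reads off the coefficients $c_{i',d_{i'}-j'}$, and a short residue computation (using that the pullback of a Laurent series in $t_{i'}$ is a Laurent series in $u_{i'}^2+\cdots$ and $d_{i'}$ is odd) shows $\langle \widetilde\omega_{i',j'},\pi^*\HdR^1(X)\rangle = 0$; this forces $\xi$ into $\bigoplus_i W_{i,(d_i-1)/2}$, whence $\xi = 0$. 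The identities $\dim_k L = 2l_Y = 4l_X + \sum_i(d_i-1)$ and $\dim_k(M\oplus W) = 2l_X + \sum_i(d_i-1)$ then give $\dim_k T = 2l_X$, and one introduces complements $M_1 \subseteq M$, $T_1 \subseteq T$. For \eqref{eq:L0decomp}: $M_0$, $T_0$, and each $W_{i,(d_i-1)/2}$ lie in $L_0$; the sum is direct, being a sum of subspaces of the summands $M, T, W_i$; and Proposition~\ref{prop:nilpotent wij} shows $V$ carries $M\oplus\bigoplus_i W_i$ into $M_0\oplus\bigoplus_i W_{i,(d_i-1)/2}$, so $V(L) = M_0\oplus\bigoplus_i W_{i,(d_i-1)/2} + V(T)$, and comparing dimensions pins down $\dim_k T_0 = l_X$ and both displayed equalities, with $\bigoplus_i W_{i,(d_i-1)/2} \equiv V(\bigoplus_i W_i)$ modulo $M_0$. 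The stated $V$-stabilities follow from $M$ being a Dieudonné submodule and from Proposition~\ref{prop:nilpotent wij}.

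For the final clause, since $M_0 \oplus \bigoplus_i W_{i,(d_i-1)/2}$ is $V$-stable (Proposition~\ref{prop:nilpotent wij}), $\dim_k\pi_{T_0}(V^n(T_0)) = \operatorname{rank}(\bar V_{T_0}^{\,n})$ where $\bar V_{T_0} \colonequals \pi_{T_0}\circ V|_{T_0}$, independently of $T_0$. I would choose $T$ to pair perfectly with $M$: this is possible because $M$ is totally isotropic (the pairing on pullback classes is $\deg\pi = 2 = 0$ by the projection formula), the pairing is nondegenerate on $\bigoplus_i W_i$ (Proposition~\ref{proposition: symplectic pairing}), and $\langle M,\bigoplus_i W_{i,(d_i-1)/2}\rangle = 0$ by the residue computation above, so $M$ lies in the radical of the pairing on $M\oplus\bigoplus_i W_i$. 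Then, from $\langle M_0\oplus\bigoplus_i W_{i,(d_i-1)/2}, M\rangle = 0$ and $\langle Vx,y\rangle^p = \langle x,Fy\rangle$, an induction yields $\langle \bar V_{T_0}^{\,n}x,\bar y\rangle^{p^n} = \langle x,\widehat F^{\,n}\bar y\rangle$ for $x \in T_0$ and $\bar y \in M/M_0$, with the pairing $T_0\times(M/M_0)\to k$ perfect; since Frobenius-twisted adjoint maps have equal rank, $\operatorname{rank}(\bar V_{T_0}^{\,n}) = \operatorname{rank}(\widehat F^{\,n})$, where $\widehat F$ is induced by $F$ on $M/M_0 \cong \pi^*(N/N_0)$ with $N_0 \colonequals N\cap\ker F_X$. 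Identifying $\widehat F$ with $F_X$ acting on $F_X(N) \cong N/N_0$ gives $\operatorname{rank}(\widehat F^{\,n}) = \dim_k F_X^{\,n+1}(N)$; and since $N$ is self-dual (the pairing on $\HdR^1(X)$ restricts to a perfect pairing on $N$ interchanging $F_X$ and $V_X$) and $N_0 = \ker(F_X|_N) = \operatorname{Im}(V_X|_N) = V_X(N)$, this equals $\dim_k V_X^{\,n+1}(N) = \dim_k V_X^{\,n}(N_0) = l_X - a_X^n$, as desired. The main obstacle is the bookkeeping making the sums direct --- especially $M\cap W = 0$ and $\dim_k T_0 = l_X$ --- and arranging $T$ so that the isotropy of $M$, the vanishing $\langle\bigoplus_i W_{i,(d_i-1)/2},M\rangle = 0$, and the perfect pairing of $T$ with $M$ all hold simultaneously; with these in place, the adjointness identification and the Dieudonné computation for $N$ are routine.
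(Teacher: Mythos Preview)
Your treatment of the decompositions, dimensions, and $V$-stability runs parallel to the paper's, which simply declares these ``elementary''; you supply more detail, and your pairing argument for $M\cap W=0$ is a nice touch. Two small caveats: your claim that $M$ lies in the radical of the pairing on $M\oplus W$ is overstated, since you only justify $\langle M,M\oplus W_V\rangle=0$, not $\langle M,W\rangle=0$ (the needed conclusion that $T$ can be chosen to pair perfectly with $M$ still holds, since over an infinite field any two subspaces of equal codimension share a common complement); and the ``short residue computation'' for $\langle W_V,M\rangle=0$ is cleanest made rigorous not by parity but via trace--residue compatibility, $\Res_{Q_i}(\pi^*\alpha)=\Res_{P_i}(\pi_*\pi^*\alpha)=\Res_{P_i}(2\alpha)=0$ in characteristic two.

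For the final clause your route is genuinely different from the paper's. The paper uses the trace $\pi_*\colon H^0(Y,\Omega^1_Y)\to H^0(X,\Omega^1_X)$: it is surjective, annihilates $M_0\oplus W_V$ (pullbacks), and commutes with $V$ (a two-line check using the Artin--Schreier equation $y^2+y=\psi$, since $V(\omega_1 y)=V(\omega_1)y+V(\omega_1\psi)$). Choosing $T_0$ as a section of $\pi_*$ onto the nilpotent part of $H^0(X,\Omega^1_X)$ then identifies $\pi_{T_0}\circ V^n|_{T_0}$ directly with $V_X^n$ there, giving rank $l_X-a_X^n$ in one stroke. Your duality argument --- choosing $T$ to pair perfectly with $M$, then using $F$--$V$ adjointness to match $\bar V_{T_0}^n$ with $\widehat F^n$ on $M/M_0\cong N/N_0$, and finally computing the latter via the self-duality of $N$ --- is longer but more structural: it never touches the explicit Artin--Schreier equation, and your observation that $\bar V_{T_0}$ is intrinsically the map induced by $V$ on $L_0/(M_0\oplus W_V)$ shows the conclusion holds for \emph{every} complement $T$ with $\dim T_0=l_X$, not just a special one. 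The paper's proof is quicker; yours extracts more invariant content.
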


Here $\pi_U : L \to U  $ denotes the projection to $U$ for any $U$ occurring in these decompositions.

\begin{proof}
The claims about the vector space decompositions and dimensions are elementary.  Note that $M$ and $M_0$, being pullbacks of enhanced differentials on the base, are certainly preserved by $V$.  The other two spaces are stable under $V$ by Proposition~\ref{prop:nilpotent wij}.

For the final statement about $T_0$, note that the trace map $\pi_* :\HH^0(Y,\Omega_Y^1) \to \HH^0(X, \Omega_X^1)$ is surjective. Hence, given any $\omega_1 \in \HH^0(X,\Omega_X^1)$, let $\eta$ be an element of $\HH^0(Y,\Omega_Y^1)$ whose trace is $\omega_1$.
Note that the trace map commutes with $V$. To see this, let
$y^2+y=\psi$ be the equation defining our $\Z/2\Z$-cover
and note that $\eta=\omega_0 + \omega_1 y$ where $\omega_0$
is a meromorphic differential on $X$. Then $V(\pi_*(\eta))=V(\omega_1)$. On the other hand, we have
$V(\omega_1 y) = V(\omega_1)y + V(\omega_1\psi)$, so
$\pi_*(V(\eta))=V(\omega_1)$. It follows that $\pi_{T_0}(V(\eta))=V(\omega_1)$ and thus the projection of $V^n(T_0)$ to $T_0$ has dimension $l_X - a^n_X$.

\end{proof}

Next we study the residue pairing on $L$.  As the pairing of regular differentials is zero, note
\begin{equation} \label{equation:regularperp}
L_0 = V(L) \subseteq L_0^\perp.
\end{equation}

\begin{lemma} \label{lemma: M_0 orthogonal to W}
$M_0$ pairs trivially with $W$.
\end{lemma}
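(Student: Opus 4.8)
The plan is to use the explicit residue formula from Theorem~\ref{theorem:differentials second kind fixed poles}, together with the fact that elements of $M_0 = L \cap \pi^*\HH^0(X,\Om_X^1)$ are pullbacks of \emph{regular} differentials on $X$, and hence themselves global regular differentials on $Y$ whose enhancement data $(f_Q)_{Q \in S}$ is zero. Since one of the two arguments is a global differential with trivial tails, the pairing formula collapses to
\[
\langle (\pi^*\eta, (0)_{Q \in S}), \widetilde{\omega}_{i,j} \rangle = \sum_{i'=1}^m \Res_{Q_{i'}}\!\big(\tilde g_{i'}\, \pi^*\eta\big),
\]
where $\tilde g_{i'}$ is the local antiderivative appearing in $\widetilde{\omega}_{i,j}$ at $Q_{i'}$. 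By Proposition~\ref{p: description of S-enhanced differential} this tail is supported only at $Q_i$, and is either $0$ (when $j \leq \frac{d_i-1}{2}$) or $u_i^{-2j+d_i}$ (when $j > \frac{d_i-1}{2}$); passing from $\widehat{\omega}_{i,j}$ to $\widetilde{\omega}_{i,j}$ only subtracts $\widehat{\omega}_{i,j}^{\bij}$, which by the same argument as in the proof of Proposition~\ref{proposition: symplectic pairing} (moving $F^r$ across the pairing and using that $V$ kills the regular differential $\pi^*\eta$ after finitely many steps, since $\pi^*\eta \in M_0 \subseteq L$ is $V$-nilpotent) does not change the pairing. So it suffices to bound $\ord_{Q_i}\!\big(u_i^{-2j+d_i}\, \pi^*\eta\big)$.

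The key estimate is that $\pi^*\eta$, being the pullback of a differential regular at $P_i$, satisfies $\ord_{Q_i}(\pi^*\eta) \geq d_i + 1$ by Lemma~\ref{l: uniformizer up above} (the pullback $dt_i$ acquires the factor $c_i u_i^{d_i+1} + O(u_i^{d_i+2})$, and $\eta = O(1)\,dt_i$ locally at $P_i$). Therefore
\[
\ord_{Q_i}\!\big(u_i^{-2j+d_i}\, \pi^*\eta\big) \;\geq\; -2j + d_i + (d_i+1) \;=\; 2d_i + 1 - 2j \;\geq\; 3,
\]
using $j \leq d_i - 1$. An order at least $3$ at $Q_i$ (in particular, at least $0$, and not equal to $-1$) forces the residue to vanish. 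Since these are the only poles, the total residue sum is zero, which proves the lemma.

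I do not expect any serious obstacle here: the argument is essentially the $i' \neq i$ case of Proposition~\ref{proposition: symplectic pairing} applied with $\pi^*\eta$ in place of the regular differential $\omega_{i,j}$, and the only point requiring a moment's care is the reduction from $\widetilde{\omega}_{i,j}$ to $\widehat{\omega}_{i,j}$ (equivalently, checking that the bijective part contributes nothing), which is handled exactly as in that earlier proof via the adjunction $\langle F x, y\rangle = \langle x, Vy\rangle^p$ and the $V$-nilpotence of elements of $L$. One should also note explicitly that $M_0$ is spanned by such pullbacks $\pi^*\eta$ with $\eta \in \HH^0(X,\Om_X^1)$, so that checking the pairing on these generators suffices by bilinearity.
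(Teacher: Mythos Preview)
Your proposal is correct and follows essentially the same route as the paper's proof: reduce the pairing to the single residue $\Res_{Q_i}(u_i^{-2j+d_i}\,\pi^*\eta)$, then use Lemma~\ref{l: uniformizer up above} to get $\ord_{Q_i}(\pi^*\eta)\geq d_i+1$ and hence $\ord_{Q_i}\geq 3$, forcing the residue to vanish. You are somewhat more explicit than the paper about the passage from $\widetilde{\omega}_{i,j}$ to $\widehat{\omega}_{i,j}$ (via the adjunction trick from Proposition~\ref{proposition: symplectic pairing}); the paper instead silently invokes the orthogonality $L\perp Z$ from Lemma~\ref{lemma: decomposition k[F,V]}, which kills $\langle \pi^*\eta,\widehat{\omega}_{i,j}^{\bij}\rangle$ directly since $\pi^*\eta\in M_0\subseteq L$ and $\widehat{\omega}_{i,j}^{\bij}\in Z$.
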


\begin{proof}
Given $\eta \in \HH^0(X,\Om_X^1)$ and $\widetilde{\omega}_{i,j} \in W_i$, note
$$\langle \pi^*\eta , \widetilde{\omega}_{i,j} \rangle = \text{Res}_{Q_i} (u_i^{-2j+d_i} \pi^*(\eta)).$$
However, since $\eta$ is regular at $P_i$, Lemma~\ref{l: uniformizer up above} implies $\ord_{Q_i} (\pi^* (\eta)) \geq d_i+1$. Furthermore, since $j \leq d_i-1$, we obtain $\ord_{Q_i}(u_i^{-2j+d_i} \pi^*(\eta)) \geq 3$ and the residue vanishes, as desired.
\end{proof}

\begin{lemma} \label{lemma: M orthogonal to W_V}
Assume $\sum_{i=1}^m (d_i+1) > 4g_X-4$. Then $M$ pairs trivially with $M_0$ and $W \cap L_0$.  
\end{lemma}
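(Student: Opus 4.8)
The idea is that both orthogonality statements reduce to the vanishing of the residue of a pullback, which holds because $\deg\pi=2=0$ in characteristic two.

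\emph{Setup.} The hypothesis forces $m\geq 1$ (taking $m=0$ would give $g_X=0$, but $\PP^1$ has no connected unramified double cover), so $B$ is a nonempty finite set of points of $X$ and $\HdR^1(X)\cong E_{X,B}/d_{X,B}(A_B)$ by Theorem~\ref{t: enhanced diff of second kind compute H1dr}. Any $m_1\in M=L\cap\pi^*\HdR^1(X)$ has the form $m_1=\pi^*\beta$; represent $\beta$ by $(\tau,(g_{P_i})_{i})\in E_{X,B}$. Since $\pi^*\tau$ is a differential of the second kind regular away from $S$ and $\pi^*\tau-d(\pi^*\tilde g_{P_i})=\pi^*(\tau-d\tilde g_{P_i})$ is regular at $Q_i$, the pair $(\pi^*\tau,(\pi^*g_{P_i})_{i})$ is an $S$-enhanced differential on $Y$ representing $\pi^*\beta$ (functoriality of the enhanced-differential model of $\HdR^1$ under pullback, which one checks through the \v{C}ech comparison of Section~\ref{ss: 2nd kind vs Cech}).

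\emph{A common residue computation.} Suppose $\zeta$ is a meromorphic differential on $X$ whose pullback $\pi^*\zeta$ is a global regular differential on $Y$. Then $(\pi^*\zeta,(0)_{Q\in S})$ is a global differential, so the pairing formula of Theorem~\ref{t: enhanced diff of second kind compute H1dr}, using the lift $\pi^*\tilde g_{P_i}$ for the enhancement of $\pi^*\beta$ at $Q_i$, gives
\[
\bigl\langle(\pi^*\zeta,(0)),\,\pi^*\beta\bigr\rangle=\sum_{i=1}^m\Res_{Q_i}\!\bigl(\pi^*\tilde g_{P_i}\cdot\pi^*\zeta\bigr)=\sum_{i=1}^m\Res_{Q_i}\!\bigl(\pi^*(\tilde g_{P_i}\,\zeta)\bigr).
\]
Each summand is the residue of the pullback of a meromorphic differential on $X$ at the point $Q_i$ of $Y$. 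Since $\pi$ is totally ramified of degree $2$ at $P_i$, compatibility of residues with the trace on differentials yields $\Res_{Q_i}(\pi^*\xi)=\Res_{P_i}\!\bigl(\mathrm{Tr}_{K_{Q_i}/K_{P_i}}(\pi^*\xi)\bigr)=\Res_{P_i}\!\bigl(\deg(\pi)\cdot\xi\bigr)=2\,\Res_{P_i}(\xi)=0$. Hence $\langle(\pi^*\zeta,(0)),\pi^*\beta\rangle=0$.

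\emph{Conclusion.} For the orthogonality of $M$ and $M_0$: an element of $M_0=L\cap\pi^*\HH^0(X,\Om_X^1)$ is $\pi^*\eta$ with $\eta\in\HH^0(X,\Om_X^1)$, which is a global regular differential on $Y$; apply the computation with $\zeta=\eta$. For the orthogonality of $M$ and $W\cap L_0$: by Lemma~\ref{lemma:Ldecomposition} the space $W\cap L_0$ is spanned by the $\widetilde\omega_{i,j}=\widehat\omega_{i,j}=(\omega_{i,j},(0))$ with $1\leq j\leq\tfrac{d_i-1}{2}$, and by Proposition~\ref{p: description of S-enhanced differential} each $\omega_{i,j}=\pi^*w_{i,j}$ is a global regular differential on $Y$ for such $j$; apply the computation with $\zeta=w_{i,j}$. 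In both cases the pairing against every $m_1\in M$ vanishes, as required.

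\emph{The main point.} The only steps needing care are the functoriality of the enhanced-differential description under $\pi^*$ and the local trace–residue identity; both are standard but should be made explicit. Alternatively one can avoid the trace–residue identity by invoking Theorem~\ref{theorem:differentials second kind fixed poles} to choose $(\tau,(g_{P_i})_{i})$ with controlled pole orders and then bounding $\ord_{Q_i}(\pi^*\tilde g_{P_i}\cdot\pi^*\zeta)$ directly via Lemma~\ref{l: uniformizer up above}; it is in making this pole estimate go through that the hypothesis $\sum_i(d_i+1)>4g_X-4$ enters.
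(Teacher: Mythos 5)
Your proof is correct, but it takes a genuinely different route from the paper's. The paper chooses a representative $\xi=(\pi^*\omega,(\pi^*f_i))$ of a class in $M$ whose tails are normalized via Riemann--Roch to satisfy $\ord_{P_i}(f_i)\geq -(d_i+1)/2$ --- this is exactly where the hypothesis $\sum_i(d_i+1)>4g_X-4$ enters --- and then kills each residue by explicit pole-order and parity bookkeeping in the uniformizer $u_i$ (using Lemma~\ref{l: uniformizer up above} and Lemma~\ref{l: construct wij}). You instead take an arbitrary representative and observe that every summand is the residue at $Q_i$ of the pullback of a local meromorphic differential from $X$, which vanishes by the trace--residue compatibility since $\mathrm{Tr}_{K_{Q_i}/K_{P_i}}(\pi^*\xi)=\deg(\pi)\cdot\xi=0$ in characteristic two. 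This is cleaner, it handles both assertions of the lemma uniformly, and --- notably --- it does not use the hypothesis $\sum_i(d_i+1)>4g_X-4$ at all, so your argument actually proves the lemma unconditionally (for $M\perp M_0$ this is also visible from the projection formula $\langle\pi^*\alpha,\pi^*\beta\rangle=\deg(\pi)\langle\alpha,\beta\rangle=0$; for $W\cap L_0$ one genuinely needs your local version, since $w_{i,j}$ need not be of the second kind on $X$). The two facts you flag are fine: the functoriality of the enhanced-differential description under $\pi^*$ is already used implicitly by the paper when it writes the representative $(\pi^*\omega,(\pi^*f_i))$, and the local trace--residue identity holds without tameness assumptions (Tate, Serre). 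What the paper's approach buys is self-containedness --- it stays entirely within the explicit local expansions developed in Section~\ref{section: omegaij} and produces the normalized representative \eqref{eq: pi^*(f_i)} --- while yours buys brevity and a strictly stronger statement.
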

\begin{proof}
Recall $M = \pi^* \HdR^1(X)$, so any class in $M$ can be represented as 
\[
\xi = (\pi^* (\omega), (\pi^*(f_1), \ldots ,\pi^*(f_m))),
\]
where $f_i$ is the tail of a local function at $P_i$.  If $\sum_{i=1}^m (d_i+1)/2 > 2g_X -2$, then for any $n > (d_i+1)/2$ the Riemann-Roch theorem gives a function $h$ on $X$ which is regular away from $\{P_1,\ldots, P_m\}$ with a pole of order exactly $n$ at $P_i$ and a pole of order at most $(d_j+1)/2$ at $P_j$ when $j \neq i$.  By modifying the de Rham class representative using these functions (i.e. replacing $\xi$ with $\xi + dh$), we may assume that
$$\ord_{Q_i}(\pi^*(f_i)) = 2\ord_{P_i}(f_i) \geq - (d_i+1)$$ for every $i$. By Corollary~\ref{cor: t^j}, we may write 
\begin{equation}
f_i = \sum_{l=\frac{d_i+1}{2}}^0  c_l t_i^{-l}  + O(1)\quad \text{and} \quad \pi^*(f_i) = \sum_{l=-\frac{d_i+1}{2}}^0 \left(c_lu_i^{2l} + O\left(u_i^{2l+d_i}\right) \right). \label{eq: pi^*(f_i)}
\end{equation}
In particular, $\ord_{Q_i}(f_i) \geq - (d_i+1)$.

If $\eta\in \HH^0(X,\Om_X^1)$ then Lemma~\ref{l: uniformizer up above} and the bound on the order show that $\pi^*(f_i \eta)$ is regular at $Q_i$ and hence 
$$\langle \pi^*(\eta), \xi \rangle = \sum_{i=1}^m \Res_{Q_i}(\pi^*(f_i) \pi^*(\eta)) = 0.$$
Hence $M$ pairs trivially with $M_0$.  

We now show that any $\widetilde{\omega}_{i,j} \in W \cap L_0$ pairs trivially with $\xi$.  (Necessarily $j \leq (d_i-1)/2$.) Note
$$\langle \widetilde{\omega}_{i,j}, \xi \rangle = \sum_{i'=1}^m \Res_{Q_{i'}} (\pi^*(f_{i'}) \omega_{i,j}).$$
The terms with $i' \neq i$ are zero, as $\pi^*(f_{i'}) \omega_{i,j}$ is regular at $Q_{i'}$ by Equation~\eqref{eq: pi^*(f_i)} and Lemma~\ref{l: uniformizer up above}.
Combining the local expansion $\omega_{i,j} = \left[u_i^{-2j+d_i-1} + O\left(u_i^{d_i+1}\right)\right ]du_i$ at $Q_i$ from Lemma~\ref{l: construct wij}(ii) with Equation~\eqref{eq: pi^*(f_i)} gives
$$\pi^*(f_i) \omega_{i,j} = \sum_{l=\frac{d_i+1}{2}}^0 \left(c_lu_i^{-2l} + O\left(u_i^{-2l+d_i}\right) \right)\left(u_i^{-2j+d_i-1} + O\left(u_i^{d_i+1}\right)\right)du_i, $$
which does not have a $u_i^{-1}$ term since $d_i$ is odd. Therefore the residue vanishes, so $M$ pairs trivially with a basis for $W \cap L_0$.
\end{proof}

\section{Ordinary Base Curves} \label{sec:ordinary base}

As before, let $\pi: Y \to X$ be a double cover of smooth, proper, connected curves over an algebraically closed field $k$ of characteristic $2$. In this section, we make the additional assumption that the base curve $X$ is ordinary. We use Section~\ref{section: omegaij} to completely describe the Ekedahl-Oort type of $Y$, proving \cite[Conjecture 8.1.1]{StevenPhD}.
\subsection{The action of \texorpdfstring{$F$}{F} and \texorpdfstring{$V$}{V}} \label{ss:fandv}
When $X$ is ordinary, we have by definition $g_X=f_X$ and hence $l_X=0$. As in Remark~\ref{remark: W subspace of L}, we know $W = \bigoplus_{i=1}^m W_i$ is a subspace of $L$ of codimension $4l_X$, so $W=L$.  This gives a decomposition 
\begin{equation} \label{eq: k[F,V] decomposition ordinary rough}
\HdR^1(Y) = U \oplus Z \oplus W
\end{equation}
that is compatible with $F$ and $V$.  
Recall the notation from Definition~\ref{defn:wijtilde}.

\begin{lemma} \label{lemma:V ordinary}
When $X$ is ordinary, we have $V(\widetilde{\omega}_{i,j}) = \begin{cases}
    \widetilde{\omega}_{i,j/2} &\hbox{if $j$ is even} \\
    0 &\hbox{if $j$ is odd}.
\end{cases}$
\end{lemma}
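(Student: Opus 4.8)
The plan is to deduce this from Proposition~\ref{prop:nilpotent wij} by showing that the error terms $\eta_{i,j}$ vanish in the ordinary case. Recall that Proposition~\ref{prop:nilpotent wij} gives, for a suitable choice of the $\widehat{\omega}_{i,j}$, that $V(\widetilde{\omega}_{i,j}) = \widetilde{\omega}_{i,j/2} + \pi^*\eta_{i,j}$ when $j$ is even and $V(\widetilde{\omega}_{i,j}) = \pi^*\eta_{i,j}$ when $j$ is odd, where $\eta_{i,j} \in \HH^0(X,\Om_X^1)$ and $V$ acts nilpotently on $\eta_{i,j}$. The point is that when $X$ is ordinary, $\HH^0(X,\Om_X^1)^{\nil} = 0$, since $V$ acts bijectively on all of $\HH^0(X,\Om_X^1)$ (this is precisely the condition $l_X = 0$, equivalently $f_X = g_X$). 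Hence any element of $\HH^0(X,\Om_X^1)$ on which $V$ acts nilpotently must be zero, so $\eta_{i,j} = 0$ for all $i,j$, which gives the claimed formula.

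First I would note that since $X$ is ordinary, $W = L$ by the dimension count in Remark~\ref{remark: W subspace of L} (the codimension of $W$ in $L$ is $4l_X = 0$), so the decomposition \eqref{eq: k[F,V] decomposition ordinary rough} holds and the $\widetilde{\omega}_{i,j}$ span $L$. Then I would recall that ordinary means $l_X = g_X - f_X = 0$, and that $f_X$ is the stable rank of $V$ on $\HH^0(X,\Om_X^1)$ while $\dim \HH^0(X,\Om_X^1) = g_X$; the Fitting decomposition $\HH^0(X,\Om_X^1) = \HH^0(X,\Om_X^1)^{\bij} \oplus \HH^0(X,\Om_X^1)^{\nil}$ has $\dim \HH^0(X,\Om_X^1)^{\bij} = f_X = g_X$, forcing $\HH^0(X,\Om_X^1)^{\nil} = 0$. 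Finally I would substitute $\eta_{i,j} = 0$ into the formula from Proposition~\ref{prop:nilpotent wij}, using that $\widetilde{\omega}_{i,j/2} = \widehat{\omega}_{i,j/2}^{\nil}$ by Definition~\ref{defn:wijtilde}.

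There is essentially no obstacle here: the lemma is a direct specialization of Proposition~\ref{prop:nilpotent wij}, and the only substantive input is the standard fact that an ordinary curve has no nilpotent part in its space of regular differentials under the Cartier operator. The mild bookkeeping point worth stating explicitly is why the $\widetilde{\omega}_{i,j}$ (rather than the $\widehat{\omega}_{i,j}$) appear: since $V$ kills the bijective part $\widehat{\omega}_{i,j}^{\bij}$, we have $V(\widetilde{\omega}_{i,j}) = V(\widehat{\omega}_{i,j})$, and the target $\widehat{\omega}_{i,j/2}^{\nil} = \widetilde{\omega}_{i,j/2}$ already lies in $L$, so the formula is internally consistent with the decomposition $\HdR^1(Y) = U \oplus Z \oplus W$ being compatible with $V$.
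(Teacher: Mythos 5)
Your proof is correct and follows exactly the paper's argument: specialize Proposition~\ref{prop:nilpotent wij} and observe that ordinarity forces $\HH^0(X,\Om_X^1)^{\nil}=0$, so the error terms $\eta_{i,j}$ vanish. The extra bookkeeping you include is accurate but not needed beyond the paper's one-line justification.
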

\begin{proof}
This follows immediately from Proposition~\ref{prop:nilpotent wij} and the observation that, since $X$ is ordinary, there are no nonzero elements of $\HH^0(X,\Om_X^1)$ on which $V$ acts nilpotently.
\end{proof}

This fact allows us to decompose $\HdR^1(Y)$ further.

\begin{lemma} \label{lemma: decomposition fine}
When $X$ is ordinary, we have a direct sum decomposition (compatible with $F$ and $V$)
\begin{equation} \label{eq: k[F,V] decomposition ordinary fine}
\HdR^1(Y) = U \oplus Z \oplus \bigoplus_{i=1}^m W_i.
\end{equation}
\end{lemma}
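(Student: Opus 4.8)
The plan is to check the two requirements of a compatible decomposition separately for the summand $\bigoplus_i W_i$: that each $W_i$ is stable under $V$ and that each is stable under $F$. Combined with the direct-sum decomposition $W=\bigoplus_i W_i$ of Definition~\ref{defn:wijtilde} and the identity $W=L$ (which is what makes \eqref{eq: k[F,V] decomposition ordinary rough} an $F,V$-compatible decomposition), this yields \eqref{eq: k[F,V] decomposition ordinary fine}. Stability of $W_i$ under $V$ is immediate from Lemma~\ref{lemma:V ordinary}: $V(\widetilde{\omega}_{i,j})$ equals $\widetilde{\omega}_{i,j/2}$ or $0$, both lying in $W_i$. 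So the real content is $F$-stability, which I would extract from the symplectic pairing via the skew-adjointness relation $\langle Vx,y\rangle^p=\langle x,Fy\rangle$.

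First I would record that the restriction of the symplectic pairing to $W=L$ is non-degenerate: by Lemma~\ref{lemma: decomposition k[F,V]} the subspace $U\oplus Z$ is orthogonal to $L$ and $U\times Z\to k$ is perfect, so any $x\in L$ orthogonal to all of $L$ is orthogonal to all of $\HdR^1(Y)$ and hence zero. Next I would show $W_i\perp W_{i'}$ whenever $i\neq i'$. Since $\widetilde{\omega}_{i,j}=\widehat{\omega}_{i,j}-\widehat{\omega}_{i,j}^{\bij}$ with $\widehat{\omega}_{i,j}^{\bij}\in Z$ and $Z$ is orthogonal to $Z\oplus L$, one has $\langle\widetilde{\omega}_{i,j},\widetilde{\omega}_{i',j'}\rangle=\langle\widehat{\omega}_{i,j},\widehat{\omega}_{i',j'}\rangle$, so it suffices to evaluate the latter using the residue formula of Theorem~\ref{theorem:differentials second kind fixed poles} (equivalently Proposition~\ref{p: pairing}) on the explicit $S$-enhanced differentials of Proposition~\ref{p: description of S-enhanced differential}. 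The tail of $\widehat{\omega}_{i,j}$ is supported only at $Q_i$ and that of $\widehat{\omega}_{i',j'}$ only at $Q_{i'}$, and for $i\neq i'$ every term of the formula vanishes: the term $\Res_{Q_{i'}}(u_{i'}^{-2j'+d_{i'}}\,\omega_{i,j})$ vanishes because $\omega_{i,j}$ is regular at $Q_{i'}$ of order $\geq d_{i'}+1$ (Lemma~\ref{l: uniformizer up above} and Corollary~\ref{c: estimating corollary of tail of differential at off-points}) while $-2j'+d_{i'}\geq -d_{i'}+2$, so the product has order $\geq 3$; the symmetric term at $Q_i$ vanishes in the same way; and the cross term $\Res_{Q_l}(\tilde g_l\,d\tilde f_l)$ is nonzero only at a common pole of the two tails, which cannot exist when $i\neq i'$. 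Hence $W=\bigoplus_i W_i$ is an orthogonal direct sum, and therefore the restriction of the (non-degenerate) pairing to each $W_i$ is itself non-degenerate.

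Finally I would deduce $F(W_i)\subseteq W_i$. Since $L=W$ is a Dieudonn\'e submodule, $F(x)\in W$ for $x\in W_i$; write $F(x)=\sum_{i'}y_{i'}$ with $y_{i'}\in W_{i'}$. For $i'\neq i$ and any $z\in W_{i'}$, block orthogonality gives $\langle y_{i'},z\rangle=\langle F(x),z\rangle$; skew-adjointness together with antisymmetry gives $\langle F(x),z\rangle=\langle x,Vz\rangle^p$; and $Vz\in W_{i'}$ by the $V$-stability already proved, so $\langle x,Vz\rangle=0$ again by block orthogonality. Thus $\langle y_{i'},z\rangle=0$ for all $z\in W_{i'}$, and non-degeneracy of the pairing on $W_{i'}$ forces $y_{i'}=0$; hence $F(x)\in W_i$, completing the verification. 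I expect the one genuinely new point to be the block-orthogonality step: Proposition~\ref{proposition: symplectic pairing} only treats pairings in which one index is at most $(d_i-1)/2$, so the case in which both $\widehat{\omega}_{i,j}$ and $\widehat{\omega}_{i',j'}$ carry a nonzero tail requires the direct residue computation above — though this is a routine variant of the computations already carried out for Proposition~\ref{proposition: symplectic pairing} and Lemma~\ref{lemma: M orthogonal to W_V}.
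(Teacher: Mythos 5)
Your proof is correct and follows essentially the same route as the paper: $V$-stability of each $W_i$ from Lemma~\ref{lemma:V ordinary}, then $F$-stability by pairing $F(\widetilde{\omega}_{i,j})$ against $W_{i'}$ via skew-adjointness and concluding from non-degeneracy of the pairing on each block. The only difference is that you explicitly verify the block orthogonality $W_i \perp W_{i'}$ in the case where both enhanced differentials carry nonzero tails (a case not covered by Proposition~\ref{proposition: symplectic pairing}), a fact the paper's final step uses implicitly without spelling out; your residue computation for it is correct.
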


\begin{proof}
Appealing to Equation~\eqref{eq: k[F,V] decomposition ordinary rough}, we need to show that $F$ and $V$ preserve each $W_i$. Lemma~\ref{lemma:V ordinary} shows that $V$ preserves $W_i$, so it remains to be shown that $F$ preserves $W_i$. Equation~\eqref{eq: k[F,V] decomposition ordinary rough} gives that $F(W_i) \subseteq W$, so it suffices to check that the projection of $F(\omega_{i,j})$ to $W_{i'}$ is zero when $i'\neq i$. For this, let $j'$ be arbitrary. Then using  Proposition~\ref{proposition: symplectic pairing} we have 
$$\langle \widetilde{\omega}_{i',j'} , F(\widetilde{\omega}_{i,j}) \rangle  = \langle V(\widetilde{\omega}_{i',j'} ), \widetilde{\omega}_{i,j}\rangle^p =0,$$
since $V(\widetilde{\omega}_{i',j'})$ is a regular differential in $W_{i'}$ by Lemma~\ref{lemma:V ordinary}. Since $F(\widetilde{\omega}_{i,j})$ pairs trivially with each $\widetilde{\omega}_{i',j'}$ when $i \neq i'$ and the pairing is non-degenerate when restricted to $W_{i'}$, it follows that the projection of $F(\widetilde{\omega}_{i,j})$ to $W_{i'}$ is zero. 
\end{proof}

Next we describe the action of $F$ on the $S$-enhanced differentials $\widetilde{\omega}_{i,j}$.

\begin{lemma} \label{lemma: F ordinary}
When $X$ is ordinary, we have $F(\widetilde{\omega}_{i,j})= \begin{cases}
        0 &\hbox{if $1 \leq j \leq \frac{d_i-1}{2}$} \\
        \widetilde{\omega}_{i,2j-d_i} &\hbox{if $ \frac{d_i-1}{2} < j \leq d_i-1$} .
    \end{cases}$
\end{lemma}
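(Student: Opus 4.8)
The statement splits along the value of $j$. For $1\le j\le\frac{d_i-1}{2}$ the class $\widetilde\omega_{i,j}=\widehat\omega_{i,j}$ is represented by a global regular differential on $Y$ (as noted after Definition~\ref{defn:wijtilde}), and since $\HH^0(Y,\Om_Y^1)=\ker F$ we get $F(\widetilde\omega_{i,j})=0$. So the content is the range $\frac{d_i-1}{2}<j\le d_i-1$, where I want $F(\widetilde\omega_{i,j})=\widetilde\omega_{i,2j-d_i}$; here $2j-d_i$ is an odd integer in $\{1,\dots,d_i-2\}$ and $2d_i-2j$ is an even integer in $\{2,\dots,d_i-1\}$, so all the objects named make sense.

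\textbf{Computing the pairing of $F(\widetilde\omega_{i,j})$.} By Lemma~\ref{lemma: decomposition fine}, $F$ preserves $W_i$, so $F(\widetilde\omega_{i,j})\in W_i$, and it is determined by its pairings against the basis $\widetilde\omega_{i,m}$ ($1\le m\le d_i-1$) because the symplectic pairing is perfect on $L=\bigoplus_i W_i$ and the $W_i$ are mutually orthogonal. Using that the pairing is alternating (hence symmetric in characteristic two) together with the adjunction $\langle Vx,y\rangle^p=\langle x,Fy\rangle$, I compute
\[
\langle F(\widetilde\omega_{i,j}),\widetilde\omega_{i,m}\rangle=\langle V(\widetilde\omega_{i,m}),\widetilde\omega_{i,j}\rangle^{p}.
\]
By Lemma~\ref{lemma:V ordinary}, $V(\widetilde\omega_{i,m})$ is $0$ when $m$ is odd and $\widetilde\omega_{i,m/2}$ with $m/2\le\frac{d_i-1}{2}$ when $m$ is even; in the latter case one index is ``small'', so Proposition~\ref{proposition: symplectic pairing} applies and the pairing is $1$ exactly when $m/2+j=d_i$. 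Hence
\[
\langle F(\widetilde\omega_{i,j}),\widetilde\omega_{i,m}\rangle=\begin{cases}1&\text{if }m=2d_i-2j,\\ 0&\text{otherwise.}\end{cases}
\]

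\textbf{Identifying the element.} It now suffices to check that $\widetilde\omega_{i,2j-d_i}$ has these same pairings with every $\widetilde\omega_{i,m}$, since then $F(\widetilde\omega_{i,j})=\widetilde\omega_{i,2j-d_i}$ by non-degeneracy. We want $\langle\widetilde\omega_{i,2j-d_i},\widetilde\omega_{i,m}\rangle=1$ iff $m=2d_i-2j$, i.e.\ iff $(2j-d_i)+m=d_i$. When $m\le\frac{d_i-1}{2}$, or when $2j-d_i\le\frac{d_i-1}{2}$, this is exactly Proposition~\ref{proposition: symplectic pairing}. The only remaining case is when $2j-d_i>\frac{d_i-1}{2}$ \emph{and} $m>\frac{d_i-1}{2}$: then $2d_i-2j\le\frac{d_i-1}{2}$, so the target value is $0$, and since $(2j-d_i)+m\ge d_i+1$ we need $\langle\widetilde\omega_{i,2j-d_i},\widetilde\omega_{i,m}\rangle=0$. (For a cross-check one can also observe, running the displayed formula over all $j$ and using Proposition~\ref{proposition: symplectic pairing} for the small-index pairs and $VF=0$ to kill even-index components, that $F(\widetilde\omega_{i,j})$ is an odd-index combination and that its large odd coefficient is $[\,l=2j-d_i\,]$; the same orthogonality is exactly what pins down the small odd coefficients.)

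\textbf{The main obstacle.} Thus everything reduces to the one piece of the pairing on $W_i$ not covered by Proposition~\ref{proposition: symplectic pairing}: for $a,m\in\{\tfrac{d_i+1}{2},\dots,d_i-1\}$ with $a+m\ne d_i$ one must show $\langle\widetilde\omega_{i,a},\widetilde\omega_{i,m}\rangle=0$. I would prove this by the residue formula of Theorem~\ref{t: enhanced diff of second kind compute H1dr}: writing $\widehat\omega_{i,a}=(\omega_{i,a},(u_i^{-2a+d_i}))$ and $\widehat\omega_{i,m}=(\omega_{i,m},(u_i^{-2m+d_i}))$ (and using $L\perp Z$ to replace $\widetilde\omega$ by $\widehat\omega$), the pairing is
\[
\Res_{Q_i}\!\big(u_i^{-2m+d_i}\,\omega_{i,a}-u_i^{-2a+d_i}\,\omega_{i,m}-u_i^{-2m+d_i}\,d(u_i^{-2a+d_i})\big),
\]
and one checks, using Lemma~\ref{l: uniformizer up above}, Corollary~\ref{cor: t^j} and the local expansions in Lemma~\ref{l: construct wij}, that because $d_i$ is odd none of the monomials occurring in these three terms is $u_i^{-1}\,du_i$ when $a+m\ne d_i$. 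Tracking the exponents precisely enough to rule out a stray $u_i^{-1}$-term is the delicate point; the rest of the proof is bookkeeping with parities of indices.
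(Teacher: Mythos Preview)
Your approach is exactly the paper's: kill the regular range with $\ker F=\HH^0(Y,\Om_Y^1)$, use Lemma~\ref{lemma: decomposition fine} to land in $W_i$, compute
\[
\langle \widetilde\omega_{i,j'}, F(\widetilde\omega_{i,j})\rangle
=\langle V(\widetilde\omega_{i,j'}),\widetilde\omega_{i,j}\rangle^{p}
=[\,j'=2d_i-2j\,]
\]
via Lemma~\ref{lemma:V ordinary} and Proposition~\ref{proposition: symplectic pairing}, and then read off $F(\widetilde\omega_{i,j})=\widetilde\omega_{i,2j-d_i}$ from nondegeneracy. The paper's proof stops at exactly this point and appeals to Proposition~\ref{proposition: symplectic pairing}; you have gone a step further and noticed that this last identification needs $\langle\widetilde\omega_{i,j'},\widetilde\omega_{i,2j-d_i}\rangle=[\,j'=2d_i-2j\,]$ for \emph{all} $j'$, whereas Proposition~\ref{proposition: symplectic pairing} only supplies this when at least one of $j'$, $2j-d_i$ is $\le\tfrac{d_i-1}{2}$. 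In particular the case $2j-d_i>\tfrac{d_i-1}{2}$ and $j'>\tfrac{d_i-1}{2}$ is not covered; this is exactly the range $j>\tfrac{3d_i-1}{4}$. The paper glosses over this, and you are right to flag it.

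Where your proposal still has a gap is in the fix you outline. The residue computation you sketch for the ``large--large'' pairing is not as innocent as the parity heuristic suggests. With $a,m>\tfrac{d_i-1}{2}$, the expansion of Lemma~\ref{l: construct wij}(iii) gives
\[
u_i^{-2m+d_i}\,\omega_{i,a}
=\bigl[u_i^{-2(a+m)+2d_i-1}+O\!\left(u_i^{-2(a+m)+3d_i-1}\right)\bigr]\,du_i,
\]
and for $a+m>\tfrac{3d_i-1}{2}$ (which happens already for $d_i\ge 5$) the error term begins at a \emph{negative} exponent, so ``$d_i$ is odd'' alone does not rule out a stray $u_i^{-1}du_i$; the monomials in $O(u_i^{-2a+2d_i-1})$ are not constrained to a single parity because $t_i=u_i^2+c_iu_i^{d_i+2}+\cdots$ has both even and odd powers of $u_i$. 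The same issue hits the companion term $u_i^{-2a+d_i}\omega_{i,m}$. So as written, your ``delicate point'' is genuinely delicate: you would need a finer statement about the tail of $\omega_{i,a}$ (e.g.\ exploiting that it is a pullback from $X$) rather than just the order estimate in Lemma~\ref{l: construct wij}. If you want to avoid that computation altogether, one clean workaround is to note that knowing the pairings against all $j'\le\tfrac{d_i-1}{2}$ already forces $F(\widetilde\omega_{i,j})-\widetilde\omega_{i,2j-d_i}\in W_{i,(d_i-1)/2}$; then feed this back into the pairings against the \emph{large} $j'$ (where now the ``small index'' hypothesis of Proposition~\ref{proposition: symplectic pairing} applies to $F(\widetilde\omega_{i,j})-\widetilde\omega_{i,2j-d_i}$ itself) to finish. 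Either way, the missing step is precisely the large--large pairing, and your sketch does not yet close it.
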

\begin{proof}
The first case is immediate since $F$ kills regular differentials. By Lemma~\ref{lemma: decomposition fine}, $F(\widetilde{\omega}_{i,j})$ lies in $W_i$. Moreover, for any $1 \leq j \leq d_i-1$, Lemma~\ref{lemma:V ordinary} and Proposition~\ref{proposition: symplectic pairing} yield
$$ \langle \widetilde{\omega}_{i,j'}, F(\widetilde{\omega}_{i,j}) \rangle 
= \langle V(\widetilde{\omega}_{i,j'}), \widetilde{\omega}_{i,j} \rangle ^p = \begin{cases}
     1 & \hbox{if $\frac{j'}{2} + j =d_i$} \\
     0 & \hbox{otherwise}.
\end{cases}$$
Rearranging the condition gives $j'=2d_i-2j$, so by Proposition~\ref{proposition: symplectic pairing} we obtain
\[F(\widetilde{\omega}_{i,j})  = \widetilde{\omega}_{i,2j-d_i}. \qedhere\]
\end{proof}

The isomorphism class of a $p$-torsion group scheme is its Ekedahl-Oort type, and can be studied via Dieudonn\'{e} theory \cite{DemGroups}.  For a $p$-torsion group scheme $G$, denote by $D(G)$ its Dieudonn\'{e} module.  Then Oda \cite{Oda} showed that $D(\Jac(Y)[p]) = \HdR^1(Y) $.  Thus, the Ekedahl-Oort type is determined by the actions of $F$ and $V$ on $\HdR^1(Y)$.

\subsection{Final types} \label{ss:final types} 
Recall that the Ekedahl-Oort type of a curve $C$ is the isomorphism class of the polarized mod-$p$ Dieudonn\'{e} module $\HdR^1(C)$, or equivalently the isomorphism class of the group scheme $\Jac(C)[p]$.
The final type is a combinatorial way to encode the Ekedahl-Oort type, which we will now recall. See Pries~\cite{pries_guide} for additional information.

Let $N$ be a polarized mod-$p$ Dieudonn\'{e} module (for instance $N=\HdR^1(C)$) with dimension $2g$ as a $k$-vector space.  A \emph{final filtration} is a filtration 
$$0 \subset N_1 \subset \cdots \subset N_{g} = V(N) \subset N_{g+1} \subset \ldots \subset N_{2g} = N$$
that is stable under $V$ and $\perp$ (symplectic complement) satisfying $\dim_k (N_l) = l$. By \cite[Lemma 5.2]{OortStrat}, one may require instead that the filtration is stable under $V$ and $F^{-1}$, giving an equivalent definition. The \emph{final type} is a string of integers defined by $\nu_l = \dim_k (V(N_l))$. Through the duality coming from the polarization, the first $g$ elements of the final type determine the entire final type. Thus we declare the final type $\nu$ of $N$ to be
$$\nu = [ \nu_1, \cdots ,\nu_{g} ].$$
Oort proves that two polarized mod-$p$ Dieudonn\'{e} modules are isomorphic if and only if they have the same final type \cite{OortStrat}.   

\begin{definition}
A \emph{simple word} in $V$ and $\perp$ is a string of $V$'s and $\perp$'s that ends with $V$ and does not contain consecutive copies of $\perp$.  More concretely, simple words are of the form $V^{n_t} \bot V^{n_{t-1}} \ldots V^{n_2} \bot V^{n_1}$.
\end{definition}

A simple word $w = V^{n_t} \bot V^{n_{t-1}} \ldots V^{n_2} \bot V^{n_1}$ acts on subspaces $L \subset N$ by
\[
w(L) = V^{n_t}(\cdots (V^{n_2}((V^{n_1}(L))^\perp)^\perp) \cdots)
\]
This is convenient for studying final types and the canonical filtration as knowing $\dim_k w(N)$ for all simple words $w$ determines the final type.

\subsection{EO Types for Covers of Ordinary Curves}  We now determine the Ekedahl-Oort type of $Y$. 

\begin{lemma}\label{lemma: final type of Wi}
The final type of $W_i$ is given by $\left[0, 1, 1, 2, 2, \ldots , \floor{\frac{d_i-1}{4}}\right]$, i.e. $\nu_l = \left\lfloor l/2 \right \rfloor$ for $1 \leq l \leq \frac{d_i-1}{2}$.
\end{lemma}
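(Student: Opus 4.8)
The space $W_i$ is a mod-$p$ Dieudonné module of dimension $d_i - 1$ with an explicit basis $\{\widetilde\omega_{i,1},\dots,\widetilde\omega_{i,d_i-1}\}$. Lemmas~\ref{lemma:V ordinary} and~\ref{lemma: F ordinary} give the $V$- and $F$-actions on this basis in closed form, and Proposition~\ref{proposition: symplectic pairing} gives the pairing: $\langle\widetilde\omega_{i,j},\widetilde\omega_{i,j'}\rangle = 1$ iff $j+j' = d_i$ (when at least one index is $\le (d_i-1)/2$; one checks the remaining cases are forced by non-degeneracy, or simply notes the pairing is perfect on $W_i$ with this "reverse" form). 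So everything in sight is combinatorial, and the task is to read off the final type from this data. I would set $g = (d_i-1)/2$ (the genus contributed by this piece, so $\dim_k W_i = 2g$), and construct an explicit final filtration $0\subset N_1\subset\dots\subset N_{2g}=W_i$, then compute $\nu_l = \dim_k V(N_l)$.

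\textbf{Key steps.} First I would describe the filtration. The natural guess, matching Definition~\ref{defn:wijtilde}, is to take $N_l = W_{i,l} = \spa\{\widetilde\omega_{i,j} : j\le l\}$ for $1\le l\le g$, which is the span of the regular differentials $\widetilde\omega_{i,1},\dots,\widetilde\omega_{i,g}$ (recall $\widetilde\omega_{i,j}=\widehat\omega_{i,j}$ for $j\le g$), hence equals $V(W_i)$ when $l = g$ — this is forced since $V(W_i) = H^0$-part has dimension $g$ and, by Lemma~\ref{lemma:V ordinary}, $V$ surjects $W_i$ onto $\spa\{\widetilde\omega_{i,j}:j\le g\}$. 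Then for $g < l \le 2g$ I would take $N_l = W_{i,g}^{\perp}\cap$-type extensions: concretely, by the pairing $W_{i,g}^\perp = W_{i,g}$ (the regular differentials are exactly their own symplectic complement, since $\langle\widetilde\omega_{i,j},\widetilde\omega_{i,j'}\rangle\ne 0$ requires $j+j'=d_i$, impossible for $j,j'\le g$), so I instead build the top half by the dual basis: set $N_{g+r} = W_{i,g} \oplus \spa\{\widetilde\omega_{i,d_i-1},\widetilde\omega_{i,d_i-2},\dots,\widetilde\omega_{i,d_i-r}\}$ for $1\le r\le g$. I would check this is stable under $V$: $V(\widetilde\omega_{i,j})$ is $\widetilde\omega_{i,j/2}$ or $0$, and $j/2 \le j \le g$ once... careful — for $j$ near $d_i-1$, $j/2$ can exceed $g$; but $j/2 \le (d_i-1)/2 = g$ always, so $V$ lands in $W_{i,g}\subseteq N_{g+r}$. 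Good. Stability under $\perp$ (equivalently $F^{-1}$ by \cite[Lemma 5.2]{OortStrat}): since the pairing matches $\widetilde\omega_{i,j}$ with $\widetilde\omega_{i,d_i-j}$, the complement of $N_{g+r}$ is $N_{g-r}=W_{i,g-r}$, which is in the filtration. With the filtration in hand, compute $\nu_l = \dim_k V(N_l)$. For $l = g+r$ ($r\ge 0$), $V(N_{g+r}) = V(W_{i,g}) + V(\spa\{\widetilde\omega_{i,d_i-1},\dots,\widetilde\omega_{i,d_i-r}\})$; the first summand is $\spa\{\widetilde\omega_{i,j/2} : 2\le j\le g,\ j\text{ even}\}$ of dimension $\lfloor g/2\rfloor$, and adding the $V$-images of the top $r$ classes fills in more — a short index count gives $\nu_{g+r} = g - \lceil (g-r)/2\rceil$ or similar, matching $[0,1,1,2,2,\dots]$ after reindexing. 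For $l\le g$, $N_l = W_{i,l}$ consists of regular differentials and $V(W_{i,l}) = \spa\{\widetilde\omega_{i,j}: 2j\le l\}$ (the even $j\le l$ give $\widetilde\omega_{i,j/2}$, the odd ones give $0$), so $\nu_l = \#\{j : 1\le j,\ 2j\le l\} = \lfloor l/2\rfloor$, which is exactly the claimed formula.

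\textbf{Main obstacle.} The real work is verifying that the proposed filtration is genuinely stable under both $V$ and $\perp$ (or $F^{-1}$) simultaneously, and that the indexing of the two "halves" glues correctly at $l=g$ — in particular that $N_g = V(W_i)$ holds on the nose. The $\perp$-stability is where one must use the precise "reversal" shape of the pairing from Proposition~\ref{proposition: symplectic pairing}, including pinning down the pairing values $\langle\widetilde\omega_{i,j},\widetilde\omega_{i,j'}\rangle$ for $j,j' > g$ (not covered directly by that proposition's hypothesis $j\le (d_i-1)/2$); these follow from non-degeneracy of the pairing on $W_i$ together with the already-known values, but it requires a careful argument. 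Everything else is bookkeeping with floors and the parity of $d_i$; I would organize the $\nu_l$ computation as a single lemma about the combinatorics of the $V$-action on $W_{i,l}$ to keep the floor manipulations contained. The final sanity check is that $\nu_g = \lfloor g/2\rfloor = \lfloor (d_i-1)/4\rfloor$, matching the last listed entry, with the "once or twice" behavior reflecting $g \bmod 2$, i.e. $d_i \bmod 4$.
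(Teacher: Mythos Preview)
Your bottom half is fine and matches the paper: $N_l = W_{i,l}$ for $l\le g=(d_i-1)/2$, and $V(W_{i,l})=W_{i,\lfloor l/2\rfloor}$ by Lemma~\ref{lemma:V ordinary} gives $\nu_l=\lfloor l/2\rfloor$ immediately. The problem is your top half. Your claim that $N_{g+r}^\perp = W_{i,g-r}$ is false: orthogonality to $W_{i,g}$ forces $x\in W_{i,g}$ (as you note), but then orthogonality to $\widetilde\omega_{i,d_i-s}$ for $s=1,\dots,r$ kills the coefficient $c_s$ (since $s+(d_i-s)=d_i$), so in fact $N_{g+r}^\perp = \spa\{\widetilde\omega_{i,r+1},\dots,\widetilde\omega_{i,g}\}$, which is \emph{not} in your filtration. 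Thus your filtration is not $\perp$-stable and is not a final filtration, so you cannot read off the final type from it.

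The paper's fix is simpler than your construction: just take $N_l = W_{i,l}$ for \emph{all} $1\le l\le d_i-1$. This is $V$-stable by Lemma~\ref{lemma:V ordinary} ($V(W_{i,l})=W_{i,\lfloor l/2\rfloor}$) and $F^{-1}$-stable by Lemma~\ref{lemma: F ordinary} ($F^{-1}(W_{i,l})=W_{i,\lfloor (l+d_i)/2\rfloor}$, since $F(\widetilde\omega_{i,j})=\widetilde\omega_{i,2j-d_i}$ for $j>g$ and $0$ otherwise), hence is a final filtration by the equivalence cited from \cite{OortStrat}. One can also check $\perp$-stability directly: $W_{i,l}^\perp = W_{i,d_i-1-l}$, and here the unknown pairings $\langle\widetilde\omega_{i,j},\widetilde\omega_{i,j'}\rangle$ for $j,j'>g$ turn out to be irrelevant, because orthogonality to $W_{i,g}$ already forces all coefficients with index $>g$ to vanish. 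So you never needed to ``pin down'' those values; your identified main obstacle evaporates once you pick the natural filtration.
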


\begin{proof}
Recall the subspaces $W_{i,l} = \spa \{ \widetilde{\omega}_{i,j} \; | \; j \leq l\}$ from Definition~\ref{defn:wijtilde}. Note that $\dim_k W_{i,l} = l$ by construction. Lemma~\ref{lemma:V ordinary} and Lemma~\ref{lemma: F ordinary} establish that the filtration 
$$0 \subset W_{i,1} \subset \cdots \subset W_{i,d_i-1} = W_i$$
is a final filtration. 
Finally, Lemma~\ref{lemma:V ordinary} yields $V(N_{i,l})=N_{i,\lfloor l/2 \rfloor}$, giving the desired result.
\end{proof}

For any odd positive integer $d$, let $G_d$ be the $p$-torsion group scheme of length $p^{d-1}$ with final type $\left[0, 1, 1, 2, 2, \ldots , \floor{\frac{d_i-1}{4}}\right]$, so that the Dieudonn\'{e} module of $G_{d_i}$ is isomorphic $W_i$ by Lemma~\ref{lemma: final type of Wi}.  As in the introduction, let $M_{\ord}$ be the mod-$p$ Dieudonn\'{e} module of an ordinary elliptic curve.

\begin{theorem} \label{thm: ordinary EO type}
Let $\pi:Y \to X$ be a double cover of smooth, proper, geometrically connected curves in characteristic $2$, with ramification breaks $d_1,\ldots,d_m$ at the ramified points. Assume $X$ is ordinary and write $f_Y=2g_X-1+m$ for the $p$-rank of $Y$. Then we have 
\[
\HdR^1(Y) \simeq M_{\ord}^{f_Y} \oplus \bigoplus_{i=1}^m W_i.
\]
Equivalently, we have
\[ \Jac(Y)[p] \cong (\underline{\mathbb{Z}/2\mathbb{Z}} \oplus \mu_2)^{f_Y} \oplus \bigoplus_{i=1}^m G_{d_i}.\]
\end{theorem}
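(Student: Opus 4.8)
The plan is to upgrade the fine decomposition $\HdR^1(Y) = U \oplus Z \oplus \bigoplus_{i=1}^m W_i$ of Lemma~\ref{lemma: decomposition fine}, which is already compatible with $F$ and $V$, into a decomposition of \emph{polarized} mod-$p$ Dieudonn\'{e} modules by analyzing the symplectic pairing on each summand. Concretely I would (a) identify $U \oplus Z$ with $M_{\ord}^{f_Y}$, (b) identify each $W_i$ with $D(G_{d_i})$, (c) check that the symplectic form is block-diagonal for this decomposition so that the pieces assemble, and (d) transport the conclusion to $\Jac(Y)[p]$ via Oda's isomorphism $D(\Jac(Y)[p]) = \HdR^1(Y)$ and the (anti)equivalence given by the Dieudonn\'{e} functor (under which $M_{\ord}$ corresponds to $\underline{\mathbb{Z}/2\mathbb{Z}} \oplus \mu_2$ and $W_i \simeq D(G_{d_i})$ to $G_{d_i}$).

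For (a): since $F$ is a bijective $\sigma$-linear operator on the finite-dimensional $k$-vector space $Z$ and $k$ is algebraically closed, a Lang-type descent argument produces an $\mathbb{F}_p$-basis $z_1, \dots, z_{f_Y}$ of $Z^{F=1}$ with $Z = Z^{F=1} \otimes_{\mathbb{F}_p} k$. Using the perfect pairing $U \times Z \to k$ of Lemma~\ref{lemma: decomposition k[F,V]}(iv), take the dual basis $u_1, \dots, u_{f_Y}$ of $U$, so $\langle u_i, z_j \rangle = \delta_{ij}$. The adjunction $\langle V u_i, z_j\rangle^p = \langle u_i, F z_j \rangle = \delta_{ij}$ forces $\langle V u_i, z_j\rangle = \delta_{ij}$ (unique $p$-th roots in characteristic $p$), and because $U$ and $Z$ are isotropic and pair only with each other this forces $V u_i = u_i$. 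Then $F u_i = 0$, $V z_i = 0$, $F z_i = z_i$, $V u_i = u_i$ and $\langle u_i, z_i\rangle = 1$, so each $\spa\{u_i, z_i\}$ is a polarized Dieudonn\'{e} submodule isomorphic to $M_{\ord}$; these submodules are mutually orthogonal and span $U \oplus Z$, giving $U \oplus Z \simeq M_{\ord}^{f_Y}$.

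For (b) and (c): first, $U \oplus Z$ is orthogonal to $L := \bigoplus_i W_i$, since for $u \in U$ one may write $u = V^n u'$ (as $V$ is bijective on $U$) and then $\langle u, \ell \rangle = \langle u', F^n \ell \rangle^{1/p^n} = 0$ for $n \gg 0$ and $\ell \in L$, because $F$ is nilpotent on $L$; symmetrically $Z \perp L$ using the bijectivity of $F$ on $Z$. Hence the symplectic form restricts to a perfect pairing on $L$. Next, for $i \neq i'$ one checks $W_i \perp W_{i'}$: after discarding $F$-bijective components (which are killed by $V$, resp. by a high power of $V$, since they lie in the image of $F$) the pairing $\langle \widetilde{\omega}_{i,j}, \widetilde{\omega}_{i',j'}\rangle$ reduces to $\langle \widehat{\omega}_{i,j}, \widehat{\omega}_{i',j'}\rangle$, and the residue formula of Proposition~\ref{p: pairing} together with the order estimate of Lemma~\ref{l: uniformizer up above} (a differential pulled back from $X$ and regular at $P_{i'}$ vanishes to order $\geq d_{i'}+1$ at $Q_{i'}$, so its product with the tail $u_{i'}^{-2j'+d_{i'}}$ has order $\geq 3$ there) makes this vanish; indices in the range of Proposition~\ref{proposition: symplectic pairing} are covered directly, and indices $j$ with $(d_i-1)/2 < j < d_i-1$ reduce to that range via the identity $\widetilde{\omega}_{i,j} = F\widetilde{\omega}_{i,(j+d_i)/2}$, the adjunction $\langle Fx, y\rangle = \langle Vx, y\rangle^p$, and Lemma~\ref{lemma:V ordinary}. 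The same manipulations show that on each $W_i$, in the basis $\{\widetilde{\omega}_{i,j}\}$, one has $\langle \widetilde{\omega}_{i,j}, \widetilde{\omega}_{i,j'}\rangle = 1$ exactly when $j+j' = d_i$ and $0$ otherwise; this antidiagonal form is perfect, so $W_i$ with the restricted form is a polarized mod-$p$ Dieudonn\'{e} module. By Lemma~\ref{lemma: final type of Wi} its final type is $\left[0,1,1,2,2,\dots,\floor{\tfrac{d_i-1}{4}}\right]$, which is by definition the final type of $G_{d_i}$, so Oort's classification of polarized mod-$p$ Dieudonn\'{e} modules \cite{OortStrat} gives $W_i \simeq D(G_{d_i})$. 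Assembling (a)--(c) yields $\HdR^1(Y) \simeq M_{\ord}^{f_Y} \oplus \bigoplus_{i=1}^m D(G_{d_i})$ as polarized Dieudonn\'{e} modules, and (d) finishes.

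I expect the main obstacle to be step (c): carefully establishing that the symplectic pairing is block-diagonal with respect to $U \oplus Z \oplus \bigoplus_i W_i$ and non-degenerate on each $W_i$, in particular dealing with the indices $j$ with $(d_i-1)/2 < j \leq d_i-1$ that fall outside the hypothesis of Proposition~\ref{proposition: symplectic pairing} and reconciling the classes $\widetilde{\omega}_{i,j}$ with the representatives $\widehat{\omega}_{i,j}$ used in the residue computations.
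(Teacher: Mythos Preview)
Your plan is the paper's, but with an extra layer. The paper's proof simply cites Lemma~\ref{lemma: decomposition fine} for the $F,V$-compatible decomposition and then identifies $U \cong D(\underline{\mathbb{Z}/2\mathbb{Z}}^{f_Y})$, $Z \cong D(\mu_2^{f_Y})$, and $W_i \cong D(G_{d_i})$ as \emph{unpolarized} Dieudonn\'e modules (the last via Lemma~\ref{lemma: final type of Wi}, since the final type already pins down a $BT_1$ Dieudonn\'e module); the group-scheme statement then follows from the Dieudonn\'e equivalence, so your step~(c) is not needed at all.

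If you nonetheless carry out the polarized analysis, watch three small points. First, your adjunction should read $\langle Fx,y\rangle = \langle x,Vy\rangle^p$, not $\langle Fx,y\rangle = \langle Vx,y\rangle^p$. Second, the reduction $\widetilde{\omega}_{i,j} = F\widetilde{\omega}_{i,(j+d_i)/2}$ only makes sense for odd $j$, so it does not cover all indices outside the range of Proposition~\ref{proposition: symplectic pairing}; however, the direct residue computation you sketch (reducing $\langle\widetilde{\omega}_{i,j},\widetilde{\omega}_{i',j'}\rangle$ to $\langle\widehat{\omega}_{i,j},\widehat{\omega}_{i',j'}\rangle$ and using the order estimate at $Q_{i'}$) already handles $W_i\perp W_{i'}$ in all cases, so just use that. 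Third, your assertion that $\langle \widetilde{\omega}_{i,j}, \widetilde{\omega}_{i,j'}\rangle = \delta_{j+j',d_i}$ for \emph{all} $j,j'$ goes beyond what Proposition~\ref{proposition: symplectic pairing} gives and need not hold when both indices exceed $(d_i-1)/2$; fortunately you only need non-degeneracy of the pairing on each $W_i$, and the antidiagonal block supplied by Proposition~\ref{proposition: symplectic pairing} already forces that.
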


\begin{proof}
Using Lemma~\ref{lemma: decomposition fine}, we get  isomorphisms of Dieudonn\'{e} modules
$$D(\Jac(Y)[p]) = \HdR^1(Y) = U \oplus Z \oplus \bigoplus_{i=1}^m W_i.$$
Recalling Lemma~\ref{lemma: decomposition k[F,V]},
we observe that $D((\underline{\mathbb{Z}/2\mathbb{Z}})^{f_Y}) \cong U$, $D(\mu_2^{f_Y}) \cong Z$, and $D(G_{d_i})=W_i$. 
\end{proof}

\begin{remark}
Note that this establishes Theorem~\ref{thmA} in the introduction.  %
\end{remark}

We now work out an equivalent form of Theorem~\ref{thm: ordinary EO type} concerning the final filtration.

\begin{definition} \label{def: phi}
Define a function $\phi$ taking two or more integer arguments recursively as follows:
\begin{align*}
    \phi(d;n_1)&\colonequals \left \lfloor \frac{d-1}{2^{n_1}} \right \rfloor  \\
    \phi(d;n_1, \ldots ,n_{t+1}) &\colonequals  \left \lfloor \frac{d-1-\phi(d;n_1,\ldots ,n_t)}{2^{n_{t+1}}}  \right \rfloor.
\end{align*}
For a simple word $w = V^{n_t} \bot \ldots \bot V^{n_1}$, define $\phi(d,w)=\phi(d;n_1,\ldots ,n_t)$. 

When a cover with ramification breaks $d_1,\ldots,d_m$ is clear from context, we set $\phi(w):= \sum_{i=1}^m \phi(d_i,w)$. %
\end{definition}

Note that taking $n_{t+1}=0$ gives that if $w = \bot V^{n_t} \bot \ldots \bot V^{n_1} = \bot w'$ then
\begin{equation}
\phi(d,w) = \left \lfloor d- 1 - \phi(d,n_1,\ldots,n_t) \right \rfloor = d-1-\phi(d,w').
\end{equation}

\begin{lemma} \label{lemma: phi ordinary}
When $X$ is ordinary, for any simple word $w$ we have $w (U \oplus W_i) = U \oplus W_{i,\phi(d_i,w)}$ and 
$$\dim_k (w(\HdR^1(Y))) = f_Y + \phi(w).$$
\end{lemma}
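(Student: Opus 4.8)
The statement has two parts: the identity $w(U\oplus W_i) = U\oplus W_{i,\phi(d_i,w)}$ for each $i$, and the dimension count for $w(\HdR^1(Y))$. The plan is to prove the first assertion by induction on the number $t$ of letters $\perp$ in the simple word $w$, and then deduce the dimension formula by summing over $i$ using the orthogonal decomposition from Lemma~\ref{lemma: decomposition fine}.

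For the inductive claim, first I would handle the base case where $w = V^{n_1}$ has no $\perp$. Here I use Lemma~\ref{lemma:V ordinary}, which says $V$ sends $\widetilde{\omega}_{i,j}$ to $\widetilde{\omega}_{i,j/2}$ when $j$ is even and to $0$ when $j$ is odd; iterating, $V^{n_1}$ maps $W_{i}$ onto the span of those $\widetilde{\omega}_{i,j'}$ with $j' \le (d_i-1)/2^{n_1}$, i.e.\ onto $W_{i,\phi(d_i;n_1)}$. Since $V$ kills $U$ (as $U \subset \operatorname{Im} F$ and $FV=0$), we also get $V^{n_1}(U\oplus W_i) = W_{i,\phi(d_i;n_1)}$; but $\phi(d;n_1)\le (d-1)/2$, and by Lemma~\ref{lemma: F ordinary} the subspace $W_{i,(d_i-1)/2}$ consists of regular differentials killed by $F$, hence lies in $\ker F$. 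Actually the cleanest bookkeeping is to note that for $w$ ending in $V$, $w(U\oplus W_i)$ is automatically contained in $\ker F \supseteq V(\HdR^1)$; combined with the duality between $U$ and $Z$ from Lemma~\ref{lemma: decomposition k[F,V]}(iv), one checks $U$ either survives or is annihilated consistently. I would be careful here: the symplectic complement of a subspace containing $U$-type classes interacts with $Z$, so the statement is really that $w(U\oplus W_i)$ always \emph{contains} $U$ when $w$ ends in a $\perp$-block of the right parity and equals $W_{i,\ast}$ otherwise — but since every simple word ends in $V$, and applying $V$ to anything lands in $\ker F$, one sees the "$U$" piece is handled uniformly by tracking whether the last operation before a $V$ was a $\perp$. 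The recursive formula in Definition~\ref{def: phi} with the identity $\phi(d,\perp w') = d-1-\phi(d,w')$ is exactly designed to encode: taking $\perp$ inside $W_i$ (which is non-degenerately self-paired by Proposition~\ref{proposition: symplectic pairing}, pairing $\widetilde{\omega}_{i,j}$ with $\widetilde{\omega}_{i,d_i-j}$) sends $W_{i,\ell}$ to $W_{i,d_i-1-\ell}$, at least after accounting for the $U\oplus Z$ contribution.

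So the inductive step is: assume $w' = V^{n_t}\perp\cdots\perp V^{n_1}$ satisfies $w'(U\oplus W_i) = U \oplus W_{i,\phi(d_i,w')}$ (or the variant without $U$), and consider $w = V^{n_{t+1}}\perp w'$. First take the symplectic complement of $w'(U\oplus W_i)$ inside $\HdR^1(Y)$: using that $U\oplus Z \perp L$ (Lemma~\ref{lemma: decomposition k[F,V]}), that $U\times Z\to k$ is perfect, and that the pairing on $W_i$ pairs $\widetilde{\omega}_{i,j}$ with $\widetilde{\omega}_{i,d_i-j}$ (Proposition~\ref{proposition: symplectic pairing}) while $W_i \perp W_{i'}$ for $i\ne i'$, I compute $(w'(U\oplus W_i))^\perp \cap (U\oplus Z \oplus W_i) = Z \oplus W_{i, d_i-1-\phi(d_i,w')}$ (with the $U$/$Z$ swap reflecting the sign conventions). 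Then apply $V^{n_{t+1}}$: $V$ kills $Z$ (Lemma~\ref{lemma: decomposition k[F,V]}(ii)) and acts on $W_i$ by halving indices (Lemma~\ref{lemma:V ordinary}), so $V^{n_{t+1}}$ sends $W_{i,d_i-1-\phi(d_i,w')}$ to $W_{i,\lfloor (d_i-1-\phi(d_i,w'))/2^{n_{t+1}}\rfloor} = W_{i,\phi(d_i,w)}$. This matches the claim. The main obstacle, and the step to write out carefully, is bookkeeping the $U$ versus $Z$ alternation under repeated $\perp$: each $\perp$ swaps the roles of $U$ and $Z$, and each trailing $V$ annihilates whichever of these is currently present, so one must verify that after the final $V$ the surviving $U\oplus Z$-part is exactly $U$ (which it is, since $w$ ends in $V$ and $V(\HdR^1)\cap(U\oplus Z) = U$ because $V$ is bijective on $U$ and zero on $Z$). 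Once the subspace identity is established, the dimension formula follows: by Lemma~\ref{lemma: decomposition fine}, $\HdR^1(Y) = U\oplus Z\oplus\bigoplus_i W_i$ is $w$-stable in the sense that $w$ acts block-diagonally on $Z\oplus\bigoplus_i W_i$ together with the $U$-contribution, so $w(\HdR^1(Y)) = U \oplus \bigoplus_{i=1}^m W_{i,\phi(d_i,w)}$, which has dimension $f_Y + \sum_i \phi(d_i,w) = f_Y + \phi(w)$ since $\dim_k U = f_Y$ and $\dim_k W_{i,\ell} = \ell$.
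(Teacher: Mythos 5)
Your overall strategy (induct on the word, using Lemma~\ref{lemma:V ordinary} for the $V$-steps and Proposition~\ref{proposition: symplectic pairing} for the $\perp$-steps, then sum over $i$ via Lemma~\ref{lemma: decomposition fine}) is the same as the paper's, but your handling of the $U\oplus Z$ part contains a genuine error that breaks the argument. You assert that ``$V$ kills $U$ (as $U\subset\operatorname{Im}F$ and $FV=0$)'' and that each $\perp$ ``swaps the roles of $U$ and $Z$.'' Both claims contradict Lemma~\ref{lemma: decomposition k[F,V]}: there $V$ acts \emph{bijectively} on $U$ and $F$ acts trivially on it (it is $Z$ on which $F$ is bijective and $V$ is zero), and since $U$ pairs nontrivially only with $Z$, with that pairing perfect, one has $U^\perp = U\oplus L = U\oplus W$ --- which contains $U$ and contains no nonzero element of $Z$. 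Consequently your claimed computation $\bigl(w'(U\oplus W_i)\bigr)^\perp\cap(U\oplus Z\oplus W_i)=Z\oplus W_{i,d_i-1-\phi(d_i,w')}$ is false; the correct complement is $U\oplus W_{i,d_i-1-\phi(d_i,w')}$. Your proposed patch does not rescue this: if the space before the final $V$ were of the form $Z\oplus W_{i,\ast}$, then applying $V$ would give $V(Z)\oplus V(W_{i,\ast})=W_{i,\lfloor \ast/2\rfloor}$ with \emph{no} $U$ summand, contradicting the statement $w(U\oplus W_i)=U\oplus W_{i,\phi(d_i,w)}$. So the ``bookkeeping of the $U$/$Z$ alternation'' that you flag as the main obstacle is not an obstacle at all once the roles are assigned correctly: $U$ is fixed by every $V$ (bijectivity) and survives every $\perp$ (isotropy of $U$ and $U^\perp=U\oplus W$), so it simply rides along unchanged, exactly as in the paper's proof. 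With that correction, the rest of your argument --- the halving of indices under $V$, the reflection $\ell\mapsto d_i-1-\ell$ under $\perp$ inside $W_i$, and the final dimension count $f_Y+\sum_i\phi(d_i,w)$ --- goes through.
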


\begin{proof}
Recall that $V$ is bijective on $U$, that $V(Z)=0$, and that $U^\perp = U \oplus W$ while $W^\perp = U \oplus Z$.
By Lemma~\ref{lemma:V ordinary}, $V^n(W_{i,l})=W_{i,\left \lfloor l/2^n \right \rfloor}$ and by Proposition~\ref{proposition: symplectic pairing} $W_{i,l}^\perp \cap W = W_{i,d_i-1-l} \oplus \bigoplus_{i'\neq i} W_{i'}$.  This establishes the first claim.
Lemma~\ref{lemma: decomposition fine} gives a decomposition compatible with $V$, so induction gives  
$$w(\HdR^1(Y)) = U \oplus \bigoplus_{i=1}^m W_{\phi(d_i,w)}.$$
The second result follows as $\dim_k(W_{i,\phi(d_i,w)}) = \phi(d_i,w)$, by definition $\phi(w)=\sum_{i=1}^m \phi(d_i,w)$ and finally $f_Y = \dim_k U$. 
\end{proof}

\begin{corollary} \label{cor: nu ordinary}
When $X$ is ordinary, the final type of $Y$ is determined by the condition that 
$$\nu_{f_Y + \phi(w)} = f_Y + \phi(Vw) = f_Y + \sum_{i=1}^m \left \lfloor \frac{\phi(d_i,w)}{2}   \right \rfloor$$
for each simple word $w$.
\end{corollary}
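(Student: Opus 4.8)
The plan is to deduce Corollary~\ref{cor: nu ordinary} directly from Lemma~\ref{lemma: phi ordinary} together with the definition of the final type and the recursive structure of the function $\phi$. Recall that the final type $[\nu_1, \ldots, \nu_{g_Y}]$ of $Y$ is by definition $\nu_l = \dim_k(V(N_l))$ for a final filtration $0 \subset N_1 \subset \cdots \subset N_{g_Y} = V(\HdR^1(Y)) \subset \cdots \subset N_{2g_Y} = \HdR^1(Y)$, and knowing $\dim_k w(\HdR^1(Y))$ for every simple word $w$ pins down the final type (Section~\ref{ss:final types}). So the whole task reduces to extracting the value of $\nu_l$ at the distinguished indices $l = f_Y + \phi(w)$ from the dimension formula already proved.

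First I would observe that by Lemma~\ref{lemma: phi ordinary}, the subspace $w(\HdR^1(Y)) = U \oplus \bigoplus_i W_{i,\phi(d_i,w)}$ has dimension $f_Y + \phi(w)$, and these subspaces, as $w$ ranges over simple words, are exactly the terms of the canonical filtration on $\HdR^1(Y)$; in particular $N_{f_Y + \phi(w)} = w(\HdR^1(Y))$ whenever $w$ ends in $V$ (the terms in the ``lower half'' $l \leq g_Y$ of the filtration correspond to words ending in $V$, while prepending a $\bot$ moves to the upper half). Then applying $V$ to $N_{f_Y+\phi(w)}$ gives, again by Lemma~\ref{lemma: phi ordinary} applied to the word $Vw$, the subspace $U \oplus \bigoplus_i W_{i,\phi(d_i,Vw)}$ of dimension $f_Y + \phi(Vw)$. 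Hence $\nu_{f_Y+\phi(w)} = \dim_k V(N_{f_Y+\phi(w)}) = f_Y + \phi(Vw)$. Finally, unwinding Definition~\ref{def: phi}: if $w = V^{n_t}\bot\cdots\bot V^{n_1}$ then $Vw = V^{n_t+1}\bot\cdots\bot V^{n_1}$, so $\phi(d_i,Vw) = \lfloor (d_i - 1 - \phi(d_i;n_1,\ldots,n_{t-1}))/2^{n_t+1}\rfloor = \lfloor \phi(d_i,w)/2\rfloor$ (using $\phi(d_i,w) = \lfloor(d_i-1-\phi(d_i;n_1,\ldots,n_{t-1}))/2^{n_t}\rfloor$ and that halving commutes with the floor here since $2^{n_t} \mid 2^{n_t+1}$), which yields $\phi(Vw) = \sum_i \lfloor \phi(d_i,w)/2\rfloor$ as claimed.

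The last point requiring care, and the place I expect to spend the most attention, is the bookkeeping that the values $\{f_Y + \phi(w) : w \text{ a simple word ending in } V\}$ together with their ``$\bot$-completions'' really do exhaust all of $\{1, \ldots, g_Y\}$ (equivalently, that the subspaces $w(\HdR^1(Y))$ realize every step of the canonical filtration), so that specifying $\nu_l$ at these indices determines the whole final type rather than just part of it. This follows because $\HdR^1(Y) = U \oplus Z \oplus \bigoplus_i W_i$ is a direct sum decomposition compatible with $F$, $V$, and (by Lemma~\ref{lemma: M_0 orthogonal to W}-style orthogonality, here Proposition~\ref{proposition: symplectic pairing}) the pairing, so the canonical filtration of $\HdR^1(Y)$ is the ``interleaving'' of the trivial filtration on $U \oplus Z$ with the canonical filtrations of the $W_i$, whose steps are precisely the $W_{i,l}$ for $0 \leq l \leq d_i - 1$; and every such $l$ arises as $\phi(d_i,w)$ for a suitable simple word $w$ by the surjectivity built into the recursion defining $\phi$. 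I would state this as the one genuine verification in the proof and otherwise present the argument as a short computation, citing Lemma~\ref{lemma: phi ordinary} and the definitions.
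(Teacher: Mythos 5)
Your proposal is correct and follows essentially the same route as the paper: both reduce the corollary to Lemma~\ref{lemma: phi ordinary} via the observation that the canonical filtration $\{w(\HdR^1(Y))\}$ determines the final type, then compute $\nu_{f_Y+\phi(w)} = \dim_k V(w(\HdR^1(Y))) = \dim_k (Vw)(\HdR^1(Y)) = f_Y + \phi(Vw)$ and unwind $\phi(d_i,Vw)=\lfloor\phi(d_i,w)/2\rfloor$ via the nested-floor identity. The extra care you devote to why the distinguished indices suffice is a reasonable elaboration of a point the paper delegates to the standard fact (stated in Section~\ref{ss:final types}) that the dimensions $\dim_k w(N)$ over all simple words determine the final type.
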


\begin{proof}
The Ekedahl-Oort type of $Y$ is determined by the final type $\nu_l = \dim_k (V(N_l))$, where $\{N_l\}$ is any final filtration. Any final filtration is a refinement of the \emph{canonical filtration} $\{ w(\HdR^1(Y))\}$, where $w$ ranges over all words in $V$ and $\perp$. The final type is determined by the values $\dim_k (w(\HdR^1(Y)))$, which are given in Lemma~\ref{lemma: phi ordinary}.
\end{proof}

\begin{remark} \label{rmk:family1}
We can now prove the first part of Corollary~\ref{cor:families}: consider covers of $\PP^1$ ramified at infinity with ramification break $d$.  The genus of the cover is $g=(d-1)/2$ by Riemann-Hurwitz, and the final type is $[0,1,1,2,2,\ldots,\floor{g/2}]$ by Theorem~\ref{thm: ordinary EO type}. The dimension of the family can be recovered from \cite{prieszhu12}, or seen more directly as follows.  By Artin-Schreier theory, any such Artin-Schreier cover can be written in the form $y^2 + y = f(x)$ where $f(x)$ is a polynomial of degree $d$, and the extension of function fields is determined by $f(x)$ up to adding something of the form $g^2 + g$.  Thus to obtain non-isomorphic extensions we may just restrict to polynomials $f(x)$ for which the coefficient of $x^{2i}$ is zero for any integer $i$.  There are $(d+1)/2$ unconstrained coefficients, giving a $(d+1)/2$-dimensional family of Artin-Schreier covers.  However, two such covers of $\PP^1$ may be isomorphic without being isomorphic as covers of $\PP^1$.  As there is a two-dimensional family of automorphisms of $\PP^1$ which fix infinity, we obtain a $(d+1)/2-2$-dimensional family of Artin-Schreier curves.
\end{remark}

\begin{remark}\label{remark:unlikely}
By \cite[Theorem 1.2]{OortStrat},   the Ekedahl-Oort stratum with final type $[\nu_1, \ldots, \nu_g]$ has codimension $\sum_{i=1}^g (i-\nu_i)$ inside $\mathcal{A}_g$. Consider now an ordinary curve $X$ and a double cover $\pi: Y \to X$ that is branched at one point with ramification invariant $d$.  Then $g_Y = 2g_X+ (d-1)/2$.
By Theorem~\ref{thm: ordinary EO type}, the final type of $Y$ is given by
$$\left[1, 2, \ldots, g_X, g_X, g_X+1, g_X+1, \ldots, g_X+\left \lfloor \frac{d-1}{4} \right \rfloor \right].$$
The Ekedahl-Oort stratum of $\Jac(Y)$ has codimension 
$$\sum_{i=1}^{\frac{d-1}{2}} \left(i-\left\lfloor \frac{i}{2} \right\rfloor\right) = \sum_{i=1}^{\frac{d-1}{2}} \left \lceil \frac{i}{2} \right \rceil \geq \frac{(d-1)(d+1)}{16}.$$

$$3g_{Y}-3= 3\left(2g_X+ \frac{d-1}{2} \right)-3.$$
Thus, when $d$ is sufficiently large compared to $g_X$, the codimension of the Ekedahl-Oort stratum of $\Jac(Y)$ will exceed the dimension of the Torelli locus and therefore the intersection between the Torelli locus and the Ekedahl-Oort stratum of $\Jac(Y)$ is an unlikely one.
\end{remark}

\section{Examples of \texorpdfstring{$\Z/2\Z$}{Z/2Z}-covers of a supersingular
elliptic curve}\label{sec: non ordinary base}
In this section we explore some examples of degree two covers
$\pi:Y \to X$ when $X$ is the unique supersingular elliptic curve
in characteristic two to illustrate that the Ekedahl-Oort type of $Y$ is not determined by the Ekedahl-Oort type of $X$ and the ramification of $\pi$ (also see Example \ref{ex2} for
a higher genus example). This is the simplest non-ordinary example
and we observe a variety of Ekedahl-Oort types for covers with the same ramification.  In this section we restrict to covers ramified at a single point as this already illustrates the essential behavior and avoids another layer of cumbersome notation.

In Section~\ref{ss: initial examples}
we give some concrete examples found computationally.   In
Section~\ref{ss: some covers of a supersingular base curve}
we determine all possible $k[V]$-structures on $H^0(Y,\Omega^1_Y)$ occurring
when $\pi$ is ramified
over a single point. We also determine the codimension of
each $k[V]$-module stratum in the moduli space of such covers.
Finally, in Section~\ref{ss: EO type of ss covers} we investigate the Ekedahl-Oort type of $Y$ when $\pi$
is ramified over a single point. In Example \ref{ex:theoryd7} we revisit and conceptually understand the examples from Section~\ref{ss: initial examples} with a supersingular base curve.  For example, we determine the two possible Ekedahl-Oort types when the unique ramification break is $d=7$. We also prove in
Theorem \ref{theorem: d = 2^n+1} that
there is only one possible Ekedahl-Oort type when $d=2^k+1$.  But in general there are many possible Ekedahl-Oort types and we do not attempt to completely classify them as we expect the answer to be a combinatorial mess.  Instead, we will bound the Ekedahl-Oort type in Section~\ref{sec:bounds}.

\subsection{Initial Examples} \label{ss: initial examples}

We begin with some concrete examples.

\begin{example} \label{example: different EO types}
Let $X$ be the supersingular elliptic curve $y^2 + y = x^3 $ over $k$.  %
Fix a point $Q$ on $X$.  The Ekedahl-Oort type of the double cover defined by $z^2 + z = f(x,y)$, where $f(x,y)$ is regular except at $Q$ where it has a pole of order $d$, is not always determined by $d$.  Due to automorphisms of $X$, the behavior is independent of $Q$.

In particular, if we take $d=7$ and $f(x,y) = x^2 y$ then the cover $Y$ has final type $[ 0, 1, 1, 2, 3]$.
But taking $f(x,y) = (x^2+x+1)y$ the cover has final type $[ 0, 1, 2, 2, 3]$. %
These were the only two Ekedahl-Oort types found through a computational search.  In Example~\ref{ex:theoryd7} we will analyze this example, prove these are the only possible Ekedahl-Oort types, and prove the latter is generic.
\end{example}

\begin{example} \label{ex:d15}
Again let $X$ be the supersingular elliptic curve $y^2 + y = x^3 $.  Looking at covers ramified at one point with ramification invariant $d=15$, a computational search finds at least five different Ekedahl-Oort types.  Due to their number we will not list them completely, but simply highlight one interesting feature: we find covers with different final types (like $[0, 1, 2, 2, 3, 4, 4, 4, 5]$ and $[0, 1, 2, 2, 3, 3, 3, 4, 5]$)
but with the same $k[V]$-module structure for the regular differentials (in this case $k[V]/(V^5) \oplus k[V]/(V^2) \oplus k[V]/(V) \oplus k[V]/(V)$).  
In other words, this is an example where the higher $a$-numbers (dimension of the kernel of the powers of the Cartier operator on the regular differentials) do not determine the Ekedahl-Oort type. 
\end{example}

\begin{example} \label{ex2}
Let $X$ be the genus three curve given by the affine equation $y^2 + y =x^7-x$.  It is neither ordinary nor superspecial (it has final type $[ 0, 1, 1]$). 
Searching with a computer, Table~\ref{table:coversofgenus3} gives examples of covers ramified over one point with ramification invariant $d=7$ with decreasing frequency.  Note the Ekedahl-Oort type of the particularly simple cover $z^2 + z = y$ was not found via a random search, but instead chosen for its simple form: the corresponding stratum has large codimension.

\begin{table}[htb]

    \caption{Ekedahl-Oort types of covers given by $z^2 + z =f(x,y)$ with ramification break $d=7$, listed with decreasing frequency}

\begin{tabular}{|c|Sc|}
    \hline 
    Ekedahl-Oort Type    &   $ f(x,y)$ \\ [0.5ex] 
    \hline \hline 
    $[ 0, 1, 2, 3, 3, 4, 5, 6, 7]$ & $\displaystyle \frac{x^2 y + (x^4 + x^3 + x^2 + x + 1)}{x^7 + x^6 + x^5 + x^4 + x^3 + x^2 + x + 1} $ \\
        \hline 
    $[ 0, 1, 1, 2, 3, 4, 5, 6, 7]$ & $\displaystyle \frac{y + (x^5 + x^4 + x^3)}{x^7 + x^6 + x^5 + x^4 + x^3 + x^2 + x + 1} $ \\
    \hline 
    $[ 0, 0, 1, 2, 3, 4, 5, 6, 7]$ & $\displaystyle \frac{x y + (x^6 + x^5 + x^4)}{x^7 + x^6 + x^5 + x^4 + x^3 + x^2 + x + 1} $ \\
    \hline 
    $[ 0, 1, 1, 2, 2, 3, 4, 4, 5 ]$ & $y$\\
    \hline 
\end{tabular}
\label{table:coversofgenus3}
\end{table}
\end{example}

\subsection{The \texorpdfstring{$k[V]$}{k[V]}-structure of \texorpdfstring{$\Z/2\Z$}{Z/2Z}-covers
of the supersingular elliptic curve}
\label{ss: some covers of a supersingular base curve}
We again take $X$ to be the superspecial (i.e. supersingular) elliptic curve given by $y^2 + y = x^3$. 
Let $\pi: Y \to X$ be a double cover ramified over $S=\{Q\}$ with ramification break $d$. We suppose $Y$ is given by an Artin-Schreier equation $z^2 - z = \psi$ for some function $\psi$ on $X$ regular except at infinity and with $\ord_\infty(\psi)=d$. This is always possible
if $d>1$ by Riemann-Roch.

\subsubsection{The basics of $V$ acting on $\HdR^1(Y)$}
It is straightforward to compute that the two-dimensional $\HdR^1(X)$ is spanned by the enhanced differentials 
\begin{equation}
    \beta_{1} \colonequals (dx,0) \quad \text{and} \quad \beta_2 \colonequals (x dx,y/x):
\end{equation}
note that $d(y/x) = x dx + y/x^2 dx$.  Note that $t\colonequals y/x^2 + 1/x^2$ is a uniformizer at the infinite point $Q \in X$. To see this, we first remark that $x$ has a pole of order $2$ at $Q$ and 
$y$ has a pole of order $3$. Thus, $y/x^2$ has a zero of order one and $1/x^2$ has a zero of order four, so that $t$ has a zero of order one. We remark that
$dt = dx$ and  $x = t^{-2 } + O(1)$.

\begin{propositiondefn} \label{prop:Valpha}
There exists a choice $\{\widetilde{\omega}_{1,j}\}_{j=1, \ldots, d-1}$ of $S$-enhanced differentials on $Y$ such that $V(\widetilde{\omega}_{1,1}) = \beta_1 = (dx,0)$ and for $2\leq j \leq d-1$
\[
V(\widetilde{\omega}_{1,j}) = \begin{cases}
0 & \text{if } j \text{ is odd},\\
\widetilde{\omega}_{1,j/2} & \text{if } j \text{ is even}.
\end{cases}
\] 
For ease of notation, in this section we let $\widetilde{\omega}_i \colonequals \widetilde{\omega}_{1,i}$ for $1 \leq i \leq d-1$.  Thus $V(\widetilde{\omega}_i) = \widetilde{\omega}_{i/2}$ if $i$ is even, $V(\widetilde{\omega}_1) = \beta_1=(dx,0)$, and $V(\widetilde{\omega}_i)=0$ otherwise.
\end{propositiondefn}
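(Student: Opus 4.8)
The plan is to refine Proposition~\ref{prop:nilpotent wij} to this special case. Applying it with $m=1$ and writing $\widetilde{\omega}_j^{(0)} \colonequals \widehat{\omega}_{1,j}^{\nil}$, one obtains a subspace $N \colonequals W_1 \oplus M$ that is stable under $V$ (by Lemma~\ref{lemma:Ldecomposition}), of dimension $d+1$, on which
\[
V\bigl(\widetilde{\omega}_j^{(0)}\bigr) = \begin{cases} \widetilde{\omega}_{j/2}^{(0)} + b_j \beta_1 & \text{if $j$ is even},\\ b_j \beta_1 & \text{if $j$ is odd},\end{cases}
\qquad V(\beta_1) = 0, \qquad V(\beta_2) = c\,\beta_1
\]
for scalars $b_j, c \in k$. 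Here one uses that $\HH^0(X,\Om_X^1)$ is one-dimensional, spanned by the invariant differential $\beta_1 = (dx,0)$, that it is killed by the Cartier operator since $X$ is supersingular, and that $V(\beta_2) = (V_X(x\,dx),0)$ is a multiple of $\beta_1$ because $x\,dx$ has only a double pole (at $Q$), so $V_X(x\,dx)$ is a regular differential on $X$. The whole statement then reduces to the two claims (a)~$c\neq0$ and (b)~$b_1\neq0$; I expect (b) to be the main obstacle.

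Granting (a) and (b), the rest is formal. Regard $N$ as a finite-length module over the twisted polynomial ring $k\{V\}$ (with relation $V\lambda = \lambda^{1/2}V$); it splits as a direct sum of cyclic modules $k\{V\}/(V^{n_i})$. Because $b_1\neq0$ one has $\beta_1 \in V(N)$, so upon iterating $V$ the error terms $b_j\beta_1$ ($j\ge2$) are absorbed at every stage; computing $\dim_k V^\ell(N) = \floor{(d-1)/2^\ell}+1$ for $1\le\ell\le a+1$ (where $2^a\le d-1<2^{a+1}$) and $V^{a+2}(N)=0$ pins down the isomorphism type of $N$, giving
\[
N \;\cong\; k\{V\}/(V^{a+2}) \;\oplus\; k\{V\}/(V) \;\oplus\! \bigoplus_{\substack{m\ \text{odd}\\ 3\le m\le d-1}}\! k\{V\}/\bigl(V^{\,1+\floor{\log_2((d-1)/m)}}\bigr)
\]
independently of the remaining $b_j$ and of $c$. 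Since $V^{a+1}(N)=k\beta_1$ is one-dimensional, the length-$(a+2)$ factor is unique, and one may choose the splitting so that its generator $e_0$ satisfies $V^{a+1}e_0=\beta_1$. Setting $\widetilde{\omega}_{1,2^k}\colonequals V^{a-k}e_0$ for $0\le k\le a$ and, for each odd $m$ with $3\le m\le d-1$ (with $e_m$ the generator of the corresponding factor), $\widetilde{\omega}_{1,2^k m}\colonequals V^{\floor{\log_2((d-1)/m)}-k}e_m$ covers every $j\in\{1,\dots,d-1\}$ exactly once, and the relations for $V$ in the statement follow immediately from the shift structure of the cyclic factors.

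Claim (a) is quick: $\HdR^1(X)$ is two-dimensional, $V$ is nilpotent on it because $X$ is supersingular, and $\operatorname{Im} V = \HH^0(X,\Om_X^1)$ is one-dimensional, so $\ker V = k\beta_1$; as $\beta_2\notin k\beta_1$ this forces $V(\beta_2)\neq0$. Claim (b) is where the geometry of the specific curve enters, and is the step I expect to require the most care. By Lemma~\ref{l: construct wij}(i) the polar part of $w_{1,1}$ at $Q$ is $c_1' t^{-2}\,dt$ with $c_1'\neq0$, while $x = t^{-2}+O(1)$ and $dt=dx$ show that $x\,dx$ has polar part $t^{-2}\,dt$ at $Q$; hence $w_{1,1}-c_1'\,x\,dx$ is regular at $Q$ and, being regular away from $Q$ as well, lies in $\HH^0(X,\Om_X^1)=k\,dx$. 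Thus $w_{1,1}$ is of the second kind (it has only a double pole), and as a class in $\HdR^1(X)$ it equals $c_1'\beta_2+\alpha'\beta_1$ for some $\alpha'\in k$ up to a class of the form $(0,(\ast)_Q)$. Pulling back to $Y$, using that $V$ annihilates every class of the form $(0,(\ast))$ and that the Cartier operator commutes with $\pi^*$ because the cover is separable, one obtains (the other two terms being killed by $V$)
\[
V\bigl(\widehat{\omega}_{1,1}\bigr) = V\bigl(c_1'\,\pi^*\beta_2\bigr) = (c_1')^{1/2}\,\pi^*\bigl(V(\beta_2)\bigr) = (c_1')^{1/2}c\,\beta_1,
\]
which is nonzero by (a). The genuinely substantive input is therefore this comparison of $w_{1,1}$ with $x\,dx=\beta_2$; the module-theoretic bookkeeping and the handling of the enhancement terms in (b) are fussier but present no real difficulty.
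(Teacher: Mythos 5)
Your two key claims are correct, and claim (b) is precisely the paper's central observation: since $w_{1,1}$ has polar part $c_1't^{-2}\,dt$ at $Q$ while $x\,dx=(t^{-2}+O(1))\,dt$, one may take $\widehat{\omega}_{1,1}=(x\,dx,0)$ outright, and then $V(\widehat{\omega}_{1,1})=(V_X(x\,dx),0)$ is a nonzero multiple of $\beta_1$ because $\ker V=k\beta_1$ inside the two-dimensional $\HdR^1(X)$ (your claim (a)). Where you diverge from the paper is in disposing of the error terms $b_j\beta_1$. The paper corrects in place: for even $j$ it absorbs $b_j\beta_1$ into the definition of the target $\widetilde{\omega}_{1,j/2}$ (harmless since $V(\beta_1)=0$), and for odd $j\ge 3$ one subtracts $b_j^2\widetilde{\omega}_{1,1}$; every correction is the pullback of a differential on $X$ that is regular or has a double pole at $Q$, so the normalizations $\ord_{Q_1}(\omega_{1,j})=-2j+d-1$ and the duality $\langle\widetilde{\omega}_{j},\widetilde{\omega}_{j'}\rangle=\delta_{j+j',d}$ of Proposition~\ref{proposition: symplectic pairing} survive the modification.

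Your route instead passes to an abstract cyclic decomposition of $N$ over $k\{V\}$ and re-chooses generators $e_0,e_m$. The dimension count $\dim_k V^\ell(N)=\floor{(d-1)/2^\ell}+1$ is right (granting $b_1\neq 0$ and $c\neq 0$), so this does produce classes with the stated $V$-action; but the generators of the cyclic factors are only determined up to adding elements of other chains of equal or shorter length --- for $d=15$, say, nothing in your argument rules out $e_3=\widetilde{\omega}_{12}^{(0)}+\widetilde{\omega}_{14}^{(0)}$. The resulting $\widetilde{\omega}_{1,j}$ then need not lie in $W_1$, need not have the prescribed order at $Q_1$, and need not satisfy the pairing formula, all of which the section immediately relies on (Corollary~\ref{c: V behavior on X with poles}, Lemma~\ref{lemma: superelliptic pairing}, Lemma~\ref{lemma:d=2^n+1}). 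So while your argument proves the bare existence statement, as a construction of ``a choice of the $\widetilde{\omega}_{1,j}$'' in the sense of Definition~\ref{defn:wijtilde} it has a genuine gap: you must additionally arrange that the new generators differ from the $\widetilde{\omega}_j^{(0)}$ only by pullbacks of differentials on $X$ with at most a double pole at $Q$. Once one sees that the corrections can be taken to be multiples of $\beta_1$ and of $\widetilde{\omega}_{1,1}$, the module-theoretic machinery becomes unnecessary.
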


\begin{proof}
Proposition~\ref{prop:nilpotent wij} almost gives the desired behavior, except the images under $V$ may include the pullback of a differential regular on $X$, i.e. a scalar multiple of $\beta_1 = (dx,0)$. 
First note that $\widetilde{\omega}_1$ may be chosen to be the regular differential $(xdx,0)$, which has local expansion $(t^{-2} + O(1) )dt$.  Then $V(\widetilde{\omega}_1) = \beta_1 = (dx,0)$ as desired.  
 Since $V(\beta_1)=0$  we may freely modify the other $\widetilde{\omega}_{i,j}$ by adding a multiple of $dx$ without changing the image under $V$ to arrange the desired formulas.
\end{proof}

We have two immediate corollaries of Proposition/Definition \ref{prop:Valpha}. The first is about the subspaces of $\HdR^1(Y)$ defined by
\begin{equation} \label{eq:Rn}
    R_n\coloneq \mathrm{span}_k(\beta_1,\widetilde{\omega}_1,\dots, \widetilde{\omega}_n) \quad \text{and} \quad R_0 = \mathrm{span}_k(\beta_1).
\end{equation}

\begin{corollary} \label{c: V behavior on X with poles}
    Let $r \geq 0$ satisfy $2n \geq 2^r$. Then
    \begin{align*}
        V^r \left( R_n \right) &= R_{\lfloor \frac{n}{2^r} \rfloor}.
    \end{align*}
\end{corollary}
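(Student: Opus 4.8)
The plan is to induct on $r$, reducing everything to a single application of $V$. When $r = 0$ there is nothing to prove, as $V^0$ is the identity and $R_{\lfloor n/2^0 \rfloor} = R_n$. For $r = 1$ I would read the action of $V$ directly off Proposition/Definition~\ref{prop:Valpha}, together with Proposition~\ref{prop:F and V on S-enhanced differentials}: the generator $\beta_1 = (dx,0)$ has $V(\beta_1) = 0$ since $V(\beta_1)$ is represented by $(V_X(dx),(0)_{Q}) = 0$; the generators $\widetilde{\omega}_j$ with $j$ even map to $\widetilde{\omega}_{j/2}$; the generator $\widetilde{\omega}_1$ maps to $\beta_1$; and each $\widetilde{\omega}_j$ with $j$ odd and $j>1$ maps to $0$. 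Hence for $n \geq 1$ the image $V(R_n)$ is spanned by $\beta_1$ together with $\widetilde{\omega}_1, \ldots, \widetilde{\omega}_{\lfloor n/2 \rfloor}$, which is precisely $R_{\lfloor n/2 \rfloor}$. Note that $2n \geq 2^r$ with $r \geq 1$ forces $n \geq 1$, so this first step always applies.

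For the inductive step I would write $V^r(R_n) = V^{r-1}\bigl(V(R_n)\bigr) = V^{r-1}\bigl(R_{\lfloor n/2 \rfloor}\bigr)$ and invoke the inductive hypothesis on $R_{\lfloor n/2 \rfloor}$ with exponent $r-1$. This requires checking the hypothesis $2\lfloor n/2 \rfloor \geq 2^{r-1}$: from $2n \geq 2^r$ one gets $n \geq 2^{r-1}$, and when $r \geq 2$ the integer $2^{r-1}$ is even, so $\lfloor n/2 \rfloor \geq 2^{r-2}$ and hence $2\lfloor n/2 \rfloor \geq 2^{r-1}$ (the case $r-1 = 0$ is the trivial base case, where no hypothesis is needed). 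The inductive hypothesis then gives $V^{r-1}(R_{\lfloor n/2 \rfloor}) = R_{\lfloor \lfloor n/2 \rfloor / 2^{r-1}\rfloor}$, and the elementary identity $\lfloor \lfloor a/b \rfloor / c \rfloor = \lfloor a/(bc) \rfloor$ for positive integers rewrites the index as $\lfloor n/2^r \rfloor$, completing the induction.

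The only mildly delicate point — and the reason the statement carries the hypothesis $2n \geq 2^r$ — is the boundary case $R_0 = \mathrm{span}_k(\beta_1)$: since $V(\beta_1) = 0$, applying $V$ to $R_0$ yields $0$ rather than $R_0$, so the recursion $R_m \mapsto R_{\lfloor m/2 \rfloor}$ is valid only when $m \geq 1$. The hypothesis is exactly what forces the index appearing just before the last application of $V$, namely $\lfloor n/2^{r-1} \rfloor$, to be at least $1$, so that no intermediate step ever touches $R_0$. Beyond this bit of bookkeeping there is no genuine obstacle; everything follows routinely once Proposition/Definition~\ref{prop:Valpha} is available.
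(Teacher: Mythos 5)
Your proof is correct and is essentially the argument the paper intends: the corollary is stated there as an immediate consequence of Proposition/Definition~\ref{prop:Valpha}, and your induction on $r$ simply spells out the one-step computation $V(R_m)=R_{\lfloor m/2\rfloor}$ for $m\geq 1$ together with the bookkeeping that the hypothesis $2n\geq 2^r$ keeps every intermediate index positive (so $\beta_1$ never drops out before the last application of $V$). Nothing further is needed.
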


\begin{definition}
    Let $\omega \in \HdR^1(Y)$. We define the $V$-order
    of $\omega$ to be
    \begin{equation*}
        \ord_V(\omega) = \min \{r : V^r(\omega)=0 \}. 
    \end{equation*}
\end{definition}

\begin{corollary} \label{c: alpha order label}
Writing $n = 2^k m$ with $m$ odd, we have    \begin{align*}
        \ord_V(\widetilde{\omega}_n) &=\begin{cases}
            k+2 & \text{if } m=1 \\
            k+1 & \text{if } m >1.
        \end{cases}
    \end{align*}
\end{corollary}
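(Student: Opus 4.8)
The plan is to unwind the $V$-action of Proposition/Definition~\ref{prop:Valpha} directly: starting from $\widetilde{\omega}_n$ with $n = 2^k m$ and $m$ odd, I will compute $V^r(\widetilde{\omega}_n)$ for every $r \geq 0$ and record the first $r$ for which the result vanishes. Everything reduces to iterating the three rules $V(\widetilde{\omega}_i) = \widetilde{\omega}_{i/2}$ for $i$ even, $V(\widetilde{\omega}_1) = \beta_1$, $V(\widetilde{\omega}_i) = 0$ for $i$ odd with $i > 1$, together with $V(\beta_1) = 0$.

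First I would treat the case $m = 1$, so $n = 2^k$. Since $2^j$ is even for $j \geq 1$, the rule $V(\widetilde{\omega}_{2^j}) = \widetilde{\omega}_{2^{j-1}}$ applies repeatedly, giving $V^r(\widetilde{\omega}_{2^k}) = \widetilde{\omega}_{2^{k-r}}$ for $0 \leq r \leq k$. In particular $V^k(\widetilde{\omega}_{2^k}) = \widetilde{\omega}_1$, then $V^{k+1}(\widetilde{\omega}_{2^k}) = V(\widetilde{\omega}_1) = \beta_1$, and finally $V^{k+2}(\widetilde{\omega}_{2^k}) = V(\beta_1) = 0$. Since $\{\widetilde{\omega}_1, \ldots, \widetilde{\omega}_{d-1}\}$ is a basis of the $(d-1)$-dimensional space $W_1$ (recall $\dim_k W_{1,l} = l$) and $\beta_1 = (dx,0)$ is a nonzero class in $\pi^*\HdR^1(X) \subset \HdR^1(Y)$, none of these intermediate terms vanish, so $\ord_V(\widetilde{\omega}_{2^k}) = k+2$. (The case $k=0$ is the base case $\ord_V(\widetilde{\omega}_1) = 2$, which is exactly the content of $V(\widetilde{\omega}_1) = \beta_1 \neq 0$ and $V^2(\widetilde{\omega}_1)=0$.)

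For $m > 1$, so $m$ odd and $m \geq 3$, write $n = 2^k m$. The same iteration, using that $2^j m$ is even for $j \geq 1$, yields $V^r(\widetilde{\omega}_{2^k m}) = \widetilde{\omega}_{2^{k-r} m}$ for $0 \leq r \leq k$, hence $V^k(\widetilde{\omega}_{2^k m}) = \widetilde{\omega}_m$, which is nonzero. Because $m$ is odd and strictly greater than $1$, Proposition/Definition~\ref{prop:Valpha} gives $V(\widetilde{\omega}_m) = 0$, so $V^{k+1}(\widetilde{\omega}_{2^k m}) = 0$ while $V^k(\widetilde{\omega}_{2^k m}) \neq 0$; thus $\ord_V(\widetilde{\omega}_{2^k m}) = k+1$. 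I do not anticipate any genuine obstacle: the corollary is an immediate bookkeeping consequence of Proposition/Definition~\ref{prop:Valpha}, and the only point needing a word of care is the nonvanishing of the intermediate terms $\widetilde{\omega}_{2^j m}$ and of $\beta_1$, which follows from the linear independence of the $\widetilde{\omega}_j$ and the injectivity of $\pi^*$ on de Rham cohomology.
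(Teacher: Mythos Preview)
Your proposal is correct and follows exactly the approach the paper intends: the corollary is stated without proof in the paper, being an immediate consequence of iterating the rules in Proposition/Definition~\ref{prop:Valpha}, and your computation is precisely that iteration. One small remark: invoking ``injectivity of $\pi^*$ on de Rham cohomology'' is more than you need (and is a slightly delicate statement for a $\Z/p\Z$-cover in characteristic~$p$); it suffices to note that $\beta_1=(\pi^*(dx),0)$ is a nonzero regular differential on $Y$, which is immediate since $\pi$ is finite and $dx\neq 0$.
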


    \subsubsection{Generalities about $k[V]$-modules}
    Let $k[V]$ denote the `skew' polynomial ring where we have the relation
    $Vc^p=cV$ for any $c \in K$. 
	Let $M$ be a $k[V]$-module that is finite dimensional as a $k$-vector space (i.e. $H^0(Y,\Omega^1_Y)$).
	There is a decomposition 
	\begin{align*}
		M&=M^{\bij} \oplus M^{\nil},\\
		M^{\bij}&=(k[V]/(V-1))^{p(M)},\\
		M^{\nil} &= \bigoplus_{i\geq 0 } (k[V]/V^i)^{b_i(M)},
	\end{align*}
    where $p(M), b_i(M) \in \Z_{\geq 0}$ and almost all the $b_i(M)$
    are zero. $p(M)$ is the $p$-rank of $M$.
    Next, 
	we define the $r$-th higher $a$-numbers of $M$:
	\begin{equation*}
		a^r(M) \colonequals \dim(\ker(V^r|_M)).
	\end{equation*}
	Note that
	\begin{equation*}
		a^r(M) = \sum_{i\geq 0} b_i(M) \cdot \min(r,i),
	\end{equation*}
	so the numbers $a^r(M)$ determine the numbers $b_i(M)$ and
	vice versa. In particular, the $p$-rank of $M$ and the
	higher $a$-numbers completely determine the class of $M$,
    and thus completely determine $M$. 

    \begin{definition}
        The $V$-type of $M$ is a sequence of nonincreasing positive integers
        $\iota(M)=(c_0,c_1, \dots)$ defined by
        \begin{equation*}
            c_i=c_i(M) = \dim(V^i(M)) = \dim(M) - a^i(M).
        \end{equation*}
        Again, note $M$ is completely determined by its $V$-type.
        For any curve $Y$ we call $\iota(H^0(Y,\Omega_Y^1))$ the $V$-type
        of $Y$.
    \end{definition}

    An important class of examples come from spaces of differentials
	on the projective line with bounded pole. Let $n \geq 1$ and consider the $k[V]$-module
	\begin{equation*}
		M_n = H^0(\PP^1, \Omega^1_{\PP^1}((n+1)[0])).
	\end{equation*}

        \begin{lemma} \label{lemma:vtypemn}
    We have that $\dim(M_n) = n$, that $p(M_n) = 0$, and that $\displaystyle  a^r(M_n) = n- \left \lfloor \frac{n}{2^r} \right \rfloor$.
    In particular, the $V$-type of $M_n$ is
    \begin{equation*}
        \iota(M_n) = \left( \left \lfloor \frac{n}{2^i} \right \rfloor \right)_{i\geq 0}.
    \end{equation*}
    \end{lemma}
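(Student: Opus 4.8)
The plan is to exhibit an explicit $k$-basis of $M_n$ adapted to the Cartier operator and then read off all the assertions. Fix an affine coordinate $x$ on $\PP^1$ with the marked point at $x=0$, and for $1\le i\le n$ set $\eta_i\colonequals x^{-i-1}\,dx$. In the coordinate $w=1/x$ near $\infty$ one computes $\eta_i=-w^{i-1}\,dw$, which is regular at $\infty$ precisely because $i\ge 1$; since $\eta_i$ is regular away from $0$ and $\infty$ and has a pole of order $i+1\le n+1$ at $0$, it lies in $M_n$. Conversely, a differential regular away from $0$ is of the form $f(x)\,dx$ with $f$ a $k$-linear combination of negative powers $x^{-j}$, $j\ge 2$ (the condition $j\ge 2$ being exactly regularity at $\infty$), and the pole bound at $0$ forces $2\le j\le n+1$; hence $\{\eta_1,\dots,\eta_n\}$ is a basis and $\dim_k M_n=n$.

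Next I would compute the Cartier operator on this basis via the local recipe \eqref{eq: break up differential} with $t=x$ and $p=2$. Writing $\eta_i=x^{-i}\frac{dx}{x}$: if $i$ is even then $x^{-i}=(x^{-i/2})^2$ contributes only to the $a=0$ term, so $V(\eta_i)=x^{-i/2}\frac{dx}{x}=\eta_{i/2}$; if $i$ is odd then $x^{-i}=(x^{-(i+1)/2})^2\cdot x$ contributes only to the $a=1$ term, so $V(\eta_i)=0$. Thus $V$ acts on $M_n$ by the same formulas as on the $\widetilde{\omega}_{i,j}$ in Lemma~\ref{lemma:V ordinary}, and iterating gives $V^r(\eta_i)=\eta_{i/2^r}$ when $2^r\mid i$ and $V^r(\eta_i)=0$ otherwise.

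The three remaining claims then follow formally. Since $V^r(M_n)=0$ once $2^r>n$, the Cartier operator is nilpotent on $M_n$, so $M_n^{\bij}=0$ and $p(M_n)=0$. For the higher $a$-numbers, writing $\omega=\sum_{i=1}^n c_i\eta_i$ and using that $V^r$ is $\sigma^{-r}$-semilinear one gets $V^r(\omega)=\sum_{2^r\mid i}c_i^{1/2^r}\,\eta_{i/2^r}$; the basis vectors $\eta_{i/2^r}$ occurring here are pairwise distinct, so $V^r(\omega)=0$ forces $c_i=0$ for every $i$ divisible by $2^r$. Therefore $\ker(V^r|_{M_n})=\spa\{\eta_i:1\le i\le n,\ 2^r\nmid i\}$, which has dimension $n-\floor{n/2^r}$ since exactly $\floor{n/2^r}$ of the indices $1,\dots,n$ are divisible by $2^r$; this is $a^r(M_n)$. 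Finally $c_r(M_n)=\dim_k V^r(M_n)=\dim_k M_n-a^r(M_n)=\floor{n/2^r}$ by the rank-nullity relation (which holds for the semilinear map $V^r$), giving the claimed $V$-type. The argument is essentially a routine computation; the only step where one should be a bit careful is the last one, using that $k$ is a field so that $c^{1/2^r}=0$ if and only if $c=0$ in order to pin down $\ker V^r$ exactly.
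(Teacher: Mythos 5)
Your proof is correct and follows essentially the same route as the paper, which simply exhibits the basis $v_i = x^{-(i+1)}\,dx$ and observes that $V^r(M_n) = M_{\lfloor n/2^r\rfloor}$. You have merely filled in the routine details (regularity at $\infty$, the explicit Cartier computation, and the semilinear rank--nullity step), all of which check out.
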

    
    \begin{proof}
    A basis for $M_n$ is given by $\{v_1,\ldots,v_n\}$ where $v_i = x^{-(i+1)} dx$.  Thus, it is clear that $V^r(M_n) = M_{\lfloor n/2^r\rfloor }$, which gives the $V$-type.
    \end{proof}

\subsubsection{Main result} To state our main result we need
    some notation.
    \begin{definition}
        Let $u(r)$ be the sequence consisting of $r+1$ ones following
        by all zeros, i.e., 
        \begin{equation*}
            u(r) = \left(\underbrace{1,\dots,1}_{r+1}, 0,0\dots \right).
        \end{equation*}
            For any $n \geq 1$ we define $r_n  \colonequals  \left\lfloor \log_2(n) \right \rfloor $.
    \end{definition}

        \begin{theorem}\label{t: k[V] structure on ss ell curve}
        For a cover $\pi : Y \to X$ as above, ramified over a single point with ramification break $d$, let $\delta \colonequals (d-1)/2$.  Then the $V$-type of $Y$ is
        \begin{equation*}
            \iota(H^0(Y,\Omega_Y^1))  = \iota(M_{\delta}) + u({r_\delta + 1}) + u({\mu_\psi}),
        \end{equation*}
        where either $\mu_\psi = r_{d-1}+1$ or $v_2(d-1) \leq \mu_\psi< r_{d-1}$.
        Furthermore, each possibility occurs for some $\pi: Y \to X$. 
    \end{theorem}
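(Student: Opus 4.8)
The plan is to reduce everything to a single ``extra'' cyclic generator sitting on top of the submodule $V(W_1)$, whose structure is already pinned down. Since $X$ is supersingular we have $f_X=0$, hence $f_Y=2f_X-1+1=0$; so in the decomposition $\HdR^1(Y)=U\oplus Z\oplus L$ of Lemma~\ref{lemma: decomposition k[F,V]} we have $U=Z=0$, and therefore $H^0(Y,\Omega_Y^1)=V(L)=L_0$, a $k$-vector space of dimension $g_Y=\delta+2$ on which $V$ acts nilpotently (Lemma~\ref{lemma:Ldecomposition}). It contains the $k[V]$-submodule $N\colonequals V(W_1)=\spa\{\beta_1,\widetilde\omega_1,\dots,\widetilde\omega_\delta\}$, which is $R_\delta$ in the notation of \eqref{eq:Rn}. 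From Proposition/Definition~\ref{prop:Valpha} and Corollaries~\ref{c: V behavior on X with poles} and~\ref{c: alpha order label} one reads off that, as a $k[V]$-module, $N$ is the direct sum over odd $m$ with $1\le m\le\delta$ of cyclic modules: the $m=1$ summand is the chain $\widetilde\omega_{2^{r_\delta}}\mapsto\cdots\mapsto\widetilde\omega_1\mapsto\beta_1$ of length $r_\delta+2$, and for $m>1$ it is the chain generated by $\widetilde\omega_{2^{\lfloor\log_2(\delta/m)\rfloor}m}$. Comparing with Lemma~\ref{lemma:vtypemn}, this is exactly $M_\delta$ with its $m=1$ chain lengthened by one, so $\iota(N)=\iota(M_\delta)+u(r_\delta+1)$.

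Next I would treat $H^0(Y,\Omega_Y^1)$ as an extension of $N$. Since $\dim_k\bigl(H^0(Y,\Omega_Y^1)/N\bigr)=1$ and $V$ is nilpotent, the quotient is $k[V]/V$; pick a lift $\eta$ of a generator — concretely, writing the cover as $z^2+z=\psi$ with $\ord_Q(\psi)=-d$, one may take $\eta=z\,dx$, a regular differential on $Y$ which is not a pullback from $X$, so that $H^0(Y,\Omega_Y^1)=\spa\{\beta_1,\widetilde\omega_1,\dots,\widetilde\omega_\delta,\eta\}$. Because $H^0(Y,\Omega_Y^1)=N+k\eta$ we get $V^s(H^0(Y,\Omega_Y^1))=V^s(N)+kV^s(\eta)$ for all $s$, and since $V\eta\in N$ the integers $\epsilon_s\colonequals\dim_kV^s(H^0(Y,\Omega_Y^1))-\dim_kV^s(N)\in\{0,1\}$ form a non-increasing sequence for $s\ge1$ (if $V^s\eta\in V^sN$ then $V^{s+1}\eta\in V^{s+1}N$), while $\epsilon_0=1$. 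Hence $\iota(H^0(Y,\Omega_Y^1))=\iota(N)+u(\mu_\psi)$ with $\mu_\psi\colonequals\max\{s\ge0:V^s\eta\notin V^sN\}$, a quantity independent of the chosen lift. Combined with the previous paragraph this already proves the displayed formula $\iota(H^0(Y,\Omega_Y^1))=\iota(M_\delta)+u(r_\delta+1)+u(\mu_\psi)$; it remains to locate $\mu_\psi$.

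The crux is the computation of $V\eta$. Using $z\,dx=z^2\,dx+\psi\,dx$ in $\Omega^1_{k(Y)/k}$ and $V(z^2\,dx)=z\,V(dx)$ (Cartier operator), one first checks $V(dx)=0$ by a short local computation at $Q_1$: by Lemma~\ref{l: uniformizer up above} every term of $\pi^*dx$ sits in an odd power of the uniformizer once the different is accounted for, and the Cartier operator annihilates such terms. Thus $V\eta=V(\psi\,dx)=\pi^*(V_X(\psi\,dx))$; expanding $\psi=\sum_kb_kt^{-k}$ at $Q$, applying $V_X$, and rewriting the resulting pole terms via $w_{1,j}=c_1' t^{-(j+1)}dt+O(1)\,dt$ (Lemma~\ref{l: construct wij}), one finds $V\eta=\sum_{j=1}^{\delta}c_j\widetilde\omega_j+c_0\beta_1$ in $\HdR^1(Y)$ with $c_j$ a nonzero scalar multiple of $b_{2j+1}^{1/2}$ (note $b_1=0$, since $X$ carries no function with a simple pole at $Q$); in particular $c_\delta\ne0$ because $b_d\ne0$. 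Feeding this into the explicit $V$-action of Proposition/Definition~\ref{prop:Valpha}, $\mu_\psi$ is precisely the depth to which the ``deepest surviving'' component of $V\eta$ descends through the chains of $N$. Since $V\eta\in N$ and $V^{r_\delta+2}N=0$ we get $\mu_\psi\le r_\delta+2=r_{d-1}+1$. If $\delta$ is a power of $2$, then $\widetilde\omega_\delta$ generates the longest chain, so $V^{r_\delta+2}\eta=c_\delta\beta_1\ne0$ and $\mu_\psi=r_{d-1}+1$. If $\delta$ is not a power of $2$, then $\widetilde\omega_\delta$ generates the shorter chain on $m_0\colonequals\delta/2^{v_2(\delta)}$, and a direct inspection of coordinates shows $V^{v_2(\delta)+1}\eta$ still has a nonzero $\widetilde\omega_{m_0}$-component (the only other potential contributors to that component have already been killed), so $\mu_\psi\ge v_2(\delta)+1=v_2(d-1)$; moreover $\mu_\psi=r_{d-1}$ is impossible, because $V^{r_{d-1}}\eta\notin V^{r_{d-1}}N$ would force $c_{2^{r_\delta}}\ne0$, and then $V^{r_{d-1}+1}\eta=c_{2^{r_\delta}}\beta_1\ne0$, giving $\mu_\psi=r_{d-1}+1$ after all. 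Finally, for realizability I would invoke Artin--Schreier theory: the coefficients $b_{2j+1}$ (for $2j+1\ne1$) can be prescribed freely, since the ambiguity $\psi\mapsto\psi+g^2+g$ does not affect them; choosing $\psi$ so that the highest active coefficient lands on a chain of the desired length realizes each admissible value, and I would write down an explicit $\psi$ (a monomial in $x,y$ with a single high-order pole term) for the extreme value $\mu_\psi=v_2(d-1)$, where $c_\delta$ is the only active coefficient.

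The step I expect to be the main obstacle is this last case analysis: converting the coefficients $c_j$ of $V\eta$ into precise chain positions, establishing the gap at $\mu_\psi=r_{d-1}$ when $\delta$ is not a power of $2$, and exhibiting clean $\psi$'s for the realization statement. The preparatory facts $V(dx)=0$ on $Y$ and the identification $V\eta=\pi^*(V_X(\psi\,dx))$ expressed in the basis $\{\beta_1,\widetilde\omega_j\}$ are routine but need care, since pullback does not in general commute with the Cartier operator.
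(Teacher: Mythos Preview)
Your argument is correct and follows the same overall strategy as the paper: split off $R_\delta$ (the paper's notation for your $N$), compute its $V$-type as $\iota(M_\delta)+u(r_\delta+1)$, and then analyze how far the extra generator $\omega_T=z\,dx$ protrudes via $\mu_\psi=\max\{s:V^s\omega_T\notin V^sR_\delta\}$. The one substantive difference is in how this last quantity is computed. You apply $V$ first and work with $V\eta\in R_\delta$, then chase through the chains to decide when $V^{s-1}(V\eta)\in V^sR_\delta$; this works but makes the case analysis and the realization step somewhat delicate. The paper instead lifts one level up: it writes $\psi\,dx=c_{d-1}\widetilde\omega_{d-1}+\cdots+c_1\widetilde\omega_1+c_0\,dx$ inside $R_{d-1}$ and sets $\gamma=\sum_{i=\delta+1}^{d-1}c_i\widetilde\omega_i$, so that $V^r(\psi\,dx)\in V^r(R_\delta)$ iff $V^r(\gamma)=0$. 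This reduces everything to the single clean formula $\mu_\psi=\max_{\delta<i\le d-1}\ord_V(c_i\widetilde\omega_i)-1$, from which the dichotomy (governed by the coefficient $c_{2^{r_{d-1}}}$) and the explicit realization $\psi\,dx=\widetilde\omega_{d-1}+\widetilde\omega_i$ with $v_2(i)=\mu$ drop out immediately. Two small remarks on your write-up: the vanishing $V(dx)=0$ on $Y$ needs no local computation---$dx=d(\pi^*x)$ is exact in $\Omega^1_{k(Y)/k}$, so the Cartier operator kills it; and the reason $b_1=0$ is the residue theorem applied to $\psi\,dx$ on $X$, not the Weierstrass gap.
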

    To prove Theorem \ref{t: k[V] structure on ss ell curve} we need
    three short lemmas.  Recall that $R_n$ is defined in equation \eqref{eq:Rn}, and that $\dim(R_n)=n+1$. 
    
    \begin{lemma}
        \label{l: V structure of differentials with poles
        on ss curve}  We have that $p(R_n)=0$ and
        the $V$-type of $R_n$ is
        \begin{equation*}
            \iota(R_n)=\iota(M_n) + u({r_n+1}).
        \end{equation*}
    \end{lemma}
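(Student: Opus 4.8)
The plan is to compute $\dim_k V^r(R_n)$ for every $r \geq 0$ directly, using the description of the $V$-action on the $\widetilde{\omega}_i$ from Proposition/Definition~\ref{prop:Valpha} together with Corollary~\ref{c: V behavior on X with poles}, and then to recognize the resulting sequence as $\iota(M_n) + u(r_n+1)$ entry by entry. Since a $k[V]$-module is determined by its $V$-type, this suffices. As a preliminary remark, $R_n$ is a genuine $k[V]$-submodule of $\HdR^1(Y)$: by Proposition/Definition~\ref{prop:Valpha} we have $V(\beta_1)=0$ and $V(\widetilde{\omega}_i) \in \{0,\beta_1,\widetilde{\omega}_{i/2}\} \subseteq R_n$ for $1 \leq i \leq n$; moreover $V$ is nilpotent on $R_n$ (established in the next paragraph), so $R_n = R_n^{\nil}$ and hence $p(R_n) = 0$, which disposes of the $p$-rank assertion.

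Next I would recall that $r_n = \lfloor \log_2 n \rfloor$, so the inequality $2n \geq 2^r$ holds precisely when $r \leq r_n+1$. For such $r$, Corollary~\ref{c: V behavior on X with poles} gives $V^r(R_n) = R_{\lfloor n/2^r \rfloor}$, of dimension $\lfloor n/2^r \rfloor + 1$ since $\dim_k R_m = m+1$. Taking $r = r_n+1$ and using $2^{r_n} \leq n < 2^{r_n+1}$ yields $\lfloor n/2^{r_n+1} \rfloor = 0$, so $V^{r_n+1}(R_n) = R_0 = \mathrm{span}_k(\beta_1)$ is one-dimensional; applying $V$ once more annihilates it (as $V(\beta_1)=0$), so $V^r(R_n)=0$ for all $r \geq r_n+2$. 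Therefore
\[
\iota(R_n) = \big( \lfloor n/2^0\rfloor + 1,\ \lfloor n/2^1\rfloor + 1,\ \ldots,\ \lfloor n/2^{r_n+1}\rfloor + 1,\ 0,\ 0,\ \ldots \big).
\]

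Finally I would compare this with $\iota(M_n) + u(r_n+1)$. By Lemma~\ref{lemma:vtypemn}, $\iota(M_n) = (\lfloor n/2^i\rfloor)_{i \geq 0}$, while $u(r_n+1)$ has a $1$ in exactly the positions $i = 0,1,\ldots,r_n+1$ and $0$ afterwards. Thus for $0 \leq i \leq r_n+1$ the $i$-th entry of $\iota(M_n) + u(r_n+1)$ is $\lfloor n/2^i\rfloor + 1$, matching $\iota(R_n)$; and for $i \geq r_n+2$ both sequences vanish, since $n < 2^{r_n+1} \leq 2^{i-1}$ forces $\lfloor n/2^i\rfloor = 0$. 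This establishes $\iota(R_n) = \iota(M_n) + u(r_n+1)$, completing the proof.

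I do not anticipate a genuine obstacle here; the only point requiring a little care is the boundary between $r = r_n+1$, where Corollary~\ref{c: V behavior on X with poles} still applies and $V^{r_n+1}(R_n) = R_0$, and $r = r_n+2$, where the hypothesis $2n \geq 2^r$ fails and one instead invokes nilpotence of $V$ on the one-dimensional space $R_0$.
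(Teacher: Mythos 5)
Your proposal is correct and follows essentially the same route as the paper: both apply Corollary~\ref{c: V behavior on X with poles} for $r\leq r_n+1$ to get $V^r(R_n)=R_{\lfloor n/2^r\rfloor}$ and then observe $V(R_0)=0$ to kill everything beyond. Your write-up is merely more explicit about the boundary case and about deducing $p(R_n)=0$ from nilpotence, which the paper leaves implicit.
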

    \begin{proof}
        From Corollary \ref{c: V behavior on X with poles}, we know that for $r=0,\dots,r_n+1$ we have $V^r(R_n)=R_{\lfloor n/2^r \rfloor}$,
        so that $c_r(R_n)=\floor{ \frac{n}{2^r}} + 1$. For
        $r> r_n+1$ we have $V^r(R_n)=V(R_0)=0$. 
    \end{proof}

    Let $\omega_T \colonequals (z dx,0)$, which is easily seen to be a regular differential on $Y$.
    
    \begin{lemma}
        \label{l: structure of global differentials}
         We have $H^0(Y,\Omega_Y^1)=R_\delta \oplus k \cdot  \omega_T $.
    \end{lemma}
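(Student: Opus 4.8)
The plan is to deduce the decomposition from a dimension count, after verifying three things: that both $R_\delta$ and $k\cdot\omega_T$ lie in $H^0(Y,\Omega_Y^1)$, that their sum is direct, and that the total dimension equals $g_Y$. By Riemann--Hurwitz (as recorded at the start of Section~\ref{section: omegaij}) we have $g_Y = 2g_X - 1 + (d+1)/2 = (d+3)/2$, and since $\delta + 2 = (d-1)/2 + 2 = (d+3)/2$, it suffices to exhibit $\delta+2$ linearly independent regular differentials that span $R_\delta + k\cdot\omega_T$.

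First I would record that $R_\delta \subseteq H^0(Y,\Omega_Y^1)$. For $1 \le j \le \delta = (d-1)/2$ we have $\widetilde{\omega}_j = \widehat{\omega}_{1,j} = (\omega_{1,j},0)$ with $\omega_{1,j}$ a global holomorphic differential on $Y$ by Proposition~\ref{p: description of S-enhanced differential}, and $\beta_1 = (dx,0) = V(\widetilde{\omega}_1)$ by Proposition/Definition~\ref{prop:Valpha}, which is regular since the Cartier operator preserves $H^0(Y,\Omega_Y^1)$ (equivalently, $\beta_1 = \pi^*(dx)$ is the pullback of a regular differential on $X$). Together with the hypothesis that $\omega_T = (z\,dx,0)$ is regular, this gives $R_\delta + k\cdot\omega_T \subseteq H^0(Y,\Omega_Y^1)$.

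The heart of the argument is a computation of orders at the point $Q$ of $Y$ lying over infinity, from which linear independence and directness follow by a parity observation. By Lemma~\ref{l: construct wij}, $\ord_Q(\widetilde{\omega}_j) = d - 1 - 2j$ for $1 \le j \le \delta$, running through the $\delta$ distinct \emph{even} values $d-3, d-5, \ldots, 0$. Using the identity $dx = dt$ for the uniformizer $t = y/x^2 + 1/x^2$ together with Lemma~\ref{l: uniformizer up above} gives $\ord_Q(\beta_1) = \ord_Q(dt) = d+1$, again even. Finally, from $z^2 - z = \psi$ with $\psi$ having a pole of order $d$ at the branch point one reads off $\ord_Q(z) = -d$, whence $\ord_Q(\omega_T) = \ord_Q(z) + \ord_Q(dx) = -d + (d+1) = 1$, which is \emph{odd}. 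The $\delta+1$ differentials $\widetilde{\omega}_1, \ldots, \widetilde{\omega}_\delta, \beta_1$ have pairwise distinct orders at $Q$, hence are linearly independent and $\dim_k R_\delta = \delta+1$; moreover every nonzero element of $R_\delta$ has even order at $Q$, since its order equals the minimum of the (distinct) orders of its nonzero components. Therefore $\omega_T \notin R_\delta$, the sum $R_\delta \oplus k\cdot\omega_T$ is direct of dimension $\delta + 2 = g_Y$, and it must equal $H^0(Y,\Omega_Y^1)$.

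The only delicate point is getting the local data at $Q$ exactly right --- in particular $\ord_Q(dx) = d+1$ (via $dx = dt$ and Lemma~\ref{l: uniformizer up above}) and $\ord_Q(z) = -d$ --- and then noticing the even/odd dichotomy; once these are in place the conclusion is a one-line dimension count using nothing beyond Section~\ref{section: omegaij} and the explicit model of $X$.
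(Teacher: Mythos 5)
Your proof is correct, and the dimension count framing is exactly the paper's, but you establish the key point $\omega_T \notin R_\delta$ by a genuinely different mechanism. The paper's proof is a one-liner: $R_\delta$ consists of pullbacks of (enhanced) differentials from $X$, hence is fixed by the involution in $\Gal(Y/X)$, whereas $\omega_T = (z\,dx,0)$ is not fixed (the involution sends $z$ to $z+1$). You instead compute valuations at the ramified point: the spanning differentials of $R_\delta$ have the pairwise distinct \emph{even} orders $d+1, d-3, d-5, \ldots, 0$ there (by Lemma~\ref{l: construct wij} and Lemma~\ref{l: uniformizer up above}), so every nonzero element of $R_\delta$ has even order, while $\ord(z\,dx) = -d + (d+1) = 1$ is odd. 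Both arguments are sound; note that your use of orders of actual differentials is legitimate here because all the classes involved are represented by genuine regular differentials and $H^0(Y,\Omega^1_Y) \hookrightarrow \HdR^1(Y)$. (The possible modification of $\widetilde{\omega}_j$ by a multiple of $dx$ from Proposition/Definition~\ref{prop:Valpha} does not disturb your order computation, since $\ord(dx)=d+1$ exceeds $d-1-2j$.) The Galois argument is shorter and does not depend on the explicit model of $X$; your valuation argument is more computational but has the side benefit of re-verifying $\dim_k R_\delta = \delta+1$ and the regularity of $\omega_T$ at the ramified point, neither of which the paper reproves at this juncture.
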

    \begin{proof}
        We know $H^0(Y,\Omega_Y^1)$ has dimension $\dim_k(R_\delta)+1 = \delta +2 = (d+3)/2$. Thus, it is enough to show $\omega_T \not \in R_\delta$. This follows by observing that $\omega_T$ is not fixed
        by $\Gal(Y/X)$, while $R_\delta$ is fixed by
        the Galois action.
    \end{proof}

    	\begin{lemma}\label{l: ss function and differential isomorphism}
		Let $\omega$ be a nowhere vanishing global differential on $X$. Then the function
		\[ s:H^0(X,\cO_X(dQ)) \to R_{d-1}=H^0(X, \Omega_X^1(dQ)), \quad f \mapsto fdx ,\]
		is an isomorphism. Furthermore, $pole(f)=pole(fdx)$. 
	\end{lemma}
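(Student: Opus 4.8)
The plan is to deduce the whole lemma from the single fact that the differential $dx$ has trivial divisor on $X$, so that multiplication by $dx$ preserves orders of vanishing at every closed point. First I would verify this. On the affine curve $y^2+y=x^3$, differentiating in characteristic $2$ gives $dy=x^2\,dx$, so $\Omega^1_X$ is generated by $dx$ on the affine chart; and at $Q$ we have already recorded that $dx=dt$ for a uniformizer $t$, so $dx$ is regular and nonvanishing at $Q$ as well. Since $\deg K_X=0$ (as $X$ has genus one), an effective canonical divisor must be zero, hence $\operatorname{div}(dx)=0$. (Equivalently, a nowhere vanishing global differential on a genus-one curve is precisely a nonzero regular one, and $dx$ is such; this is what the hypothesis of the lemma amounts to.)

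Next I would let multiplication by $dx$ do everything at once. It is a $k$-linear bijection $k(X)\to\Omega^1_{k(X)/k}$, with inverse $\tau\mapsto\tau/dx$. For a closed point $P$ and $f\in k(X)^\times$ one has $\ord_P(f\,dx)=\ord_P(f)+\ord_P(dx)=\ord_P(f)$ because $\operatorname{div}(dx)=0$, so $\operatorname{div}(f\,dx)=\operatorname{div}(f)$; in particular the pole divisors of $f$ and of $f\,dx$ coincide, which gives the last assertion of the lemma. Consequently $\operatorname{div}(f)\geq -dQ$ if and only if $\operatorname{div}(f\,dx)\geq -dQ$, that is, $f\in H^0(X,\cO_X(dQ))$ if and only if $f\,dx\in H^0(X,\Omega^1_X(dQ))=R_{d-1}$. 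Hence $s$ restricts to a bijection $H^0(X,\cO_X(dQ))\to R_{d-1}$, and being additive it is an isomorphism of $k$-vector spaces. Here I regard $R_{d-1}$ as $H^0(X,\Omega^1_X(dQ))$ under the identification already in force, which is immediate because $dx,w_{1,1},\dots,w_{1,d-1}$ have pairwise distinct pole orders $0,2,3,\dots,d$ at $Q$ and span a $d$-dimensional space, while $\dim_k H^0(X,\Omega^1_X(dQ))=d$ by Riemann--Roch.

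I do not expect any real obstacle: the entire content is the triviality of $\operatorname{div}(dx)$, and the rest is bookkeeping with orders of vanishing. If a coordinate-free account is preferred, the same argument says that the trivialization $\cO_X\to\Omega^1_X$ sending $1\mapsto dx$ twists up to an isomorphism of invertible sheaves $\cO_X(dQ)\to\Omega^1_X(dQ)$, which one then evaluates on global sections; this makes the compatibility with the pole filtration automatic and avoids even mentioning dimension counts.
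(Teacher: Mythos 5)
Your proof is correct and takes essentially the same route as the paper, whose entire argument is the one-line citation of Riemann--Roch and genus one: your verification that $\operatorname{div}(dx)=0$ (nonvanishing on the affine chart via $dy=x^2\,dx$, and $dx=dt$ at $Q$), so that multiplication by $dx$ is an isomorphism $\cO_X(dQ)\to\Omega^1_X(dQ)$ preserving pole divisors, is exactly the content being invoked. The dimension count identifying $R_{d-1}$ with $H^0(X,\Omega^1_X(dQ))$ via the distinct pole orders $0,2,3,\dots,d$ is a welcome extra that the paper leaves implicit.
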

    
	\begin{proof}
		This is an easy consequence of Riemann-Roch using that $X$ has genus one.
 	\end{proof}

    \begin{proof}[Proof Of Theorem \ref{t: k[V] structure on ss ell curve}]
        We know that $H^0(Y,\Omega_Y^1) \cong R_\delta \oplus k \cdot \omega_T$,
        so that
        \begin{equation*}
            V^r(H^0(Y,\Omega_Y^1))=V^r(R_\delta) + k \cdot V^r(\omega_T).
        \end{equation*}
        Thus, we see that $\iota(H^0(Y,\Omega_Y^1))= \iota(R_\delta) + u({\mu_{\psi}})$  where $\mu_\psi$ is the largest $r$ such that
        $V^r(\omega_T)\not\in V^r(R_\delta)$.
        We see 
        \[
        V(\omega_T)=V((z^2 +\psi) dx ) = V(\psi dx),
        \]
        so we are looking for the
        largest value $r$ such that $V^r(\psi dx)\not\in V^r(R_\delta)$.
        By Lemma \ref{l: ss function and differential isomorphism} we know
        \begin{equation*}
            \psi dx = c_{d-1}\widetilde{\omega}_{d-1} + \dots + c_1\widetilde{\omega}_1 + c_0 dx,
        \end{equation*}
        where $c_{d-1}\neq 0$. In particular, we have
        \begin{equation}\label{eq: definition of gamma}
            \psi dx \equiv \underbrace{c_{d-1} \widetilde{\omega}_{d-1} + \dots + c_{\delta+1}\widetilde{\omega}_{\delta+1}}_{\gamma} \mod R_{\delta}.
        \end{equation}
        From Proposition/Definition \ref{prop:Valpha} it is clear that
        $V^r(\psi dx)\in V^r(R_\delta)$ if and only if $V^r(\gamma)=0$. Thus,
        \begin{equation}\label{eq: max order V}
            \mu_\psi = \max \{\ord_V(c_i\widetilde{\omega}_i)\}_{i=\delta+1,\ldots,d-1} -1.
        \end{equation}
        Let $n$ be the largest power of two less or equal to $d-1$, i.e. $n = 2^{r_{d-1}}$.
        We consider two cases:
        \begin{enumerate}
            \item Suppose $c_{n} \neq 0$. Then
            $\ord_V(c_{n}\widetilde{\omega}_{n}) = r_{d-1} + 2$ from Corollary
            \ref{c: alpha order label}. We claim this is the unique maximal term in the right side of \eqref{eq: max order V}. 
            Consider $i\neq n = 2^{r_{d-1}}$ satisfying $d-1 \geq i  >\delta $.
            By the definition of $\delta$ and $r_{d-1}$, we know
            that $i$ is not a power of two. Thus, $i=2^km$ where
            $m >1$ and $k<r_{d-1}$. Then by Corollary \ref{c: alpha order label} we find $\ord_V(c_i\widetilde{\omega}_i) \leq k+1< r_{d-1}+1$. It follows
            that $\mu_\psi=r_{d-1}+1$. 

            \item Suppose $c_{n} = 0$. As in the
            previous case, the non-trivial terms in \eqref{eq: max order V} satisfy $\ord_V(c_i\widetilde{\omega}_i)<r_{d-1}+1$, . This gives
            an upper bound of $\mu_\psi < r_{d-1}$.
            Furthermore, since $c_{d-1} \neq 0$ the maximum in \eqref{eq: max order V} must be at least $v_2(d-1)+1$. Thus, we have
            $v_2(d-1)  \leq \mu_\psi < r_{d-1}$. To see that each possibility occurs,
            let $\mu$ satisfy $v_2(d-1) \leq \mu < r_{d-1}$. We can find $i$
            satisfying $d-1 \geq i >\delta $ with $v_2(i)=\mu$. 
            Then choose $\psi$ so that $\psi dx= \widetilde{\omega}_{d-1} + \widetilde{\omega}_i$.  \qedhere          
        \end{enumerate}
    \end{proof}
    
        We can describe the codimensions of the $k[V]$-module strata in the
        moduli space $\mathcal{M}_{X,d}$ of curves $Y$ admitting a $\Z/2\Z$-cover over $X$ branched at a
        single point with ramification break $d$. Let $Y \to X$ be
        a $\Z/2\Z$-cover ramified over a single point with ramification
        break $d$. We can assume the ramified point is at $Q$ by translating
        with the group law. Set
        \[ G_d \coloneqq H^0(X,dQ)-H^0(X,(d-1)Q),\]
        so that $G_d$ consists of rational functions on $X$ whose pole
        divisor is $dQ$. By Riemann-Roch, each cover is given by an equation $y^2 + y = \psi$ where $\psi\in G_d$. Let $M_\delta=\{ f^p+f\; | \; f \in H^0(X,\delta Q)\}$. Then $\psi$ and $\psi +h$ define
        the same $\Z/2\Z$-cover for any $h \in M_\delta$. 
        In particular, we may view $G_d/M_\delta$ as a parameter space of curves admitting a $\Z/2\Z$-cover to $X$.
        For any class $[\psi] \in G_d/M_\delta$, there is a unique representative
        such that
        \begin{equation*}
            \psi dx = c_{d-1}\widetilde{\omega}_{d-1} + \dots + c_1\widetilde{\omega}_1 + c_0 dx,
        \end{equation*}
        with $c_i=0$ for odd $i>1$, $c_0=0$, and $c_{d-1}\neq 0$. In particular,
        we see the parameter space $G_d/M_\delta$ is equal to $\mathbb{G}_m \times \mathbb{A}^{\delta}$ by sending $[\psi]$ to $(c_{d-1},c_{d-3},\dots, c_2,c_1)$.
        The moduli space $\mathcal{M}_{X,d}$ is a finite quotient of $G_d/M_\delta$, taking into account
        the automorphisms of $X$ that fix $Q$.
        
        From \eqref{eq: max order V} we see that the strata are defined by the vanishing
        and nonvanishing of $c_i$ for even $i$ in the range $\delta+1\leq i \leq d-1$.
        As in the proof of Theorem \ref{t: k[V] structure on ss ell curve}, we see that $\mu_\psi=r_{d-1}+1$ if and only if $c_{n} \neq 0$ for $n=2^{r_{d-1}}$.
        In particular, we see that $\mu_\psi=r_{d-1}+1$ occurs generically, and
        the complement has codimension one. For $\mu $ with $v_2(d-1) \leq \mu < r_{d-1}$, by \eqref{eq: max order V} and Lemma \ref{c: alpha order label} we see that $\mu_\psi = \mu$ if and only if 
        the following hold:
        \begin{enumerate}
            \item For any $i$ with $\delta+1\leq i \leq d-1$ satisfying $v_2(i)>\mu$ we
            have $c_i=0$. 
            \item For some $i$ with $\delta+1 \leq i \leq d-1$ with $v_2(i)=\mu$ we
            have $c_i \neq 0$.
        \end{enumerate}
        There are
        \[ \left\lfloor \frac{d-1}{2^{\mu+1}} \right \rfloor - \left\lfloor \frac{\delta}{2^{\mu+1}} \right \rfloor  = \left\lfloor \frac{d-1}{2^{\mu+1}} \right \rfloor - \left\lfloor \frac{d-1}{2^{\mu+2}} \right \rfloor. \]
        values of $i$ with $\delta+1\leq i \leq d-1$ and $v_2(i) > \mu$.
        To see this, note that there are $\left\lfloor \frac{d-1}{2^{\mu+1}} \right \rfloor+1$
        multiples of $2^{\mu+1}$ between $0$ and $d-1$.
        In particular, we see that the stratum corresponding to $\mu_\psi=\mu$ is
        irreducible and has codimension $\left\lfloor \frac{d-1}{2^{\mu+1}} \right \rfloor - \left\lfloor \frac{d-1}{2^{\mu+2}} \right \rfloor$.
        We summarize this discussion with the following theorem.
    \begin{theorem}
        \label{t: codimension of strata} Continue with the notation
        above. Then the following holds. 
        \begin{enumerate}
            \item A generic point in $\mathcal{M}_{X,d}$ 
            has the $V$-type determined by $\mu_\psi=r_{d-1}+1$.
            \item For $\mu$ with $v_2(d-1) \leq \mu < r_{d-1}$, the stratum
            of covers satisfying $\mu_\psi = \mu$ in $\mathcal{M}_{X,d}$ has codimension
            \[\left\lfloor \frac{d-1}{2^{\mu+1}} \right \rfloor - \left\lfloor \frac{d-1}{2^{\mu+2}} \right \rfloor. \]
        \end{enumerate}
    \end{theorem}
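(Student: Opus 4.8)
The plan is to carry out the computation sketched just above on the explicit parameter space $G_d/M_\delta \cong \mathbb{G}_m \times \mathbb{A}^\delta$, whose coordinates are the coefficients $(c_{d-1}, c_{d-3}, \ldots, c_2, c_1)$ of the canonical representative $\psi\,dx = c_{d-1}\widetilde{\omega}_{d-1} + \cdots + c_1\widetilde{\omega}_1$ of a class $[\psi]$, and then transfer the conclusions to $\mathcal{M}_{X,d}$. Since $\mathcal{M}_{X,d}$ is a finite quotient of $G_d/M_\delta$ and $\mu_\psi$ is determined by the $V$-type of $Y$ by Theorem~\ref{t: k[V] structure on ss ell curve} (hence an isomorphism invariant), each locus $\{\mu_\psi = \mu\}$ is a union of fibers of $G_d/M_\delta \to \mathcal{M}_{X,d}$; as a finite surjection preserves dimension and irreducibility of constructible subsets, I would prove everything upstairs.

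First I would record the combinatorial dictionary coming from \eqref{eq: max order V} and Corollary~\ref{c: alpha order label}: for $\delta + 1 \le i \le d-1$ one has $\ord_V(\widetilde{\omega}_i) - 1 = r_{d-1}+1$ when $i = n := 2^{r_{d-1}}$ (the unique power of two in that range) and $\ord_V(\widetilde{\omega}_i) - 1 = v_2(i)$ otherwise, while $c_i = 0$ identically for odd $i > 1$. Part (1) is then immediate: $\mu_\psi = r_{d-1}+1$ exactly when $c_n \ne 0$, and since $\delta < n \le d-1$ this $c_n$ is one of the affine coordinates, so the locus is open and dense with complement $\{c_n = 0\}$ of codimension one. (If $d-1$ is itself a power of two, then $v_2(d-1) = r_{d-1}$, the range in (2) is empty, and $\mu_\psi$ is constantly $r_{d-1}+1$, so there is nothing further to prove.)

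For part (2), fixing $\mu$ with $v_2(d-1) \le \mu < r_{d-1}$, the dictionary gives that $\ord_V(\widetilde{\omega}_i) - 1 \le \mu$ is equivalent to $v_2(i) \le \mu$ for every $i$ in the range, whence $\mu_\psi = \mu$ is equivalent to the conjunction of (i) $c_i = 0$ whenever $\delta + 1 \le i \le d-1$ and $v_2(i) > \mu$, together with (ii) $c_i \ne 0$ for some $i$ in that range with $v_2(i) = \mu$. I would then count that the indices in (i) are precisely the multiples of $2^{\mu+1}$ lying in $[\delta+1, d-1]$, of which there are $\lfloor (d-1)/2^{\mu+1} \rfloor - \lfloor (d-1)/2^{\mu+2} \rfloor$ using $\delta = (d-1)/2$; thus (i) cuts out a coordinate linear subspace of this codimension inside $\mathbb{G}_m \times \mathbb{A}^\delta$, and (ii) restricts to the nonvanishing of at least one of a nonempty collection of coordinates on that subspace (nonempty because some $i$ in the range has $v_2(i) = \mu$, e.g.\ $i = d-1$ when $\mu = v_2(d-1)$), hence to a dense open subset of it. This yields the claimed irreducibility and codimension.

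The hard part will be the equivalence ``$\mu_\psi = \mu \iff$ (i) and (ii)'': it rests on the maximum in \eqref{eq: max order V} being governed precisely by the $2$-adic valuations of the indices $i$ with $c_i \ne 0$, which in turn uses the exact form of Corollary~\ref{c: alpha order label} and the observation that $2^{r_{d-1}}$ is the only power of two in $[\delta+1, d-1]$ (otherwise such an extra index would contribute an $\ord_V$ one bigger than $v_2(i)+1$ predicts, corrupting the count). A secondary point worth stating explicitly is that this description genuinely descends to $\mathcal{M}_{X,d}$; this is automatic from the strata being pulled back along the finite quotient map together with the fact that this map preserves dimension and irreducibility, but it should be remarked on rather than left implicit.
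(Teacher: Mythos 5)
Your proposal is correct and follows essentially the same route as the paper: the paper's proof is exactly the discussion preceding the theorem statement, which identifies $G_d/M_\delta$ with $\mathbb{G}_m\times\mathbb{A}^\delta$ via the coefficients $c_i$, reads off $\mu_\psi$ from \eqref{eq: max order V} and Corollary~\ref{c: alpha order label}, and performs the identical count of indices $i$ with $v_2(i)>\mu$. Your extra remarks on the edge case $d-1=2^{r_{d-1}}$ and on descending along the finite quotient to $\mathcal{M}_{X,d}$ only make explicit what the paper leaves implicit.
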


\subsection{Ekedahl-Oort types of \texorpdfstring{$\Z/2\Z$}{Z/2Z}-Covers of supersingular elliptic curves}
\label{ss: EO type of ss covers}

The genus of $Y$ is $(d+3)/2$ by Riemann-Hurwitz, so the span of $\beta_1, \beta_2$, $\widetilde{\omega}_1,\ldots, \widetilde{\omega}_{d-1}$, and $\omega_T = (z dx,0)$ has codimension one in $\HdR^1(Y)$.  We pick an arbitrary non-regular $\omega_T'$ which completes the basis.  (The exact form of $\omega_T'$ is irrelevant.)  As we do not have complete information about the symplectic pairing, we will identify $\HdR^1(Y)$ with its dual non-canonically using this chosen basis.

\begin{notation}
For $\omega \in \HdR^1(Y)$, let $\omega^*$ be the linear functional corresponding to $\omega$ upon identifying $\HdR^1(Y)$ with its dual using the chosen basis $\beta_1,\beta_2,\widetilde{\omega}_1,\ldots,\widetilde{\omega}_{d-1},\omega_T,\omega_T'$.  For $\omega_1,\ldots,\omega_r \in \HdR^1(Y)$, let $Z(\omega_1,\ldots,\omega_r)$ be the subspace of $\HdR^1(Y)$ consisting of the intersection of the kernels of $\omega_1^*,\ldots, \omega_r^*$.  
\end{notation}

For example $\widetilde{\omega}_i^*(\omega)$ is the coefficient of $\widetilde{\omega}_i$ when writing $\omega$ in terms of our chosen basis, while $Z(\beta_1,\widetilde{\omega}_{i})$ consists of $\omega \in \HdR^1(Y)$ for which the coefficients of $\beta_1$ and $\widetilde{\omega}_{i}$ are zero.

\begin{lemma} \label{lemma: superelliptic pairing}
With notation as above:
\begin{enumerate}
    \item  $V(\omega_T)$ is a $k$-linear combination of $\beta_1, \widetilde{\omega}_1 ,\ldots, \widetilde{\omega}_{(d-1)/2}$.  The coefficient of $\widetilde{\omega}_{(d-1)/2}$ is nonzero, and the coefficient of $\widetilde{\omega}_i$ is nonzero if and only if the coefficient of $t^{-(2i+1)}$ in the local expansion of $\psi$ at $Q$ is nonzero.
    \item  for $1 \leq i \leq (d-1)/2$
    \[
    \langle \widetilde{\omega}_i, \widetilde{\omega}_{i'} \rangle = \begin{cases}
        1 & \hbox{if $i + i' =d$}\\
     0 & \hbox{otherwise}.
    \end{cases}
    \]
    \item  $\beta_1$ pairs trivially with $\beta_2$ and with each $\widetilde{\omega}_i$ for $1 \leq i \leq d-1$, and $\beta_2$ pairs trivially with $\widetilde{\omega}_i$ for $1 \leq i \leq (d-1)/2$.
    \item  given $1 \leq i_1 < \ldots < i_r \leq (d-1)/2$,
    \[
    \spa\{\beta_1,\widetilde{\omega}_{i_1},\ldots,\widetilde{\omega}_{i_r}\} ^\perp = Z(\omega_T',\widetilde{\omega}_{d-i_1},\ldots, \widetilde{\omega}_{d-i_r}).
    \]
\end{enumerate}
\end{lemma}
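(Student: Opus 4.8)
The plan is to obtain parts (2) and (3) directly from Section~\ref{section: omegaij}, to compute $V(\omega_T)$ with the Cartier operator for part (1), and to extract part (4) formally from (1)--(3).

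Part (2) is Proposition~\ref{proposition: symplectic pairing} specialized to $m=1$, $d_1=d$; the only thing to observe is that the differentials $\widetilde\omega_{1,j}$ produced by Proposition/Definition~\ref{prop:Valpha} differ from those of Definition~\ref{defn:wijtilde} only by multiples of $\beta_1=\pi^\ast(dx)$, which lies in $M_0$ and hence pairs trivially with all of $W$ by Lemma~\ref{lemma: M_0 orthogonal to W}, so the pairing restricted to $W$ is unaffected. For part (3): $\beta_1$ and $\beta_2$ both lie in $M=\pi^\ast\HdR^1(X)$ (all of $\pi^\ast\HdR^1(X)$ sits in $L$ because $X$ is supersingular), and $\pi^\ast\HdR^1(X)$ is totally isotropic since $\langle\pi^\ast a,\pi^\ast b\rangle=(\deg\pi)\langle a,b\rangle=0$ in characteristic two (equivalently, $\Res_Q(\pi^\ast\eta)=\Res_P(\pi_\ast\pi^\ast\eta)=\Res_P((\deg\pi)\eta)=0$); so $\langle\beta_1,\beta_2\rangle=0$. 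That $\beta_1=\pi^\ast(dx)\in M_0$ pairs trivially with each $\widetilde\omega_i$ is Lemma~\ref{lemma: M_0 orthogonal to W}, and that $\beta_2\in M$ pairs trivially with the regular differentials $\widetilde\omega_i$ ($i\le (d-1)/2$), which lie in $W\cap L_0$, is Lemma~\ref{lemma: M orthogonal to W_V} (whose hypothesis $\sum_i(d_i+1)=d+1>0=4g_X-4$ is automatic).

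For part (1), write $z\,dx=z^2\,dx+\psi\,dx$ using $z=z^2+\psi$. Since $z^2\,dx=d(z^2x)$ is exact we have $V_Y(z^2\,dx)=0$, so Proposition~\ref{prop:F and V on S-enhanced differentials} gives $V(\omega_T)\sim(V_Y(z\,dx),0)=(V_Y(\psi\,dx),0)$, which is $V$ applied to the de Rham class of $\psi\,dx$ with any admissible tail (different choices of tail differ by a class in $\operatorname{im}F$, which $V$ kills). By Lemma~\ref{l: ss function and differential isomorphism}, $\psi\,dx=c_0\,dx+\sum_{j=1}^{d-1}c_jw_{1,j}$ in $\HH^0(X,\Omega^1_X(dQ))$ with $c_{d-1}\ne0$; pulling this back and taking for the class of $\psi\,dx$ the tail equal to $\sum_j c_j$ times the tail of $\widehat\omega_{1,j}$ realizes it as $c_0\beta_1+\sum_j c_j\widehat\omega_{1,j}$ in $\HdR^1(Y)$. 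Applying $V$, using $V(\widehat\omega_{1,j})=V(\widetilde\omega_{1,j})$ (the $F$-bijective part of $\widehat\omega_{1,j}$ is killed by $V$) and the formulas of Proposition/Definition~\ref{prop:Valpha}, we get
\[
V(\omega_T)=c_1^{1/2}\,\beta_1+\sum_{\substack{2\le j\le d-1\\ j\text{ even}}}c_j^{1/2}\,\widetilde\omega_{1,j/2}.
\]
Thus $V(\omega_T)\in\spa(\beta_1,\widetilde\omega_1,\dots,\widetilde\omega_{(d-1)/2})$, the coefficient of $\widetilde\omega_{(d-1)/2}$ is $c_{d-1}^{1/2}\ne0$, and the coefficient of $\widetilde\omega_i$ is $c_{2i}^{1/2}$. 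Comparing principal parts at $Q$ in $\psi\,dx=c_0\,dx+\sum_jc_jw_{1,j}$ (the leading term of $w_{1,j}$ is a nonzero multiple of $t^{-j-1}\,dt$ by Lemma~\ref{l: construct wij}, while $dx$ is regular at $Q$) shows that $c_j$ is a nonzero scalar times the coefficient of $t^{-j-1}$ in the expansion of $\psi$ at $Q$; hence the coefficient of $\widetilde\omega_i$ in $V(\omega_T)$ is nonzero exactly when the coefficient of $t^{-(2i+1)}$ in $\psi$ is nonzero. The delicate point, and the main obstacle, is precisely this bookkeeping: realizing the pulled-back meromorphic differential $\psi\,dx$ as a de Rham class with a coherent choice of tails and matching $V(\omega_T)$ with $V$ of that class, which is exactly what the two applications of Proposition~\ref{prop:F and V on S-enhanced differentials} accomplish.

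Finally, for part (4), expand a vector $v$ in the chosen basis $\beta_1,\beta_2,\widetilde\omega_1,\dots,\widetilde\omega_{d-1},\omega_T,\omega_T'$. By (3) and the fact that $\HH^0(Y,\Omega^1_Y)$ is totally isotropic (so $\langle\beta_1,\omega_T\rangle=0$ and $\langle\widetilde\omega_i,\omega_T\rangle=0$ for $i\le(d-1)/2$), the functional $\langle\beta_1,\cdot\rangle$ annihilates every chosen basis vector except $\omega_T'$, and by non-degeneracy it does not annihilate $\omega_T'$; hence $\langle\beta_1,v\rangle=0$ if and only if the $\omega_T'$-coordinate of $v$ vanishes. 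For $i\le(d-1)/2$, part (2) shows $\langle\widetilde\omega_i,v\rangle$ equals the $\widetilde\omega_{d-i}$-coordinate of $v$ plus $\langle\widetilde\omega_i,\omega_T'\rangle$ times the $\omega_T'$-coordinate of $v$. Therefore $v\in\spa\{\beta_1,\widetilde\omega_{i_1},\dots,\widetilde\omega_{i_r}\}^\perp$ forces the $\omega_T'$-coordinate of $v$ to vanish, after which the remaining conditions collapse to the vanishing of the $\widetilde\omega_{d-i_1},\dots,\widetilde\omega_{d-i_r}$ coordinates, that is, to $v\in Z(\omega_T',\widetilde\omega_{d-i_1},\dots,\widetilde\omega_{d-i_r})$, giving the claimed equality.
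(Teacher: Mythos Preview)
Your proof is correct and follows essentially the same route as the paper's: parts (2)--(4) are derived from Proposition~\ref{proposition: symplectic pairing}, Lemma~\ref{lemma: M_0 orthogonal to W}, Lemma~\ref{lemma: M orthogonal to W_V}, and non-degeneracy in the same way, and part (1) rests on the identity $V(\omega_T)=(V_Y(\psi\,dx),0)$. The only noteworthy differences are stylistic: for (1) the paper computes $V_X(\psi\,dx)$ directly from the local expansion $\psi=\sum c_l t^{-l}$ (which is slightly quicker), whereas you first expand $\psi\,dx$ in the basis $dx,w_{1,1},\dots,w_{1,d-1}$ and then apply the $V$-formulas of Proposition/Definition~\ref{prop:Valpha}; and for $\langle\beta_1,\beta_2\rangle=0$ you use the projection-formula identity $\langle\pi^\ast a,\pi^\ast b\rangle_Y=\deg(\pi)\langle a,b\rangle_X=0$, while the paper simply invokes Lemma~\ref{lemma: M orthogonal to W_V}. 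Both arguments are valid and lead to the same conclusions.
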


\begin{proof}
For the first, recall that as $z^2 = z + \psi$ we have
\begin{equation}\label{eq: extra differential on bottom curve}
V(\omega_T) = (V(z^2dx + \psi dx) ,0) = (V(\psi dx),0).
\end{equation}
Thus the local expansion of $\psi$ at $Q$ controls $V(\omega_T)$.  In particular, if $\psi = \sum_{i=1}^{d} c_i t^{-i} + O(1)$ (with $c_d \neq 0$) then
\begin{equation}
V(\psi dx) = \left( \sum_{j=1}^{(d+1)/2} c_{2j-1}^{1/2} t^{-j} + O(1) \right) dt.
\end{equation}
Thus $V(\omega_T)$ is a $k$-linear combination of $\beta_1, \widetilde{\omega}_1 ,\ldots, \widetilde{\omega}_{(d-1)/2}$, and recalling the local expansions in Lemma~\ref{l: construct wij} we conclude the coefficient of $\widetilde{\omega}_{(d-1)/2}$ must be nonzero.  By similar reasoning, the coefficient of $\widetilde{\omega}_i$ is nonzero if and only if $c_{2i+1} \neq 0$.

The second follows from Proposition~\ref{proposition: symplectic pairing}, and the third from Lemma~\ref{lemma: M_0 orthogonal to W} and Lemma~\ref{lemma: M orthogonal to W_V}.
The last statement will follow from the second and third.  Since the pairing is non-degenerate and $\beta_1$ pairs trivially with all regular differentials as well as all the $\widetilde{\omega}_i$ and $\beta_2$, it must pair nontrivially with $\omega_T'$.  Thus when writing $\omega$ in the orthogonal complement in terms of the basis, they cannot have a $\omega_T'$ term. 
Furthermore, if $i \leq (d-1)/2$ then $\widetilde{\omega}_i$ pairs trivially with all basis elements except possibly for $\omega'_T$ and $\widetilde{\omega}_{d-i}$ (it is orthogonal to all regular differentials as it is regular).  Thus if $\omega$ is orthogonal to $\beta_1$ and $\widetilde{\omega}_i$ then the $\omega_T'$ and $\widetilde{\omega}_{d-i}$ terms are zero as desired.
\end{proof}

We will now give a couple of examples of computing $w(\HdR^1(Y))$ where $w$ is a simple word in $V$ and $\bot$.  

\begin{example} \label{ex:theoryd7}
Let $\pi : Y \to X$ be a double cover ramified at one point with ramification invariant $d=7$.  We will now establish that the two Ekedahl-Oort types observed in Example~\ref{example: different EO types} are the only possibilities.  Which one we get depends on whether the coefficient $c_5$ of $t^{-5}$ in the local expansion of $\psi$ is nonzero.  

In general, note that $V(\omega_T)$ is a linear combination of $\beta_1,\widetilde{\omega}_1,\widetilde{\omega}_2,\widetilde{\omega}_3$ and the coefficient of $\widetilde{\omega}_3$ must be nonzero.  Furthermore, $c_5 \neq 0$ if and only if $\widetilde{\omega}_2^*(V(\omega_T)) \neq 0$.
We may write $V^i(\HdR^1(Y))$ as a span of linearly independent vectors using Proposition~\ref{prop:Valpha}.

\begin{minipage}{.5 \textwidth}
If $c_5 \neq 0$, then:
\begin{align*}
V(\HdR^1(Y)) &= H^0(Y,\Omega^1_Y) \\ 
V^2(\HdR^1(Y)) &= \spa \{\beta_1, \widetilde{\omega}_1, V(\omega_T)\} \\
V^3(\HdR^1(Y)) &= \spa \{ \beta_1, V^2(\omega_T) \} = \spa \{ \beta_1, \widetilde{\omega}_1 \}  \\
V^4(\HdR^1(Y)) &= \spa \{ \beta_1 \} \\
V^5(\HdR^1(Y)) &= 0.
\end{align*}
\end{minipage}
\begin{minipage}{.5 \textwidth}
If $c_5 =0$, then:
\begin{align*}
V(\HdR^1(Y)) &= H^0(Y,\Omega^1_Y) \\ 
V^2(\HdR^1(Y)) &= \spa \{\beta_1, \widetilde{\omega}_1, V(\omega_T)\} \\
V^3(\HdR^1(Y)) &= \spa \{ \beta_1 \} \\
V^4(\HdR^1(Y)) &= 0 \\
V^5(\HdR^1(Y)) &= 0.
\end{align*}
\end{minipage}

Thus, when $c_5 \neq 0$ the $V$-type is $(5,3,2,1,0,\dots)$ and 
when $c_5=0$ the $V$-type is $(5,3,1,0,\dots)$.
In both cases, we see that $\spa \{\beta_1\}$ is part of the canonical filtration of $\HdR^1(Y)$. This allows us to construct more spaces in the canonical filtration:
\begin{align*}
\spa \{\beta_1\}^\perp &= Z(\omega'_T) \\
V(Z(\omega'_T)) &= \spa \{\beta_1, \widetilde{\omega}_1, \widetilde{\omega}_2, \widetilde{\omega}_3\} \\
V^2(Z(\omega'_T)) &= \spa \{\beta_1, \widetilde{\omega}_1 \}. 
\end{align*}
We conclude that $\HdR^1(Y)$ always has a canonical filtration whose first half is given by
\[
0 \subset \spa \{\beta_1\} \subset \spa \{ \beta_1, \widetilde{\omega}_1 \} \subset \spa \{\beta_1, \widetilde{\omega}_1, V(\omega_T)\}
 \subset \spa\{ \beta_1, \widetilde{\omega}_1,\widetilde{\omega}_2,\widetilde{\omega}_3\} \subset H^0(Y,\Omega^1_Y). \]

The final type of $Y$ then records the dimensions of the images of these subspaces under $V$. Note that we have already computed all these images; we see that the third entry of the final type depends on $c_5$ and the other entries do not. We conclude that the final type of $Y$ is $[0,1,1,2,3]$ when $c_5=0$ and it is $[0,1,2,2,3]$ when $c_5 \neq 0$.

\end{example}

\begin{example} \label{ex:theoryd15}
We now return to Example~\ref{ex:d15} and explain how we can obtain two covers with ramification invariant $d=15$ with  different Ekedahl-Oort types but where both curves have the same $V$-type (i.e. the $k[V]$-module structures of $H^0(Y,\Omega^1_Y)$ are the same.) %
As in the previous example, we see that
\[
V^2(\HdR^1(Y)) = \spa \{ \beta_1,\widetilde{\omega}_1,\widetilde{\omega}_2,\widetilde{\omega}_3,V(\omega_T) \}=:N_5.
\]
Here $N_i$ denotes the $i$-dimensional space in the canonical filtration of $\HdR^1(Y)$. 

Let $\psi= \sum_{i=1}^{15} c_i t^{-i} +O(1)$ be the local expansion of $\psi$ at the ramified point. As $c_{15} \neq 0$, it follows that $\widetilde{\omega}_7^*(V(\omega_T))\neq 0$, and hence the vectors defining $N_5$ are linearly independent. Further suppose that $c_9 \neq 0$, so that $\widetilde{\omega}_4^*(V(\omega_T)) \neq 0$.  Then we compute
\[
V^3(\HdR^1(Y)) = \spa \{\beta_1, \widetilde{\omega}_1, V^2(\omega_T)\} \quad V^4(\HdR^1(Y)) = \spa \{\beta_1,\widetilde{\omega}_1\} \quad V^5(\HdR^1(Y)) = \spa \{ \beta_1\}
\]
where in each case the specified vectors are linearly independent.  Thus,
the $V$-type of $Y$ is $(9,5,3,2,1,0,\dots)$, which determines the higher $a$-numbers. %
(There are other possibilities when $c_9=0$, but we do not consider those here.)

However, this is not enough information to determine the full Ekedahl-Oort type. More precisely, we will see that the final type of $Y$ depends on the coefficient $c_{13}$ of $t^{-13}$. Note that $c_{13}=\widetilde{\omega}_3^*(V^2(\omega_T))$. Now, we proceed to construct the canonical filtration of $\HdR^1(Y)$:
\begin{align*}
V^2\left(\HdR^1(Y)\right)^\perp &= N_5^\perp = Z \left(\omega'_T, \widetilde{\omega}_{14}, \widetilde{\omega}_{13}, \widetilde{\omega}_{12} \right) \cap \ker \langle V(\omega_T),- \rangle =: N_{13} \\
V\left(V^2\left(\HdR^1(Y)\right)^\perp\right) &= V(N_{13}) =  \spa \left\{\beta_1, \widetilde{\omega}_1, \ldots , \widetilde{\omega}_5, V(\omega_T) \right\} =: N_7 \\
V\left(V^2\left(\HdR^1(Y)\right)^\perp\right)^\perp &= N_7^\perp = Z\left(\omega'_T , \widetilde{\omega}_{14}, \widetilde{\omega}_{13}, \widetilde{\omega}_{12}, \widetilde{\omega}_{11}, \widetilde{\omega}_{10} \right) \cap \ker \langle V(\omega_T), - \rangle =: N_{11} \\
V\left(V\left(V^2\left(\HdR^1(Y)\right)^\perp\right)^\perp\right) &= V(N_{11}) = \spa \left\{\beta_1, \widetilde{\omega}_1, \widetilde{\omega}_2, \widetilde{\omega}_3, \widetilde{\omega}_4, V(\omega_T) \right\} =: N_6.
\end{align*}
We deduce from this that
\[
\dim_k (V(N_6)) = \dim_k(V(N_7)) = \begin{cases} 3 &\hbox{if $c_{13}=0$} \\ 4 &\hbox{if $c_{13} \neq 0$.} \end{cases}
\]
Furthermore, one can verify that all the other entries of the final type are determined by the assumptions $d=15$ and $c_9 \neq 0$. Thus we conclude that the final type of $Y$ is $[0,1,2,2,3,4,4,4,5]$ when $c_9\neq 0$ and $c_{13}\neq 0$, and that the final type of $Y$ is $[0,1,2,2,3,3,3,4,5]$ when $c_9 \neq 0$ and $c_{13}=0$.

\end{example}

On the other hand, in some special cases there is a single Ekedahl-Oort type.

\begin{theorem} \label{theorem: d = 2^n+1}
Let $X$ be the unique supersingular elliptic curve over $\overline{\mathbf{F}}_2$.  Let $\pi : Y \to X$ be a double cover ramified over exactly one point of $X$, with ramification invariant $d = 2^n +1$ for some positive integer $n$.  Then the Ekedahl-Oort type of $Y$ is determined by $d$: in particular if $d>3$ the final type is $$[0,1,2,3,3,\ldots,(d-1)/4+1 ,(d-1)/4+1,(d-1)/4+2 ].$$
If $d=3$ then the final type is $[0,1,2]$.
\end{theorem}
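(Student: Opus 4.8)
The plan is to read off the final type from the dimensions $\dim_k w(\HdR^1(Y))$ as $w$ ranges over simple words in $V$ and $\perp$ (these determine the final type, cf.\ Section~\ref{ss:final types}), using the explicit action of $V$ from Proposition/Definition~\ref{prop:Valpha} and the pairing data of Lemma~\ref{lemma: superelliptic pairing}. The content of the theorem is that, although the individual subspaces of the canonical filtration of $\HdR^1(Y)$ depend on the chosen cover $\psi$, their dimensions do not when $d = 2^n + 1$. One first disposes of $d = 3$: there $\delta := (d-1)/2 = 1$, so $R_\delta = R_1$ and already $V(\omega_T) \in R_1$, there is no cover-dependence, and $[0,1,2]$ is immediate. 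So assume $d = 2^n+1 > 3$, whence $\delta = 2^{n-1}$ is an even power of two.

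Step one: the $k[V]$-module $H^0(Y,\Omega^1_Y)$ is rigid. Since $d-1 = 2^n$ we have $v_2(d-1) = \lfloor\log_2(d-1)\rfloor = n$, so the interval $v_2(d-1) \le \mu_\psi < \lfloor\log_2(d-1)\rfloor$ in Theorem~\ref{t: k[V] structure on ss ell curve} is empty and $\mu_\psi = n+1$ is forced. Concretely, by Proposition/Definition~\ref{prop:Valpha} the $V$-chain $\widetilde\omega_{2^{n-1}} \mapsto \widetilde\omega_{2^{n-2}} \mapsto \cdots \mapsto \widetilde\omega_1 \mapsto \beta_1$ has length $n+1$, and by Lemma~\ref{lemma: superelliptic pairing}(1) the leading term of $V(\omega_T)$ is a nonzero multiple of $\widetilde\omega_\delta$. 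Iterating $V$ and using $V(\widetilde\omega_i) \in R_{\lfloor i/2\rfloor}$, one checks that for $1 \le j \le n-1$ the class $V^j(\omega_T)$ is, modulo $R_{2^{n-j}-1}$, a nonzero multiple of $\widetilde\omega_{2^{n-j}}$ (all cover-dependent lower-order corrections landing in that $R$), that $\beta_1$ and $V^n(\omega_T)$ span $R_1$, and that $V^{n+1}(\omega_T)$ is a nonzero multiple of $\beta_1$. Hence $V^r(\HdR^1(Y)) = R_{\lfloor\delta/2^{r-1}\rfloor} + k\cdot V^{r-1}(\omega_T)$ for $r \ge 1$, a space whose dimension depends only on $d$.

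Step two is the heart. I would prove by induction on word length that every subspace obtained from $\HdR^1(Y)$ by applying $V$'s and $\perp$'s is either of the form $R_m + k\cdot V^j(\omega_T)$ with $m \le \delta$ (so that, by Lemma~\ref{lemma: superelliptic pairing}(4), its $\perp$ is $R_m^\perp = Z(\omega_T', \widetilde\omega_{d-m},\ldots,\widetilde\omega_{d-1})$ cut down by the single linear condition $\langle\,\cdot\,, V^j(\omega_T)\rangle = 0$) or the $\perp$ of such, and that in all cases the dimension is a function of $d$ alone. The only ways cover-dependence could enter are through the unknown pairings involving $\omega_T'$ or through the cover-dependent lower-order parts of $V^j(\omega_T)$; neither does. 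The class $\omega_T'$ never re-enters, since $R_m^\perp$ lies in the hyperplane $(\omega_T')^* = 0$. And for $1 \le j \le n-1$ the condition $\langle\,\cdot\,, V^j(\omega_T)\rangle = 0$ has a nonzero coefficient — ultimately the nonvanishing of $c_{d-1}$ — on the coordinate dual to the leading term $\widetilde\omega_{2^{n-j}}$ of $V^j(\omega_T)$, namely the $\widetilde\omega_{d-2^{n-j}}$-coordinate; since $2^{n-j} \ge 2$ is a power of two, $d - 2^{n-j}$ is \emph{odd} and $\ge 3$, so $\widetilde\omega_{d-2^{n-j}} \in \ker V$ (and it lies in $R_m^\perp$ exactly when the condition is nontrivial on $R_m^\perp$). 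Consequently that hyperplane loses no dimension under the next application of $V$, and the whole count proceeds as if $\psi$ were irrelevant; the boundary cases $j = 0$ (use that $H^0(Y,\Omega^1_Y)$ is Lagrangian for the pairing, which is immediate from the residue formula in Theorem~\ref{theorem:differentials second kind fixed poles}, so $(H^0(Y,\Omega^1_Y))^\perp = H^0(Y,\Omega^1_Y)$) and $j \ge n$ (where $V^j(\omega_T)$ adds nothing beyond $R_1$) are trivial. This parity argument is exactly where the hypothesis $d = 2^n+1$ enters, and is what fails in general — contrast Examples~\ref{ex:theoryd7} and~\ref{ex:theoryd15}, where a cover-dependent lower coefficient genuinely changes a dimension.

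Step three is bookkeeping with powers of two. The previous step shows the first half of the canonical filtration comes out to be, for $3 \le l \le g_Y$, the chain $N_l = R_{l-2} + k\cdot V^{j(l)}(\omega_T)$, where $j(l)$ is the largest $j$ with $V^j(\omega_T) \notin R_{l-2}$ (together with $N_1 = R_0$, $N_2 = R_1$, $N_{g_Y} = R_\delta + k\cdot\omega_T = H^0(Y,\Omega^1_Y)$), so that $\nu_l = \dim_k V(N_l) = \dim_k\bigl(R_{\lfloor(l-2)/2\rfloor} + k\cdot V^{j(l)+1}(\omega_T)\bigr)$. As $2^{n-j(l)} > l-2$ forces $2^{n-j(l)-1} > \lfloor(l-2)/2\rfloor$, the vector $V^{j(l)+1}(\omega_T)$ still sticks out of $R_{\lfloor(l-2)/2\rfloor}$, giving $\nu_1 = 0$, $\nu_2 = 1$, and $\nu_l = \lfloor l/2\rfloor + 1$ for $3 \le l \le g_Y$ — i.e.\ the final type $[0,1,2,3,3,\ldots,(d-1)/4+1,(d-1)/4+1,(d-1)/4+2]$. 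The main obstacle is Step two: organizing the induction so that every simple word is genuinely covered and checking that the parity argument applies uniformly across all the degenerate and boundary cases; once the $V$-action and pairing of Section~\ref{ss: EO type of ss covers} are fixed, the remaining work is routine if somewhat lengthy.
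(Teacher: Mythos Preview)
Your proposal is correct and follows essentially the same approach as the paper: the paper defines $U_i = \spa\{\beta_1,\widetilde{\omega}_1,\ldots,\widetilde{\omega}_i,V^{j(i)}(\omega_T)\}$ with $j(i) = n-1-\lfloor\log_2 i\rfloor$ (your $N_{i+2}$) and proves in a single lemma that $V(U_i) = U_{\lfloor i/2\rfloor}$ and $V(U_i^\perp) = U_{\lfloor(d-1-i)/2\rfloor}$ via exactly the parity observation you isolate, namely that $d-2^{n-j}$ is odd so $\widetilde{\omega}_{d-2^{n-j}}\in\ker V$. The one place where the paper's organization is tighter than yours is that by fixing $j(i)$ explicitly it guarantees $2^{n-j(i)} > i$, which is what ensures the leading coordinate $\widetilde{\omega}_{d-2^{n-j}}$ actually survives in $R_i^\perp$; your parenthetical ``it lies in $R_m^\perp$ exactly when the condition is nontrivial on $R_m^\perp$'' glosses over this, and it is precisely the content you flag as the main obstacle in Step~2.
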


The proof is similar in spirit to Example~\ref{ex:theoryd7}, but simpler as the only relevant coefficient in the local expansion of $\psi$ is leading one (the coefficient of $t^{-d}$) which is automatically nonzero.

\begin{definition}
For an integer $0 < i \leq (d-1)/2$ let $j(i) \colonequals n - 1 - \lfloor \log_2(i) \rfloor$ and
\begin{equation}
U_i \colonequals \spa \{ \beta_1,\widetilde{\omega}_1,\ldots,\widetilde{\omega}_i, V^{j(i)}(\omega_T)\} \subseteq H^0(Y,\Omega^1_Y).
\end{equation}
Furthermore let $U_0 = \spa \{ \beta_1,\widetilde{\omega}_1\} = \spa \{\beta_1,V^n(\omega_T )\}$ and $U_{-1} \colonequals \spa \{ \beta_1 \}$.
\end{definition}

Note that $U_{(d-1)/2} = H^0(Y,\Omega^1_Y)$ since in that case $j(i) =  0$.

\begin{lemma} \label{lemma:d=2^n+1} If $0<i\leq (d-1)/2$ then 
\begin{enumerate}
    \item   $V(U_i) = U_{i'}$ where $i' = \lfloor i/2 \rfloor$.  
    \item  $V(U_i^\perp) = U_{i''}$ where $i'' = \lfloor (d-1-i)/2 \rfloor$. 
    \item  $\dim U_i = i+2$.
    \item  $U_{m-1} \subset U_m$ for $0\leq m \leq (d-1)/2$
\end{enumerate}
\end{lemma}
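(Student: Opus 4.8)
To prove Lemma~\ref{lemma:d=2^n+1}, the plan is to leverage the one piece of luck that $d = 2^{n}+1$ provides: in any local expansion $\psi = \sum_{i\ge 1} c_i t^{-i}+O(1)$ at $Q$ one has $c_d \neq 0$ automatically, so the "leading" coefficient in Lemma~\ref{lemma: superelliptic pairing}(1) is always present and no further coefficients matter. First I would record the precise shape of the iterates of $\omega_T$ under $V$. By Lemma~\ref{lemma: superelliptic pairing}(1), $V(\omega_T)=(V(\psi\,dx),0)$ lies in $\spa\{\beta_1,\widetilde{\omega}_1,\dots,\widetilde{\omega}_{(d-1)/2}\}$ with nonzero $\widetilde{\omega}_{(d-1)/2}$-coefficient, and $(d-1)/2 = 2^{n-1}$. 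Feeding this into the rules $V(\widetilde{\omega}_\ell)=\widetilde{\omega}_{\ell/2}$ for even $\ell$, $V(\widetilde{\omega}_1)=\beta_1$, $V(\widetilde{\omega}_\ell)=0$ for odd $\ell>1$ of Proposition/Definition~\ref{prop:Valpha}, and inducting, one obtains: for $1\le r\le n-1$, $V^{r}(\omega_T)\in\spa\{\beta_1,\widetilde{\omega}_1,\dots,\widetilde{\omega}_{2^{n-r}}\}$ with nonzero $\widetilde{\omega}_{2^{n-r}}$-coefficient; $V^{n}(\omega_T)\in\spa\{\beta_1,\widetilde{\omega}_1\}$ with nonzero $\widetilde{\omega}_1$-coefficient (so $\spa\{\beta_1,V^{n}(\omega_T)\}=\spa\{\beta_1,\widetilde{\omega}_1\}=U_0$, matching the definition); $V^{n+1}(\omega_T)$ is a nonzero multiple of $\beta_1$ (so $\spa\{\beta_1,V^{n+1}(\omega_T)\}=U_{-1}$); and $V^{r}(\omega_T)=0$ for $r\ge n+2$.

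Granting this, parts (3), (1), (4) are formal. Fix $i$ with $0<i<(d-1)/2$ and put $k=\lfloor\log_2 i\rfloor$, so $2^{k}\le i<2^{k+1}\le 2^{n-1}$ and $j(i)=n-1-k$; then $V^{j(i)}(\omega_T)$ has nonzero $\widetilde{\omega}_{2^{k+1}}$-coefficient and $2^{k+1}>i$, hence it is not in $\spa\{\beta_1,\widetilde{\omega}_1,\dots,\widetilde{\omega}_i\}$, giving $\dim U_i=i+2$; and $U_{(d-1)/2}=H^0(Y,\Omega_Y^1)$ by Lemma~\ref{l: structure of global differentials}, of dimension $g_Y=(d+3)/2=(d-1)/2+2$. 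This is (3). For (1), apply $V$ to the spanning set of $U_i$: the $V(\widetilde{\omega}_\ell)$ for $1\le\ell\le i$ span $\spa\{\beta_1,\widetilde{\omega}_1,\dots,\widetilde{\omega}_{\lfloor i/2\rfloor}\}$, while $V\bigl(V^{j(i)}(\omega_T)\bigr)=V^{j(i)+1}(\omega_T)=V^{j(\lfloor i/2\rfloor)}(\omega_T)$ using $j(\lfloor i/2\rfloor)=j(i)+1$ for $i\ge 2$ (which is just $\lfloor\log_2\lfloor i/2\rfloor\rfloor=\lfloor\log_2 i\rfloor-1$); the case $i=1$ gives $V(U_1)=\spa\{\beta_1,V^{n}(\omega_T)\}=U_0$. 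So $V(U_i)=U_{\lfloor i/2\rfloor}$. For (4): if $m$ is not a power of $2$ then $j(m-1)=j(m)$ and $U_{m-1}\subseteq U_m$ is immediate; if $m=2^{k}$ then $j(m-1)=j(m)+1$, and the only possibly-new generator $V^{j(m-1)}(\omega_T)=V\bigl(V^{j(m)}(\omega_T)\bigr)$ lies in $\spa\{\beta_1,\widetilde{\omega}_1,\dots,\widetilde{\omega}_{2^{k}}\}\subseteq U_m$ because $V^{j(m)}(\omega_T)$ is supported on $\beta_1,\widetilde{\omega}_1,\dots,\widetilde{\omega}_{2^{k+1}}$ and $V$ halves indices; the cases $m\in\{0,1\}$ are clear from the definitions.

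The main obstacle is (2), since we only control the pairing partially. The key observation is that $U_i\subseteq H^0(Y,\Omega_Y^1)$ (each generator is a regular differential, or in the case of $V^{j(i)}(\omega_T)$ a combination of regular differentials), and $H^0(Y,\Omega_Y^1)=\operatorname{Im}V=\ker F$ is Lagrangian (as $(\operatorname{Im}V)^\perp=\ker F$), so $U_i^\perp\supseteq H^0(Y,\Omega_Y^1)$. When $i=(d-1)/2$ this settles it: $U_i$ is Lagrangian, so $V(U_i^\perp)=V(U_i)=U_{\lfloor i/2\rfloor}=U_{\lfloor(d-1)/4\rfloor}$, which is $U_{i''}$ with $i''=\lfloor(d-1-i)/2\rfloor$. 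For $0<i<(d-1)/2$, set $w_i=V^{j(i)}(\omega_T)$ and $Z_i=\spa\{\beta_1,\widetilde{\omega}_1,\dots,\widetilde{\omega}_i\}^\perp$, which by Lemma~\ref{lemma: superelliptic pairing}(4) equals $Z(\omega_T',\widetilde{\omega}_{d-i},\widetilde{\omega}_{d-i+1},\dots,\widetilde{\omega}_{d-1})$, the span of all chosen basis vectors of $\HdR^1(Y)$ except $\omega_T',\widetilde{\omega}_{d-i},\dots,\widetilde{\omega}_{d-1}$; then $U_i^\perp=Z_i\cap w_i^\perp$. Applying $V$ to $Z_i$, using $V(\beta_1)=0$, $V(\beta_2)\in\spa\{\beta_1\}$ (it is the pullback of a regular differential on $X$ by Proposition~\ref{prop:F and V on S-enhanced differentials}), $V(\omega_T)=(V(\psi\,dx),0)$, and that the $V(\widetilde{\omega}_\ell)$ for $1\le\ell\le d-i-1$ span $\spa\{\beta_1,\widetilde{\omega}_1,\dots,\widetilde{\omega}_{i''}\}$ with $i''=\lfloor(d-1-i)/2\rfloor$, one finds $V(Z_i)=\spa\{\beta_1,\widetilde{\omega}_1,\dots,\widetilde{\omega}_{i''},V(\psi\,dx)\}$. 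The numerical point is that $2^{n-2}\le i''<2^{n-1}$ for every admissible $i$, so $j(i'')=1$ and $V^{j(i'')}(\omega_T)=V(\psi\,dx)$; hence $V(Z_i)=U_{i''}$. Finally, $U_i^\perp=Z_i\cap w_i^\perp$ is a hyperplane in $Z_i$, and the vector $\widetilde{\omega}_{d-2^{k+1}}$ lies in $\ker(V|_{Z_i})$ (its index is odd and at least $3$) but not in $w_i^\perp$ (it pairs nontrivially with $w_i$ by Lemma~\ref{lemma: superelliptic pairing}(2), since $w_i$ has nonzero $\widetilde{\omega}_{2^{k+1}}$-coefficient and $2^{k+1}\le(d-1)/2$); a dimension count then gives $\dim V(U_i^\perp)=\dim V(Z_i)=\dim U_{i''}$, so $V(U_i^\perp)=U_{i''}$, which is (2). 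The degenerate case $d=3$ (where $(d-1)/2=1$ and $i''=0$) is checked directly. I expect (2) to be the only genuine difficulty: it is precisely the combination of the Lagrangian property of $H^0(Y,\Omega_Y^1)$, the duality formula of Lemma~\ref{lemma: superelliptic pairing}(4), and the accident $j(i'')=1$ that makes the symplectic complement computable despite incomplete knowledge of the pairing.
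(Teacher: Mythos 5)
Your proposal is correct and follows essentially the same route as the paper: both identify $U_i^\perp$ as the hyperplane of $\spa\{\beta_1,\beta_2,\widetilde{\omega}_1,\ldots,\widetilde{\omega}_{d-1-i},\omega_T\}$ cut out by pairing against $V^{j(i)}(\omega_T)$, and both exploit that the only coordinate involved in that relation, $\widetilde{\omega}_{d-2^{k+1}}$, has odd index and so is killed by $V$, reducing $V(U_i^\perp)$ to $V$ of the ambient space, which equals $U_{i''}$ because $j(i'')=1$. Your phrasing of this step as splitting off a $\ker V$ complement (and treating the Lagrangian case $i=(d-1)/2$ separately) is a cosmetic reorganization of the paper's argument, not a different method.
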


\begin{proof}
The claim about $V(U_i)$ follows from the definition using Proposition~\ref{prop:Valpha}.
Now let us consider $U_i^\perp$, and let $j = j(i)$ to simplify notation.  By Lemma~\ref{lemma: superelliptic pairing}, $U_i^\perp$ is a codimension one subspace of 
\[
Z(\omega'_T,\widetilde{\omega}_{d-1},\ldots, \widetilde{\omega}_{d-i}) = \spa \{ \beta_1,\beta_2,\widetilde{\omega}_1,\ldots,\widetilde{\omega}_{d-1-i},\omega_T\}.
\]
consisting of elements $\omega$ orthogonal to $V^{j}(\omega_T)$. Any regular differential is automatically orthogonal to $\omega_T$, but $\widetilde{\omega}_{m}$ need not be for $m > (d-1)/2$.  Now $\widetilde{\omega}_{(d-1)/2}^*(V(\omega_T)) = \widetilde{\omega}_{2^{n-1}}^*(V(\omega_T)) \neq 0$ so $\widetilde{\omega}^*_{2^{n-j}}( V^j(\omega_T)) \neq 0$, so the orthogonality relation becomes a linear relation of the form 
\[
c_{d-2^{n-j}} \widetilde{\omega}_{d-2^{n-j}}^*(\omega) + \ldots = 0
\]
where $c_{d-2^{n-j}} $ is known to be nonzero.  This means that $U_i^\perp$ contains an element of the form $c' \widetilde{\omega}_{d-2^{n-j}} + \widetilde{\omega}_m$ for each $m$.  Since $d$ is odd,
\[
V(c' \widetilde{\omega}_{d-2^{n-j}} + \widetilde{\omega}_m) = V(\widetilde{\omega}_m).
\]
Then using Proposition~\ref{prop:Valpha} we conclude that 
\[
V(U_i^\perp) = U_{i''}
\]

For the claim about dimensions, remember that $\widetilde{\omega}^*_{2^{n-j}}( V^j(\omega)) \neq 0$.  Thus $V^j(\omega)$ is not in the span of the linearly independent $\beta_1 , \widetilde{\omega}_1 ,\ldots, \widetilde{\omega}_i$.

Finally, observe that $U_{-1} \subset U_0 \subset U_1$.  If $j(m)=j(m-1)$, then it is clear by definition that $U_{m-1}\subset U_m$.  Otherwise $m=2^r$, in which case $V^{n-r}(\omega_T) \in \spa \{ \beta_1,\widetilde{\omega}_1,\ldots,\widetilde{\omega}_{m} \}$ since $\ord_Q(V^{n-r}(\omega_T)) = - 2^{r} - 1$.
\end{proof}

Similar reasoning deals with the exceptional case $i=0$, showing $V(U_0) = U_{-1}$, $V(U_0^\perp) = U_{(d-1)/2-1}$, and $\dim U_0 = 2$.

\begin{proof}[Proof of Theorem~\ref{theorem: d = 2^n+1}]
We claim that
\[
0 \subset U_{-1} \subset U_0 \subset U_1 \subset \ldots \subset U_{(d-1)/2} = H^0(Y) \subset U_{(d-1)/2-1}^\perp \subset \ldots \subset U_{-1}^\perp \subset \HdR^1(Y)
\]
is the final filtration for $\HdR^1(Y)$.
Lemma~\ref{lemma:d=2^n+1} shows that this is a filtration, that it is preserved by $V$ and $\perp$, and furthermore gives the dimensions.  As $\dim_k V(U_i) = \lfloor i/2 \rfloor +2 $ for $i > 0 $, we conclude that the final type is $[0,1,2,3,3,\ldots,(d-1)/4+1 ,(d-1)/4+1,(d-1)/4+2 ]$.
\end{proof}

We can now finish the proof of Corollary~\ref{cor:families}.

\begin{proof}[Proof of Corollary~\ref{cor:families}]
The first family, constructed as a cover of $\PP^1$, was analyzed in Remark~\ref{rmk:family1}.  The second family is similar, constructed as a cover of the supersingular elliptic curve $X$ ramified over the point $Q$ at infinity, with ramification break $d$, and relies on Theorem~\ref{theorem: d = 2^n+1}.  The only non-obvious point is the computation of the dimension of the family.  The extension of function fields is generated by adjoining a root of $z^2 + z = f$, where $f$ is a rational function on $X$ which is regular except at infinity.  The function $f$ is unique up to adding functions of the form $g^2+g$.  

By the Riemann-Roch theorem, there exists a rational function on $X$ which is regular except at infinity and has a pole of order $n$ at $Q$ for any $n >1$.  
Since $d>1$, we may assume that the order of $f$ at $Q$ is exactly $d$ by modifying $f$ by a function of the form $g^2 + g$.  Similarly, we assume that the coefficient of $t^{-2i}$ in the local expansion of $f$ at $Q$ is zero for $i>1$, and that there is no constant term in the local expansion.  Now, by Riemann-Roch, $\dim_k H^0(X,\cO_X(d Q)) = d$, so the space of possible $f$ which give non-isomorphic extension of the function field has dimension $d-1-(d-3)/2 = (d+1)/2$. 
There is a zero-dimensional space of automorphisms of $X$ which fix the point at infinity, so there is also a $(d+1)/2$-dimensional family of curves with constant Ekedahl-Oort type. 
\end{proof}

\section{Bounds on the final type when the base curve is not ordinary} \label{sec:bounds}

\subsection{The setting}

As before, let $\pi: Y \to X$ be a double cover of smooth, proper, connected curves over an algebraically closed field $k$ of characteristic $2$. In Theorem~\ref{thm: ordinary EO type}, we proved that, when $X$ is ordinary, the isomorphism class of $\Jac(Y)[p]$ is determined by the ramification invariants $d_i$, and the isomorphism class is built from local contributions at each ramified point. When $X$ is not ordinary, on the other hand, the isomorphism class of $\Jac(Y)[p]$ is not determined by $X$ and the ramification invariants $d_i$. See Example~\ref{example: different EO types}. 
In this section, we use the ideas from Section~\ref{section: omegaij} to prove bounds on the final type of a cover of a non-ordinary curve with given ramification invariants.  Examples~\ref{ex:theoryd7} and \ref{ex:theoryd15} may be helpful to keep in mind. %

Recall from Section~\ref{ss:final types}  that the final type of $Y$ measures the interaction between the operations $V$ and $\perp$, or equivalently the interaction between $V$ and $F^{-1}$. More precisely, it is determined by $\dim_k (V(w(\HdR^1(Y)))$ as $w$ ranges over all words in the letters $V$ and $\perp$. Thus it suffices to bound the dimensions of these spaces.  Lemma~\ref{lemma: decomposition k[F,V]} gives a decomposition of Dieudonn\'{e} modules
\begin{align} \label{eq:UZL decomposition}
    \HdR^1(Y) = U \oplus Z \oplus L,
\end{align}
where $V$ is bijective on $U$, $F$ is bijective on $Z$ and both operators are nilpotent on $L$. Recall the pairing $U \times Z \to k$ is perfect and the spaces $U \oplus Z$ and $L$ are orthogonal.

\begin{lemma} \label{lemma: reduce to L}
If $w$ is a simple word, then $w(\HdR^1(Y)) = U \oplus (w(L) \cap L)$.
\end{lemma}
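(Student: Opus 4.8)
The plan is to leverage the fact, recorded in Lemma~\ref{lemma: decomposition k[F,V]}, that $\HdR^1(Y) = (U\oplus Z)\oplus L$ is an \emph{orthogonal} direct sum of two $V$-stable subspaces on each of which the restricted symplectic pairing is non-degenerate. First I would do the elementary orthogonality bookkeeping this entails: since $U$ and $Z$ are isotropic, $U\times Z\to k$ is perfect, and $L\perp(U\oplus Z)$, a dimension count (using $\dim U=\dim Z=f_Y$ and $\dim L=2l_Y$) gives $L^\perp=U\oplus Z$, $(U\oplus Z)^\perp=L$, and $U^\perp=U\oplus L$; in particular the pairing restricted to $L$ is non-degenerate, and inside $U\oplus Z$ one has $U^{\perp_{U\oplus Z}}=U$ (because $U$ is isotropic and $U\times Z$ is perfect). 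The point of these facts is that both operations occurring in a simple word respect a decomposition $S=S_1\oplus S_2$ of a subspace into $S_1\subseteq U\oplus Z$ and $S_2\subseteq L$: the operator $V$ does because $V(U\oplus Z)\subseteq U\oplus Z$ and $V(L)\subseteq L$, and $\perp$ does because the summands are orthogonal and non-degenerate, so $S^\perp=S_1^{\perp_{U\oplus Z}}\oplus S_2^{\perp_L}$, where $\perp_{U\oplus Z}$ and $\perp_L$ denote symplectic complements taken inside $U\oplus Z$ and inside $L$.

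Granting this, the statement becomes a componentwise computation. Both $L=0\oplus L$ and $\HdR^1(Y)=(U\oplus Z)\oplus L$ are of the product form above, so for any simple word $w$ the subspaces $w(L)$ and $w(\HdR^1(Y))$ remain of that form, and their $L$-components coincide: the $L$-component of $S^\perp$ (resp.\ of $V(S)$) depends only on the $L$-component of $S$, and $L$ and $\HdR^1(Y)$ have the same $L$-component, namely $L$. Writing $D$ for this common $L$-component of $w(L)$ and $w(\HdR^1(Y))$, we get $w(L)\cap L=D$, since the other summand lies in $U\oplus Z$. It then remains only to identify the $(U\oplus Z)$-component of $w(\HdR^1(Y))$: it starts at $U\oplus Z$; the innermost operation of $w$ is $V$ (because $w$ ends in $V$), and $V(U\oplus Z)=U$ since $V$ is bijective on $U$ and zero on $Z$; thereafter $U$ is fixed by every operation, as $V(U)=U$ and $U^{\perp_{U\oplus Z}}=U$. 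Hence this component equals $U$, giving $w(\HdR^1(Y))=U\oplus D=U\oplus(w(L)\cap L)$.

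The routine pieces are the dimension counts and the verification that $\perp$ distributes over an orthogonal direct sum, where I anticipate no difficulty. The step that needs care is the bookkeeping of the $(U\oplus Z)$-component: one must check it stabilizes at $U$ after the first $V$ and is undisturbed by later $\perp$'s, and this is exactly where the hypothesis that $w$ is \emph{simple} (ends in $V$, no consecutive $\perp$'s) is used. It is also the reason the statement refers to $w(L)\cap L$ rather than $w(L)$: the $(U\oplus Z)$-component of $w(L)$ itself need not be $U$ (it can be $0$, $U$, or $U\oplus Z$, depending on how many $\perp$'s $w$ contains), even though its intersection with $L$ is always $D$. If a formally inductive write-up is preferred, one can induct on the length of $w$, carrying the stronger assertion that $w(L)\in\{D,\ U\oplus D,\ (U\oplus Z)\oplus D\}$ with $w(\HdR^1(Y))=U\oplus D$, and peel off the outermost letter in the inductive step using the two distributivity facts above.
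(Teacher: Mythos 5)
Your proof is correct and is essentially the paper's argument in different packaging: the paper runs an induction on the length of $w$, peeling off the outermost letter and using exactly the identities you isolate ($V(U)=U$, $V(Z)=0$, $U^\perp=U\oplus L$, and the distribution of $\perp$ over the orthogonal decomposition $(U\oplus Z)\oplus L$), whereas you track the $(U\oplus Z)$- and $L$-components from the inside out. One small correction to your commentary: only the ``ends in $V$'' part of simplicity is needed here, since once the $(U\oplus Z)$-component stabilizes at $U$ it is fixed by both $V$ and $\perp_{U\oplus Z}$, so consecutive $\perp$'s would cause no harm.
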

\begin{proof}
We argue by induction on the length of $w$. The equation holds for $w=V$, since $V$ preserves the three summands $U$, $Z$ and $L$, and we have $V(U)=U$ and $V(Z)=0$. Now assume it holds for all simple words that have up to $n$ letters and consider a word $w$ of $n+1$ letters. If $w$ starts with $V$, write $w=Vw'$ and observe
\begin{align*}
w(\HdR^1(Y)) &= V(w'(\HdR^1(Y))) = V\left(U \oplus (w'(L) \cap L) \right)
= V(U) \oplus V(w'(L)\cap L) \\ &= U \oplus (V(w'(L)) \cap L) = U \oplus (w(L) \cap L)
\end{align*}
as desired. On the other hand, if $w$ starts with $\perp$, write $w=\perp w'$ and observe
\begin{align*}
w(\HdR^1(Y)) &= w'(\HdR^1(Y))^\perp = (U \oplus (w'(L) \cap L))^\perp = U^\perp \cap (w'(L) \cap L)^\perp  \\ &= (U \oplus L) \cap (U\oplus Z \oplus (w'(L)^\perp \cap L)) = U \oplus (w'(L)^\perp \cap L) = U \oplus (w(L) \cap L).\qedhere
\end{align*}
\end{proof}

Thus it suffices to determine the final type of $L$. In particular, it suffices to bound the dimensions of the spaces $w(L)$ for simple words $w$.

Recall that in Section~\ref{section: omegaij} (in particular, Definition~\ref{defn:wijtilde}), we constructed subspaces $W_i \subset L$ spanned by classes $\widetilde{\omega}_{i,j}$ and formed the sum $W = \bigoplus_{i=1}^m W_i$.  We also introduced subspaces $W_{i,l} \subset W_i$ with $\dim_k W_{i,l}=l$. Finally, recall the function $\phi(d,w)$ and the notation $\phi(w)=\sum_{i=1}^m \phi(d_i,w)$ introduced in Definition~\ref{def: phi}.

\begin{definition} \label{defn:Ww}
For a simple word $w$, let $W_w \colonequals \bigoplus_{i=1}^m W_{i,\phi(d_i,w)}$.
\end{definition}

Note that, by definition, $\dim_k (W_w)=\phi(w)$. Note also that $W_V=W \cap \HH^0(Y,\Omega_Y^1)$.

\begin{lemma} \label{lemma:projection}
For any simple word $w$, we have $\pi_W(V(W_w)) = W_{Vw}$ and $\pi_W(W_w^\perp) = W_{\perp w}$.
\end{lemma}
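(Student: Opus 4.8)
The plan is to establish the two equalities separately.

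\emph{The first equality} $\pi_W(V(W_w)) = W_{Vw}$ is essentially a direct computation. With the differentials fixed as in Proposition~\ref{prop:nilpotent wij} and Definition~\ref{defn:wijtilde}, we have $V(\widetilde{\omega}_{i,j}) = \widetilde{\omega}_{i,j/2} + \pi^*\eta_{i,j}$ for $j$ even and $V(\widetilde{\omega}_{i,j}) = \pi^*\eta_{i,j}$ for $j$ odd, where $\pi^*\eta_{i,j} \in \pi^*\HH^0(X,\Om_X^1)\cap L = M_0 \subseteq \ker(\pi_W)$. Since $W_{i,l} = \spa\{\widetilde{\omega}_{i,j} : j \le l\}$, applying $\pi_W$ kills the $\eta$-terms and yields $\pi_W(V(W_{i,l})) = \spa\{\widetilde{\omega}_{i,j/2} : j \le l,\ j\text{ even}\} = W_{i,\floor{l/2}}$; summing over $i$ gives $\pi_W(V(W_w)) = \bigoplus_i W_{i,\floor{\phi(d_i,w)/2}}$. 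It remains to check $\floor{\phi(d_i,w)/2} = \phi(d_i,Vw)$: writing $w = V^{n_t}\perp\cdots\perp V^{n_1}$, this is the nested-floor identity $\floor{\floor{a/2^{n_t}}/2} = \floor{a/2^{n_t+1}}$ applied with $a$ the nonnegative integer appearing in Definition~\ref{def: phi} (namely $a = d_i - 1 - \phi(d_i;n_1,\dots,n_{t-1})$, or $a = d_i-1$ when $t=1$), so the first equality follows.

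\emph{The second equality} $\pi_W(W_w^\perp) = W_{\perp w}$ I would prove through inclusions plus a dimension count. Since $W_w \subseteq W \subseteq L$ and $L$ is orthogonal to $U \oplus Z$, we get $U \oplus Z \subseteq W_w^\perp$, hence $W_w^\perp = (U\oplus Z) \oplus (W_w^\perp \cap L)$ and $\pi_W(W_w^\perp) = \pi_W(W_w^\perp \cap L)$. Next I would show $W_{\perp w} \subseteq W_w^\perp \cap L$, so that $W_{\perp w} = \pi_W(W_{\perp w}) \subseteq \pi_W(W_w^\perp\cap L)$. The generators of $W_w$ are the $\widetilde{\omega}_{i',j'}$ with $j' \le \phi(d_{i'},w)$, those of $W_{\perp w}$ the $\widetilde{\omega}_{i,j}$ with $j \le \phi(d_i,\perp w) = d_i - 1 - \phi(d_i,w)$, and I claim each such pair has at least one index $\le (d_\cdot - 1)/2$. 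This rests on the following dichotomy, immediate from Definition~\ref{def: phi}: a simple word $w$ beginning with the letter $V$ has $\phi(d,w) = \floor{a/2^{n_t}} \le (d-1)/2$ for every odd $d$ (as $n_t \ge 1$ and $0\le a \le d-1$), while a simple word $w = \perp w'$ beginning with $\perp$ has $\phi(d,w) = d - 1 - \phi(d,w') \ge (d-1)/2$ for every odd $d$. In the first case every generator of $W_w$ has index $\le (d_\cdot-1)/2$; in the second every generator of $W_{\perp w}$ does. So Proposition~\ref{proposition: symplectic pairing} applies to the pair $(\widetilde{\omega}_{i,j},\widetilde{\omega}_{i',j'})$ (after swapping the two slots if necessary, the pairing being symmetric in characteristic two), and it is nonzero only when $i=i'$ and $j+j' = d_i$; but $j+j' \le (d_i - 1 - \phi(d_i,w)) + \phi(d_i,w) = d_i - 1 < d_i$, so it vanishes, giving $\widetilde{\omega}_{i,j} \in W_w^\perp$.

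For the reverse inclusion I would count dimensions. As the pairing on $\HdR^1(Y)$ is perfect and $\dim_k\HdR^1(Y) = 2g_Y$, we have $\dim_k W_w^\perp = 2g_Y - \dim_k W_w = 2g_Y - \phi(w)$; subtracting $\dim_k(U\oplus Z) = 2f_Y$ gives $\dim_k(W_w^\perp \cap L) = 2l_Y - \phi(w)$. By Lemma~\ref{lemma:Ldecomposition}, $\pi_W$ restricted to $L$ has kernel $M \oplus T$ of dimension $4l_X$, so $\dim_k\pi_W(W_w^\perp\cap L) \ge 2l_Y - \phi(w) - 4l_X$. Since $2l_Y = 4l_X + \sum_i(d_i-1)$ (Section~\ref{section: omegaij}), this lower bound equals $\sum_i(d_i-1) - \phi(w) = \sum_i\phi(d_i,\perp w) = \dim_k W_{\perp w}$. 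Combined with $W_{\perp w} \subseteq \pi_W(W_w^\perp\cap L)$ from the previous paragraph, the two spaces have equal dimension and one contains the other, so they coincide; together with $\pi_W(W_w^\perp) = \pi_W(W_w^\perp\cap L)$ this is the claim.

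\textbf{The main obstacle.} The only genuinely nontrivial point is ensuring Proposition~\ref{proposition: symplectic pairing} is applicable when proving $W_{\perp w} \subseteq W_w^\perp$: that proposition only controls $\langle \widetilde{\omega}_{i,j}, \widetilde{\omega}_{i',j'}\rangle$ when one of the two indices is at most $(d_\cdot - 1)/2$, and for a general simple word $\phi(d_i,w)$ need not be $\le (d_i-1)/2$. The dichotomy above — which depends only on whether the word starts with $V$ or $\perp$, not on the $d_i$ — is exactly what forces one index to be small in every relevant pair. Verifying this dichotomy and the nested-floor identity is routine from Definition~\ref{def: phi}, and everything else is the perfect-pairing and dimension bookkeeping already in Sections~\ref{section: omegaij} and~\ref{ss:final types}.
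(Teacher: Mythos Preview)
Your proof is correct and follows essentially the same route as the paper. The paper's argument for the second equality is terse---it simply writes $\pi_W(W_w^\perp) = \bigoplus_i W_{i,d_i-1-\phi(d_i,w)}$ and cites Proposition~\ref{proposition: symplectic pairing} and Lemma~\ref{lemma:Ldecomposition}---whereas you spell out carefully the inclusion-plus-dimension-count and, in particular, the dichotomy on whether $w$ begins with $V$ or $\perp$ that is needed to make Proposition~\ref{proposition: symplectic pairing} applicable; this dichotomy is the point the paper's proof leaves implicit.
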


\begin{proof}
The proof is similar to the proof of Lemma~\ref{lemma: phi ordinary}. For the first claim let $\widetilde{\omega}_{i,j} \in W_w$. Then by Proposition~\ref{prop:nilpotent wij} and Lemma~\ref{lemma:Ldecomposition}
we have $\pi_W(V(\widetilde{\omega}_{i,j}))=\widetilde{\omega}_{i,j/2}$
for $j$ even and $0$ for $j$ odd. Thus,

$$\pi_W(V(W_w)) = \pi_W\left( V\left( \bigoplus_{i=1}^m W_{i,\phi(d_i,w)} \right) \right) = \bigoplus_{i=1}^m W_{i,\floor{\phi(d_i,w)/2}} = \bigoplus_{i=1}^m W_{i,\phi(d_i,Vw)} = W_{Vw} .$$

The second claim follows from Proposition~\ref{proposition: symplectic pairing} and Lemma~\ref{lemma:Ldecomposition}. More precisely, we obtain
\[
\pi_W(W_w^\perp) = \pi_W\left( \bigcap_{i=1}^m W_{i,\phi(d_i,w)}^\perp \right) = \bigoplus_{i=1}^m W_{i,d_i-1-\phi(d_i,w)} = \bigoplus_{i=1}^m W_{i,\phi(d_i,\perp w)} = W_{\perp w}. \qedhere
\]
\end{proof}

\subsection{Inductive approach on words}
In this section we bound $\dim_k (w(L))$ for simple words $w$.  
The strategy is to construct spaces that bound $w(L)$ via induction on the  number of occurrences of $\perp$ in $w$.
The necessary induction steps, proved in Lemmas~\ref{lemma: upper to lower} and~\ref{lemma: lower to upper}, are put together in Proposition~\ref{prop: w(L) bounds}.

Recall that $l_X=g_X-f_X$. Recall the spaces $M_0$, $M_1$, $M$, $T_0$, $T_1$ and $T$ defined in Lemma~\ref{lemma:Ldecomposition}.

\begin{lemma} \label{lemma: upper to lower}
Let $w$ be a simple word starting with $\perp$. Assume that there exists a space $W_w^U \subseteq T_1 \oplus W$ such that the inclusion $W_w \subseteq W_w^U$ has codimension at most $c$ and
$$w(L) \subseteq M \oplus T_0 \oplus W_w^U.$$
Then, for every $n$, there exists a space $W_{\perp V^n w}^L \supseteq W_V$ such that $W_{\perp V^n w}^L \subseteq W_{\perp V^n w}$ with codimension at most $c+l_X$ such that
\begin{equation} \label{eq: upper to lower}
V^n(w(L))^\perp \supseteq M_0 \oplus T_0 \oplus W_{\perp V^n w}^L.
\end{equation}
\end{lemma}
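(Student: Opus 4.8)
\noindent\emph{Proof sketch.} The plan is to push the hypothesised upper bound on $w(L)$ through $V^{n}$ and then take symplectic complements; here $n\ge 1$, which is what makes the key structural input available, namely $V^{n}(w(L))\subseteq V(L)=L_{0}=M_{0}\oplus T_{0}\oplus W_{V}$ by Lemma~\ref{lemma:Ldecomposition} and \eqref{eq:L0decomp}, where $W_{V}=W\cap\HH^{0}(Y,\Om^{1}_{Y})$ is the regular part of $W$. So applying $V^{n}$ collapses the $M$-summand of the hypothesis into $M_{0}$ and forces the surviving part of $W$ to be regular; note also $M_{0},T_{0},W_{V}\subseteq L_{0}\subseteq L_{0}^{\perp}$ by \eqref{equation:regularperp}.

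First I would write $W_{w}^{U}=W_{w}\oplus C$ with $\dim_{k}C\le c$ and apply $V^{n}$ to $w(L)\subseteq M+T_{0}+W_{w}+C$. Since $V$ carries $\pi^{*}\HdR^{1}(X)$ into $\pi^{*}\HH^{0}(X,\Om^{1}_{X})$ and $M_{0}$ is $V$-stable (Lemma~\ref{lemma:Ldecomposition}), one gets $V^{n}(M)\subseteq M_{0}$; and iterating Proposition~\ref{prop:nilpotent wij} (which computes $V(\widetilde{\omega}_{i,j})$ modulo $M_{0}$) together with Lemma~\ref{lemma:projection} gives $V^{n}(W_{w})\subseteq M_{0}+W_{V^{n}w}$, with $W_{V^{n}w}=\pi_{W}(V^{n}(W_{w}))\subseteq\pi_{W}(L_{0})=W_{V}$. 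Writing $E:=V^{n}(T_{0})$ and $E':=V^{n}(C)$, both lie in $V(L)=L_{0}$ and $\dim_{k}(E+E')\le l_{X}+c$; letting $\pi_{T_{0}\oplus W_{V}}$ be the projection of $L_{0}=M_{0}\oplus T_{0}\oplus W_{V}$ onto $T_{0}\oplus W_{V}$ and $B':=\pi_{T_{0}\oplus W_{V}}(E+E')$, we obtain
\[
V^{n}(w(L))\ \subseteq\ M_{0}+W_{V^{n}w}+B',\qquad B'\subseteq T_{0}\oplus W_{V},\quad \dim_{k}B'\le l_{X}+c.
\]

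Then I would set $W^{L}_{\perp V^{n}w}:=\{\,\omega\in W_{\perp V^{n}w}:\langle\omega,B'\rangle=0\,\}$. By Proposition~\ref{proposition: symplectic pairing} the annihilator of $W_{V^{n}w}$ inside $W$ is exactly $W_{\perp V^{n}w}$ (using $W_{V^{n}w}\subseteq W_{V}$), so $W^{L}_{\perp V^{n}w}$ is cut out of $W_{\perp V^{n}w}$ by at most $\dim_{k}B'\le c+l_{X}$ linear conditions, which is the codimension bound; and since $W_{V}\subseteq W_{\perp V^{n}w}$ and $W_{V},B'\subseteq L_{0}\subseteq L_{0}^{\perp}$, we have $W_{V}\subseteq W^{L}_{\perp V^{n}w}$. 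To obtain \eqref{eq: upper to lower} it then suffices, by the display, to check that $M_{0}\oplus T_{0}\oplus W^{L}_{\perp V^{n}w}$ pairs trivially with $M_{0}+W_{V^{n}w}+B'$: each such pairing is zero because either both factors lie in $L_{0}\subseteq L_{0}^{\perp}$, or it is a pairing of $M_{0}$ or of $W^{L}_{\perp V^{n}w}\subseteq W$ against a subspace of $W$ and so vanishes by Lemma~\ref{lemma: M_0 orthogonal to W}, or it vanishes by the defining orthogonality conditions of $W_{\perp V^{n}w}$ and $W^{L}_{\perp V^{n}w}$.

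The one delicate point is the codimension accounting. Naively one would control the $T_{0}$-part and the $W_{V}$-part of the error $V^{n}(T_{0})+V^{n}(C)$ separately, costing $l_{X}$ for the former and another $l_{X}+c$ for the latter; the remedy is to bundle them into the single space $B'$, a linear image of the $(l_{X}+c)$-dimensional space $E+E'$, so that imposing orthogonality to $B'$ handles both simultaneously. This is also why the hypothesis must be pushed all the way into $L_{0}$ rather than merely into $L$, which is precisely where $n\ge 1$ is used.
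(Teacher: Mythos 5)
Your proof is correct and follows essentially the same route as the paper's: apply $V^n$ to the hypothesised upper bound so that everything lands in $L_0 = M_0 \oplus T_0 \oplus W_V$, then cut $W_{\perp V^n w}$ by orthogonality to the error terms, using Lemma~\ref{lemma: M_0 orthogonal to W} and the isotropy of the regular classes exactly as the paper does. The only cosmetic difference is that you bundle the two error sources $V^n(T_0)$ and $V^n(C)$ into a single space $B'$ and impose one block of at most $c+l_X$ linear conditions, whereas the paper first fattens $W_{V^n w}$ to a space $W_{V^n w}^U$ (costing $c$) and then separately intersects with $V^n(T_0)^\perp$ (costing $l_X$); both give the same codimension bound and the same containment.
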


\begin{proof}
We begin by noting that
\begin{equation*} \label{eq: upper to lower 2}
V^n( w(L))^\perp \supseteq V^n(M \oplus T_0 \oplus W_w^U)^\perp = V^n(M \oplus W_w^U)^\perp \cap V^n(T_0)^\perp.
\end{equation*}
Now 
$V^n(M \oplus W_w) \subseteq M_0 \oplus W_{V^n w}$ and the inclusion $V^n(M \oplus W_w) \subseteq V^n(M \oplus W_w^U)$ has codimension at most $c$.  Hence there exists a space $W_{V^n w}^U$ containing $W_{V^n w}$ with codimension at most $c$ such that $V^n(M \oplus W_w^U) \subseteq M_0 \oplus W_{V^nw}^U$. Taking symplectic complements gives
$$V^n(M \oplus W_w^U)^\perp \supseteq (M_0 \oplus W_{V^nw}^U)^\perp = M_0^\perp \cap W_{V^n w}^{U ,\perp}.$$
By Lemma~\ref{lemma: M_0 orthogonal to W}, we know $M_0^\perp \supseteq M_0 \oplus W \oplus T_0$. Furthermore, $W_{V^n w}^{U ,\perp}$ is contained in $W_{V^n w}^\perp$ with codimension at most $c$. Projecting to both spaces to $W$, we see that $\pi_W(W_{V^n w}^{U ,\perp})$ is contained in $\pi_W(W_{V^n w}^\perp)= W_{\perp V^n w}$ with codimension at most $c$.
Thus $W_{\perp V^n w}^L\colonequals \pi_W(W_{V^n w}^{U ,\perp} ) \cap V^n(T_0)^\perp$ satisfies Equation~\eqref{eq: upper to lower} and has codimension at most $c+\dim_k(V^n(T_0)) \leq c+l_X$ inside $W_{\perp V^n w}$.  
Since $W_{V^n w}^U$ and $V^n(T_0)$ both consist of regular differentials, it follows that $W_V \subseteq \pi_W(W_{V^n w}^{U ,\perp} ) \cap V^n(T_0)^\perp = W_{\perp V^n w}^L$. 
\end{proof}

Recall the $n$-th higher $a$-number, denoted 
$$a_X^n := \dim_k ( \ker(V: \HH^0(X,\Omega_X^1) \to \HH^0(X,\Omega_X^1)))$$
and observe that $a_X^n \leq l_X$ for every $n$.

\begin{lemma} \label{lemma: lower to upper}
Let $w$ be a simple word starting with $\bot$. Assume that there exists a space $W_w^L \supseteq W_V$ such that the inclusion $W_w^L \subseteq W_w$ has codimension at most $c$ and
$$w(L) \supseteq M_0 \oplus T_0 \oplus W_w^L.$$
Then there exists a space $W_{\bot V^n w}^U \subseteq T_1 \oplus W$ such that the inclusion $W_{\bot V^n w} \subseteq W_{\bot V^n w}^U$ has codimension at most $c+l_X+a_X^n$ and
\begin{equation} \label{eq: lower to upper}
    V^n(w(L))^\perp \subseteq M \oplus T_0 \oplus W_{\bot V^n w}^U.
\end{equation}
If furthermore $\sum_{i=1}^m (d_i+1) > 4g_X-4$  holds, then the codimension of $W_{\bot V^n w} \subseteq W_{\bot V^n w}^U$ is at most $c+a_X^n$.
\end{lemma}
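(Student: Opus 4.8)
The plan is to dualize the given lower bound for $w(L)$ after applying $V^n$, mirroring the structure of the proof of Lemma~\ref{lemma: upper to lower}. First I would apply $V^n$ to the inclusion $w(L) \supseteq M_0 \oplus T_0 \oplus W_w^L$; since $V^n$ commutes with projections and the spaces $M_0$, $W$ are $V$-stable (Lemma~\ref{lemma:Ldecomposition}), we get $V^n(w(L)) \supseteq V^n(M_0 \oplus T_0 \oplus W_w^L) = V^n(M_0) \oplus V^n(T_0) \oplus V^n(W_w^L)$, at least after controlling overlaps. Taking symplectic complements reverses the inclusion:
\[
V^n(w(L))^\perp \subseteq \big(V^n(M_0) \oplus V^n(T_0) \oplus V^n(W_w^L)\big)^\perp = V^n(M_0)^\perp \cap V^n(T_0)^\perp \cap V^n(W_w^L)^\perp.
\]
The strategy is then to bound each of these three factors and intersect. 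By Proposition~\ref{prop:nilpotent wij} and Lemma~\ref{lemma:projection}, $\pi_W(V^n(W_w^L)) \subseteq W_{V^n w}$ with the codimension of $W_{V^n w}$ inside $\pi_W$ of the full complement controlled by $c$; dualizing and projecting to $W$ gives that $\pi_W(V^n(W_w^L)^\perp)$ contains $W_{\perp V^n w} = \pi_W(W_{V^n w}^\perp)$ with bounded codefect, and I would define $W_{\bot V^n w}^U$ as the projection to $T_1 \oplus W$ of the intersection of the three perps. The inclusion $W_V \subseteq W_w^L$ (hypothesis) together with the fact that $W_V$ consists of regular differentials ensures $W_V \subseteq W_{V^n w}$, and hence via Proposition~\ref{proposition: symplectic pairing} one sees $W_{\bot V^n w} \subseteq W_{\bot V^n w}^U$.

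For the codimension count, the error accumulates from three sources: the codimension $c$ of $W_w^L$ inside $W_w$ (which passes through $V^n$ and $\perp$), a contribution of at most $\dim_k V^n(T_0) \leq l_X$ from the factor $V^n(T_0)^\perp$ (using the final statement of Lemma~\ref{lemma:Ldecomposition}), and a contribution of at most $a_X^n = \dim_k(\ker V^n|_{\HH^0(X,\Omega_X^1)})$ coming from the kernel that $V^n$ introduces on $M_0 \subseteq \pi^*\HH^0(X,\Omega_X^1)$: passing from $M_0$ to $V^n(M_0)$ loses up to $a_X^n$ dimensions, so the perp gains up to $a_X^n$. Combining, the codimension of $W_{\bot V^n w} \subseteq W_{\bot V^n w}^U$ is at most $c + l_X + a_X^n$. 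For the improved bound under the hypothesis $\sum_{i=1}^m(d_i+1) > 4g_X - 4$, I would invoke Lemma~\ref{lemma: M orthogonal to W_V}: that hypothesis guarantees $M$ pairs trivially with $M_0$ and with $W \cap L_0$, so $M \oplus T_0 \oplus W$ already lies in $M_0^\perp \cap (W\cap L_0)^\perp$, which means the $T_0$-factor contribution is absorbed (the $V^n(T_0)^\perp$ constraint is automatically satisfied on the relevant subspace rather than cutting down dimension independently), removing the $l_X$ term and leaving $c + a_X^n$.

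The main obstacle I anticipate is the bookkeeping around the three-fold intersection of symplectic complements: symplectic complement does not distribute over sums in general, and I must be careful that $\big(\bigoplus V^n(\cdot)\big)^\perp = \bigcap V^n(\cdot)^\perp$ is being used correctly, and that projecting the intersection to $T_1 \oplus W$ and then bounding its defect against $W_{\bot V^n w}$ does not hide a loss. The subtle point is that $W_{\bot V^n w}^U$ must land inside $T_1 \oplus W$ (not all of $L$), which forces me to argue that $V^n(w(L))^\perp$ has no component in $M_1 \oplus T_0$ that obstructs this — here I would lean on the fact that $M \oplus T_0$ is split off in the target of Equation~\eqref{eq: lower to upper} and that $M$ pairs trivially with $M_0 \oplus (W \cap L_0)$ (Lemma~\ref{lemma: M orthogonal to W_V}) under the stated hypothesis, or handle the general case by simply enlarging $W_{\bot V^n w}^U$ by the extra $l_X$. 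Everything else is a routine translation of the argument in Lemma~\ref{lemma: upper to lower} with the roles of "upper" and "lower" swapped.
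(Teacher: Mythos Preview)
Your overall structure is right and matches the paper: take perps of the given lower bound after applying $V^n$, project to $T_1\oplus W$, and bound the excess over $W_{\bot V^n w}$. However, your attribution of the three error terms is off in a way that wrecks the argument for the improved bound.

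In your general-case accounting you say the $l_X$ term comes from the constraint $V^n(T_0)^\perp$. It does not. In fact a short codimension bookkeeping (introduce $c' = \codim\bigl(\pi_{T_1\oplus W}(V^n(M_0\oplus W_w^L)^\perp)\cap V^n(T_0)^\perp \subseteq \pi_{T_1\oplus W}(V^n(M_0\oplus W_w^L)^\perp)\bigr)$ and observe that the same $c'$ bounds the loss in $W_{\bot V^n w}\cap V^n(T_0)^\perp$) shows that intersecting with $V^n(T_0)^\perp$ contributes \emph{nothing} to the codimension bound. The $l_X$ actually arises at the projection step: the perp $V^n(M_0\oplus W_w^L)^\perp$ certainly contains $M_0\oplus T_0$ (since $V^n(M_0\oplus W_w^L)\subseteq L_0$), so projecting to $T_1\oplus W$ drops at least $2l_X$; since $\dim V^n(M_0\oplus W_w^L)^\perp \le 3l_X + a_X^n + \phi(\bot V^n w) + c$, the projection has dimension at most $l_X + a_X^n + \phi(\bot V^n w) + c$, and the stray $l_X$ is exactly the possible $M_1$-component of the perp that we cannot rule out.

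This matters because your mechanism for the improved bound is wrong. Lemma~\ref{lemma: M orthogonal to W_V} says $M$ pairs trivially with $M_0$ and with $W\cap L_0$; it says nothing about $T_0$, so it cannot make the $V^n(T_0)^\perp$ constraint automatic. The correct use of the hypothesis is that it forces $M\subseteq (M_0\oplus W_V)^\perp \subseteq V^n(M_0\oplus W_w^L)^\perp$ (using $W_w^L\supseteq W_V$), so the perp now visibly contains $M\oplus T_0$ of dimension $3l_X$, the projection drops $3l_X$ instead of $2l_X$, and the $l_X$ disappears for that reason. Rewriting your argument with this corrected source of the $l_X$ (and adding $+\,W_{\bot V^n w}$ to your definition of $W_{\bot V^n w}^U$ so that the required inclusion holds on the nose) gives the paper's proof.
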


\begin{proof}
We begin by defining
\begin{equation} \label{eq:def wU}
W^U_{\bot V^nw} \colonequals \pi_{T_1 \oplus W}(V^n(M_0 \oplus T_0 \oplus W_w^L)^\perp) + W_{\bot V^n w} .
\end{equation}
Observe that
\begin{itemize}
    \item  $W^U_{\bot V^nw} = \left( \pi_{T_1 \oplus W}(V^n(M_0 \oplus W_w^L)^\perp) \cap \pi_{T_1 \oplus W}(V^n(T_0)^\perp) \right) + W_{\bot V^n w} $;
    \item $W_{\bot V^n w} \subseteq W^U_{\bot V^n w}$; and
    \item $W^U_{\bot V^nw} \subseteq T_1 \oplus W$.
\end{itemize}
To check \eqref{eq: lower to upper}, note that since $w(L) \supseteq M_0 \oplus T_0 \oplus W_w^L$ we know that
\begin{equation} \label{equation: Vnwlperp}
V^n(w(L))^\perp \subseteq V^n(M_0 \oplus T_0 \oplus W_w^L)^\perp = V^n(M_0 \oplus W_w^L)^\perp \cap V^n(T_0)^\perp.
\end{equation}
Now recalling Equation~\eqref{equation:regularperp}, observe that
\begin{equation} \label{eq:first projection}
V^n(M_0 \oplus W_w^L)^\perp = M_0 \oplus T_0 \oplus \pi_{M_1 \oplus T_1 \oplus W} (V^n(M_0 \oplus W_w^L)^\perp) \subseteq M \oplus T_0 \oplus \pi_{T_1 \oplus W} (V^n(M_0 \oplus W_w^L)^\perp) .
\end{equation}
Similarly as $V^n(T_0)^\perp \supseteq M_0 \oplus T_0$ we see
\[
V^n(T_0)^\perp = M_0 \oplus T_0  \oplus \pi_{M_1 \oplus T_1 \oplus W}(V^n(T_0)^\perp) \subseteq M \oplus T_0 \oplus \pi_{T_1 \oplus W}(V^n(T_0)^\perp).
\]
Therefore Equation~\eqref{equation: Vnwlperp} gives that $V^n(w(L))^\perp \subseteq M \oplus T_0 \oplus W^U_{\bot V^nw}$ as desired.

It remains to bound the codimension of $W_{\bot V^n w} \subseteq W_{\bot V^n w}^U$. 
We begin by noting $V^n(M_0)$ has dimension $l_X-a_X^n$ by the rank-nullity theorem. Moreover, using Lemma~\ref{lemma:projection} yields
$$\dim_k(\pi_W(V^n(W_w))) = \dim_k (W_{V^nw}) = \sum_{i=1}^m \phi(d_i,V^n w)=\phi(V^n w).$$
  It follows that 
\begin{align*}
    \dim_k (V^n(M_0 \oplus W_w^L)) &\geq (l_X-a_X^n) + \phi(V^n w)-c,
    \intertext{so that}
    \dim_k ( V^n(M_0 \oplus W_w^L)^\perp) &\leq 2l_Y - (l_X-a_X^n) - \phi(V^n w)+c \\
    &= 4l_X + \sum_{i=1}^m (d_i-1) - (l_X-a_X^n) - \sum_{i=1}^m \phi(d_i,V^n w)+c \\
    &= 3l_X + a_X^n + \phi(\bot V^n w) + c.
\end{align*}
The last step uses that $\phi(d_i,\bot V^n w) = d_i-1-\phi(d_i,V^n w)$. 
Then Equation~\eqref{eq:first projection} gives that
\begin{equation} \label{eq: upper bound general case}
\dim_k (\pi_{T_1 \oplus W}(V^n(M_0 \oplus W_w^L)^\perp)) \leq l_X + a_X^n +  \phi(\bot V^n w) + c
\end{equation}
Letting $c'$ be the codimension of $\pi_{T_1 \oplus W}(V^n(M_0 \oplus W_w^L)^\perp) \cap V^n(T_0)^\perp \subseteq \pi_{T_1 \oplus W}(V^n(M_0 \oplus W_w^L)^\perp)$, we conclude that
\[
\dim_k (\pi_{T_1 \oplus W} (V^n(M_0 \oplus T_0 \oplus W_w^L)^\perp)) \leq  \phi(\perp V^n w) + l_X + a_X^n +c - c'.
\]
Furthermore, as $W_{\bot V^n w} \subseteq \pi_{T_1 \oplus W}(V^n(M_0 \oplus W_w^L)^\perp)$ we see that 
\[
\codim_k\left(W_{\bot V^n w} \cap V^n(T_0)^\perp \subseteq W_{\bot V^n w} \right) \leq c'
\]
Letting $Z = \pi_{T_1 \oplus W} (V^n(M_0 \oplus T_0 \oplus W_w^L)^\perp)$, we estimate
\begin{align*}
    \codim_k(W_{\bot V^n w} \subseteq W_{\bot V^n w}^U) &\leq \dim_k Z - \dim_k( W_{\bot V^n w} \cap Z)\\
    &= \dim_k Z - \dim_k( W_{\bot V^n w} \cap V^n(T_0)^\perp)\\
    &\leq \left( \phi(\bot V^n w) + l_X + a_X^n +c -  c' \right)\\ &\hspace{10mm} - \left( \phi(\bot V^n w) -  \codim_k\left(W_{\bot V^n w} \cap V^n(T_0)^\perp \subseteq W_{\bot V^n w} \right)  \right) \\
    &\leq c+l_X+a_X^n.
\end{align*}
This gives the general claim about the codimension.  

We now prove the sharper upper bound under the assumption $\sum_{i=1}^m (d_i+1) > 4g_X-4$. Under this assumption, Lemma~\ref{lemma: M orthogonal to W_V} yields
$$M \subseteq (M_0 \oplus W_V)^\perp \subseteq V^n(M_0 \oplus W_w^L)^\perp.$$
As a result, the inclusion in Equation~\eqref{eq:first projection} becomes an equality:
$$V^n(M_0 \oplus W_w^L)^\perp = M \oplus T_0 \oplus \pi_{T_1 \oplus W} (V^n(M_0 \oplus W_w^L)^\perp).$$
After computing the dimension of the left-hand side in the same way as before, we now get
\begin{align*}
\dim_k (\pi_{T_1 \oplus W}(V^n(M_0 \oplus W_w^L)^\perp)) &= \dim_k (V^n(M_0 \oplus W_w^L)^\perp) - \dim_k (M \oplus T_0) \\
&= \dim_k (V^n(M_0 \oplus W_w^L)^\perp) - 3l_X \\
&\leq a_X^n +  \phi(\bot V^n w) + c.
\end{align*}
Comparing this to Equation~\eqref{eq: upper bound general case}, the bound has decreased by $l_X$ using the assumption $\sum_{i=1}^m (d_i+1) >4g_X -4$. From there on, the same argument building on Equation~\eqref{eq: upper bound general case} shows that
\[\codim_k (W_{\bot V^n w} \subseteq W_{\bot V^n w}^U) \leq c+a_X^n.\qedhere\]
\end{proof}

\begin{remark}
In some cases, this bound can be slightly improved by using Tango's theorem \cite{Tango}. For this, let 
\begin{equation} \label{eq:Tango def}
n(X)\colonequals  \frac{1}{2} \max \left \{ \sum_{P \in X(k)} \ord_P(df) \; | \; f \in k(X) \setminus k(X)^2   \right \}
\end{equation}
be the Tango number of $X$. When $\sum_{i=1}^m \left \lfloor \frac{d_i+1}{2^{n+1}} \right \rfloor \geq n(X)$, we have 
\begin{align*}
M_0 \subseteq V^n(M_0 \oplus W_V) &\subseteq V^n(M_0 \oplus W_w^L) 
\intertext{whence}
\dim_k(V^n(M_0 \oplus W_w^L)) &\geq l_X + \phi(V^n w) - c.
\end{align*}
In that case the bound on $\codim_k(W_{\perp V^n w}\subseteq W_{\perp V^n w}^U)$ is reduced by $a_X^n$. Since this improvement depends on $n$, it is cumbersome to include this condition in the next Proposition (and its inductive proof), but it does offer occasional improvement. %
\end{remark}

\begin{proposition} \label{prop: w(L) bounds}
    Let $w=\bot V^{n_t} \bot \ldots  \bot V^{n_1}$ be a simple word containing $t$ instances of $\bot$. There exists
\begin{itemize}
    \item a space $W_w^L \supseteq W_V$ that has codimension at most $\lfloor \tfrac{3t}{2} \rfloor l_X$ inside $W_w$, %
    \item and a space $W_w^U \subseteq T_1 \oplus W$ in which $W_w$ %
    has codimension at most $ \lceil \frac{3t}{2} \rceil l_X$,
\end{itemize}
such that the following holds:
$$M_0 \oplus T_0 \oplus W_{w}^L \subseteq w(L) \subseteq M \oplus T_0 \oplus W_{w}^U.$$
Furthermore, if we additionally assume $\sum_{i=1}^m (d_i+1) > 4g_X-4$, then both codimensions are at most $tl_X$.
\end{proposition}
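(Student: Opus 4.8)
The plan is to induct on $t$, the number of occurrences of $\bot$ in $w$, with Lemmas~\ref{lemma: upper to lower} and~\ref{lemma: lower to upper} serving as the two halves of the induction step. Given $w = \bot V^{n_t} w'$, where $w'$ is a simple word containing $\bot$ exactly $t-1$ times, the inductive hypothesis provides both a lower bound $M_0 \oplus T_0 \oplus W_{w'}^L \subseteq w'(L)$ and an upper bound $w'(L) \subseteq M \oplus T_0 \oplus W_{w'}^U$ with controlled codimensions. I would then feed the upper bound for $w'$ into Lemma~\ref{lemma: upper to lower} (with $n = n_t$) to obtain the lower bound for $w$, and feed the lower bound for $w'$ into Lemma~\ref{lemma: lower to upper} (with $n = n_t$) to obtain the upper bound for $w$. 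Note $n_t \geq 1$ automatically, since otherwise $w = \bot w'$ would contain two consecutive $\bot$'s; so the hypotheses on $n$ in those lemmas are met.

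The base case $t = 1$ I would reduce to the word $V$ itself. By Lemma~\ref{lemma:Ldecomposition}, $V(L) = L_0 = M_0 \oplus T_0 \oplus W_V$, so taking $W_V^L = W_V^U = W_V$ gives both inclusions with codimension $0$, and $W_V \supseteq W_V$, $W_V \subseteq T_1 \oplus W$ hold trivially. The proofs of Lemmas~\ref{lemma: upper to lower} and~\ref{lemma: lower to upper} use only the displayed inclusions for their input word together with $n \geq 1$; the standing hypothesis that the input starts with $\bot$ is used only to guarantee that the resulting word is simple, which here also holds since $\bot V^n V = \bot V^{n+1}$ has no consecutive $\bot$'s. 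Applying them with input $V$ and $n = n_1 - 1 \geq 1$ produces, for $\bot V^{n_1}$ ($n_1 \geq 2$), a lower bound of codimension at most $l_X$ and an upper bound of codimension at most $l_X + a_X^{n_1 - 1} \leq 2 l_X$ — and, when $\sum_i (d_i + 1) > 4 g_X - 4$, an upper bound of codimension at most $a_X^{n_1-1} \leq l_X$ from the sharp half of Lemma~\ref{lemma: lower to upper}. The leftover case $n_1 = 1$ is immediate: $\bot V(L) = (V(L))^\bot = L_0^\bot = L_0$ because $L_0$ is Lagrangian in $L$, and $W_{\bot V} = W_V$, so $W_{\bot V}^L = W_{\bot V}^U = W_V$ works with codimension $0$.

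Finally I would run the codimension bookkeeping. Let $L_t$ and $U_t$ denote the codimension bounds in the lower and upper estimates for a word with $t$ occurrences of $\bot$. The two lemmas give $L_t \leq U_{t-1} + l_X$ and $U_t \leq L_{t-1} + l_X + a_X^{n_t} \leq L_{t-1} + 2 l_X$ in general ($U_t \leq L_{t-1} + a_X^{n_t} \leq L_{t-1} + l_X$ under the extra hypothesis), with base values $L_1 \leq l_X$, $U_1 \leq 2 l_X$ (respectively $L_1, U_1 \leq l_X$). An elementary induction — using $\lceil 3(t-1)/2 \rceil + 1 = \lfloor 3t/2 \rfloor$ and $\lfloor 3(t-1)/2 \rfloor + 2 = \lceil 3t/2 \rceil$ — then yields $L_t \leq \lfloor 3t/2 \rfloor l_X$ and $U_t \leq \lceil 3t/2 \rceil l_X$, and $L_t, U_t \leq t l_X$ in the sharp case, which is exactly the claim.

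All the substantive content is already in Lemmas~\ref{lemma: upper to lower} and~\ref{lemma: lower to upper}; what remains is assembly, and the two points to watch are: that the induction can legitimately be seeded from the word $V$, which does not begin with $\bot$ (so one must double-check the two lemmas really apply there, as indicated above), and that the codimension recursion lands on $\lfloor 3t/2\rfloor l_X$ and $\lceil 3t/2 \rceil l_X$. The genuine subtlety, such as it is, is therefore the $t=1$ case — confirming that prepending one block $\bot V^{n_1 - 1}$ to the clean description $V(L) = M_0 \oplus T_0 \oplus W_V$ costs only $l_X$ of codimension on the lower side and only $l_X + a_X^{n_1-1}$ on the upper side.
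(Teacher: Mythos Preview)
Your proposal is correct and follows essentially the same approach as the paper. The paper phrases the reduction slightly differently --- it replaces $w$ by $\bar w = \bot V^{n_t}\bot\cdots\bot V^{n_1-1}\bot V$ and then seeds the induction with the word $\bot V$ (codimension $0$) --- but since $V(L)=\bot V(L)$ and $W_V=W_{\bot V}$ this is exactly your seeding with the word $V$, and the ensuing codimension recursion is identical.
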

\begin{proof}
The proof is by induction on $t$.  

To make the base case easier, we write $\bar{w}\colonequals \bot V^{n_t} \bot \ldots V^{n_1-1} \bot V$. Note that $\bar{w}(L) = w(L)$ since $V(L) = V(L)^\perp=L_0$. This reduces us to the base case $w=\bot V$:
$$w(L) = V(L)^\perp = V(L) = L_0 = M_0 \oplus T_0 \oplus W_V.$$
Thus for $w=\bot V$ we may set $W_w^L = W_w^U = W_w$, so both codimensions equal zero. 

For the induction step, assume the statement holds for a simple word $w$ containing $t$ instances of $\bot$, and let $n>0$.  Using $W^U_w$, Lemma~\ref{lemma: upper to lower} gives a space $W_{V^n \bot w}^L$, which has codimension at most $\left \lceil \frac{3t}{2} \right \rceil l_X + l_X =  \left \lfloor \frac{3(t+1)}{2} \right \rfloor l_X$ inside $W_{\bot V^n w}$. If we additionally assume $\sum_{i=1}^m (d_i+1) > 4g_X-4$, then the codimension is at most $tl_X + l_X = (t+1)l_X$.

Analogously, using Lemma~\ref{lemma: lower to upper} gives a space $W_{\bot V^n w}^U \subseteq T_1 \oplus W$, in which $W_{\perp V^n w}$ has codimension at most $$\left \lfloor \frac{3t}{2} \right \rfloor l_X + l_X + a_X^n \leq \left \lfloor \frac{3t}{2} \right \rfloor l_X + 2l_X = \left \lceil \frac{3(t+1)}{2} \right \rceil l_X,$$
as desired. If we additionally assume $\sum_{i=1}^m (d_i+1) > 4g_X -4$, then the codimension is at most $tl_X + a^n_X \leq t l_X + l_X = (t+1)l_X$.
\end{proof}

\subsection{Bounds on \texorpdfstring{$\dim_k ( w(L))$}{dim(w(L))}}

We now use Proposition~\ref{prop: w(L) bounds} to obtain restrictions on $\dim_k w(\HdR^1(Y))$ for simple words $w$. This results in Proposition~\ref{prop: bounds dim(w(L))}, which in turn gives restrictions on the final type of $Y$ (see Theorem~\ref{theorem: final type bounds}). The following lemmas allow us to strengthen the lower bound.

\begin{lemma} \label{lemma: BC8.3 adaptation}
Let $\eta=\omega_0 + \omega_1 y$ be a (not necessarily regular) differential on $Y$ and let $n$ be an integer. Assume $\ord_{P_i}(\omega_0) \geq -n$ and $V(\eta)=0$. Then we have
$$\ord_{P_i}(\pi_*(\eta)) = \ord_{P_i} (\omega_1) \geq 2 \left \lceil\frac{d_i-n}{2} \right \rceil.$$
\end{lemma}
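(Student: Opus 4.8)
The plan is to use the hypothesis $V(\eta)=0$ to realize $\eta$ as a genuinely exact differential on $Y$, and then to convert the pole bound on $\omega_0$ at $P_i$ into a bound on $\omega_1$ by a parity analysis of Laurent expansions. First I would recall that $Y$ is defined by an Artin--Schreier equation $y^2+y=\psi$ with $\psi\in k(X)$, and that the ramification break at $P_i$ being $d_i$ (odd) amounts to $\ord_{P_i}(d\psi)=-d_i-1$. The equality $\ord_{P_i}(\pi_*(\eta))=\ord_{P_i}(\omega_1)$ is then immediate: writing the trace as $\pi_*=1+\gamma$ for the involution $\gamma\in\Gal(Y/X)$, using $\gamma(y)=y+1$ and that $\gamma$ fixes differentials pulled back from $X$, one gets $\pi_*(\eta)=2\omega_0+\omega_1(2y+1)=\omega_1$ in characteristic two.

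Next I would invoke the standard fact that $V(\eta)=0$ forces $\eta=dh$ for some $h\in k(Y)$. Writing $h=a+by$ with $a,b\in k(X)$ and using $dy=d\psi$ (valid in characteristic two), we obtain $dh=(da+b\,d\psi)+(db)\,y$; matching the $1$- and $y$-components gives $\omega_0=da+b\,d\psi$ and $\omega_1=db$, so it remains to bound $\ord_{P_i}(db)$ given $\ord_{P_i}(da+b\,d\psi)\ge -n$. Fixing a uniformizer $t$ at $P_i$, I would split the Laurent expansion $b=b_{\mathrm{even}}+b_{\mathrm{odd}}$ into its even- and odd-exponent parts. In characteristic two differentiation kills $b_{\mathrm{even}}$, so $db=d(b_{\mathrm{odd}})$ and $\ord_{P_i}(db)=\ord_{P_i}(b_{\mathrm{odd}})-1$, which is even. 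Writing $d\psi=\phi\,dt$, every exponent occurring in $\phi$ is even, and $da$ also involves only even exponents; hence the odd-exponent part of $\omega_0=da+b\,d\psi$ is exactly $b_{\mathrm{odd}}\,d\psi$. Since the even- and odd-exponent parts of $\omega_0$ involve disjoint powers of $t$, this yields $\ord_{P_i}(b_{\mathrm{odd}}\,d\psi)\ge\ord_{P_i}(\omega_0)\ge -n$ (the case $b_{\mathrm{odd}}=0$ being trivial, as then $\omega_1=0$), hence $\ord_{P_i}(b_{\mathrm{odd}})=\ord_{P_i}(b_{\mathrm{odd}}\,d\psi)+d_i+1\ge d_i+1-n$. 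Therefore $\ord_{P_i}(\omega_1)=\ord_{P_i}(db)=\ord_{P_i}(b_{\mathrm{odd}})-1\ge d_i-n$, and since this order is even it is at least $2\ceiling{(d_i-n)/2}$.

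The computation is essentially bookkeeping, and the one point needing care is the parity accounting: recognizing that $db$ is supported on even powers of $t$, while $da$ and $b_{\mathrm{even}}\,d\psi$ are too, so that the odd part of $\omega_0$ is precisely $b_{\mathrm{odd}}\,d\psi$. That clean separation is exactly what propagates the pole bound from $\omega_0$ to $\omega_1$, and I do not anticipate a genuine obstacle — the only thing to double-check is that the ingredients $\pi_*=1+\gamma$ on differentials, the implication $V(\eta)=0\Rightarrow\eta$ exact, and the characterization $\ord_{P_i}(d\psi)=-d_i-1$ of the break are all available in the form needed here.
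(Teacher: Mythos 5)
Your proof is correct, and it reaches the paper's bound by a route that is recognizably dual to the one in the text rather than identical to it. The paper differentiates: it applies the Cartier operator to $\eta=\omega_0+\omega_1y$, uses $y=y^2+\psi$ to get $V(\eta)=V(\omega_0)+V(\psi\omega_1)+yV(\omega_1)$, concludes $V(\omega_1)=0$ and $V(\omega_0)=V(\psi\omega_1)$, writes $\omega_1=f_1^2\,dz_i$ with $\psi=c^2z_i^{-d_i}$, and computes $V(\psi\omega_1)=cf_1z_i^{-(d_i+1)/2}dz_i$ to convert $\ord(V(\omega_0))\ge-\lfloor(n+1)/2\rfloor$ into $\ord(f_1)\ge\lceil(d_i-n)/2\rceil$. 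You integrate: Cartier's theorem gives $\eta=dh$ with $h=a+by$, hence $\omega_0=da+b\,d\psi$ and $\omega_1=db$, which are exactly the antiderivative forms of the paper's identities $V(\omega_0)=V(\psi\omega_1)$ and $V(\omega_1)=0$; your parity bookkeeping on Laurent expansions then plays the role of the paper's explicit computation of $V$ on local expansions. The two arguments carry the same content, but yours has the small benefit of making the equality $\pi_*(\eta)=\omega_1$ explicit (the paper leaves it implicit), while the paper's avoids invoking exactness and stays entirely at the level of $V$.

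One point you should state explicitly: the identity $\ord_{P_i}(d\psi)=-d_i-1$ is not a consequence of the break being $d_i$ for an arbitrary Artin--Schreier representative; it requires $\psi$ to be normalized so that $\ord_{P_i}(\psi)=-d_i$ (a representative whose local odd-exponent part is perturbed can have $\ord_{P_i}(d\psi)$ strictly smaller, or $d\psi$ nearly regular in degenerate cases). The paper makes the same tacit normalization when it chooses $z_i$ with $\psi=c^2z_i^{-d_i}$, and your inequality only needs $\ord_{P_i}(d\psi)\le-d_i-1$, which holds for any representative whose odd part has pole order at least $d_i$. So this is a matter of recording the convention, not a gap in the argument.
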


\begin{proof}
This is an adaptation in characteristic $2$ of the proof of \cite[Theorem 8.3]{BoCaIwasawa}, with the difference that we no longer require $\eta$ to be regular. The decomposition $\eta=\omega_0 + \omega_1 y$ has implicitly fixed an equation $y^2+y=\psi$ defining $Y$.  Now observe
$$V(\eta) = V(\omega_0) + V(\omega_1 y) = V(\omega_0) + V(\omega_1(y^2+\psi)) = V(\omega_0) + V(\psi \omega_1) + y V(\omega_1) = 0,$$
so that $V(\omega_1)=0$ and $V(\omega_0) = V(\psi \omega_1)$. Since $\ord_{P_i}(\omega_0) \geq -n$, it follows that 
\begin{equation} \label{eq: BC8.3}
    \ord_{P_i}(V(\psi \omega_1)) = \ord_{P_i}(V(\omega_0)) \geq - \left \lfloor \frac{n+1}{2} \right \rfloor.
\end{equation}
Now, we may pick a uniformizer $z_i \in \widehat{\mathcal{O}}_{X,P_i}$ such that $\psi = c^2z_i^{-d_i}$ for some $c \in k^\times$. Since $V(\omega_1)=0$, there exists a function $f_1\in \widehat{\mathcal{O}}_{X,P_i} \cong k[\![z]\!]$ 
such that $\omega_1 = f_1^2 dz_i$. Then we obtain
$$ V(\psi \omega_1) = V(c^2f_1^2 z_i^{-d_i}z_i) = cf_1z_i^{-\frac{d_i+1}{2}} dz_i.$$
Combining this with Equation~\eqref{eq: BC8.3} yields 
\begin{align*}
    \ord_{P_i}(f_1) &\geq \frac{d_i+1}{2} - \left \lfloor \frac{n+1}{2} \right \rfloor = \left \lceil \frac{d_i-n}{2}  \right \rceil, \intertext{and therefore}
    \ord_{P_i}(\omega_1) &= \ord_{P_i}(f_1^2 dz_i) \geq 2 \left \lceil \frac{d_i-n}{2}  \right \rceil. \qedhere
\end{align*}
\end{proof}

\begin{corollary} \label{cor: BC8.4}
Consider a simple word $w=\bot V^{n_t} \bot \ldots V^{n_1} = \bot w'$ and let $(\eta,(f_i))$ represent a class in $M_0 \oplus T_0 \oplus W_w.$  If
\begin{equation} \label{eq: BC8.4}
\sum_{i=1}^m \left \lceil \frac{\phi(d_i,w')}{2} \right \rceil \geq g_X
\end{equation}
and $V(\eta)=0$, then it follows that $\pi_*(\eta)=0.$
\end{corollary}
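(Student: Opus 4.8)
The plan is to fix the Artin--Schreier equation $y^2+y=\psi$ defining $Y$, write $\eta = \omega_0 + \omega_1 y$ with $\omega_0,\omega_1$ meromorphic differentials on $X$ (so that $\pi_*(\eta) = \omega_1$), and show $\omega_1 = 0$. This is done by bounding the pole of $\omega_0$ at each $P_i$, feeding this into Lemma~\ref{lemma: BC8.3 adaptation}, and finishing with a Riemann--Roch degree count.

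For the pole bound I would take $(\eta,(f_i))$ to be the representative obtained by expanding the class in the natural generators of $M_0 \oplus T_0 \oplus W_w$. First note that, since $w = \bot w'$ is simple, every exponent of $w'$ is $\ge 1$, so $\phi(d_i,w') = \phi(d_i;n_1,\dots,n_t) \le (d_i-1)/2$ and hence $\phi(d_i,w) = d_i - 1 - \phi(d_i,w') \ge (d_i-1)/2$. Now: a class in $M_0$ is a pullback $\pi^*\omega$ of a regular differential, so $\omega_0 = \omega$ is regular at $P_i$ and $\omega_1 = 0$. A class in $T_0$ is a regular differential $\eta_0 = \omega_0^{(0)} + \omega_1^{(0)}y$ on $Y$; since (by Lemma~\ref{l: uniformizer up above} and $\mathrm{ord}_{Q_i}(y) = -d_i$) the two summands of $\eta_0$ have orders of opposite parity at $Q_i$, regularity forces $\mathrm{ord}_{P_i}(\omega_0^{(0)}) \ge -(d_i+1)/2 \ge -(\phi(d_i,w)+1)$, while $\omega_1^{(0)} = \pi_*(\eta_0)$ is regular on all of $X$. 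Finally, for the generators $\widetilde{\omega}_{i,j}$ of $W_w$ (with $1 \le j \le \phi(d_i,w)$): when $j \le (d_i-1)/2$ these equal $\pi^* w_{i,j}$, with $\omega_0 = w_{i,j}$ of pole order $j+1 \le \phi(d_i,w)+1$ at $P_i$ and $\omega_1 = 0$; when $(d_i-1)/2 < j \le \phi(d_i,w)$, the correction $\widehat{\omega}_{i,j}^{\bij}$ lies in the $F$-bijective part $Z \subseteq \mathrm{Im}(F)$, hence by Proposition~\ref{prop:F and V on S-enhanced differentials} admits a representative with zero differential part, so $\widetilde{\omega}_{i,j}= \widehat{\omega}_{i,j} - \widehat{\omega}_{i,j}^{\bij}$ is represented by an enhanced differential with differential part $\pi^* w_{i,j}$, again giving $\mathrm{ord}_{P_i}(\omega_0) = -j-1 \ge -(\phi(d_i,w)+1)$ and $\omega_1 = 0$. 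Adding up, the class has a representative $(\eta,(f_i))$ with $\eta = \omega_0 + \omega_1 y$ where $\omega_0$ is regular away from $B$ with a pole of order $\le \phi(d_i,w)+1$ at $P_i$, and $\omega_1$ is a regular differential on $X$. Since $V$ is the Cartier operator (which kills exact differentials), the hypothesis $V(\eta) = 0$ is insensitive to the choice of representative.

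Now I would apply Lemma~\ref{lemma: BC8.3 adaptation} at each $P_i$ with $n = \phi(d_i,w)+1$: as $\mathrm{ord}_{P_i}(\omega_0) \ge -n$ and $V(\eta)=0$,
\[
\mathrm{ord}_{P_i}(\omega_1) = \mathrm{ord}_{P_i}(\pi_*\eta) \ge 2\left\lceil \frac{d_i-n}{2}\right\rceil = 2\left\lceil \frac{\phi(d_i,w')}{2}\right\rceil ,
\]
using $d_i - \phi(d_i,w) - 1 = \phi(d_i,w')$. Thus $\omega_1 \in \HH^0(X,\Omega^1_X(-D))$ with $D = \sum_{i=1}^m 2\lceil \phi(d_i,w')/2\rceil\, P_i$, and by \eqref{eq: BC8.4} we have $\deg D = 2\sum_i \lceil \phi(d_i,w')/2\rceil \ge 2g_X > 2g_X - 2 = \deg\Omega^1_X$. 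Hence $\Omega^1_X(-D)$ has negative degree, so $\omega_1 = 0$, i.e. $\pi_*(\eta) = 0$.

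The crux is the pole bound in the second paragraph: the correction terms $\widehat{\omega}_{i,j}^{\bij}$ inside $\widetilde{\omega}_{i,j}$ (for $j > (d_i-1)/2$) are not pullbacks and could a priori carry large poles, and the $T_0$-part must also be controlled. What makes it go through is that $F$-bijective classes lie in the image of $F$ and so have representatives with vanishing differential part (Proposition~\ref{prop:F and V on S-enhanced differentials}), together with the elementary inequality $\phi(d_i,w) \ge (d_i-1)/2$ forced by $w$ being simple; everything else is order bookkeeping and Riemann--Roch.
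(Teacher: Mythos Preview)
Your proof is correct and follows essentially the same approach as the paper: write $\eta = \omega_0 + \omega_1 y$, establish the pole bound $\ord_{P_i}(\omega_0) \ge -\phi(d_i,w)-1$, apply Lemma~\ref{lemma: BC8.3 adaptation}, and finish with a degree count on $\Omega^1_X(-D)$. The paper's proof simply asserts the pole bound by ``recalling definitions,'' whereas you spell out the justification carefully (in particular that $\widehat{\omega}_{i,j}^{\bij} \in Z \subseteq \mathrm{Im}(F)$ admits a representative with zero differential part, and that $\phi(d_i,w) \ge (d_i-1)/2$ for a simple word $w = \bot w'$, which handles the $T_0$-contribution); this extra care is warranted and fills in what the paper leaves implicit.
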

\begin{proof}
Since $(\eta,(f_i))$ is in $M_0 \oplus T_0 \oplus W_w$, we can write $\eta=\omega_0 + \omega_1 y$, where $\omega_1$ is regular. Recalling Definitions \ref{defn:wijtilde} and \ref{defn:Ww}, note $\ord_{P_i}(\omega_0) \geq -\phi(d_i,w) - 1$ for every $i$. Then Lemma~\ref{lemma: BC8.3 adaptation} yields that 
$$\ord_{P_i}(\omega_1) \geq 2 \left \lceil \frac{d_i-1-\phi(d_i,w)}{2}  \right \rceil = 2 \left \lceil \frac{\phi(d_i,w')}{2}  \right \rceil.$$
Thus, if we consider the effective divisor
$$E= \sum_{i=1}^m 2 \left \lceil \frac{\phi(d_i,w')}{2}  \right \rceil [P_i],$$
we see $\pi_*(\eta) = \omega_1 \in H^0(X,\Om_X^1(-E))=0$ since $\deg(E)>2g_X-2$ by Equation~\eqref{eq: BC8.4}.
\end{proof}

Recall Equation~\eqref{eq:Tango def} defining the Tango number $n(X)$.
Note that $n(X) \leq g_X-1$ by \cite[Lemma 10]{Tango}. Define the divisor on $X$
$$D\colonequals  \sum_{i=1}^m \frac{d_i + 1}{2} [P_i].$$ 

\begin{definition} \label{def: L(X,pi,w)}
Consider a simple word $w=V^r \bot V^{n_t} \bot \ldots \bot V^{n_1}=V^r \bot w'$. Define
\begin{align*} \label{eq: defintion of L and U}
L_1(X, \pi , w) & \colonequals  \begin{cases} l_X - a_X^r &\hbox{if $\deg(\lfloor D/2^r \rfloor) < n(X)$}, \\
l_X &\hbox{if $\deg(\lfloor D/2^r \rfloor) \geq n(X)$}. 
\end{cases}\\
L_2(X,\pi,w) & \colonequals  \begin{cases}  \phi(w) - \left \lfloor \frac{3t}{2} \right \rfloor l_X &\hbox{if $\sum_{i=1}^m (d_i+1) \leq 4g_X -4$ }, \\
 \phi(w) - tl_X &\hbox{if $\sum_{i=1}^m (d_i+1) > 4g_X -4$ }.
\end{cases} \\
L_3(X,\pi,w) & \colonequals  \begin{cases} l_X  &\hbox{if $r=1$ and $\sum_{i=1}^m \left \lceil \frac{\phi(d_i,w')}{2} \right \rceil \geq g_X$}, \\
l_X-a_X^r &\hbox{otherwise}. 
\end{cases} \\
L(X,\pi,w) & \colonequals  L_1(X,\pi,w) + L_2(X,\pi,w) + L_3(X,\pi,w).
\end{align*}
\end{definition}

\begin{lemma} \label{lemma: L(X,pi,w)}
Consider a word $w=V^r \perp V^{n_t} \perp  \ldots \perp V^{n_1}= V^r \perp w'.$ We have
$$\dim_k (w(L)) \geq L(X,\pi,w).$$
\end{lemma}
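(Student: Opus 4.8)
The plan is to bound $\dim_k(w(L))$ from below by producing, inside $w(L)$, a large subspace whose dimension we can control. We start from the containment
$$w(L) \supseteq M_0 \oplus T_0 \oplus W_w^L$$
from Proposition~\ref{prop: w(L) bounds}, where $W_w^L \supseteq W_V$ has codimension at most $\lfloor \tfrac{3t}{2}\rfloor l_X$ (or $t l_X$ under the extra hypothesis) inside $W_w$. Since $\dim_k W_w = \phi(w)$, this already gives $\dim_k(w(L)) \geq \dim_k(M_0\oplus T_0) + L_2(X,\pi,w) = 2l_X + L_2(X,\pi,w)$. The point of the lemma is to improve on the contribution $2l_X$ by analyzing more carefully how $V^r$ and the final $\perp$ interact with the $M_0$, $T_0$ and $W$ parts, which is exactly what the three terms $L_1$, $L_3$ (replacing the two copies of $l_X$) are designed to capture.

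The key steps, in order: (1) Write $w = V^r \perp w'$. Since $w(L) = V^r((w')(L)^\perp)$ and $(w')(L) \supseteq M_0 \oplus T_0 \oplus W_{w'}^L$, taking symplectic complements and then applying $V^r$ will let me isolate a $V^r$-image of a space built from $M_0$, $T_0$ and $W$. (2) For the $L_1$ term: examine $V^r$ applied to the $M_0$-direction (or a regular-differentials direction); by rank–nullity $\dim_k V^r(M_0) = l_X - a_X^r$ in general, but when $\deg(\lfloor D/2^r\rfloor) \geq n(X)$ Tango's theorem (as invoked in the Remark after Lemma~\ref{lemma: lower to upper}) forces $M_0 \subseteq V^r(M_0 \oplus W_V)$, so the full $l_X$ survives — this is where $L_1$ comes from. (3) For the $L_2$ term: the $W$-part contributes $\dim_k W_{V^r \perp w'} = \phi(w)$ minus the codimension loss $\lfloor\tfrac{3t}{2}\rfloor l_X$ (resp. $t l_X$) from Proposition~\ref{prop: w(L) bounds}, using Lemma~\ref{lemma:projection} to track $\pi_W$ of $V^r$ and $\perp$. (4) For the $L_3$ term: the $T_0$-direction contributes $\dim_k V^r(T_0) = l_X - a_X^r$ by the final statement of Lemma~\ref{lemma:Ldecomposition}; but in the special case $r=1$ with $\sum_i \lceil \phi(d_i,w')/2\rceil \geq g_X$, Corollary~\ref{cor: BC8.4} says any $V$-killed class in $M_0\oplus T_0\oplus W_w$ has $\pi_*(\eta)=0$, which upgrades the $T_0$ contribution to the full $l_X$ — this is where $L_3$ comes from. (5) Finally, check that these three contributions live in complementary directions (the $M_0$, $T_0$, and $W$ summands of the decomposition in Lemma~\ref{lemma:Ldecomposition} are independent), so their dimensions add, giving $\dim_k(w(L)) \geq L_1 + L_2 + L_3 = L(X,\pi,w)$.

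The main obstacle I anticipate is step (4) — correctly extracting the $l_X - a_X^r$ versus $l_X$ dichotomy for the $T_0$ part and verifying that, under the $r=1$ hypothesis, Corollary~\ref{cor: BC8.4} genuinely forces the projection of $V(T_0)$ to $T_0$ to be all of $l_X$ rather than $l_X - a_X^1$. This requires unwinding the identification $\pi_{T_0}(V(\eta)) = V(\pi_*(\eta))$ from Lemma~\ref{lemma:Ldecomposition} and arguing that when $\pi_*(\eta) = 0$ is the only way to be in the kernel of $V$ on the relevant subspace, then $V^r$ restricted to $T_0$ (for $r=1$) is injective on a complement of the desired size. A secondary subtlety is bookkeeping the codimension losses so that they attach to $L_2$ (the $W$-part) and not to the $M_0$ or $T_0$ contributions, which is consistent with how $W_w^L$ was defined in Proposition~\ref{prop: w(L) bounds} as sitting between $W_V$ and $W_w$. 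Everything else is routine dimension-counting using the decomposition $L = M_0 \oplus M_1 \oplus T_0 \oplus T_1 \oplus \bigoplus_i W_i$ and the already-established behavior of $V$ and $\perp$ on each summand.
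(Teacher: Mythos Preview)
Your proposal is correct and follows essentially the same route as the paper's proof. The only bookkeeping slip is that Proposition~\ref{prop: w(L) bounds} applies to words beginning with $\perp$, so you should invoke it for $\perp w'$ (giving $w'(L)^\perp \supseteq M_0 \oplus T_0 \oplus W_{\perp w'}^L$ with $t$ instances of $\perp$) and then apply $V^r$, rather than for $w$ or $w'$ directly; correspondingly, Corollary~\ref{cor: BC8.4} is applied with the space $M_0 \oplus T_0 \oplus W_{\perp w'}$ (not $W_w$), which is exactly what makes the kernel of $V$ land in $M_0 \oplus W_{\perp w'}^L$ and forces the full $l_X$ contribution from $T_0$ when $r=1$.
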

\begin{proof}
We use the lower bound from Proposition~\ref{prop: w(L) bounds} to the word $\bot w'$, which implies
$$w(L) = V^r\left(w'(L)^\perp\right) \supseteq V^r\left(M_0 \oplus W_{\perp w'}^L \oplus T_0 \right)= V^r(M_0 \oplus W_{\perp w'}^L) + V^r(T_0).$$
where the subspace $W_{\perp w'}^L$ satisfies
$$\codim_k(W_{\perp w'}^L \subseteq W_{\perp w'}) \leq \begin{cases}
    \left \lfloor \frac{3t}{2} \right \rfloor l_X &\hbox{if $\sum_{i=1}^m (d_i+1) \leq 4g_X -4$ } \\
    tl_X &\hbox{if $\sum_{i=1}^m (d_i+1) > 4g_X-4$ },
\end{cases}$$
so that $\dim_k (\pi_W(V^r(W_{\perp w'}^L))) \geq L_2(X,\pi,w)$ by Lemma~\ref{lemma:projection}. Note that $\dim_k(V^r(M_0)) =l_X-a_X^r$. If furthermore $\deg(\lfloor D/2^r\rfloor ) \geq n(X)$ then Tango's theorem (see \cite{Tango} and \cite[Corollary 6.8]{groen}) implies that 
$$M_0 \subseteq V^r(M_0 \oplus W_V) \subseteq V^r(M_0 \oplus W_{\perp w'}^L),$$
so that in either case $\dim_k (V^r(M_0 \oplus W_{\perp w'}^L)) \geq L_1(X,\pi,w) + L_2(X,\pi,w).$

Finally, we analyze the contribution from $T_0$. By Lemma~\ref{lemma:Ldecomposition} we have $\dim_k (\pi_{T_0}(V^r(T_0))) = l_X-a_X^r$, which contributes to lower bound on $\dim_k(w(L)).$ If additionally $\sum_{i=1}^m \left \lceil \frac{\phi(d_i,w')}{2} \right \rceil \geq g_X$, then Corollary~\ref{cor: BC8.4} shows that $V(\eta)=0$ implies $\pi_*(\eta)=0$, so that
$$\ker\left (V : M_0 \oplus T_0 \oplus W_{\perp w'}^L \to M_0 \oplus T_0 \oplus W_{\perp w'}^L \right) \subseteq M_0 \oplus W_{\perp w'}^L .$$ 
In this case the contribution of $T_0$ to the lower bound is $l_X$. Thus in either case the contribution from $T_0$ is at least $L_3(X,\pi,w)$. Therefore we conclude 
\begin{align*}
    \dim_k (w(L)) & \geq \dim_k (V^r(M_0 \oplus W_{\perp w'}^L \oplus T_0)) \\ 
    &\geq L_1(X,\pi,w) + L_2(X,\pi,w) + L_3(X,\pi,w) = L(X,\pi,w). \qedhere
\end{align*}
\end{proof}

We now turn to the upper bound, which is more straightforward to define.
\begin{definition} \label{def: U(X,pi,w)}
Consider a simple word $w=V^r \bot V^{n_t} \bot \ldots \bot V^{n_1}=V^r\bot w'$. Define
$$U(X,\pi,w) \colonequals \begin{cases}  \phi(w) + \left \lceil \frac{3t+4}{2} \right \rceil l_X &\hbox{if $\sum_{i=1}^m (d_i+1) \leq 4g_X -4$ }\\  \phi(w) + (t+2)l_X&\hbox{if $\sum_{i=1}^m (d_i+1) > 4g_X-4$. }
\end{cases}$$
\end{definition}

\begin{lemma} \label{lemma: U(X,pi,w)}
For a simple word $w=V^r \bot V^{n_t} \bot \ldots \bot V^{n_1}=V^r\bot w'$, we have
$$\dim_k (w(L)) \leq U(X,\pi,w).$$
\end{lemma}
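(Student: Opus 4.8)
The plan is to transfer the upper bound of Proposition~\ref{prop: w(L) bounds} from the word $\perp w'$ to $w = V^r \perp w'$ by pushing a containment forward along $V^r$, and then to bound the dimension of the image piece by piece using the decomposition $L = M_0 \oplus M_1 \oplus T_0 \oplus T_1 \oplus W$ from Lemma~\ref{lemma:Ldecomposition}. Throughout we use that, $w$ being a simple word of the displayed form, $r\geq 1$.

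First I would apply Proposition~\ref{prop: w(L) bounds} to the simple word $\perp w'$, which contains exactly $t$ copies of $\perp$. This produces a space $W_{\perp w'}^U \subseteq T_1 \oplus W$ containing $W_{\perp w'}$ with codimension at most $\lceil \tfrac{3t}{2}\rceil l_X$ (and at most $t l_X$ when $\sum_{i=1}^m(d_i+1) > 4g_X-4$), together with the inclusion $w'(L)^\perp = (\perp w')(L) \subseteq M \oplus T_0 \oplus W_{\perp w'}^U$. Writing $W_{\perp w'}^U = W_{\perp w'} + C$ with $\dim_k C$ equal to that codimension and applying $V^r$, I obtain
$$w(L) = V^r\big(w'(L)^\perp\big) \subseteq V^r(M) + V^r(T_0) + V^r(W_{\perp w'}) + V^r(C).$$

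Next I would bound the four summands. Since $r \geq 1$ and $\operatorname{Im} V = \HH^0(Y,\Omega^1_Y)$, while $M$ is $V$-stable by Lemma~\ref{lemma:Ldecomposition}, we get $V^r(M) \subseteq M \cap \HH^0(Y,\Omega^1_Y) = M_0$. By Proposition~\ref{prop:nilpotent wij}, for each generator $\widetilde{\omega}_{i,j}$ of $W$ the difference $V(\widetilde{\omega}_{i,j}) - \widetilde{\omega}_{i,j/2}$ (or $V(\widetilde{\omega}_{i,j})$ itself when $j$ is odd) is the pullback of a regular differential on $X$ lying in $L$, hence in $M_0$; thus $V(W) \subseteq W + M_0$, and iterating, $V^r(W_{\perp w'}) \subseteq W + M_0$ with its $W$-projection contained in $W_{V^r\perp w'} = W_w$ by repeated use of Lemma~\ref{lemma:projection} (the $M_0$-part dies under $\pi_W$ and $V(M_0)\subseteq M_0$). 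In particular $V^r(W_{\perp w'}) \subseteq W_w + M_0$. Combining the first two observations, $V^r(M) + V^r(W_{\perp w'}) \subseteq W_w + M_0$, which has dimension at most $\phi(w) + l_X$; meanwhile $\dim_k V^r(T_0) \leq \dim_k T_0 = l_X$ and $\dim_k V^r(C) \leq \dim_k C$. Adding these, $\dim_k w(L) \leq \phi(w) + l_X + l_X + \dim_k C$, which equals $\phi(w) + (2 + \lceil \tfrac{3t}{2}\rceil) l_X = \phi(w) + \lceil \tfrac{3t+4}{2}\rceil l_X$ in general and $\phi(w) + (t+2)l_X$ under the extra hypothesis — exactly $U(X,\pi,w)$.

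The bookkeeping of the summands of $L$ and of where $V^r$ sends each one is routine but fiddly, much as in the proof of Lemma~\ref{lemma: lower to upper}; the only step needing any cleverness is the merge above: because both $V^r(M)$ and the $M_0$-overflow of $V^r(W_{\perp w'})$ land in the single space $M_0$, they jointly cost only $l_X$ rather than the $2l_X$ one would get from bounding $\dim_k V^r(M)$ and $\dim_k W_{\perp w'}^U$ separately. That is precisely what brings the estimate down to the claimed value of $U(X,\pi,w)$, and I expect it to be the part of the argument that is easiest to get wrong.
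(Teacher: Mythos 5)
Your proposal is correct and follows essentially the same route as the paper: both apply Proposition~\ref{prop: w(L) bounds} to $\perp w'$, use Proposition~\ref{prop:nilpotent wij} to get $V^r(M\oplus W_{\perp w'})\subseteq M_0\oplus W_w$ (dimension at most $l_X+\phi(w)$), and then add $l_X$ for $V^r(T_0)$ and the codimension of $W_{\perp w'}$ in $W_{\perp w'}^U$ for the remainder. Your explicit splitting $W_{\perp w'}^U = W_{\perp w'}+C$ is just a more detailed bookkeeping of the paper's one-line estimate $\dim_k V^r(M\oplus W_{\perp w'}^U)\leq l_X+\phi(w)+\codim_k(W_{\perp w'}\subseteq W_{\perp w'}^U)$.
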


\begin{proof}
By Proposition~\ref{prop: w(L) bounds}, we have $w'(L)^\perp \subseteq M \oplus W_{\perp w'}^U \oplus T_0$ with 
\begin{equation*}
\codim_k(W_{\perp w'}\subseteq  W_{\perp w'}^U) \leq \begin{cases} \left \lceil \frac{3t}{2} \right \rceil l_X &\hbox{if $\sum_{i=1}^m (d_i+1) \leq 4g_X -4$ }\\ 
tl_X &\hbox{if $\sum_{i=1}^m (d_i+1) > 4g_X-4$. }
\end{cases}
\end{equation*}
By Proposition~\ref{prop:nilpotent wij}, we have 
\begin{align*}
V^r(M \oplus W_{\perp w'}) &\subseteq M_0 \oplus W_{V^r \perp w'} = M_0 \oplus W_w
\intertext{so that}
\dim_k(V^r(M \oplus W_{\perp w'}^U)) &\leq l_X +  \phi(w) + \codim_k(W_{\perp w'}\subseteq  W_{\perp w'}^U).
\end{align*}
Moreover, one observes $\dim_k(V^r(T_0)) \leq \dim_k(T_0) = l_X$ and deduces
\begin{equation*}
    \dim_k(w(L)) \leq \dim_k(V^r(M \oplus W_{\perp w'}^U \oplus T_0)) \leq \dim_k(V^r(M \oplus W_{\perp w'}^U)) + \dim_k(V^r(T_0)). \qedhere
\end{equation*}
\end{proof}

\begin{proposition} \label{prop: bounds dim(w(L))}
Consider a simple word $w$ in $V$ and $\bot$. Let $L(X,\pi,w)$ be as in Definition~\ref{def: L(X,pi,w)} and let $U(X,\pi,w)$ be as in Definition~\ref{def: U(X,pi,w)}. Then we have
$$L(X,\pi,w) \leq \dim_k( w(L)) \leq U(X,\pi,w).$$
\end{proposition}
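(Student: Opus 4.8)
The plan is short, because this proposition is simply the conjunction of the two lemmas immediately preceding it. First I would put the given simple word $w$ into the shape required by Definitions~\ref{def: L(X,pi,w)} and~\ref{def: U(X,pi,w)}, namely $w = V^r \bot w'$ with $r \ge 0$ and $w'$ again a simple word. If $w$ already contains a $\bot$ this is automatic: take $r$ to be the length of the initial run of $V$'s (which is $0$ exactly when $w$ begins with $\bot$), and the hypothesis that simple words contain no consecutive $\bot$'s guarantees that $w'$ begins with a $V$ and is itself simple. For any remaining simple word, i.e. one of the form $w = V^r$ with no $\bot$ at all, I would use the identity $V(L) = V(L)^\bot = L_0$ exactly as in the first paragraph of the proof of Proposition~\ref{prop: w(L) bounds}: replacing the final block $V^r$ by $V^{r-1}\bot V$ does not change $w(L)$, and now $w$ is of the desired shape. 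So no generality is lost in assuming $w = V^r\bot w'$.

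With $w$ in this form, the lower inequality $L(X,\pi,w) \le \dim_k(w(L))$ is precisely the content of Lemma~\ref{lemma: L(X,pi,w)}, and the upper inequality $\dim_k(w(L)) \le U(X,\pi,w)$ is precisely Lemma~\ref{lemma: U(X,pi,w)}; concatenating the two finishes the proof. I would also recall why these are the dimensions one wants to control: by Lemma~\ref{lemma: reduce to L} we have $w(\HdR^1(Y)) = U \oplus (w(L)\cap L)$, so the final type of $Y$ is determined entirely by the numbers $\dim_k(w(L))$ as $w$ ranges over simple words, which is why bounding $\dim_k(w(L))$ is exactly the right target.

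I do not expect any obstacle in the proposition itself; the genuine work has already been carried out. The technical heart is Lemma~\ref{lemma: lower to upper} (passing from a lower bound on $w'(L)$ to an upper bound on $V^n(w'(L)^\bot)$), where the orthogonality facts from Section~\ref{section: omegaij} --- Lemmas~\ref{lemma: M_0 orthogonal to W} and~\ref{lemma: M orthogonal to W_V} --- and the dimension bookkeeping with the function $\phi$ come into play, together with its dual Lemma~\ref{lemma: upper to lower}; these feed the induction on the number of $\bot$'s in Proposition~\ref{prop: w(L) bounds}, and Lemmas~\ref{lemma: L(X,pi,w)} and~\ref{lemma: U(X,pi,w)} merely package that induction's output (adding in the $T_0$-contribution, the rank--nullity estimate on $V^r(M_0)$, and the Tango-theorem refinements) into the precise constants $L_1+L_2+L_3$ and $\phi(w)+\lceil(3t+4)/2\rceil l_X$. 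Thus the only thing one must be careful about here is the purely arithmetic matching of error terms $\lfloor 3t/2\rfloor l_X$, $\lceil 3t/2\rceil l_X$, and $a_X^n \le l_X$ against the definitions --- and that matching is already done inside the proofs of the two lemmas.
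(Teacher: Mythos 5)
Your proposal is correct and matches the paper's proof, which is literally the one-line combination of Lemma~\ref{lemma: L(X,pi,w)} and Lemma~\ref{lemma: U(X,pi,w)}; your extra care in reducing a word with no $\bot$ to the form $V^r\bot V$ via $V(L)=V(L)^\perp=L_0$ is a detail the paper handles inside the proof of Proposition~\ref{prop: w(L) bounds} rather than here, but it is the right observation.
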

\begin{proof}
Combine Lemma~\ref{lemma: L(X,pi,w)} and Lemma~\ref{lemma: U(X,pi,w)}.
\end{proof}

\subsection{Bounds on the final type}

Recall from Section~\ref{ss:final types} that the isomorphism class of $\Jac(Y)[p]$ is encoded in the final type, $\nu = [\nu_1, \ldots , \nu_{g_Y}]$, which may be interpreted as a non-decreasing function from $\{1, \ldots , g_Y\}$ to $\{1, \ldots , g_Y\}$. 
Proposition~\ref{prop: bounds dim(w(L))} can be interpreted as follows: for each word $w$, it provides a rectangle that this function must pass through. This is articulated in the following corollary.  

\begin{corollary} \label{cor: rectangle}
Let $\nu = [\nu_1, \ldots ,\nu_{g_Y}]$ be the final type of $\Jac(Y)[p]$ and consider a word $w$. Then there exists an integer $n$ with $L(X,\pi,w) \leq n \leq U(X,\pi,w)$ such that
$$f_Y + L(X,\pi,Vw) \leq \nu_{f_Y+n} \leq f_Y + U(X,\pi,Vw).$$
\end{corollary}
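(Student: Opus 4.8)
The plan is to recognize $w(\HdR^1(Y))$ as a term in any final filtration of $\HdR^1(Y)$, to read off the corresponding entry of the final type from the action of $V$ on it, and then to feed both $w$ and $Vw$ into Proposition~\ref{prop: bounds dim(w(L))}.

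Concretely, take $w$ to be a simple word (the only case relevant, since $Vw$ and $\perp w$ are again simple). By Lemma~\ref{lemma: reduce to L} we have $w(\HdR^1(Y)) = U \oplus (w(L)\cap L)$, so setting $n \colonequals \dim_k(w(L))$ gives $\dim_k(w(\HdR^1(Y))) = f_Y + n$, and Proposition~\ref{prop: bounds dim(w(L))} already yields $L(X,\pi,w) \leq n \leq U(X,\pi,w)$. Since a simple word ends in $V$, the space $w(\HdR^1(Y))$ lies inside $V(\HdR^1(Y)) = H^0(Y,\Omega_Y^1)$, so $f_Y + n \leq g_Y$, and together with $U \subseteq w(\HdR^1(Y))$ this makes $f_Y+n$ a legitimate index of the final type. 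Now recall from Section~\ref{ss:final types} that $w(\HdR^1(Y))$ is one of the spaces in the canonical filtration of $\HdR^1(Y)$ and that every final filtration $\{N_l\}$ refines the canonical filtration; since a filtration is a chain, its terms are determined by their dimensions, so $N_{f_Y+n} = w(\HdR^1(Y))$. Hence
$$\nu_{f_Y+n} = \dim_k(V(N_{f_Y+n})) = \dim_k\bigl(V(w(\HdR^1(Y)))\bigr) = \dim_k\bigl((Vw)(\HdR^1(Y))\bigr).$$

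Finally, $Vw$ is a simple word, so applying Lemma~\ref{lemma: reduce to L} and then Proposition~\ref{prop: bounds dim(w(L))} to $Vw$ gives $\dim_k((Vw)(\HdR^1(Y))) = f_Y + \dim_k((Vw)(L))$ with $L(X,\pi,Vw) \leq \dim_k((Vw)(L)) \leq U(X,\pi,Vw)$; combining this with the displayed equality yields $f_Y + L(X,\pi,Vw) \leq \nu_{f_Y+n} \leq f_Y + U(X,\pi,Vw)$, as claimed. The only point that genuinely requires justification beyond bookkeeping is the identification $N_{f_Y+n} = w(\HdR^1(Y))$, which rests on the standard facts that every final filtration refines the canonical filtration generated by the operations $V$ and $\perp$ and that $w(\HdR^1(Y))$ is a member of that canonical filtration (Oort's theory, recalled in Section~\ref{ss:final types}); once these are granted, everything else follows immediately from Lemma~\ref{lemma: reduce to L} and Proposition~\ref{prop: bounds dim(w(L))} applied at the two simple words $w$ and $Vw$.
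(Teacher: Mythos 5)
Your proposal is correct and follows essentially the same route as the paper: set $n=\dim_k(w(L))$, invoke Lemma~\ref{lemma: reduce to L} to identify $w(\HdR^1(Y))=U\oplus(w(L)\cap L)$ as a term of the canonical (hence any final) filtration of dimension $f_Y+n$, compute $\nu_{f_Y+n}=f_Y+\dim_k(V(w(L)))$, and apply Proposition~\ref{prop: bounds dim(w(L))} to both $w$ and $Vw$. The extra remarks (that $f_Y+n$ is a legitimate index and that a final filtration's terms are determined by their dimensions) are harmless elaborations of what the paper leaves implicit.
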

\begin{proof}
Let $n=\dim_k (w(L))$, so $L(X,\pi,w) \leq n \leq U(X,\pi,w)$ follows immediately from Proposition~\ref{prop: bounds dim(w(L))}. By Lemma~\ref{lemma: reduce to L}, the space $U \oplus w(L)$ occurs in the canonical filtration of $\HdR^1(Y)$, and therefore it occurs in any final filtration. The dimension of $U \oplus w(L)$ is $f_Y+n$. By definition, we have 
$$\nu_{f_Y+n} = \dim_k(V(U \oplus w(L))) = \dim_k(U \oplus V(W(L))) = f_Y + \dim_k(V(w(L))).$$
Then the corollary follows from applying Proposition~\ref{prop: bounds dim(w(L))} to the word $Vw$.
\end{proof}

Corollary~\ref{cor: rectangle} provides bounds on the final type of $\Jac(Y)[p]$, and the bounds on $\nu_{f_Y+n} = f_Y +\dim_k(V(w(L))$ do not fully take the value $n=\dim_k(w(L))$ into account.  We now improve these bounds by better accounting for the relationship between $w(L)$ and $V(w(L))$.

For ease of exposition, we fix a simple word $\widehat{w}=\bot V^{n_t} \bot \ldots \bot V^{n_1}$ and let $W_{\widehat{w}}^L$ and $W_{\widehat{w}}^U$ be as in Proposition~\ref{prop: w(L) bounds}. We introduce the following notation for any $r\geq 0$:
\begin{align*}
    \delta^r &:= \dim_k (V^r(M_0 \oplus T_0 \oplus W_{\widehat{w}})) \\
    \delta_L^r &:= \dim_k (V^r(M_0 \oplus T_0 \oplus W_{\widehat{w}}^L)) \\
    \delta_U^r &:= \dim_k (V^r(M \oplus T_0 \oplus W_{\widehat{w}}^U)) \\
    \varepsilon^r &:= \dim_k(V^r(\widehat{w}(L))) \\
    c_1^r &:= \codim_k (V^r(M_0 \oplus T_0 \oplus W_{\widehat{w}}^L)\subseteq V^r(\widehat{w}(L))) = \varepsilon^r-\delta_L^r \\
    c_2^r &:= \codim_k (V^r(\widehat{w}(L))\subseteq V^r(M \oplus T_0 \oplus W_{\widehat{w}}^U)) = \delta_U^r-\varepsilon^r \\
    e_1^r &:= \codim_k (V^r(M_0 \oplus T_0 \oplus W_{\widehat{w}}^L)\subseteq V^r(M_0\oplus T_0 \oplus W_{\widehat{w}})) = \delta^r-\delta_L^r \\
    e_2^r &:= \codim_k (V^r(M_0 \oplus T_0 \oplus W_{\widehat{w}})\subseteq V^r(M_0 \oplus T_0 \oplus W_{\widehat{w}}^U)) = \delta_U^r-\delta^r.
\end{align*}

The situation is illustrated in the following diagram where the labels indicate codimension:
\begin{equation*} \label{diag: c_1^r etc}
\begin{tikzcd}
                                               & V^r(M\oplus T_0 \oplus W_{\widehat{w}}^U)                                                           &                                                                        \\
                                               &                                                                                                     &                                                                        \\
V^r(\widehat{w}(L)) \arrow[ruu, "c_2^r", hook] &                                                                                                     & V^r(M_0 \oplus T_0 \oplus W_{\widehat{w}}) \arrow[luu, "e_2^r"', hook] \\
                                               &                                                                                                     &                                                                        \\
                                               & V^r(M_0 \oplus T_0 \oplus W_{\widehat{w}}^L). \arrow[luu, "c_1^r", hook] \arrow[ruu, "e_1^r"', hook] &                                                                       
\end{tikzcd}
\end{equation*}

Now Proposition~\ref{prop: w(L) bounds} and the fact that $M_0$ has codimension $l_X$ in $M$ gives
\begin{align}
    e_1^0 &\leq \begin{cases}  \left \lfloor \frac{3t}{2} \right\rfloor l_X &\hbox{if $\sum_{i=1}^m (d_i+1) \leq 4g_X -4$ }  \\ t l_X &\hbox{if $\sum_{i=1}^m (d_i+1) > 4g_X -4$ } \end{cases} \label{eq: e_1^0} \\
    e_2^0 &\leq \begin{cases}  \left \lceil \frac{3t+2}{2} \right\rceil l_X &\hbox{if $\sum_{i=1}^m (d_i+1) \leq 4g_X -4$ }  \\ (t+1) l_X &\hbox{if $\sum_{i=1}^m (d_i+1) > 4g_X -4$.} \end{cases} \label{eq: e_2^0}
\end{align}

Our goal is now to bound $\varepsilon^{r+1}$ based on $w=V^r \widehat{w}$ and $n=\varepsilon^r$, so that $\nu_{f_Y+n} = f_Y + \varepsilon^{r+1}$. The following Lemma records that $\varepsilon^{r+1}$ cannot be further away from the bounds of Proposition~\ref{prop: bounds dim(w(L))} than $\varepsilon^r$ is. 

\begin{lemma} \label{lemma: epsilon delta}
We have
\begin{equation*} \label{eq: epsilon delta}
    \varepsilon^r + (\delta_U^{r+1} - \delta_U^r) \leq \varepsilon^{r+1} \leq \varepsilon^r + (\delta_L^{r+1} - \delta_L^r).
\end{equation*}
\end{lemma}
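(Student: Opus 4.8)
The plan is to deduce both inequalities from a single monotonicity property: the ``rank drop'' $\dim_k V^r(S)-\dim_k V^{r+1}(S)$ of the Verschiebung along a subspace $S\subseteq\HdR^1(Y)$ is non-decreasing in $S$ with respect to inclusion. By Proposition~\ref{prop: w(L) bounds} applied to $\widehat{w}$ (which has the required form $\bot V^{n_t}\bot\cdots\bot V^{n_1}$), we have the chain
\[
M_0 \oplus T_0 \oplus W_{\widehat{w}}^L \;\subseteq\; \widehat{w}(L) \;\subseteq\; M \oplus T_0 \oplus W_{\widehat{w}}^U,
\]
and since $V^r$ maps any subspace into a subspace and preserves inclusions, applying $V^r$ gives a nested chain of subspaces whose dimensions are $\delta_L^r \leq \varepsilon^r \leq \delta_U^r$ for every $r\geq 0$. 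The lemma refines this crude comparison to a statement about successive increments.

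First I would establish the elementary identity that for any $k$-subspace $S\subseteq\HdR^1(Y)$,
\[
\dim_k V^r(S) - \dim_k V^{r+1}(S) \;=\; \dim_k\bigl(\ker V \cap V^r(S)\bigr).
\]
This is rank--nullity applied to the surjection $V\colon V^r(S)\twoheadrightarrow V(V^r(S))=V^{r+1}(S)$; the $\sigma^{-1}$-semilinearity of $V$ causes no trouble because $k$ is algebraically closed, hence perfect, so $\sigma$ is an automorphism and a semilinear map still has matching rank and nullity. Then from $S\subseteq S'$ we get $V^r(S)\subseteq V^r(S')$, whence $\ker V\cap V^r(S)\subseteq \ker V\cap V^r(S')$, which is precisely the asserted monotonicity.

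Finally, applying this monotonicity to the chain $M_0 \oplus T_0 \oplus W_{\widehat{w}}^L \subseteq \widehat{w}(L) \subseteq M \oplus T_0 \oplus W_{\widehat{w}}^U$ yields
\[
\delta_L^r - \delta_L^{r+1} \;\leq\; \varepsilon^r - \varepsilon^{r+1} \;\leq\; \delta_U^r - \delta_U^{r+1}.
\]
Rearranging the left-hand inequality gives $\varepsilon^{r+1}\leq \varepsilon^r + (\delta_L^{r+1}-\delta_L^r)$, and rearranging the right-hand inequality gives $\varepsilon^{r+1}\geq \varepsilon^r + (\delta_U^{r+1}-\delta_U^r)$, which together are exactly the claimed bounds. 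There is essentially no obstacle here; the only point deserving a remark is the validity of the rank--nullity step for the semilinear operator $V$, and that is automatic over the perfect (indeed algebraically closed) field $k$.
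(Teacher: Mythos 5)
Your proof is correct and takes essentially the same route as the paper: both arguments come down to the fact that applying $V$ cannot increase the codimension of $V^r(S)$ inside $V^r(S')$ for nested subspaces $S\subseteq S'$, which is exactly your monotonicity of the rank drop after rearranging. The paper justifies this via the induced surjection $V^r(S')/V^r(S)\to V^{r+1}(S')/V^{r+1}(S)$ rather than via rank--nullity and the inclusion $\ker V\cap V^r(S)\subseteq\ker V\cap V^r(S')$, but the two justifications are interchangeable and your remark about semilinearity over the perfect field $k$ is the right (and sufficient) caveat.
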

\begin{proof}
We focus first on the lower bound. By definition, we have $\varepsilon^r= \delta_U^r-c_2^r$, so $c_2^r=\varepsilon^r-\delta_U^r$. Next, we observe $c_2^{r+1} \leq c_2^r$, since there is a surjection
$$V^r(M\oplus T_0 \oplus W_{\widehat{w}}^U) / V^{r}(\widehat{w}(L)) \to V^{r+1}(M\oplus T_0 \oplus W_{\widehat{w}}^U) / V^{r+1}(\widehat{w}(L)).$$
This gives
$$\varepsilon^{r+1} = \delta_U^{r+1} - c_2^{r+1} \geq \delta_U^{r+1} - c_2^r = \delta_U^{r+1} - (\delta_U^r - \varepsilon^r) = \varepsilon^r + (\delta_U^{r+1}-\delta_U^r).$$
The upper bound is established in an analogous fashion using $\widehat{w}(L)$ and $M_0 \oplus T_0 \oplus W_{\widehat{w}}^L$.
\end{proof}

The next step is to bound the differences $\delta_U^{r+1}-\delta_U^r$ and $\delta_L^{r+1}-\delta_L^r$. Note that, by definition of $e_1^r$ and $e_2^r$, we have
\begin{align*}
    \delta_L^{r+1} -\delta_L^r &= (\delta^{r+1} -e_1^{r+1}) - (\delta^r-e_1^r) = (\delta^{r+1}-\delta^r) + (e_1^r - e_1^{r+1}) \\
    \delta_U^{r+1}-\delta_U^r &= (\delta^{r+1} + e_2^{r+1}) - (\delta^r+e_2^r) = (\delta^{r+1} - \delta^r) - (e_2^r-e_2^{r+1}).
\end{align*}

\begin{lemma} \label{lemma: delta^r+1-delta^r}
We have
$$ \phi(V^{r+1}\widehat{w}) - \phi(V^r \widehat{w})- 2a_X^{r+1}\leq \delta^{r+1}-\delta^r \leq  \phi(V^{r+1}\widehat{w}) - \phi(V^r\widehat{w})+a_X^r.$$
\end{lemma}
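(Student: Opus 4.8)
The plan is to pin down $\delta^r$ up to an error controlled by $l_X-a_X^r$, and then take the difference. Throughout write $N\colonequals M_0\oplus T_0\oplus W_{\widehat w}$, so $\delta^r=\dim_k V^r(N)$ and $\delta^{r+1}-\delta^r=-\dim_k(\ker V\cap V^r(N))$. The preliminary facts I would record are: (a) $M_0\oplus\bigoplus_i W_i$ is stable under $V$ — this follows from Proposition~\ref{prop:nilpotent wij} once one notes $\pi^*\HH^0(X,\Omega^1_X)^{\nil}\subseteq M_0$ — and hence so is $M_0\oplus W_{\widehat w}$, since $1\le j\le\phi(d_i,\widehat w)$ forces $\lfloor j/2\rfloor\le\phi(d_i,\widehat w)$; (b) $V^s(N)\subseteq M_0\oplus T_0\oplus W$ for every $s$, because $V(L)=L_0=M_0\oplus T_0\oplus V(\bigoplus_i W_i)$, so $\pi_W$ and $\pi_{T_0}$ restricted to $V^s(N)$ have kernels $V^s(N)\cap(M_0\oplus T_0)$ and $V^s(N)\cap(M_0\oplus W)$ respectively; (c) iterating Lemma~\ref{lemma:projection}, $\pi_W(V^s(W_{\widehat w}))=W_{V^s\widehat w}$ and the operator $\Psi\colonequals\pi_W\circ V\colon W_{V^s\widehat w}\to W$ has image $W_{V^{s+1}\widehat w}$, so $\dim_k\ker\Psi=\phi(V^s\widehat w)-\phi(V^{s+1}\widehat w)$, and by (a) one has $\pi_W(Vx)=\Psi(\pi_W x)$ for $x\in M_0\oplus W$; (d) $\dim_k V^s(M_0)=l_X-a_X^s$, since $\pi^*$ identifies $M_0$ with $\HH^0(X,\Omega^1_X)^{\nil}$ as a $k[V]$-module, and $\dim_k\pi_{T_0}(V^s(T_0))=l_X-a_X^s$ by the last clause of Lemma~\ref{lemma:Ldecomposition}.

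Using these I would first prove the squeeze $\phi(V^s\widehat w)+2(l_X-a_X^s)\le\delta^s\le\phi(V^s\widehat w)+2l_X$ for every $s$. The upper bound is immediate from $V^s(N)=V^s(M_0\oplus W_{\widehat w})+V^s(T_0)$, since $V^s(M_0\oplus W_{\widehat w})\subseteq M_0\oplus W_{V^s\widehat w}$ has dimension $\le\phi(V^s\widehat w)+l_X$ and $\dim_k V^s(T_0)\le l_X$. For the lower bound, decompose $V^s(M_0\oplus W_{\widehat w})$ along $\pi_W$ using (b): its $\pi_W$-image contains $W_{V^s\widehat w}$ and its intersection with $\ker\pi_W$ (which is $V^s(M_0\oplus W_{\widehat w})\cap M_0$) contains $V^s(M_0)$, giving $\dim_k V^s(M_0\oplus W_{\widehat w})\ge\phi(V^s\widehat w)+(l_X-a_X^s)$; then $V^s(M_0\oplus W_{\widehat w})\cap V^s(T_0)\subseteq V^s(T_0)\cap(M_0\oplus W)=\ker(\pi_{T_0}|_{V^s(T_0)})$, whose dimension is $\dim_k V^s(T_0)-(l_X-a_X^s)$, so adjoining $V^s(T_0)$ adds at least $l_X-a_X^s$ to the dimension. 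Comparing the squeeze at $s=r+1$ (lower end) with $s=r$ (upper end) then yields the lower bound $\delta^{r+1}-\delta^r\ge\phi(V^{r+1}\widehat w)-\phi(V^r\widehat w)-2a_X^{r+1}$ directly.

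For the sharper upper bound I would instead exhibit a large subspace of $\ker V\cap V^r(N)$. Put $K'\colonequals\{x\in V^r(W_{\widehat w}):\pi_W(Vx)=0\}$; by (c), $K'=(\pi_W|_{V^r(W_{\widehat w})})^{-1}(\ker\Psi)$, so $\dim_k K'=(\phi(V^r\widehat w)-\phi(V^{r+1}\widehat w))+\dim_k(V^r(W_{\widehat w})\cap M_0)$, and $V(K')\subseteq M_0$ by (a). Since $K'\cap V^r(M_0)\subseteq K'\cap M_0=V^r(W_{\widehat w})\cap M_0$, the subspace $K'+V^r(M_0)\subseteq V^r(N)$ has dimension at least $(\phi(V^r\widehat w)-\phi(V^{r+1}\widehat w))+l_X-a_X^r$, while $V$ maps it into $M_0$, a space of dimension $l_X$. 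Hence $\ker V\cap V^r(N)\supseteq\ker(V|_{K'+V^r(M_0)})$ has dimension at least $(\phi(V^r\widehat w)-\phi(V^{r+1}\widehat w))-a_X^r$, which gives $\delta^{r+1}-\delta^r\le\phi(V^{r+1}\widehat w)-\phi(V^r\widehat w)+a_X^r$. The step I expect to be the most delicate — and the one deserving the most careful writing — is precisely this last paragraph: one must keep the bookkeeping straight so that intersecting with $M_0$ and absorbing one extra application of $V$ genuinely collapses the error from the crude $2a_X^r$ coming from the squeeze down to $a_X^r$; in particular all of $K'$, $V^r(M_0)$, and the projections involved must be compared inside the $V$-stable subspace $M_0\oplus\bigoplus_i W_i$, where $\pi_W\circ V=\Psi\circ\pi_W$, rather than in the ambient $L$ where $\pi_W$ is not $V$-equivariant.
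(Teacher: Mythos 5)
Your proof is correct. The two-sided estimate $\phi(V^s\widehat{w})+2(l_X-a_X^s)\leq\delta^s\leq\phi(V^s\widehat{w})+2l_X$ and the resulting lower bound on $\delta^{r+1}-\delta^r$ coincide with the paper's argument (you are somewhat more careful about why the three contributions to the lower bound add, which is welcome). For the upper bound you take a genuinely different route. The paper sandwiches $\delta^{r+1}$ and $\delta^r$ by expressions involving $\dim_k\pi_{M_0}(V^s(M_0\oplus W_{\widehat{w}}))$ and then uses that this quantity is non-increasing in $s$; you instead write $\delta^r-\delta^{r+1}=\dim_k(\ker V\cap V^r(N))$, with $N=M_0\oplus T_0\oplus W_{\widehat{w}}$, and produce an explicit subspace of that kernel of dimension at least $\phi(V^r\widehat{w})-\phi(V^{r+1}\widehat{w})-a_X^r$, namely $\ker(V|_{K'+V^r(M_0)})$, using that $V$ carries $K'+V^r(M_0)$ into the $l_X$-dimensional space $M_0$. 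Your bookkeeping checks out: the computation $\dim_k K'=(\phi(V^r\widehat{w})-\phi(V^{r+1}\widehat{w}))+\dim_k(V^r(W_{\widehat{w}})\cap M_0)$ together with the inclusion $K'\cap V^r(M_0)\subseteq V^r(W_{\widehat{w}})\cap M_0$ makes the unknown intersection cancel, and the intertwining $\pi_W\circ V=\Psi\circ\pi_W$ on the $V$-stable space $M_0\oplus W$, which you correctly isolate as the delicate point, is what makes $K'$ computable. What your route buys is that it bypasses the monotonicity of $\dim_k\pi_{M_0}(V^s(M_0\oplus W_{\widehat{w}}))$ in $s$, a step which in the paper's write-up tacitly identifies $\dim_k(B\cap M_0)$ with $\dim_k\pi_{M_0}(B)$ for $B=V^s(M_0\oplus W_{\widehat{w}})$ and is less transparent than your direct kernel count; the price is the somewhat heavier setup with $K'$ and $\Psi$.
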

\begin{proof}
We first prove the inequality
\begin{equation} \label{eq: delta^r}
\phi(V^r \widehat{w}) + 2(l_X-a_X^r) \leq \delta^r \leq \phi(V^r \widehat{w}) + 2l_X.
\end{equation}
The proof of the lower bound in Equation~\eqref{eq: delta^r} is analogous to the proof of Lemma~\ref{lemma: L(X,pi,w)}. Namely, we observe
\begin{align*} 
\delta^r = \dim_k (V^r(M_0 \oplus T_0 \oplus W_{\widehat{w}})) &\geq \dim_k (\pi_{M_0}(V^r(M_0))) + \dim_k (\pi_{T_0}(V^r(T_0))) + \dim_k (\pi_W(V^r(W_{\widehat{w}}))) \\ &= 2(l_X-a_X^r) + \phi(V^r\widehat{w}).
\end{align*}
For the upper bound of Equation~\eqref{eq: delta^r}, we observe that $V^r(M_0 \oplus W_{\widehat{w}}) \subseteq M_0 \oplus W_{V^r\widehat{w}}$. Therefore,
\begin{align*}
    \delta^r = \dim_k (V^r(M_0 \oplus T_0 \oplus W_{\widehat{w}})) &\leq \dim_k(M_0 \oplus W_{V^r\widehat{w}}) + \dim_k (T_0) \\ &=2l_X+\phi(V^r \widehat{w}). 
\end{align*}

The lower bound in the lemma follows immediately from subtracting Equation~\eqref{eq: delta^r} from its analog for $\delta^{r+1}$. For the upper bound, observe that
\begin{align*}
\delta^{r+1} &= \dim_k (V^{r+1} (M_0 \oplus W_{\widehat{w}} \oplus T_0)) \\
&\leq \dim_k (V^{r+1}(M_0 \oplus W_{\widehat{w}})) + l_X \\
&= \phi(V^{r+1}\widehat{w}) + \dim_k(\pi_{M_0}( V^{r+1}(M_0 \oplus W_{\widehat{w}}))) + l_X
\end{align*}
and that 
\begin{equation*}
\delta^r  \geq \phi(V^r\widehat{w}) + \dim_k(\pi_{M_0}( V^{r}(M_0 \oplus W_{\widehat{w}}))) + (l_X-a_X^r).
\end{equation*}
Then the lemma follows as $\dim_k(\pi_{M_0}( V^{r+1}(M_0 \oplus W_{\widehat{w}}))) \leq \dim_k(\pi_{M_0}(V^r(M_0 \oplus W_{\widehat{w}})))$. 
\end{proof}

\begin{proposition} \label{prop: n and w bounds}
Suppose $w=V^r \bot V^{n_t} \bot \ldots \bot V^{n_1}$ and $n=\dim_k(w(L))$. Then we have
$$n+\phi(Vw) - \phi(w) -2a_X^{r+1} - e_2^0 \leq \dim_k(V(w(L))) \leq n+ \phi(Vw)-\phi(w) +a_X^r +e_1^0.$$
\end{proposition}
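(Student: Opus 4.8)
The plan is to reduce everything to the bookkeeping quantities already introduced for the fixed word $\widehat{w}$ and then play Lemma~\ref{lemma: epsilon delta} and Lemma~\ref{lemma: delta^r+1-delta^r} against each other. Write $w = V^r\widehat{w}$, so that by definition $n = \dim_k(w(L)) = \varepsilon^r$ and $\dim_k(V(w(L))) = \varepsilon^{r+1}$, while $\phi(w) = \phi(V^r\widehat{w})$ and $\phi(Vw) = \phi(V^{r+1}\widehat{w})$. Thus the assertion to be proved is exactly
\[
\varepsilon^r + \bigl(\phi(V^{r+1}\widehat{w}) - \phi(V^r\widehat{w})\bigr) - 2a_X^{r+1} - e_2^0 \;\leq\; \varepsilon^{r+1} \;\leq\; \varepsilon^r + \bigl(\phi(V^{r+1}\widehat{w}) - \phi(V^r\widehat{w})\bigr) + a_X^r + e_1^0 .
\]

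The first step is to observe that the error terms $e_1^r$ and $e_2^r$ are non-increasing in $r$. This is elementary: if $A \subseteq B$ is an inclusion of $k$-vector spaces of codimension $c$, then the natural surjection $B/A \twoheadrightarrow V(B)/V(A)$ shows that $V(A) \subseteq V(B)$ has codimension at most $c$. Applying this with $(A,B) = \bigl(V^r(M_0 \oplus T_0 \oplus W_{\widehat{w}}^L),\, V^r(M_0 \oplus T_0 \oplus W_{\widehat{w}})\bigr)$ gives $e_1^{r+1} \leq e_1^r$, and with $(A,B) = \bigl(V^r(M_0 \oplus T_0 \oplus W_{\widehat{w}}),\, V^r(M \oplus T_0 \oplus W_{\widehat{w}}^U)\bigr)$ gives $e_2^{r+1} \leq e_2^r$. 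In particular $0 \leq e_i^r \leq e_i^0$ for all $r \geq 0$ and $i \in \{1,2\}$.

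The second step is to assemble the bounds. For the upper bound, Lemma~\ref{lemma: epsilon delta} gives $\varepsilon^{r+1} \leq \varepsilon^r + (\delta_L^{r+1} - \delta_L^r)$; using $\delta_L^r = \delta^r - e_1^r$ this equals $\varepsilon^r + (\delta^{r+1} - \delta^r) + (e_1^r - e_1^{r+1})$. Bounding $\delta^{r+1} - \delta^r \leq \phi(V^{r+1}\widehat{w}) - \phi(V^r\widehat{w}) + a_X^r$ via Lemma~\ref{lemma: delta^r+1-delta^r} and $e_1^r - e_1^{r+1} \leq e_1^r \leq e_1^0$ via the monotonicity just established (using $e_1^{r+1} \geq 0$) yields the claimed upper bound. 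For the lower bound, Lemma~\ref{lemma: epsilon delta} gives $\varepsilon^{r+1} \geq \varepsilon^r + (\delta_U^{r+1} - \delta_U^r)$; using $\delta_U^r = \delta^r + e_2^r$ this equals $\varepsilon^r + (\delta^{r+1} - \delta^r) - (e_2^r - e_2^{r+1})$. Bounding $\delta^{r+1} - \delta^r \geq \phi(V^{r+1}\widehat{w}) - \phi(V^r\widehat{w}) - 2a_X^{r+1}$ via Lemma~\ref{lemma: delta^r+1-delta^r} and $e_2^r - e_2^{r+1} \leq e_2^r \leq e_2^0$ yields the claimed lower bound.

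I expect no genuine obstacle: the statement is a formal consequence of the two preceding lemmas together with the elementary fact that codimension cannot increase under a linear map. The only points requiring care are keeping the four nested spaces of the codimension diagram straight — which of $\delta_L^r,\delta_U^r$ pairs with $e_1^r,e_2^r$, and in which direction the inequalities run — and noting that $e_1^0$ and $e_2^0$ in the statement are precisely the base cases of the non-increasing sequences $\{e_i^r\}_{r \geq 0}$, so that at this stage one need not invoke the explicit estimates \eqref{eq: e_1^0}–\eqref{eq: e_2^0}.
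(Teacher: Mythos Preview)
Your proof is correct and follows exactly the same route as the paper: combine Lemma~\ref{lemma: epsilon delta} with Lemma~\ref{lemma: delta^r+1-delta^r} via the identities $\delta_L^{r+1}-\delta_L^r = (\delta^{r+1}-\delta^r)+(e_1^r-e_1^{r+1})$ and $\delta_U^{r+1}-\delta_U^r = (\delta^{r+1}-\delta^r)-(e_2^r-e_2^{r+1})$, together with the observation $0 \leq e_j^r - e_j^{r+1} \leq e_j^0$. The paper compresses all of this into a single sentence, but your expanded version is precisely what that sentence unpacks to.
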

\begin{proof}
This follows from Lemmas~\ref{lemma: epsilon delta} and ~\ref{lemma: delta^r+1-delta^r} and the observation $0 \leq e_j^r-e_j^{r+1} \leq e_j^0$.
\end{proof}

Proposition~\ref{prop: n and w bounds} provides bounds on $\dim_k(V(w(L))$ in terms of $n$ and $w$, assuming $n=\dim_k(w(L))$. This complements Proposition~\ref{prop: bounds dim(w(L))}, which gives bounds on $\dim_k(w(L))$ for any word.

\begin{theorem} \label{theorem: final type bounds}
Let $Y \to X$ be a double cover with ramification invariants $d_1, \ldots ,d_m$. Let $w=V^{n_s} \bot \ldots \bot V^{n_1}$. Let $[\nu_1, \ldots , \nu_{g_Y}]$ be the final type of $Y$ and set $l:=f_Y+\phi(w)+2l_X$. Then we have
\[ \left| \nu_l - (f_Y + \phi(Vw) + l_X)  \right|  \leq
\begin{cases}
    \left \lceil \frac{3s+2}{2} \right \rceil l_X &\hbox{if $\sum_{i=1}^m (d_i+1) \leq 4g_X -4$ }  \\ (s+1)l_X &\hbox{if $\sum_{i=1}^m (d_i+1) > 4g_X -4$.}
\end{cases}
\]
\end{theorem}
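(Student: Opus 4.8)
The plan is to read $\nu_l$ off the canonical filtration of $\HdR^1(Y)$ and then correct using that the final type is non-decreasing with consecutive differences in $\{0,1\}$ (Section~\ref{ss:final types}). Write $w = V^{r}\widehat{w}$ with $r = n_s$ and $\widehat{w} = \bot\, V^{n_{s-1}}\bot\cdots\bot V^{n_1}$, so that $\widehat{w}$ (hence also $w$ and $Vw$) contains $s-1$ copies of $\bot$, and put $n := \dim_k(w(L)) = \varepsilon^r$. By Lemma~\ref{lemma: reduce to L} the space $U\oplus w(L)$ occurs in the canonical filtration, hence in every final filtration, so
\[
\nu_{f_Y+n} \;=\; \dim_k V(U\oplus w(L)) \;=\; f_Y + \dim_k V(w(L)).
\]
Comparing this index with the target index $l = f_Y + \phi(w) + 2l_X$, the monotonicity and $1$-Lipschitz property of $\nu$ give
\[
\nu_l - \big(f_Y + \phi(Vw) + l_X\big) \;=\; -\lambda\,u \;+\; \big(\dim_k V(w(L)) - \phi(Vw) - l_X\big)
\]
for some $\lambda\in[0,1]$, where $u := n - (\phi(w)+2l_X)$.

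The remaining work is to bound the two summands, and the essential point is to use \emph{level-by-level} estimates so that the errors cancel instead of adding. First I would confine $u$: from $M_0\oplus T_0\oplus W_{\widehat w}^L \subseteq w(L) \subseteq M\oplus T_0\oplus W_{\widehat w}^U$ (Proposition~\ref{prop: w(L) bounds}) one gets $\delta_L^r\le n\le\delta_U^r$, and combined with $\delta^r\in[\phi(w)+2l_X-2a_X^r,\ \phi(w)+2l_X]$ (Equation~\eqref{eq: delta^r}, from the proof of Lemma~\ref{lemma: delta^r+1-delta^r}) this forces $u\in[-2a_X^r - e_1^r,\ e_2^r]$. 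Next, the same argument one level up — equivalently Proposition~\ref{prop: n and w bounds} — gives $\delta_L^{r+1}\le\dim_k V(w(L))\le\delta_U^{r+1}$ with $\delta^{r+1}-\phi(Vw)\in[2(l_X-a_X^{r+1}),\ 2l_X]$, so that $\dim_k V(w(L)) - \phi(Vw) - l_X$ lies in $[\,l_X - 2a_X^{r+1} - e_1^{r+1},\ l_X + e_2^{r+1}\,]$, a window independent of $u$. Plugging both windows into the displayed identity bounds $\nu_l - (f_Y+\phi(Vw)+l_X)$ above and below by expressions built only from $l_X$, the higher $a$-numbers $a_X^r, a_X^{r+1}$, and the codimensions $e_1^r, e_2^r, e_1^{r+1}, e_2^{r+1}$.

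Finally I would substitute $a_X^j\le l_X$ and the codimension estimates $e_1^j\le e_1^0\le\lfloor\tfrac{3(s-1)}{2}\rfloor l_X$, $e_2^j\le e_2^0\le\lceil\tfrac{3(s-1)+2}{2}\rceil l_X$ from Equations~\eqref{eq: e_1^0}--\eqref{eq: e_2^0} (with $s-1$ copies of $\bot$), along with the sharper estimates available under the hypothesis $\sum_{i=1}^m(d_i+1)>4g_X-4$; the identity $\lfloor\tfrac{3k}{2}\rfloor+\lceil\tfrac{3k}{2}\rceil = 3k$ and a separate treatment of the two cases then yield the claimed inequality. The main obstacle is precisely this last bookkeeping: bounding $|{-\lambda u}|$ and $|\dim_k V(w(L))-\phi(Vw)-l_X|$ separately produces a bound roughly twice too large, so one genuinely needs the cancellation exhibited by writing the Lipschitz correction as a convex combination $-\lambda u$ and using that the level-$(r{+}1)$ window is one-sided in each of $e_1^{r+1}$ and $e_2^{r+1}$; getting the constant to land exactly on $\lceil\tfrac{3s+2}{2}\rceil l_X$ (resp.\ $(s+1)l_X$) requires tracking parities carefully throughout.
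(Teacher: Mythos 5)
Your overall strategy is the paper's: read $\nu_{f_Y+n}$ off the canonical filtration via Lemma~\ref{lemma: reduce to L}, transport from the index $f_Y+n$ to the index $l$ using monotonicity and the $1$-Lipschitz property of the final type, and control everything with Propositions~\ref{prop: bounds dim(w(L))} and~\ref{prop: n and w bounds}. The identity $\nu_l-(f_Y+\phi(Vw)+l_X)=-\lambda u+\bigl(\dim_k V(w(L))-\phi(Vw)-l_X\bigr)$ is a correct repackaging of the paper's two-case analysis, and your windows for $u$ and for $\dim_k V(w(L))$ check out.

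The gap is in how you claim the cancellation happens. Plugging in a level-$(r{+}1)$ window that is \emph{independent of $u$} gives, in the worst case, $-\lambda u\le 2a_X^r+e_1^r$ and $\dim_k V(w(L))-\phi(Vw)-l_X\le l_X+e_2^{r+1}$, whose sum is roughly $(2s+2)l_X$ --- the factor-of-two overshoot you flag. The one-sidedness of that window in $e_1^{r+1}$ and $e_2^{r+1}$ does not repair this, because the two worst cases involve \emph{different} error terms ($e_1^r$ in the first summand, $e_2^{r+1}$ in the second), and nothing in $u$-independent windows prevents both extremes from being attained simultaneously. The cancellation that actually saves the constant is that the two extremes are correlated through $n$: when $u$ is at its negative extreme (so $-\lambda u$ can be as large as $2a_X^r+e_1^r$), you must bound $\dim_k V(w(L))$ by the \emph{relative} estimate of Proposition~\ref{prop: n and w bounds}, namely $\dim_k V(w(L))\le n+\phi(Vw)-\phi(w)+a_X^r+e_1^0$, whose right-hand side equals $u+\phi(Vw)+l_X+(l_X+a_X^r+e_1^0)$; the explicit $+u$ absorbs the $-\lambda u$ from the Lipschitz correction and leaves $l_X+a_X^r+e_1^0\le(s+1)l_X$. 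Symmetrically, for the lower bound when $u>0$ you need the relative lower estimate from the same proposition. This is exactly the paper's case split on the sign of $l-(f_Y+n)$: in the favorable direction use the absolute bounds $L(X,\pi,Vw)$ or $U(X,\pi,Vw)$, and in the adverse direction use Proposition~\ref{prop: n and w bounds} so that the $u$'s cancel. With that substitution (and the analogous parity bookkeeping in the case $\sum_{i=1}^m(d_i+1)\le 4g_X-4$) your argument closes and lands on the stated constants.
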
 

\begin{proof}
For ease of exposition, we assume $\sum_{i=1}^m (d_i+1) > 4g_X -4$, so that Lemma~\ref{lemma: M orthogonal to W_V} applies and the bounds in Proposition~\ref{prop: bounds dim(w(L))} are sharper. The case $\sum_{i=1}^m (d_i+1) \leq 4g_X -4$ works analogously. Throughout this proof, we use the simplification $a_X^r \leq l_X$ for every $r$.

Let $n=\dim_k(w(L))$ and let $t=s-1$. Then Proposition~\ref{prop: bounds dim(w(L))} implies
$$\phi(w) - tl_X \leq L(X,\pi,w) \leq n =\dim_k(w(L)) \leq U(X,\pi,w) \leq \phi(w)+(t+2)l_X.$$
Recalling Lemma~\ref{lemma: reduce to L}, note that $\nu_{f_Y+n}=f_Y + \dim_k(V(w(L)))$. We consider two cases.
\begin{itemize}
    \item Assume $f_Y+n\leq l$. Then Proposition~\ref{prop: bounds dim(w(L))} yields
    $$\nu_{l} \geq \nu_{f_Y+n} \geq f_Y + L(X,\pi,Vw) \geq f_Y+\phi(Vw)-tl_X = (f_Y+\phi(Vw)+l_X)-sl_X,$$
    which proves the lower bound of the theorem. For the upper bound, we use Proposition~\ref{prop: n and w bounds} and Equation~\eqref{eq: e_1^0}:
    \begin{align*}
        \nu_{f_Y+n} &\leq f_Y+ n+\phi(Vw)-\phi(w)+l_X + tl_X. 
        \intertext{Now using that $\nu_j\leq \nu_{j+1}\leq \nu_j+1$ we see }
        \nu_{l} &\leq l +\phi(Vw) - \phi(w) + (t+1) l_X .
        \intertext{Finally recalling that $l=f_Y+\phi(w)+2l_X$ yields}
        \nu_{l} &\leq f_Y+ \phi(Vw) + (t+3)l_X  = (f_Y+\phi(Vw)+l_X)+(s+1)l_X.
    \end{align*}
    Thus the assertion is proven when $f_Y+n\leq l$.

    \item Assume $f_Y+n>l$. Then Proposition~\ref{prop: bounds dim(w(L))} immediately yields the upper bound
    $$\nu_{l} \leq \nu_{f_Y+n} \leq f_Y + U(X,\pi,Vw) \leq f_Y+\phi(Vw)+(t+2)l_X = (f_Y+\phi(Vw)+l_X)+sl_X.$$
    For the lower bound, Proposition~\ref{prop: n and w bounds} and Equation~\eqref{eq: e_1^0} show
    \begin{align*}
    \nu_{f_Y+n} &\geq f_Y + n + \phi(Vw) - \phi(w)-2l_X - (t+1)l_X.
    \intertext{Then we conclude that }
    \nu_{l} &\geq l +\phi(Vw)-\phi(w) - 2l_X - (t+1)l_X\\
    & =f_Y + \phi(Vw) - (t+1)l_X = (f_Y+\phi(Vw)+l_X) -(s+1)l_X,
    \end{align*}
    which finishes the proof. \qedhere
\end{itemize}
\end{proof}

\begin{remark}
Note that Theorem~\ref{thm: ordinary EO type} is a special case of Theorem~\ref{theorem: final type bounds} in which $X$ is ordinary. More precisely, one recovers Corollary~\ref{cor: nu ordinary}, which is equivalent to Theorem~\ref{thm: ordinary EO type}, by substituting $l_X=0$ in Theorem~\ref{theorem: final type bounds}.
\end{remark}

\subsection{One point covers}

We now apply Theorem~\ref{theorem: final type bounds} to double covers that are branched at exactly one point. This leads to Theorem~\ref{theorem: log bounds}. Let $Y \to X$ be a double cover branched at one point with ramification break $d$.

One of the inputs of Theorem~\ref{theorem: final type bounds} is the word $w$ that is needed to construct a space in the canonical filtration of a given size. The bounds are proportional to the number of occurrences of $\perp$ in $w$. The following lemma will allow us to bound this number uniformly.

\begin{lemma} \label{lemma: log}
Let $d$ be a positive integer. Every integer $0 \leq m \leq \frac{d-1}{2}$ can be written as $m=\phi(d,w)$ for some word $w=V^{n_s} \bot \ldots \bot V^{n_1}$ with $s \leq  \left \lceil \log_2 \left( \frac{d-1}{2} \right) \right \rceil$.
\end{lemma}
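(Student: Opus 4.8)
Write $N=d-1$. Unwinding Definition~\ref{def: phi}, for a word $w=V^{n_s}\bot V^{n_{s-1}}\bot\dots\bot V^{n_1}$ one has $\phi(d,w)=x_s$, where $x_0:=0$ and $x_i:=\lfloor (N-x_{i-1})/2^{n_i}\rfloor$ for $i\ge 1$. We may assume every $n_i\ge 1$: an intermediate $n_i=0$ inserts two consecutive $\bot$'s, which cancel, while $n_1=0$ forces the useless value $x_1=N>\tfrac{N}{2}$. Thus every reachable value satisfies $x_i\le N/2$, consistent with the targets $m\le\lfloor N/2\rfloor$ we must hit. So the lemma becomes: \emph{every $m$ with $0\le m\le\lfloor N/2\rfloor$ equals $x_s$ for some admissible $(n_1,\dots,n_s)$ with $s\le\lceil\log_2(N/2)\rceil$.} (The degenerate cases $d\le 3$, where $\lceil\log_2(\tfrac{d-1}{2})\rceil\le 0$, are trivial: $m\in\{0,1\}$ and a single block suffices, so below we assume $\tfrac{d-1}{2}\ge 2$.)

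\textbf{Trivial cases and the reduction.} The values $m=0$ (take $n_1$ large) and $m=\lfloor N/2\rfloor=\lfloor N/2^1\rfloor$ are reached in one step. For $1\le m<\lfloor N/2\rfloor$ I would argue by induction on the number of binary digits of $m$ (equivalently on $\lceil\log_2(N/2)\rceil$). The key point is to produce a \emph{predecessor}: an integer $x\in[0,\lfloor N/2\rfloor]$ and an exponent $n\ge 1$ with $\lfloor (N-x)/2^{n}\rfloor=m$, chosen so that $x$ lies in a strictly smaller instance of the problem and is therefore reached in one fewer step by the inductive hypothesis. For a fixed $n\ge 1$ with $2^{n}m\le N$, the set of valid predecessors is the block $\{\,N-2^{n}(m+1)+1,\dots,N-2^{n}m\,\}$ of $2^{n}$ consecutive integers; taking $n$ maximal with $2^{n}m\le N$ one has $2^{n}m>N/2$, so this block lies in $[0,N/2)$, and within it I would pick $x^\star:=2^{n}\big(\lfloor N/2^{n}\rfloor-m\big)$. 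The inequalities $2^{n}m\le N<2^{n+1}m$ give $0\le \lfloor N/2^{n}\rfloor-m\le m-1$, so the ``significant part'' of the new target is strictly smaller than $m$; this is what feeds the induction. One then needs the analogous statements for each stage, keeping careful track of the floor functions.

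\textbf{Main obstacle.} The real difficulty is ensuring the reduction uses only a \emph{logarithmic} number of steps, not merely finitely (or linearly) many: the factor $2^{n}$ in $x^\star=2^{n}(\lfloor N/2^{n}\rfloor-m)$ can temporarily inflate the current target, so peeling ``one binary digit of $m$'' at a time must be done in a way that genuinely consumes one of the $\le\lceil\log_2(N/2)\rceil$ available positions each time. I would handle this either by choosing the exponents $n$ at successive stages so that a suitable measure (e.g.\ the bit-length of the target, or of its odd part) strictly decreases with each block used, or by a single global counting argument showing that the chain of predecessors $m=x^{(0)}\rightsquigarrow x^{(1)}\rightsquigarrow\cdots$ strips off digits of $N$ so that at most $\lceil\log_2(N/2)\rceil$ of them reach $0$ (which is trivially attainable in one block). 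Getting this bound sharp, rather than just $O(\log d)$, while wrestling with the floor functions, is where the bulk of the work lies; everything else is bookkeeping.
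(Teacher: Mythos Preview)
Your proposal is explicitly incomplete: you identify the ``main obstacle'' (the sharp logarithmic bound) and sketch two strategies, but carry out neither. Worse, the specific predecessor you propose, $x^\star = 2^{n}(\lfloor N/2^{n}\rfloor - m)$ with $n$ maximal, can loop. For instance with $N=100$ and $m=20$ one gets $n=2$, $q=\lfloor 100/4\rfloor - 20 = 5$, and $x^\star = 4\cdot 5 = 20 = m$; the chain never moves. The ``significant part'' $q$ does drop strictly below $m$, but after scaling back by $2^{n}$ the new target can equal the old one, so this is not a usable induction invariant as written. Some different measure would need to be found, and none is supplied.

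The paper's argument avoids all of this by working globally rather than target-by-target. It tracks $M_t$, the maximal gap between consecutive values of $\phi(d,w)$ attainable with at most $t$ occurrences of $\bot$, and uses the contraction estimate
\[
\bigl|\phi(d, V^{n}\bot w_1) - \phi(d, V^{n}\bot w_2)\bigr| \;\le\; \Bigl\lceil \tfrac{|\phi(d,w_1)-\phi(d,w_2)|}{2^{n}}\Bigr\rceil
\]
(which is immediate from the definition) to conclude $M_{t+1}\le\lceil M_t/2\rceil$. Starting from the trivial bound $M_0\le (d-1)/2$, this forces $M_s=1$ once $s\ge\lceil\log_2((d-1)/2)\rceil$. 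The point is that the Lipschitz-type bound handles all targets simultaneously and the ceiling absorbs the floor-function bookkeeping that makes your constructive approach delicate; there is no need to exhibit an explicit path to each $m$.
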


\begin{proof}
Let $M_t$ be the maximal gap between consecutive values of $\phi(d,w) \leq \frac{d-1}{2}$ for simple words $w$ with at most $t$ instances of $\bot$.  (By consecutive we mean that there does not exist a word $w_3$ of the same form such that $\phi(d,w_1) < \phi(d,w_3) < \phi(d,w_2)$.)  We will prove that $M_t$ decreases at the appropriate rate as $t$ increases.

Let $i=\phi(d,w_1)$ and $j=\phi(d,w_2)$. Then recalling Definition~\ref{def: phi} we see
$$\left \lfloor \frac{|i-j|}{2^n} \right \rfloor  \leq |\phi(d,V^n\bot w_1) - \phi(d,V^n\bot w_2) | \leq \left \lceil \frac{|i-j|}{2^n} \right \rceil. $$
This implies $M_{t+1} \leq \left \lceil  \frac{M_t}{2} \right\rceil$. Note that $M_0=\frac{d-1}{2}$, so that $M_s=1$ when $s\geq \left \lceil \log_2 \left( \frac{d-1}{2} \right) \right \rceil$.
\end{proof}

This lemma can be interpreted as follows: all spaces in the (unique) final filtration of $W$ can be constructed using a word containing only a small number of applications of $\perp$. This provides restrictive bounds on the final type of $Y$, as is recorded by the following theorem.

\begin{theorem} \label{theorem: log bounds}
Let $Y\to X$ be branched at one point with ramification invariant $d$. Let $f_Y=2f_X$ be the $p$-rank of $Y$. Let $l_X=g_X-f_X$ be the local rank of $X$. Denote the final type of $Y$ by $[\nu_1, \ldots, \nu_{g_Y} ]$. Then the following restrictions hold:

\begin{enumerate}
    \item For $1\leq l \leq f_Y$, we have $\nu_l=l$.
    \item For $f_Y< l < f_Y+2l_X,$ we have $f_Y \leq \nu_l \leq l-1$.
    \item For $f_Y+2l_X \leq l \leq g_Y$, we have 
    \begin{equation} \label{eq: log bounds}
        \left | \nu_l - \left(f_Y + \left \lfloor \frac{l-f_Y}{2} \right \rfloor \right)   \right| \leq \begin{cases}   \frac{3\lceil \log_2(d-1) \rceil}{2}  l_X &\hbox{if $d \leq 4g_X-5$}  \\ \ceiling{\log_2(d-1)} l_X &\hbox{if $d > 4g_X-5$.} \end{cases} 
    \end{equation}
\end{enumerate}
\end{theorem}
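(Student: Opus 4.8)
The plan is to obtain parts (1) and (2) as formal consequences of the relationship between the final type and the $p$-rank, and to obtain part (3) --- the substantive statement --- by feeding the word constructed in Lemma~\ref{lemma: log} into Theorem~\ref{theorem: final type bounds}.

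For (1) and the upper bound in (2), I would use the general fact that for a polarized mod-$p$ Dieudonn\'{e} module with final type $[\nu_1,\ldots,\nu_g]$ and $p$-rank $f$ one has $\nu_l=l$ for $1\le l\le f$ and $\nu_l\le l-1$ for $f<l\le g$ (see \cite{pries_guide}). In our situation this can be seen directly: by Lemma~\ref{lemma: decomposition k[F,V]} the summand $U$, on which $V$ is bijective and which has dimension $f_Y$, equals $V^N(\HdR^1(Y))$ for $N$ large (since $V$ is nilpotent on $L$), so by Lemma~\ref{lemma: reduce to L} it occurs in the canonical filtration; any final filtration refines the canonical filtration, so $U$ is its step $N_{f_Y}$, and since $V$ is injective precisely on $V$-stable subspaces of $U$ one gets $\nu_l=l$ for $l\le f_Y$ and $\nu_l<l$ for $l>f_Y$. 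Because $f_Y=2f_X$ by the Deuring--Shafarevich formula \cite{SubraoDS}, this yields (1) and the bound $\nu_l\le l-1$ in (2); the lower bound $\nu_l\ge f_Y$ in (2) is immediate from (1) and the monotonicity $\nu_l\le\nu_{l+1}\le\nu_l+1$ of the final type.

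For (3), fix $l$ with $f_Y+2l_X\le l\le g_Y$ and set $m\colonequals l-f_Y-2l_X$. Riemann--Hurwitz with a single branch point gives $g_Y=f_Y+l_Y=f_Y+2l_X+\tfrac{d-1}{2}$, so $0\le m\le\tfrac{d-1}{2}$, and Lemma~\ref{lemma: log} produces a simple word $w=V^{n_s}\bot\cdots\bot V^{n_1}$ with $\phi(d,w)=m$ and $s\le\lceil\log_2(\tfrac{d-1}{2})\rceil=\lceil\log_2(d-1)\rceil-1$. Since there is only one branch point, $\phi(w)=\phi(d,w)=m$, so $l=f_Y+\phi(w)+2l_X$ --- exactly the index to which Theorem~\ref{theorem: final type bounds} applies. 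A short computation from Definition~\ref{def: phi} shows $\phi(Vw)=\phi(d,Vw)=\lfloor m/2\rfloor=\lfloor(l-f_Y)/2\rfloor-l_X$, so the centre $f_Y+\phi(Vw)+l_X$ of the estimate in Theorem~\ref{theorem: final type bounds} is precisely $f_Y+\lfloor(l-f_Y)/2\rfloor$. Substituting $s\le\lceil\log_2(d-1)\rceil-1$ into the two error terms of that theorem: when $d+1>4g_X-4$ the error $(s+1)l_X$ is at most $\lceil\log_2(d-1)\rceil\,l_X$, and when $d+1\le4g_X-4$ the error $\lceil\tfrac{3s+2}{2}\rceil l_X$ is at most $\lceil\tfrac{3\lceil\log_2(d-1)\rceil-1}{2}\rceil l_X\le\tfrac{3\lceil\log_2(d-1)\rceil}{2}\,l_X$. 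These are the two cases of \eqref{eq: log bounds}, and the range of $m$ matches exactly the interval $f_Y+2l_X\le l\le g_Y$.

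I expect the only real obstacle to be this elementary but fiddly floor/ceiling bookkeeping, in particular keeping the parameter $s$ (the number of blocks $V^{n_i}$ in $w$) distinct from the number of occurrences of $\bot$, and handling small values of $d$ --- notably $d=3$, where the word $w$ has no $\bot$ at all --- for which one may need to supplement the estimate either with the monotonicity of $\nu$ or with Theorem~\ref{theorem: d = 2^n+1} in order to recover the stated constant. No further geometric input is needed beyond Theorem~\ref{theorem: final type bounds} and Lemma~\ref{lemma: log}.
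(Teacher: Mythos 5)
Your proposal is correct and follows essentially the same route as the paper: parts (1) and (2) from $f_Y=\max\{j:\nu_j=j\}$ together with the monotonicity $\nu_j\le\nu_{j+1}\le\nu_j+1$, and part (3) by setting $m=l-f_Y-2l_X$, invoking Lemma~\ref{lemma: log}, and substituting $s\le\lceil\log_2(\tfrac{d-1}{2})\rceil=\lceil\log_2(d-1)\rceil-1$ into Theorem~\ref{theorem: final type bounds}, with the same computation $\phi(d,Vw)=\lfloor(l-f_Y)/2\rfloor-l_X$. Your observation about the degenerate case $d=3$ (where the bound $s\le\lceil\log_2(1)\rceil=0$ cannot be met by a nonempty word) is a genuine edge case that the paper's own proof also glosses over, so flagging it is appropriate rather than a defect of your argument.
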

\begin{proof}
For $1 \leq l \leq f_Y$, the statement follows immediately from the fact $f_Y=\max\{j \; | \; \nu_j=j\}$.

This fact also implies $\nu_{f_Y+1}=f_Y$. Combining this with the rule $\nu_j\leq \nu_{j+1} \leq \nu_j +1$ gives the statement for $f_Y<l<f_Y+2l_X$.

Finally, assume $f_Y+2l_X \leq l \leq g_Y = f_Y+2l_X + \frac{d-1}{2}$. Let $m:=l-f_Y-2l_X$, so that $0 \leq m \leq \frac{d-1}{2}$. Then Lemma~\ref{lemma: log} implies that there exists a word $w=V^{n_s} \perp \ldots \perp V^{n_1}$ with $s\leq \left \lceil \log_2\left(\frac{d-1}{2}\right) \right \rceil$ such that $m=\phi(d,w)$ and hence $l=f_Y+\phi(d,w)+2l_X$. We apply Theorem~\ref{theorem: final type bounds} to this word $w$, yielding
$$\left| \nu_l - (f_Y + \phi(Vw) + l_X)  \right|  \leq
\begin{cases}
    \frac{3\lceil \log_2(d-1) \rceil}{2} l_X &\hbox{if $d \leq 4g_X -5$ }  \\ \ceiling{\log_2(d-1)}l_X &\hbox{if $d > 4g_X -5$.}
\end{cases}$$
Here we have used $\log_2\left(\frac{d-1}{2}\right)+1 = \log_2(d-1)$. The proof is concluded by observing
\[
\phi(d,Vw)=\left \lfloor \frac{\phi(d,w)}{2} \right \rfloor = \left \lfloor \frac{l-f_Y-2l_X}{2} \right \rfloor = \left \lfloor \frac{l-f_Y}{2} \right \rfloor- l_X. \qedhere
\]
\end{proof}

\begin{remark}
The aim of Theorem~\ref{theorem: log bounds} is to provide a uniform bound on the possible amount of variation of the final type of $Y$. The bounds will not be optimal for all $l$. For instance, when $l=f_Y+2l_X+\phi(d,w)$ for a word $w=V^{n_s} \perp \ldots \perp V^{n_1}$ with $s$ much smaller than $\log_2(d-1)$, then Theorem~\ref{theorem: final type bounds} prescribes stricter bounds.
\end{remark}

\begin{remark}
It is interesting to interpret Theorem~\ref{theorem: log bounds} in the context when the base curve $X$ is fixed and the ramification invariant $d$ increases. The genus $g_Y=2g_X+\frac{d-1}{2}$ grows linearly with $d$, while the interval allowed by the bounds grows logarithmically. Since $$\displaystyle\lim_{d \to \infty} \frac{\log_2(d-1)l_X}{2g_X+\frac{d-1}{2}}=0,$$ it follows that, as $d$ grows, the error in approximating $\nu_l$ by $f_Y + \left \lfloor \frac{l-f_Y}{2} \right \rfloor$ becomes negligible.
\end{remark}

\bibliographystyle{alpha}
\bibliography{Bibliography}

\end{document}